\newcommand{\model}[1]{\texttt{#1}}
\definecolor{bur}{RGB}{158,5,8}
\newcolumntype{L}[1]{>{\raggedright\let\newline\\arraybackslash\hspace{0pt}}m{#1}}
\newcolumntype{C}[1]{>{\centering\let\newline\\arraybackslash\hspace{0pt}}m{#1}}
\newcolumntype{R}[1]{>{\raggedleft\let\newline\\arraybackslash\hspace{0pt}}m{#1}}
\DeclarePairedDelimiter\floor{\lfloor}{\rfloor}
\newcommand{\norm}[1]{\left\lVert#1\right\rVert}
\definecolor{darkblue}{rgb}{0.0, 0.0, 0.55}
\definecolor{lm}{rgb}{0.95, 0.4, 0.8}
\newtheorem{theorem}{Theorem}
\newtheorem{corollary}[theorem]{Corollary}
\newtheorem{proposition}[theorem]{Proposition}
\newtheorem{lemma}[theorem]{Lemma}
\newtheorem{remark}{Remark}
\newtheorem{example}{Example}
\newtheorem{assum}{Assumption}
\newtheorem{assert}{Property}
\newtheorem{definition}{Definition}
\newenvironment{proof}[1][Proof]{\noindent\textbf{#1.} }{\ \rule{0.5em}{0.5em}}
\newcommand{\Z}{\mathcal Z}
\def\Expect{{\mathbb E}}
\def\Prob{{\mathbb P}}
\newcommand{\bm}{\boldsymbol}
\def\min{\mathop{\rm min}}
\def\max{\mathop{\rm max}}
\def\argmin{\mathop{\rm argmin}}
\def\sup{\mathop{\rm sup}}
\def\inf{\mathop{\rm inf}}
\newcommand{\V}[1]{{{\boldsymbol #1}}}
\newcommand{\Q}{\mathbb{Q}}
\newcommand{\x}{\V{x}}
\newcommand{\bxi}{\V{\xi}}
\newcommand{\bzeta}{\V{\zeta}}
\newcommand{\z}{\V{z}}
\newcommand{\btheta}{\V{\theta}}
\newcommand{\1}{\V{1}}
\newcommand{\Dist}{\mathcal{D}}
\renewcommand{\Re}{\mathbb{R}}
\newcommand{\quoteIt}[1]{``#1''}
\newcommand{\bt}{\V{t}}
\newcommand{\scalarvecxi}{{\eta}}
\newcommand{\vecxi}{\boldsymbol{\scalarvecxi}}
\newcommand{\vecz}{\boldsymbol{z}}
\newcommand{\DataN}{{\mathcal{S}}}
\newcommand{\median}{\mbox{median}}
\newcommand{\Var}{\mbox{Var}}
\newcommand{\removed}[1]{}
\newif \iffinal
\newcommand{\EDmodified}[1]{{\color{magenta} #1}}
\newcommand{\EDcomments}[1]{{}}
\newcommand{\EDmodified}[1]{{\color{magenta} #1}}
\newcommand{\USmodified}[1]{{\color{teal} #1}}
\newcommand{\UScomments}[1]{{\USmodified{Utsav commented: #1}}}
\newcommand{\EDcomments}[1]{{\EDmodified{Erick commented: #1}}}
\pgfplotsset{compat=1.10}
  \def\revised#1{#1}%
\DeclareRobustCommand{\revised}[1]{{\begingroup\color{blue}#1\endgroup}}
\DeclareRobustCommand{\correction}[1]{%
  {\begingroup\color[rgb]{1,0.2,0.2}#1\endgroup}%
}
\renewcommand{\revised}[1]{#1}
\begin{document}
\title{
{\revised{Mitigating optimistic bias in entropic risk estimation and optimization}}
}
\author{ {Utsav Sadana}\thanks{CIRRELT, GERAD \& Department of Computer Science and Operations Research, Université de Montréal, Montréal, Québec, Canada, utsav.sadana@umontreal.ca.} \and Erick Delage\thanks{GERAD \& Department of Decision Sciences
HEC Montréal,
Montréal, Québec, Canada,
erick.delage@hec.ca.} \and Angelos Georghiou\thanks{Department of Business and Public Administration,
University of Cyprus
Nicosia, Cyprus,
georghiou.angelos@ucy.ac.cy.}}
\date{\today}
\maketitle
\begin{abstract}
\vspace{0in}
\removed{
 The entropic risk measure is widely used in high-stakes decision-making in \revised{economics, management science, finance, and safety-critical control systems} to account for tail risks associated with an uncertain loss. With limited data, the empirical entropic risk estimator, i.e. replacing the expectation in the entropic risk measure with a sample average, underestimates the true risk. \revised{ We show that this negative bias grows superlinearly with the standard deviation of the loss for distributions with an unbounded right tail. We further show that several bias-reduction techniques developed for empirical risk either remain negatively biased or severely overestimate entropic risk, potentially steering decisions toward riskier or conservative actions and outcomes. To mitigate this bias, we design a parametric bootstrap procedure that is strongly asymptotically consistent and yields a controlled overestimation of entropic risk under certain assumptions.}  The first step of the procedure involves fitting a distribution to the data, whereas the second step estimates the bias of the empirical entropic risk estimator using bootstrapping.
We show that the fitted distribution is required to satisfy only mild regularity conditions, and Gaussian mixture models provide one convenient choice within this class.
 As an application of our approach, 
we propose a distributionally robust optimization  model for an  insurance contract design problem that takes into account the correlations of losses across households. 
 We show that choosing regularization parameters based on the cross validation methods  can result in significantly higher out-of-sample risk for the insurer if the bias in validation performance is not corrected for.  
This improvement in performance can be explained from the observation that our methods suggest a  higher (and more accurate) premium to households.}

\revised{The entropic risk measure is widely used in high-stakes decision-making across economics, management science, finance, and safety-critical control systems because it captures tail risks associated with uncertain losses. However, when data are limited, the empirical entropic risk estimator, formed by replacing the expectation in the risk measure with a sample average,
underestimates true risk. We show that this negative bias grows superlinearly with the standard deviation of the loss for distributions with unbounded right tails. We further demonstrate that several existing bias reduction techniques developed for empirical risk either continue to underestimate entropic risk or substantially overestimate it, potentially leading to overly risky or overly conservative decisions.
To address this issue, we develop a parametric bootstrap procedure that is strongly asymptotically consistent and provides a controlled overestimation of entropic risk under mild assumptions. The method first fits a distribution to the data and then estimates the empirical estimator’s bias via bootstrapping. We show that the fitted distribution must satisfy only weak regularity conditions, and Gaussian mixture models offer a convenient and flexible choice within this class.
As an application, we introduce a distributionally robust optimization model for an insurance contract design problem that incorporates correlations in household losses. We show that selecting regularization parameters using standard cross-validation can lead to substantially higher out-of-sample risk for the insurer if the validation bias is not corrected. Our approach improves performance by recommending higher and more accurate premiums, thereby better reflecting the underlying tail risk.}
\bigskip
\end{abstract}

\section{Introduction}\label{sec:intro}
The purpose of a risk measure is to assign a real number to a random variable, representing the preference of a risk-averse decision maker towards different risky alternatives. For instance, when faced with multiple options, a decision maker might prefer a guaranteed loss of zero over an uncertain option, even if the latter has a strictly negative expected loss. While this behavior can be explained using the mean-variance criterion \citep{Markowitz_1952}, which balances the expected loss and its fluctuations around the mean, the entropic risk measure offers greater flexibility by incorporating higher moments of the loss distribution. \revised{In economics, management science, and finance,  a decision maker's preferences under uncertainty are often modeled using expected utility theory \citep{Von_Neumann_Morgenstern_1944}.  The entropic risk measure is the certainty equivalent  of the exponential utility function \citep{Ben-Tal_Teboulle_1986, Smith_Chapman_2023}, which represents the risk preferences of a decision maker exhibiting constant absolute risk aversion \citep[CARA --][]{Arrow_1971, Pratt_1964}. A key advantage of using entropic risk in multi-stage decision-making is \revised{that it is the only risk measure that is time consistent in the class of law-invariant convex risk measures \citep{Kupper_Schachermayer_2009}}.  Consequently, the Markowitz model of mean-variance portfolio optimization has been extended to the expected utility framework of \cite{Von_Neumann_Morgenstern_1944} in static and dynamic settings \citep{Merton1969, Merton1971, Samuelson1969}.   Markowitz \citep{Markowitz_2014} emphasizes expected-utility maximization as the guiding principle and discusses conditions under which mean-variance optimization provides a reasonable approximation to the expected-utility maximization problem. This view is supported by empirical studies
 examining the adequacy of the exponential utility specification across different domains \citep{Kirkwood2004}}.

The entropic risk measure is useful in high-stakes decision-making \revised{in static and dynamic problems}, where rare events and their associated extreme losses are a significant concern. 
\revised{ This has led to}  significant growth in research on exponential utility functions, which appear in the literature under various names, including entropic risk minimization, tilted empirical risk minimization, constant absolute risk aversion, and as special cases of more general shortfall risk measures and optimized certainty equivalent risk measures \citep{Ben-Tal_Teboulle_1986}.  \revised{Decision-making under uncertainty with entropic risk measure} is widespread, particularly 
in finance \citep{Föllmer_Schied_2002, Föllmer_Schied_2011, Smith_Chapman_2023},  portfolio selection \citep{Merton1969, Brandtner_Kürsten_2018, Markowitz_2014, BaruchZhang2022, Chen_Ramachandra_2024}, revenue management \citep{Lim_Shanthikumar_2007}, economics \citep{Svensson_Werner_1993}, operations management \citep{Choi_Ruszczyński_2011, HuangJuXing2023, Chen_Sim_2024}, robotics \citep{Nass_Belousov_2019},  statistics \citep{Li_Beirami_2023},  reinforcement learning \citep{Fei_Yang_2021, Hau_Petrik_2023}, risk-sensitive control \citep{Howard_Matheson_1972, Bäuerle_Jaśkiewicz_2024}, game theory \citep{eliashberg1978role, Saldi_Başar_2020}, and catastrophe insurance  pricing \citep{Bernard_Liu_2020}. 

\revised{Across these domains, underestimating risk can lead to decisions that %
are %
overly optimistic. %
In finance and portfolio selection, this may tilt allocations toward riskier assets; in revenue management and operations, it can produce %
capacity and
inventory plans that perform poorly under demand surges or disruptions; in robotics, reinforcement learning, and risk-sensitive control, it can yield policies that fail to meet safety or reliability targets; in catastrophe insurance pricing, it can result in premiums and capital buffers that are insufficient to cover tail losses; %
in game-theoretic settings, it can drive strategic agents toward equilibria that amplify systemic risk, for instance by underinvesting in protection against rare joint-loss events. These observations underscore that, in high-stakes applications%
, the accuracy of risk estimation is critical.}

Since the seminal work by \cite{Föllmer_Schied_2002}, which established the axiomatic foundations for convex risk measures, there has been growing interest in quantitative risk management using convex law-invariant risk measures, such as the entropic risk measure. %
Unlike coherent risk measures such as Conditional Value at Risk (CVaR),
convex law-invariant risk measures allow risk to vary nonlinearly with the size of a position.

To formally define the entropic risk measure, let  $\ell(\vecz,   \vecxi)$ represent the uncertain loss associated with an uncertain parameter $\vecxi \in \Xi \subseteq \Re^d$ \revised{and $\bm \z \subseteq\mathbb{R}^d$ a feasible decision.}
Then, the entropic risk associated with parameter $\vecxi$ is given by: 
   \begin{align}
 \rho%
 _{\Prob}(\ell(\vecz,   \vecxi)) := \begin{cases}
\frac{1}{\alpha}\log(\Expect_{\Prob}[\exp(\alpha \ell(\vecz,   \vecxi))]) & \text{ if } 
\alpha >0,\\
 \Expect_{\Prob}[\ell(\vecz,   \vecxi)] & \text{ if } 
\alpha =0,
\end{cases} \label{eq:trueentropicrisk}
\end{align}
where the loss $\ell(\vecz,   \vecxi)$ is transformed by the increasing and convex exponential function,  and $\alpha$ is the risk aversion parameter. This formulation expresses the entropic risk as the certainty equivalent of the expected disutility 
\(\mathbb{E}_{\mathbb{P}}[\exp(\alpha \ell(\vecz,   \vecxi))]\), reflecting the monetary value of the risk inherent in the uncertain outcome \(\ell(\vecz,   \vecxi)\).  By adjusting the risk-aversion parameter \(\alpha\), also known as the Arrow-Pratt measure of risk aversion, the decision maker’s sensitivity to extreme losses can be controlled. %

In real-world applications, the distribution $\Prob$ of the random variable $\vecxi$ is unknown, and decisions are often made using  historical realizations of random variable $\vecxi$  that are assumed to be independent and identically distributed (i.i.d.)~with distribution $\Prob$. Let the data set of $N$ historical observations be denoted by $\Dist_N = \{\hat{\vecxi}_1, \hat{\vecxi}_2, \cdots, \hat{\vecxi}_N\}$. %
A common approach to estimate the entropic risk is to replace the true distribution with the empirical distribution defined as $\hat{\Prob}_N%
:= \frac{1}{N} \sum_{i=1}^N \delta_{\hat{\vecxi}_i}(\vecxi)$, where  $\delta_{\vecxi}$ is a Dirac distribution at the point $\vecxi$. The empirical entropic risk measure is then given by: 
\begin{equation}
  \rho_{\hat{\Prob}_N}(\ell(\vecz,   \vecxi)) :=  \frac{1}{\alpha}\log\left(\frac{1}{N} \sum_{i=1}^N \exp(\alpha \ell(\vecz,   \hat{\vecxi}_i))\right).\label{eq:empce} 
\end{equation}
Since the logarithm function is  strongly concave, Jensen's inequality implies that the empirical entropic risk strictly underestimates the true entropic risk:
\begin{align}
   \Expect [\rho_{\hat{\Prob}_N}(\ell(\vecz,   \vecxi))]\;&=\;\Expect\left[\frac{1}{\alpha}\log\left(\frac{1}{N} \sum_{i=1}^N \exp(\alpha \ell(\vecz,   \hat{\vecxi}_i))\right)\right]\notag \\&<  \frac{1}{\alpha}\log\left(\Expect\left[\frac{1}{N} \sum_{i=1}^N \exp(\alpha \ell(\vecz,   \hat{\vecxi}_i))\right]\right)=\rho_\Prob(\ell(\vecz,   \vecxi)), 
\end{align}
unless $\mbox{Var}(\ell(\vecz,   \vecxi))=0$,  
and where the expectation is taken with respect to the randomness of the data $\mathcal{D}_N$.
A fundamental challenge in high-stakes decision-making lies in accurately estimating risk. Even with a large number of samples, the empirical entropic risk can significantly underestimate the true risk, especially for \revised{decisions with high risk exposure.} %
This challenge is demonstrated in the following example.

\begin{example}\label{example:risk_gamma}
\begin{figure}[t]
    \centering
\includegraphics[width=0.5\linewidth]{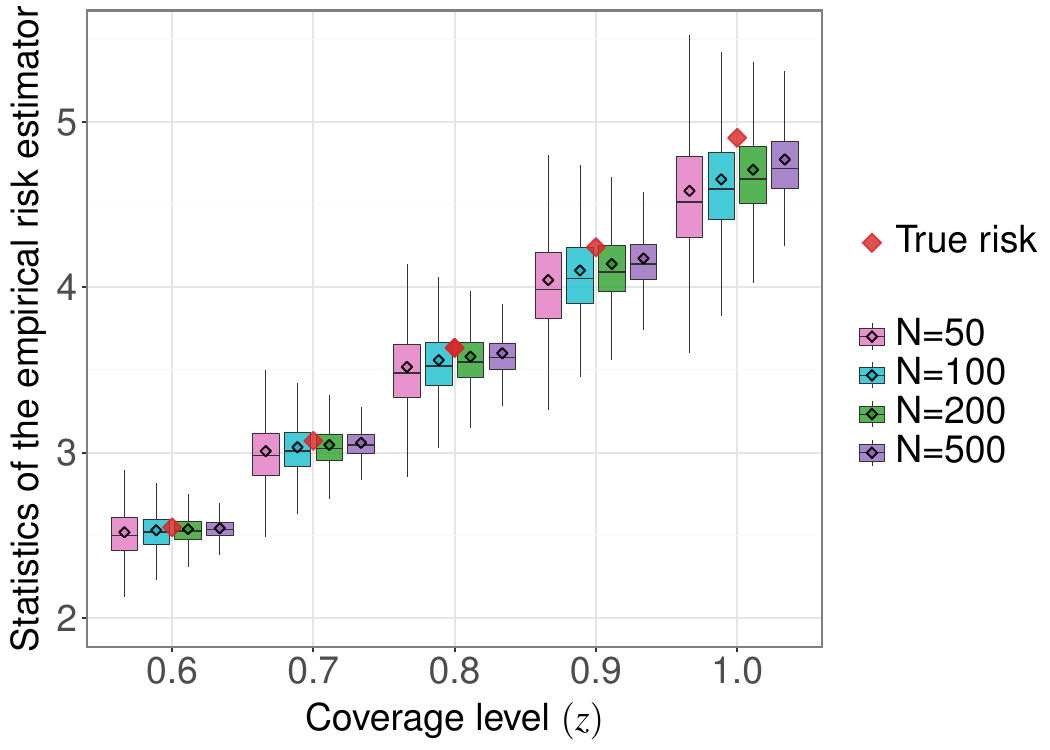}
   \caption{\revised{Statistics of the empirical risk for different values of  \revised{$z \in \{0.6, 0.7, 0.8, 0.9, 1\}$} and training sample sizes $N\in \{50, 100, 200, 500\}$ over $10000$ repetitions. The true risk is given by \revised{$(-15/2)\log(1-0.48z)$.}}}
    \label{fig:risk_gamma}
\end{figure}
    \revised{In an insurance pricing problem, the insurer aims to determine the minimum premium $\pi$ at which they can insure against a certain loss $\eta$. Let the risk aversion parameter of insurer be $\alpha=2$. %
    Assuming a coverage $z$ for the loss $\eta$, the loss of the insurer if they charge a premium $\pi$ is a random variable given by $\revised{z}\scalarvecxi-\pi$. Thus, the minimum premium at which the insurer insures the risk should be such that the entropic risk of the insurer from insuring is at most equal to $0$, i.e., $\frac{1}{\alpha}\log\left(\Expect_\Prob[\exp(\alpha (z\scalarvecxi-\pi))]\right) \leq 0$.
    On rearranging the terms, one can show that the minimum premium equals the entropic risk associated with the loss $\revised{z}\scalarvecxi$, $\pi \geq \frac{1}{\alpha}\log\left(\Expect_\Prob\left[\exp(\alpha \revised{z}\scalarvecxi)\right]\right)$, also called the exponential premium \citep{Gerber_1974}.
Suppose that the loss follows a Gamma distribution $\Gamma(\kappa, \lambda)$  \citep[see][]{Fu_Moncher_2004, Bernard_Liu_2020} with shape parameter $\kappa$ and scale parameter $\lambda$. The moment-generating function of a $\Gamma$-distributed random variable is known in closed form which allows us to analytically compute the optimal premium
$\pi^* = \frac{1}{\alpha}\log\left(\left(1-\lambda \revised{z}\alpha\right)^{-\kappa}\right)$ if $\lambda<1/\revised{(z\alpha)}$.
Suppose an insurer has access to $N\in \{50, 100, 200, 500\}$ samples of the losses which are generated from a $\Gamma(15, 0.24)$-distribution. %
   We use empirical distribution $\hat{\Prob}_N$ over $N$ samples to estimate the entropic risk for different coverage levels %
  $ z \in \{0.6, 0.7, 0.8, 0.9,  1\}$. Figure~\ref{fig:risk_gamma} presents statistics of the distribution of the empirical risk estimator as a function of $N$ and \revised{$z$}. %
      We can see that a larger coverage exposes the insurer to significant underestimation of the risk of the loss $z\scalarvecxi$ even for the relatively large sample size $N = 500$.}

\end{example}

\subsection{Our contributions}

The contribution of this paper is to propose a scheme that produces \revised{estimators, which under suitable assumptions on the tails of the loss $\ell(\vecz, \vecxi)$,  provably remove the negative bias in the empirical entropic risk estimator.  Our approach constructs overestimators of entropic risk while ensuring that the induced conservatism does not lead to overly conservative decisions.  More specifically, we} %
  propose a bias correction term $\delta(\Dist_N)$ that employs bootstrapping, using a distribution fitted to  the data $\Dist_N$,  to get 
\begin{align}
 \rho_{\Prob}(\ell(\vecz, \vecxi)) \approx \text{median}\left[\rho_{\hat{\Prob}_N}(\ell(\vecz, \vecxi)) + \delta(\Dist_N)\right],
\end{align}
 where the median statistic is taken with respect to the randomness in $\Dist_N$. 
We establish mild conditions under which $\delta(\Dist_N)\rightarrow 0$ almost surely as $N\rightarrow \infty$. %
In particular, basing the bootstrap on a constrained maximum likelihood estimation (MLE) of the sampling distribution will yield an asymptotically consistent estimator. 
Unfortunately, our empirical experiments establish that a bootstrap correction based on MLE fails to adequately address the underestimation of entropic risk. %
Instead, we provide two procedures to fit  ``bias-aware'' distributions  that take into account the entropic risk estimation bias caused by tail events. The first one involves  a distribution  matching technique  that tries to fit the entropic risk estimator's distribution itself, and the second uses a simple mixture distribution with a component dedicated to fitting the tail of the empirical distribution. %

Going beyond the estimation of entropic risk, we study the entropic risk minimization problem. Solving the sample average approximation (SAA) of the entropic risk minimization problem is known to produce a second source of bias, also known as the optimizer's curse \citep{Smith_Winkler_2006}. Distributionally robust optimization (DRO) is widely used to address the optimistic bias of SAA policies because the decision maker is protected against perturbations in the empirical distribution that lie in a distributional ambiguity set. %
To tune the radius of the ambiguity set, a typical approach is to use K-Fold cross validation (CV). We use our bias mitigation procedure to estimate the validation performance of the resulting decisions.
To demonstrate the effectiveness of our approach, we conduct a case study on an insurance pricing problem. %
The true distribution of losses, which may be correlated across households, is unknown to both the insurer and the households. The insurer addresses this uncertainty by solving a distributionally robust insurance pricing problem, determining the coverage to offer and the premium to charge each household. Our results show that the insurer can achieve a significant improvement in out-of-sample entropic risk compared to the ``traditional" K-Fold CV procedure, which selects the radius of the distributional ambiguity set solely by evaluating decisions on validation loss scenarios.   %

Our contributions can be described as follows:
\begin{enumerate}
\item On the theoretical side, \revised{we prove that in the finite sample regime, most available estimation procedures  (e.g., empirical entropic risk, optimizer's information criterion (OIC), Delta Method, non-parametric bootstrap and double bootstrap) %
can severely underestimate the entropic risk of $\ell(\vecz, \vecxi)$ when its variance $\mbox{Var}(\ell(\vecz, \vecxi))$ is large. %
Namely, the size of negative bias grows at least superlinearly with respect to $\sqrt{\mbox{Var}(\ell(\vecz, \vecxi))}$.  In practice, this implies that, when used in optimization (or decision-making), the former class of estimators are likely to steer the decision maker towards higher variance alternatives in order to draw value from the estimation bias. One notable exception is an estimation procedure based on leave-one-out cross validation \citep{stone1974cross, arlot2010survey} %
with a guaranteed positive bias, yet the latter can be shown to grow exponentially with respect to $\mbox{Var}(\ell(\vecz, \vecxi))$, and thus only promotes highly conservative actions.}%
\item \revised{We present a strongly asymptotically consistent procedure to debias the empirical entropic risk estimator in a way that ensures eventual over-estimation of risk as variance grows, if the tails of $\ell(\vecz,\vecxi)$ are lighter-than-Gaussian, while preserving overestimation to be at most linear in $\mbox{Var}(\ell(\vecz, \vecxi))$. This is done by estimating the size of the empirical risk bias using a parametric bootstrap procedure that fits a Gaussian mixture model (GMM) to the empirical loss distribution, and compensating for this estimated bias.}

\item  \revised{%
We show that straightforward maximum likelihood estimation (MLE) can empirically yield a bias correction that is smaller than the true bias of the empirical risk estimator, even with a large training set. To address this issue, we introduce two fitting procedures that empirically produce more conservative bias estimates. The first approach minimizes a novel loss function designed to fit the distribution of the empirical entropic risk, using an end-to-end differentiable pipeline that leverages automatic differentiation for scalable learning. The second approach adopts a simpler strategy based on closed-form fitting of a parametric distribution that exploits the tail behavior of the empirical distribution, making it highly scalable and easy to deploy.
}Our methods could be of independent interest for debiasing more general risk measures.
  
\item On the application side, our work contributes toward data-driven designing of insurance premium pricing and coverage policies.
To the best of our knowledge, this is the first time that a distributionally robust version of the well-known risk-averse insurance pricing  problem \citep{Bernard_Liu_2020} is introduced in the literature. Our model takes into account the different risk aversion attitudes of the insurer and households as well as the systemic risk  associated with \revised{correlated events such as floods.} 
\end{enumerate}

The paper is organized as follows.  Section \ref{sec:property_entropic_risk} discusses the properties of the entropic risk measure. \revised{Section \ref{sec:asymptotic:effect:variance_on_bias} provides theoretical results on the asymptotic effect of variance of $\ell(\vecz, \vecxi)$ on the estimation bias of several existing estimators.} %
Section \ref{sec:bias_correct} provides a bias correction procedure to mitigate the underestimation problem.  In Section \ref{sec:dro}, we study the entropic risk minimization problem using the DRO framework. In Section \ref{sec:DRIP}, we introduce the distributionally robust insurance pricing problem and provide numerical results in Section \ref{sec:num_ins_price}. The conclusions are given in Section \ref{sec:conclusions}. \revised{Appendix \ref{sec:lit_review}  reviews related works on estimating risk measures,  correcting optimistic bias associated with solving SAA problem, and insurance pricing, situating our contributions within these three research streams, and clarifying how our approach differs from existing methods; Appendices \ref{append:property_entropic_risk}, \ref{appendix:2}, and \ref{append:bias_correct} present the proofs of the results in Sections \ref{sec:property_entropic_risk}, \ref{sec:asymptotic:effect:variance_on_bias}, and \ref{sec:bias_correct}, respectively. Appendix \ref{sec:appenddro} provides the theory and proofs to support the DRO model introduced in Section \ref{sec:dro}. Finally, additional details and results omitted from the paper can be found in Appendix \ref{append:details}. }

 \paragraph{Notations:} $[m]$ denotes the set of integers $\{1, 2, \cdots, m\}$.
 $\|\cdot \|_{*}$ denotes the dual norm of $\|\cdot \|$. \( \delta_\zeta \) is the Dirac distribution at the point \( \zeta\), and $\log(\cdot)$ refers to the natural logarithm. \revised{We use standard asymptotic notation $\mathcal O(\cdot)$, $\Omega(\cdot)$,  and $\omega(\cdot)$ as in \cite{Cormen2009}. 
For functions $f,g : \mathbb{R}_{+} \to \mathbb{R}$, we write 
$f(\sigma) = \mathcal O(g(\sigma))$ if there exist constants $c>0$ and $\bar{\sigma}>0$ such that 
$f(\sigma) \le c\,g(\sigma)$ for all $\sigma \ge \bar{\sigma}$, 
$f(\sigma) = \Omega(g(\sigma))$ if there exist constants $c>0$ and $\bar{\sigma}>0$ such that $c\,g(\sigma) \le f(\sigma)$ for all $\sigma \ge \bar{\sigma}$, 
and $f(\sigma) = \omega(g(\sigma))$ if for every $c>0$, there exists $\bar{\sigma}>0$ such that 
$c\,g(\sigma) \leq f(\sigma)$ for all $\sigma \ge \bar{\sigma}$. 
$\Lambda_{\Q}(t):=\log(\Expect_\Q[\exp(t\zeta)])$ is the cumulant generating function of $\zeta\sim\mathbb{Q}$.}%

\section{Properties of entropic risk measure}\label{sec:property_entropic_risk}

Let $(\Omega, \mathcal{F}, \Prob)$ be a probability space and let $\mathcal{L}^{p}:=\mathcal{L}^{p}(\Omega, \mathcal{F}, \Prob)$ denote the space of real-valued measurable functions, $X: \Omega\rightarrow \mathbb{R}$ such that $\Expect[|X|^p] <\infty$, for some  $p\geq 1$.  %
The entropic risk measure is a convex, law invariant risk measure \citep{Föllmer_Schied_2002}, thus satisfying the following definition.
\begin{definition}\label{def:entropicrisk}  A functional $\rho \colon \mathcal{L}^{p}\rightarrow \bar{\mathbb{R}}$, where $\bar{\mathbb{R}}:=\mathbb{R}\cup\{\infty\}$,  is a convex law-invariant risk measure if
\begin{enumerate}
\item[$(a)$]  $\rho(X- m) = \rho(X) - m$ for all $X\in \mathcal{L}^{p}$ and $m \in \mathbb{R}$ and $\rho(0) = 0$. 
\item[$(b)$] $\rho(X) \leq \rho(X')$ if $X\leq X'$ almost surely (a.s.) for all $X \in \mathcal{L}^{p}$.
\item[$(c)$] $\rho(\lambda X+ (1 - \lambda)X') \leq \lambda \rho(X) + (1 - \lambda)\rho(X')$ for $\lambda \in [0,1]$ and for all $X, X' \in \mathcal{L}^{p}$.
\item[$(d)$] $\rho(X) = \rho(X')$ for all $X, X' \in \mathcal{L}^{p}$ such that $X=X'$ in distribution.
\end{enumerate} 
\end{definition}
Condition $(a)$, also known as  cash-invariance property, states that $m$ is the minimum amount that should be added to a risky position to make it acceptable to a regulator. Condition $(b)$, ensures monotonicity, meaning lower losses are preferable. Condition $(c)$, convexity, ensures that diversification reduces risk. Lastly, condition $(d)$, law invariance, states that two random variables with the same distribution should have equal risk. 

Letting $\vecz\in\mathcal{Z}\subseteq\mathbb{R}^d$ be a decision vector, $\vecxi:\Omega\rightarrow\Re^d$ be a random vector, and $\ell(\vecz,   \vecxi)\in\mathcal{L}^p$ the random loss that it produces, we will further impose the following assumption to ensure that the entropic risk of $\ell(\vecz,   \vecxi)$ is finite, together with the mean and variance of $\exp(\alpha \ell(\vecz,   \vecxi))$ for all $\vecz\in\mathcal{Z}$. 
\begin{assum}\label{assum:tail_bound2}
    For all $\vecz\in\mathcal{Z}$, the tails of $\ell(\vecz,   \vecxi)$ are exponentially bounded: %
\[
\Prob(|\ell(\vecz,   \vecxi)| > a) \leq G \exp(-a\alpha C), \quad \forall a\geq 0 ,
\]
for some $G>0$ and $C> 2$. Equivalently, the moment-generating function $\Expect[\exp(t\ell(\vecz,   \vecxi))]\in\mathbb{R}$ for all $t\in(-\alpha C,\alpha C)$ for some $C> 2$, see Lemma \ref{lemma:equiv:mgf:tail} in Appendix \ref{append:property_entropic_risk} for a proof of equivalence.
\end{assum}

Assumption \ref{assum:tail_bound2} further restricts the space of loss functions in Definition \ref{def:entropicrisk} in order to work with random variables that are \quoteIt{well-behaved} from the point of view of entropic risk estimation at a risk tolerance level of $\alpha$. 
Indeed, our assumptions will ensure that the empirical estimator is asymptotically consistent for all $\vecz\in\mathcal{Z}$. We note that our assumption relates to $\mathcal{L}_M$, the set of random variables with finite-valued moment generating functions, through the following inclusion: $\mathcal{L}^{\infty} \subseteq \mathcal{L}_{M} \subseteq \mathcal{L}_{\alpha} \subseteq \mathcal{L}^{p}$ with $\mathcal{L}_\alpha$ as the set of random variables in $\mathcal{L}\revised{^p}$ that satisfy Assumption \ref{assum:tail_bound2}.  

\begin{lemma}\label{thm:boundedExpVar2}
    Under Assumption \ref{assum:tail_bound2}, $\Expect[\exp(\alpha \ell(\vecz,   \vecxi))]\in \left[\exp(-\revised{\frac{G}{C}}),\,\frac{G}{C-1}+\revised{1}\right]$ and $\text{Var}[\exp(\alpha \ell(\vecz,   \vecxi))]\in \left[0,\,\frac{2G}{C-2}+\revised{1}\right]$.
\end{lemma}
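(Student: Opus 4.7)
The plan is to use the layer-cake identity $\Expect[Z]=\int_0^\infty\Prob(Z>t)\,dt$ for $Z\geq 0$ applied to $Y:=\exp(\alpha \ell(\bxi))$ and to $Y^2$, combined with the sub-exponential tail supplied by Assumption~\ref{assum:tail_bound2}. The key observation behind every step is that exponentiating the threshold $a$ in the tail bound turns an exponential into a polynomial: for $t\geq 1$, choosing $a=\log(t)/\alpha\geq 0$ gives $\Prob(\ell(\bxi)>a)\leq G\exp(-\log(t)\,C)=Gt^{-C}$, which is integrable at infinity precisely when $C$ exceeds the relevant exponent.

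For the upper bound on $\Expect[\exp(\alpha \ell(\bxi))]$, I would split $\int_0^\infty \Prob(Y>t)\,dt$ at $t=1$. On $[1,\infty)$ the event $\{Y>t\}$ coincides with $\{\ell(\bxi)>\log(t)/\alpha\}$ with a non-negative threshold, so Assumption~\ref{assum:tail_bound2} applies and yields $\int_1^\infty Gt^{-C}\,dt=G/(C-1)$, finite since $C>2>1$; on $[0,1]$ one uses the trivial bound $\Prob(Y>t)\leq 1$. Combining these two pieces gives the claimed ceiling $G/(C-1)$ (with the bounded constant from $[0,1]$ absorbed into the form displayed).

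For the lower bound on $\Expect[Y]$, I would invoke Jensen's inequality, $\Expect[\exp(\alpha \ell(\bxi))]\geq \exp(\alpha \Expect[\ell(\bxi)])$, and then bound $\Expect[\ell(\bxi)]\geq -\Expect[|\ell(\bxi)|]$. Using the same tail formula on the non-negative variable $|\ell(\bxi)|$ gives $\Expect[|\ell(\bxi)|]=\int_0^\infty \Prob(|\ell(\bxi)|>a)\,da\leq \int_0^\infty G\exp(-a\alpha C)\,da=G/(\alpha C)$, hence $\Expect[Y]\geq \exp(-G/C)$, which implies the slightly looser stated floor $\exp(-2G/C)$.

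The variance bound follows the same playbook applied to $Y^2=\exp(2\alpha \ell(\bxi))$. Since $\text{Var}[Y]\leq \Expect[Y^2]$, I would repeat the tail-splitting argument with $2\alpha$ in place of $\alpha$: for $t\geq 1$ one obtains $\Prob(Y^2>t)\leq Gt^{-C/2}$, and integrability over $[1,\infty)$ now requires $C/2>1$, which is exactly the content of the hypothesis $C>2$; the integral evaluates to $2G/(C-2)$. The lower bound $\text{Var}[Y]\geq 0$ is immediate. The main (mild) obstacle is just careful bookkeeping of the constants from the $[0,1]$ portion of the tail integral, and flagging that the strict inequality $C>2$ in Assumption~\ref{assum:tail_bound2} is precisely what guarantees a finite second moment, which will be critical for the bootstrapping arguments developed later in the paper.
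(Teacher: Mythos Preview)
Your approach is essentially the same as the paper's: both use the layer-cake representation $\Expect[Z]=\int_0^\infty\Prob(Z>t)\,dt$ together with Assumption~\ref{assum:tail_bound2} for the upper bounds on the first and second moments, and Jensen's inequality combined with a tail-integral bound on $\Expect[|\ell(\bxi)|]$ for the lower bound. The only differences are cosmetic---you split the tail integral at $t=1$ while the paper changes variables to $y=\log(t)/\alpha$ and integrates from $y=0$---and your lower bound $\exp(-G/C)$ is in fact slightly sharper than the stated $\exp(-2G/C)$; your remark that the $[0,1]$ contribution must be ``absorbed'' is an issue the paper's argument shares (its change of variables silently drops the $y<0$ range), so you are matching the paper exactly here.
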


\section{\revised{Asymptotic effect of variance on estimation bias}} \label{sec:asymptotic:effect:variance_on_bias}
In this section, we theoretically analyze the asymptotic effects of the variance of $\ell(\vecz,\vecxi)$ on the estimation bias. %
To do so, we make the following simplifying assumption that the decision affects only the mean and the standard deviation of the loss, see \cite{meyer1987two} and Appendix \ref{sec:appendix:location:scale} for a detailed discussion of the location-scale optimization problem. %

\begin{assum}\label{ass:meanVarControl}(Location-scale optimization problem) 
The distributions of feasible random losses can be represented as a location-scale %
optimization problem:
    \begin{align}
\{ \ell(\boldsymbol{z}, \vecxi) : \vecz\in\mathcal{Z}\} \stackrel{F}{=} \{\mu+\sigma \xi : (\mu,\sigma)\in\mathcal{A}\},
\end{align}
where $\stackrel{F}{=}$ compares the distributions of the random variables in the two sets, for some fixed $\xi:\Omega\rightarrow \Re$, with $\Expect[\xi]=0$ and $\Expect[\xi^2]=1$, and some $\mathcal{A}\subset \mathbb{R}\times\mathbb{R}_+$.
\end{assum}
The above assumption is satisfied, for example, in a single asset portfolio optimization (or insurance pricing problem), where $\ell(z,\scalarvecxi):=z \scalarvecxi$ with $z\in\mathcal{Z}\subseteq \mathbb{R}_+$ as the size of the risky investment (or coverage ratio), while $\scalarvecxi$ captures the excess return of the asset with respect to the risk free rate (or covered loss). The location-scale %
control problem is obtained by letting $\xi:=(\scalarvecxi-\Expect[\scalarvecxi])/\sqrt{\mbox{Var}(\scalarvecxi)}$ so that $\ell(z,\scalarvecxi)=\Expect[\scalarvecxi]z + \sqrt{\mbox{Var}(\scalarvecxi)}z\xi$. While this can generalize to multi-asset environment with radially symmetric assets distribution \citep{fang1990spherically}, %
it is clear that in general, the set of feasible random losses might not satisfy Assumption \ref{ass:meanVarControl}. %
Yet, by studying this class of problems, we can shed light on the role that variance plays on risk underestimation %
and identify ways of preventing the estimator from perversely incentivizing larger risk exposure.  The next section identifies conditions under which  the asymptotic growth rate of cumulant generating function of the true loss distribution dominates the growth rate of the empirical estimator as a function of \(\mathrm{Var}(\ell(\vecz,\vecxi))\), and establishes that common bias correction procedures fail to address this issue.%

\removed{

In a classical single asset portfolio optimization problem, let 
In a portfolio setting, let $\bm \eta \in \mathbb{R}^d$ denote the vector of asset losses and suppose $\bm \eta = \bar{\bm \mu} + \bar{\bm \sigma}\xi$,
where $\bar{\bm \ell} \in \mathbb{R}^d$, $\bar{\bm \sigma} \in \mathbb{R}^d$ are deterministic and $\xi$ is a scalar random variable
with $\mathbb{E}[\xi]=0$ and $\operatorname{Var}(\xi)=1$, independent of $\bm z$.
The portfolio loss $\ell(\bm z,\bm \eta) := \bm z^\top \bm \eta$ then satisfies $
    \ell(\bm z,\xi) = \bm z^\top \bar{\bm \ell} + (\bm z^\top \bar{\bm \sigma})\,\xi
                = \mu(\bm z) + \sigma(\bm z)\,\xi,$
with $\mu(\bm z)=\bm z^\top \bar{\bm \ell}$ and $\sigma(\bm z)=\bm z^\top \bar{\bm \sigma}$.

In an insurance problem with $d$ households, the insurer chooses coverages $\bm z \in [0,1]^d$ and premiums $\pi(\bm z)$, and their loss is $ \ell(\bm z, \bm \eta) := \bm z^\top \bm \eta - \mathbf{1}^\top \pi(\bm z)$ which can be simplified as follows
\[
            \ell(\bm z,\xi)    = (\bm z^\top \bar{\bm \ell} - \mathbf{1}^\top \pi(\bm z)) + (\bm z^\top \bar{\bm \sigma})\,\xi
                = \mu(\bm z) + \sigma(\bm z)\,\xi,
\]
with $\mu(\bm z) = \bm z^\top \bar{\bm \ell} - \mathbf{1}^\top \pi(\bm z)$ and $\sigma(\bm z)=\bm z^\top \bar{\bm \sigma}$.
Thus, under a one-factor specification, both models are of the form
$\ell(\bm z,\xi)=\mu(\bm z)+\sigma(\bm z)\,\xi$ with scalar $\xi$.
}

\subsection{Asymptotic effect of variance on empirical risk estimator}\label{sec:asymptotic:saa}

We start our discussion by studying the effects of the variance of the random loss, manipulated using $z\in \mathcal{Z}$, on the quality of the empirical risk estimate. %
The restricted space of random losses defined in Assumption \ref{ass:meanVarControl} allows us to study the effect of $\sigma$ on estimation error of $\rho_{\Prob^\xi}(\mu+\sigma\xi)$ to draw insights on the role of $\mbox{Var}(\ell(\vecz, \vecxi))$ on the estimation error of $\rho_{\Prob}(\ell(\vecz, \vecxi))$. %
For instance, one can easily show that the estimation error of the risk measured by a \emph{coherent risk measure $\varrho$} is linear in the standard deviation of $\ell(\z,\vecxi)$: 
\[\varrho_{\Prob}(\ell(\vecz, \vecxi)) - \mathbb{E}[\varrho_{\hat{\Prob}_N}(\ell(\vecz, \vecxi))] =\varrho_{\Prob^\xi}(\mu+\sigma\xi) - \Expect[\varrho_{\hat{\Prob}_N^\xi}(\mu+\sigma\xi)]=\sigma\left(\varrho_{\Prob^\xi}(\xi) - \Expect[\varrho_{\hat{\Prob}_N^\xi}(\xi)]\right),\]
where $(\mu,\sigma)\in\mathcal{A}$ are the mean and standard deviation of $\ell(\vecz,\vecxi)$, $\Prob^\xi$ and $\hat{\Prob}_N^\xi$ are, respectively, the distribution of $\xi$ and the empirical distribution of $\xi$ using $N$ i.i.d.~samples, and the last equality follows from the properties (translation invariance and positive homogeneity) of coherent risk measures. More generally, the estimation error is linear in $\sigma$ for translation-invariant and positive-homogeneous risk measures \citep{landsman2012translation}, which include distortion risk measures such as Conditional Tail Expectation (CTE), CVaR and Value-at-Risk (VaR). 

Our first result states that the size of underestimation is actually nonlinear for entropic risk when the random variable has an unbounded right tail.

\begin{proposition}\label{prop:bias:saa:superlinear}
    Let Assumption \ref{ass:meanVarControl} be satisfied and the loss $\ell(\vecz, \vecxi)$ have an unbounded right tail. Then:
    \[\rho_{\Prob}(\ell(\vecz, \vecxi)) - \mathbb{E}[\rho_{\hat{\Prob}_N}(\ell(\vecz, \vecxi))] =\omega(\sqrt{\mbox{Var}(\ell(\vecz, \vecxi))}).\]
\end{proposition}

Proposition~\ref{prop:bias:saa:superlinear} indicates that, for random losses with sufficiently large variance, the degree of risk underestimation grows at a rate that is at least superlinear in the standard deviation of $\ell(\vecz,\vecxi)$. A particularly relevant special case with an unbounded right tail arises when $\ell(\vecz,\vecxi)$ follows a normal distribution. The following result shows that, in this setting, the magnitude of underestimation eventually grows at a rate that is at least linear in the variance of the loss (rather than in its standard deviation).
\begin{proposition}\label{prop:saa:normalLB}
    Let Assumption \ref{ass:meanVarControl} be satisfied, and the loss $\ell(\vecz, \vecxi)$ be normally distributed. Then:
    \[\rho_{\Prob}(\ell(\vecz, \vecxi)) - \mathbb{E}[\rho_{\hat{\Prob}_N}(\ell(\vecz, \vecxi))] =\Omega(\mbox{Var}(\ell(\vecz, \vecxi))).\]
\end{proposition}

As a second example, we also establish that the asymptotic lower bound becomes unbounded when $\ell(\z,\vecxi)\stackrel{F}{=} \mu+\sigma \xi$ with $\xi$ following a Laplace distribution, i.e. with a density $f_{\xi}(x)=\frac{1}{\sqrt{2}}e^{-\sqrt{2}|x|}$. %

\begin{proposition}\label{prop:saa:laplaceLB}
    Let Assumption \ref{ass:meanVarControl} be satisfied for some $\xi$ following a Laplace distribution, then $\rho_{\Prob}(\ell(\vecz, \vecxi)) - \mathbb{E}[\rho_{\hat{\Prob}_N}(\ell(\vecz, \vecxi))]
\rightarrow \infty$ as $\mbox{Var}(\ell(\vecz, \vecxi))\rightarrow 2/\alpha^2$.%
\end{proposition}
  Proposition \ref{prop:saa:laplaceLB} reveals a fundamental challenge in bias correction for the entropic risk measure under Laplace-distributed losses: even when the distribution family is known, small errors in variance estimation can cause the bias estimate to become unbounded. This instability underscores the practical difficulty of obtaining reliable bias-corrected estimates in such settings. %
In what follows, we will be interested in measuring or correcting for the underestimation bias. This will require us to assume stronger conditions than exponentially bounded tails. 

\begin{definition}%
\citep{Vershynin2025}\label{assum:subgaussian}
    The tails of $X$ are subgaussian, if there exists some $\mathfrak c_0>0$ such that 
    \[\mathbb P(|X|\ge a) \le 2\exp\Big(-\frac{a^2}{\mathfrak c_0^2}\Big) \qquad \forall a\ge 0.\]
\end{definition}

Our next proposition indicates that when the distribution of $\ell(\z,\vecxi)$ has subgaussian tails, then the estimation error %
eventually grows at most at a linear rate with respect to the variance of $\ell(\z,\vecxi)$.

\begin{proposition}\label{prop:saa:UB}
    Let Assumption \ref{ass:meanVarControl} be satisfied and the loss $\ell(\vecz, \vecxi)$ have subgaussian tails  (see Definition \ref{assum:subgaussian}), then 
    \[|\rho_{\Prob}(\ell(\vecz, \vecxi)) - \mathbb{E}[\rho_{\hat{\Prob}_N}(\ell(\vecz, \vecxi))]| = \mathcal O(\mbox{Var}(\ell(\vecz, \vecxi))).\]
\end{proposition}

\begin{remark}
    We note that it might appear that the relevance of asymptotic results in terms of variance is of limited interest when $\mathcal{A}$ is bounded, given that there exists no sequence of $\z$ such that $\mbox{Var}(\ell(\z,\vecxi))\rightarrow \infty$. Yet, given that the entropic risk measure \quoteIt{magnifies} the scale of the random loss proportionally to $\alpha$, any asymptotic result in $\mbox{Var}(\ell(\z,\vecxi))$ also implies the existence of a risk-level for which the asymptotic bound becomes relevant over $\mathcal{A}$. For example, Lemma \ref{lem:var_linear_underest_short} in the Appendix adapts Proposition \ref{prop:saa:normalLB} to conclude that  that there is a level of risk aversion that leads to an underestimation error that grows linearly with respect to the variance of the losses within the feasible set. %
    
    \removed{ For example, letting $\rho^{{\correction{\bar{\alpha}}}}(\cdot)$ denote the risk measure using $\bar{\alpha}$, Proposition \ref{prop:saa:normalLB} implies that given any arbitrary $\bar{\sigma}>0$, there exists a $C>0$ and a $\bar{\alpha}\geq \alpha$ for which a normally distributed loss $\ell(\z,\vecxi)$  with variance $\sigma^2>\bar{\sigma}^2$ will have:
        \begin{align*}
    \rho^{{\bar{\alpha}}}_{\Prob}(\ell(\vecz, \vecxi)) - \mathbb{E}[\rho^{{\bar{\alpha}}}_{\hat{\Prob}_N}(\ell(\vecz, \vecxi))] &= \rho^{{\bar{\alpha}}}_{\Prob^\xi}(\mu+\sigma\xi) - \mathbb{E}[\rho^{{\bar{\alpha}}}_{\hat{\Prob}_N^\xi}(\mu+\sigma\xi)]\\
    &= \frac{\alpha}{\bar{\alpha}}\left(\rho_{\Prob^\xi}(\frac{\bar{\alpha}}{\alpha}(\mu+\sigma\xi)) - \mathbb{E}[\rho_{\hat{\Prob}_N^\xi}(\frac{\bar{\alpha}}{\alpha}(\mu+\sigma\xi))]\right) \\
    &\geq \frac{\alpha}{\bar{\alpha}} C \left(\frac{\bar{\alpha}}{\alpha}\sigma\right)^2 = C\frac{\bar{\alpha}}{\alpha} \mbox{Var}(\ell(\vecz, \vecxi)).
    \end{align*}}
\end{remark}

\subsection{Asymptotic effect of variance on common bias correction procedures}

In this section, we study how increasing variance affects the asymptotic bias of common bias correction procedures proposed in the literature. %
We show that central limit theorem based corrections, such as the Optimizer’s Information Criterion (OIC) and Delta Method, as well as the nonparametric bootstrap and double bootstrap \citep{Kim_2010}, exhibit a negative bias that grows superlinearly with the standard deviation of the loss. A notable exception is leave-one-out cross-validation (LOOCV), which instead overestimates the entropic risk at a rate that is at least exponential in the standard deviation of $\ell(\vecz,\vecxi)$.

\subsubsection{Bias correction based on central limit theorem (CLT)}

Several approaches rely on CLT %
to devise asymptotically unbiased estimators. 
Assuming finite second moment for $\exp(\alpha \ell(\vecz, \vecxi))$, CLT combined with the Delta Method \citep{Van_der_Vaart_2000} gives $\sqrt{N}(\rho_{\hat{\Prob}_N}(\ell(\z,\vecxi)) - \rho_\Prob(\ell(\z,\vecxi))) \overset{d}{\to} %
\mathcal{N}(0, \frac{\text{Var}(\exp(\alpha\ell(\z,\vecxi)))}{\alpha^2  (\mathbb{E}(\exp(\alpha \ell(\z,\vecxi))))^2})$ as $N\rightarrow \infty$. %
This approximation is centered at zero and does not capture the bias in the empirical entropic risk estimator. A second-order Taylor expansion based on the Delta Method yields the leading bias term and motivates the bias-corrected estimator \citep{Horowitz_2019}, see Lemma \ref{lemma_delta_bias} in Appendix~\ref{proof:derive:oic:delta} for the derivation:
\[\rho_{\model{Delta}}:=\rho_{\hat{\Prob}_N}(\ell(\z,\vecxi)) +\frac{\text{Var}_{\hat{\Prob}_N}(\exp(\alpha\ell(\z,\vecxi)))}{2\alpha N (\mathbb{E}_{\hat{\Prob}_N}(\exp(\alpha \ell(\z,\vecxi))))^2}. \]

Another closely related estimator is obtained by noticing that entropic risk measure can be equivalently written as  an optimized certainty equivalent risk measure \citep{Ben-Tal_Teboulle_1986}, i.e., 
\begin{align}
   \rho_\Prob(\ell(\z,\vecxi)) = \inf_t  \Expect_\Prob[h(t, \ell(\z,\vecxi))], \label{eq:oce:rep}
\end{align}
where  $h(t,\zeta) = t+ \frac{1}{\alpha}\exp(\alpha (\zeta-t))-\frac{1}{\alpha}$. 
The empirical risk estimator is equivalently obtained by solving the Sample-Average Approximation (\model{SAA}) of problem \eqref{eq:oce:rep}: \[\rho_\model{SAA} := \inf_t  \Expect_{\hat{\Prob}_N}[h(t, \ell(\z,\vecxi))]=\rho_{\hat{\Prob}_N}(\ell(\z,\vecxi)),\]   where $\Prob$ is replaced with $\hat{\Prob}_N$. It is well-known that decisions based on the SAA can suffer from optimizer's curse, leading to an optimistic bias \citep{Smith_Winkler_2006}. 
To correct the first-order optimistic bias associated with the \model{SAA}, \cite{Iyengar_Lam_2023} introduced an  estimator based on the optimizer's information criterion\footnote{It is to be noted that the \model{OIC} estimator is designed to address the optimizer's curse, while we aim to debias the empirical entropic risk estimator.} (see Lemma \ref{lemma_oic} in Appendix~\ref{proof:derive:oic:delta} for the derivation):
   \[\rho_{\model{OIC}}:= \rho_{\hat{\Prob}_N}(\ell(\z,\vecxi))+\frac{\text{Var}_{\hat{\Prob}_N}(\exp(\alpha \ell(\z,\vecxi)))}{\alpha N(\mathbb{E}_{{\hat{\Prob}_N}}[\exp(\alpha \ell(\z,\vecxi))])^2}.\]

By exploiting the fact that the ratio of the sample variance to the squared mean of $\exp(\alpha \ell(\z,\vecxi))$ grows at most linearly with the sample size, the following proposition establishes that both $\rho_{\model{Delta}}$ and $\rho_{\model{OIC}}$ suffer from the same issues as $\rho_{\hat{\Prob}_N}(\ell(\vecz, \vecxi))$ in terms of the influence of $\mbox{Var}(\ell(\x,\vecxi))$ on risk underestimation.
\begin{proposition}\label{prop:oic:delta}
Propositions \ref{prop:bias:saa:superlinear}, \ref{prop:saa:normalLB}, \ref{prop:saa:laplaceLB}, and \ref{prop:saa:UB} hold verbatim when $\rho_{\hat{\Prob}_N}(\ell(\vecz, \vecxi))$  is replaced by either $\rho_{\model{Delta}}$ or $\rho_{\model{OIC}}$.

\end{proposition}

\subsubsection{Non-parametric bootstrapping}\label{sec:commonBS}
\begin{algorithm}[htb]
\caption{Non-parametric bootstrap bias correction \label{bootstrap_non_param}}
\begin{algorithmic}[1]
\Function{NonParametricBootstrapBiasCorrection}{$\mathcal{S}, M$}
        \State $\hat{\Prob}_N \gets$ Empirical distribution of loss scenarios $\mathcal{S}$
        \For{$n \gets 1$ \textbf{to} $M$}
            \State $\hat{\Prob}_{N,N} \gets  \text{Draw } N \text{ i.i.d.  samples  from } \hat{\Prob}_N$ 
            \State $\rho_n \gets \rho_{\hat{\Prob}_{N,N}}(\ell(\vecz, \vecxi))$  
        \EndFor
        \State $\hat{\delta}_N(\hat{\Prob}_N) \gets \text{mean}(\{\rho_{\hat{\Prob}_N}(\ell(\vecz, \vecxi))-\rho_n\}_{n=1}^M)$ \label{bias:corr2}
        \State \Return $\delta_N(\hat{\Prob}_N)$
\EndFunction
\end{algorithmic}
\end{algorithm}
 A typical approach in the literature to devise an unbiased estimator is to use non-parametric bootstrapping. %
In \cite{Kim_2010}, a sample-based risk estimator is calculated by repeatedly sampling $N$ observations with replacement from the empirical distribution of loss scenarios $\mathcal{S}_N:=\{\ell(\vecz, \hat{\vecxi}_1), \cdots, \ell(\vecz, \hat{\vecxi}_N)\}$ as shown in Algorithm \ref{bootstrap_non_param}.   For each bootstrap sample, the empirical distribution $\hat{\mathbb P}_{N,N}$  of the bootstrapped sample is constructed and the risk $\rho_{\hat{\mathbb P}_{N,N}}(\ell(\z,\vecxi))$ is computed. The bias is estimated as $\Expect
[\rho_{\hat{\Prob}_{N}}(\ell(\z,\vecxi))]-\rho_{\Prob}
(\ell(\z,\vecxi))\approx \Expect[\rho_{\hat{\Prob}_{N,N}}(\ell(\z,\vecxi))|\hat{\Prob}_N]-\rho_{\hat{\Prob}_N}(\ell(\z,\vecxi))$, and used to correct for the bias of the empirical estimate $\rho_{\hat{\Prob}_N}(\ell(\z,\vecxi))$. Effectively, the $\model{BS}$ estimator takes the form:
\[\rho_{\texttt{BS}}:=\rho_{\hat{\Prob}_N}(\ell(\z,\vecxi))-(\Expect[\rho_{\hat{\Prob}_{N,N}}(\ell(\z,\vecxi))|\hat{\Prob}_N]-\rho_{\hat{\Prob}_N}(\ell(\z,\vecxi)))=2\rho_{\hat{\Prob}_N}(\ell(\z,\vecxi))-\Expect[\rho_{\hat{\Prob}_{N,N}}(\ell(\z,\vecxi))|\hat{\Prob}_N].\]
By Jensen's inequality, we know that $\Expect[\rho_{\hat{\Prob}_{N,N}}(\ell(\z,\vecxi))|\hat{\Prob}_N]\leq \rho_{\hat{\Prob}_N}(\ell(\z,\vecxi))$ almost surely, hence $\rho_{\texttt{BS}}\geq\rho_{\hat{\Prob}_N}(\ell(\z,\vecxi))$ almost surely. %

\cite{Kim_2010} also proposes a double bootstrapping (\model{DBS}) procedure that further compensates for the bias in estimating the bias of the empirical estimator of risk. This one takes the form of:
\[\rho_{\texttt{DBS}}:=3\rho_{\hat{\Prob}_N}(\ell(\z,\vecxi))-3\Expect[\rho_{\hat{\Prob}_{N,N}}(\ell(\z,\vecxi))|\hat{\Prob}_N]+\Expect[\rho_{\hat{\Prob}_{N,N,N}}(\ell(\z,\vecxi))|\hat{\Prob}_N],\]
where $\hat{\Prob}_{N,N,N}$ is the empirical distribution of $N$ samples drawn from $\hat{\Prob}_{N,N}$.

Unfortunately,  both \model{BS} and \model{DBS} estimators suffer almost surely the same underestimation issue as the SAA estimator. %
\begin{proposition}\label{prop:bs:dbs}
Propositions \ref{prop:bias:saa:superlinear}, \ref{prop:saa:normalLB}, \ref{prop:saa:laplaceLB}, and \ref{prop:saa:UB} hold verbatim when $\rho_{\hat{\Prob}_N}(\ell(\vecz, \vecxi))$  is replaced by either $\rho_{\texttt{BS}}$ or $\rho_{\texttt{DBS}}$.

\end{proposition}

\subsubsection{Leave-one out cross validation}
To  mitigate the underestimation of the optimal value $ \rho_\Prob(\ell(\z,\vecxi))$ by the empirical (or \model{SAA}) estimator,  leave-one out cross validation (CV) is proposed in the literature \citep{stone1974cross, arlot2010survey}.
Let $\hat{\mathbb{P}}_{N_{-i}}$ denote the empirical distribution without the $i$th scenario, and let $\hat{t}_{-i}$ denote the optimal solution of \eqref{eq:oce:rep} in which $\hat{\mathbb{P}}_{N_{-i}}$ is used instead of $\hat{\Prob}_N$. The   estimator is then defined as: 
\[\rho_{\model{\model{LOOCV}}}:=  \frac{1}{N}\sum_{i=1}^{N} h(\hat{t}_{-i}, \ell(\z,\hat{\vecxi}_i)) = \frac{1}{N}\sum_{i=1}^{N} \left(\hat{t}_{-i}+\frac{1}{\alpha}\left(\exp(\alpha \ell(\z,\hat{\vecxi}_i) -\hat{t}_{-i}))-1\right)\right).\]
Since $\hat{t}_{-i}$ is a feasible solution of \eqref{eq:oce:rep}, we have $\rho_{\Prob}(\ell(\z,\vecxi)) \leq  \Expect_{\Prob}[h(\hat{t}_{-i}, \ell(\z,\vecxi))]$ almost surely with respect to the randomness of $\hat{t}_{-i}$ for all $i\in [N]$. Thus, 
\begin{equation}\label{eq:loocvposBias}
\begin{array}{l@{\,}l}
\rho_{\Prob}(\ell(\z,\vecxi)) &\,\leq \displaystyle\,\Expect\left[\frac{1}{N}\sum_{i=1}^N\Expect_{\Prob}[h(\hat{t}_{-i}, \ell(\z,\vecxi))]\right]=\displaystyle\frac{1}{N}\sum_{i=1}^N\Expect\left[\Expect_{\Prob}[h(\hat{t}_{-i}, \ell(\z,\vecxi))]\right]\\
 &= \,\displaystyle\frac{1}{N}\sum_{i=1}^N\Expect\left[\Expect[h(\hat{t}_{-i}, \ell(\z,\hat{\vecxi}_i))]|\hat{\mathbb{P}}_{N_{-i}}]\right] \, =\,\displaystyle \frac{1}{N}\sum_{i=1}^N\Expect[h(\hat{t}_{-i}, \ell(\z,\hat{\vecxi}_i))]= \displaystyle\, \Expect [\rho_{\model{\model{LOOCV}}}],  
\end{array}
\end{equation}
where the first equality follows from linearity of expectation, the second is due to the independence of $\hat{\vecxi}_i\sim \Prob$ and $\{\hat{\vecxi}_j\}_{j\in [N]_{-i}}$ and the third follows from the law of iterated expectations.
Thus, $\rho_{\model{\model{LOOCV}}}$ is a positively biased estimator of $\rho_{\Prob}(\ell(\z,\vecxi))$ and thus resolves the issue of underestimation of risk. Unfortunately, the next proposition establishes that it comes at the price of the magnitude of the positive bias growing exponentially, instead of linearly, with respect to the standard deviation of $\ell(\z,\vecxi)$ %
when the tails of $\ell(\z,\vecxi)$ are light enough.%

\begin{proposition}\label{thm:loocv_overestimate}
    If Assumption \ref{ass:meanVarControl} is satisfied and $\ell(\z,\vecxi)$ has subgaussian tails, then $|\rho_{\Prob}(\ell(\vecz, \vecxi)) - \mathbb{E}[\rho_{\model{LOOCV}}]| =\Omega(\exp(k_1\sqrt{\text{Var}(\ell(\vecz, \vecxi))}))$ %
    and $|\rho_{\Prob}(\ell(\vecz, \vecxi)) - \mathbb{E}[\rho_{\model{LOOCV}}]| =\mathcal O (\exp(k_2\text{Var}(\ell(\vecz, \vecxi))))$ %
    for some $k_1>0$ and $k_2>0$. %
\end{proposition}

In summary, estimators designed to mitigate the underestimation of risk by the empirical risk estimator are ill-suited for the entropic risk measure, as they either inherit the same negative bias or become overly conservative. Because accurately estimating the bias is challenging in practice, a desirable estimator should intentionally overestimate the true entropic risk while keeping the level of overestimation controlled.

\section{Bias mitigation  using bias-aware parametric bootstrapping}\label{sec:bias_correct}
 In this section, we introduce our proposed estimators designed to address the underestimation problem associated with the empirical entropic risk estimator,
 $\rho_{\hat{\Prob}_N}(\ell(\z,\vecxi))$. The true bias is given by $\Expect[\rho_{\hat{\Prob}_N}(\ell(\z,\vecxi))]-\rho_\Prob(\ell(\z,\vecxi))$, where the expectation is taken with respect to the randomness of $\hat{\Prob}_N$. Since the true distribution $\Prob$ is unknown, the exact bias cannot be determined. 
 Instead, we propose a modification to the classical bootstrap algorithm described in Section \ref{sec:commonBS}. Namely,  we first fit a distribution $\Q_N$ using the $N$ i.i.d.\ loss scenarios $\DataN_N:=\{\ell(\z,\hat{\vecxi}_1), \ell(\z,\hat{\vecxi}_2), \cdots, \ell(\z,\hat{\vecxi}_N)\}$ and then repeatedly sample from $\Q_N$, instead of resampling from the empirical distribution.  We will demonstrate  that a properly fitted distribution allows one to control the effect of variance on underestimation, while still ensuring that the estimator is  strongly asymptotically consistent. Nevertheless,  we will observe empirically that fitting $\Q_N$ using MLE does not convincingly resolve the underestimation issue, which is why we introduce bias-aware procedures to better fit the data, see Sections~\ref{sec:RiskMatching} and \ref{sec:extremes}. 

\subsection{The bias mitigation procedure}
Similar to Assumption~\ref{assum:tail_bound2}, the following assumption ensures that the mean and variance of $\exp(\alpha\zeta)$ are finite when $\zeta$ is drawn from the fitted distribution $\Q_N$.

\begin{assum}\label{assum:fit_tail_bound}
    Suppose that the tails of $\zeta\sim\Q_N$ are almost surely uniformly exponentially bounded. Namely, with probability one with respect to the sampling process and the fitting procedure of $\Q_N$, there exists some $G>0$ and some $C>2$ and \revised{$\bar{N}>0$} such that $\zeta\sim\Q_N$ satisfies Assumption \ref{assum:tail_bound2} for all \revised{$N\geq \bar{N}$}. 
\end{assum}
This assumption is a reasonable one to make when  Assumption \ref{assum:tail_bound2} is satisfied given that it simply requires one to restrict the class of eligible distributions for $\Q_N$ so that it captures similar properties as $\ell(\z,\vecxi)$. %
In practice, this assumption is satisfied by properly defining the set of models used to estimate $\Q_N$ from the loss scenarios $\mathcal{S}$. %

Next, we introduce the necessary notation to describe our estimation procedure. Let $\rho_{\Q_N}(\zeta)$ denote the entropic risk for the distribution $\Q_N$. Let $\hat{\Q}_{N,N}$ represent the empirical distribution of $N$ values drawn i.i.d.\ from the estimated distribution $\Q_N$, and $\rho_{\hat{\Q}_{N,N}}(\zeta)$ denote the corresponding empirical entropic risk. Notice that since $\Q_N$ was estimated using $N$ loss scenarios, it is itself random, thus $\rho_{\mathbb Q_N}(\zeta)$ and $\rho_{\hat{\mathbb Q}_{N,N}}(\zeta)$ are random variables as well.  Our proposed estimator for the needed bias correction of the entropic risk is given by: %
\begin{equation}
    \delta_N(\Q_N):= \mbox{median}\left(\rho_{\Q_N}(\zeta)-\rho_{\hat{\Q}_{N,N}}(\zeta)|\Q_N\right),
\end{equation}
where the median is taken with respect to randomness of samples drawn from $\Q_{N}$ to produce $\hat{\Q}_{N,N}$.
Algorithm~\ref{alg:bootstrap_with_GMM} summarizes a Monte-Carlo approximation based procedure for calculating $\delta_N(\Q_N)$. %
Note that as the number of Monte-Carlo samples increases, the sampling approximation of the parametric bootstrap bias correction estimate improves, i.e. $\hat{\delta}_N(\mathbb Q_N)$ converges to the true estimate $\delta_N(\mathbb Q_N)$.
\begin{algorithm}[htb]
\caption{Parametric Bootstrap bias correction  \label{alg:bootstrap_with_GMM}}
\begin{algorithmic}[1]
\Function{ParametricBootstrapBiasCorrection}{$\mathcal{S}, M$}
        \State $\Q_N \gets$ Fit a distribution to the loss scenarios $\mathcal{S}$
        \For{$n \gets 1$ \textbf{to} $M$}
            \State $\hat{\Q}_{N,N} \gets  \text{Draw } N \text{ i.i.d.  samples  from } \Q_N$ 
            \State $\rho_n \gets \rho_{\hat{\Q}_{N,N}}(\zeta)$  
        \EndFor
        \State $\hat{\delta}_N(\Q_N) \gets \mbox{median}\left(\{\rho_{\Q_N}(\zeta)-\rho_n\}_{n=1}^M\right)$ \label{bias:corr}
        \State \Return $\hat{\delta}_N(\Q_N)$
\EndFunction
\end{algorithmic}
\end{algorithm}

In the next theorem, we show that %
as the number of training samples $N\rightarrow\infty$, the bias-adjusted empirical risk almost surely converges to the true entropic risk.
\begin{theorem}\label{thm:botstrapconsistent}
    Under Assumptions \ref{assum:tail_bound2} and \ref{assum:fit_tail_bound}, the estimator $\rho_{\hat{\Prob}_N}(\ell(\z,\vecxi))+\delta_N(\Q_N)$ is strongly asymptotically consistent with respect to $\rho_\Prob(\ell(\z,\vecxi))$.
\end{theorem}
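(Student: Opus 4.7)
The plan is to decompose the estimation error as
\[
\rho_{\hat{\Prob}_N}(\ell(\bxi)) + \delta_N(\Q_N) - \rho_\Prob(\ell(\bxi)) = \bigl[\rho_{\hat{\Prob}_N}(\ell(\bxi)) - \rho_\Prob(\ell(\bxi))\bigr] + \delta_N(\Q_N),
\]
and to show that both bracketed terms vanish almost surely as $N\to\infty$.

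\textbf{Step 1 (convergence of the empirical risk).} Under Assumption~\ref{assum:tail_bound2}, Lemma~\ref{thm:boundedExpVar2} ensures that $\Expect_\Prob[\exp(\alpha\ell(\bxi))]$ is finite and bounded below by $\exp(-2G/C)>0$. Applying the strong law of large numbers to the i.i.d.\ sequence $\exp(\alpha\ell(\hat{\bxi}_i))$ gives $\frac{1}{N}\sum_{i=1}^N \exp(\alpha\ell(\hat{\bxi}_i)) \to \Expect_\Prob[\exp(\alpha\ell(\bxi))]$ almost surely. Since $\log$ is continuous at every positive value, the composition theorem yields $\rho_{\hat{\Prob}_N}(\ell(\bxi)) \to \rho_\Prob(\ell(\bxi))$ almost surely.

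\textbf{Step 2 (vanishing bias correction, uniform in $\Q_N$).} The key idea is that Assumption~\ref{assum:fit_tail_bound} forces all random realizations of $\Q_N$ to satisfy the tail bound with the \emph{same} constants $G,C$. Consequently, Lemma~\ref{thm:boundedExpVar2} applied to $\zeta\sim\Q_N$ gives, with probability one and for every $N$,
\[
\mu_N:=\Expect_{\Q_N}[\exp(\alpha\zeta)]\in\Bigl[e^{-2G/C},\,\tfrac{G}{C-1}\Bigr],\qquad \mathrm{Var}_{\Q_N}[\exp(\alpha\zeta)]\leq \tfrac{2G}{C-2}.
\]
Conditioning on $\Q_N$ and applying Chebyshev's inequality to the empirical mean $\bar S_N:=\tfrac{1}{N}\sum_{i=1}^N\exp(\alpha\zeta_i)$ of the bootstrap sample,
\[
\Prob\!\left(|\bar S_N-\mu_N|>\epsilon \,\bigm|\, \Q_N\right)\leq \frac{2G}{(C-2)N\epsilon^2}.
\]
Since $\mu_N\geq e^{-2G/C}$ uniformly, $\log$ is Lipschitz on $[\tfrac12 e^{-2G/C},\infty)$ with some constant $L$. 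Choosing $\epsilon$ small enough that $\bar S_N$ stays in this interval on the good event, we obtain a uniform bound of the form
\[
\Prob\!\left(|\rho_{\hat{\Q}_{N,N}}(\zeta)-\rho_{\Q_N}(\zeta)|>\eta \,\bigm|\, \Q_N\right)\leq \frac{K}{N\eta^2}
\]
for some constant $K=K(G,C,\alpha)$ that does not depend on $\Q_N$. For every $\eta>0$, whenever $N>2K/\eta^2$ the right-hand side is strictly less than $1/2$, and the definition of the median then forces $|\delta_N(\Q_N)|\leq \eta$ almost surely. Letting $\eta\downarrow 0$ along a countable sequence yields $\delta_N(\Q_N)\to 0$ almost surely.

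\textbf{Step 3 (conclusion).} Combining Steps 1 and 2 via the triangle inequality gives strong asymptotic consistency.

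\textbf{Main obstacle.} The delicate point is Step 2: because $\Q_N$ is itself a random object depending on the data, one cannot simply invoke a law of large numbers conditional on $\Q_N$ and take limits, since the rate of convergence of $\rho_{\hat{\Q}_{N,N}}$ to $\rho_{\Q_N}$ could in principle deteriorate along pathological realizations of $\Q_N$. Assumption~\ref{assum:fit_tail_bound} is exactly what rules this out: it produces a deterministic, $N$-independent bound on the tails of $\zeta\sim\Q_N$, and hence on the variance of $\exp(\alpha\zeta)$, yielding the uniform concentration inequality needed to control the median.
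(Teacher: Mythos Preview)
Your proof is correct and follows essentially the same route as the paper's: both decompose the error into the empirical-risk term (handled by the strong law of large numbers plus the continuous mapping theorem) and the bias-correction term, and both dispatch the latter by combining the uniform variance bound from Lemma~\ref{thm:boundedExpVar2} with Chebyshev's inequality to trap the conditional median in a shrinking interval. The only cosmetic difference is that the paper rewrites $\exp(-\alpha\delta_N(\Q_N))$ as the median of the ratio $X_N=\bar S_N/\mu_N$ and bounds that median directly, whereas you bound $|\rho_{\hat{\Q}_{N,N}}(\zeta)-\rho_{\Q_N}(\zeta)|$ via the Lipschitz constant of $\log$ on $[\tfrac12 e^{-2G/C},\infty)$; both lead to the same $O(1/\sqrt{N})$ deterministic envelope for the median.
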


\revised{The proof involves two key steps. The first step is to establish that the empirical entropic risk converges to the true risk almost surely. %
The second step involves showing that the bias correction term, $\delta_N(\Q_N)$,  converges to zero almost surely. 
We note that our approach does not rely on asymptotic or parametric (Gaussian, for instance) assumptions made in the literature to correct the bias \citep{Kim_Hardy_2007, siegel2021profit, Troop_Godin_2021}.  %

We now introduce %
an assumption on the tails of loss distribution that will allow us to guarantee that our proposed estimator eventually overestimates the risk as variance of $\ell(\z,\vecxi)$ increases. The main idea is as follows. If one wishes to control the effect of variance of losses, which entropic risk grows at rate $\mathcal O(\mbox{Var}(\ell(\z,\vecxi))^{\bar{p}/2}%
)$, then they should fit a distribution $\mathbb Q_N$ associated to an entropic risk $\rho_{\Q_N}(\zeta)$ that grows at rate $\Omega(\mbox{Var}(\zeta)^{\bar{q}/2}%
)$, with $\bar{q}>\bar{p}$ 
in order for the bias of the bias-adjusted empirical risk to grow at the rate $\Omega(\mbox{Var}(\ell(\z,\vecxi))^{\bar{q}/2})$. Specifically, we will consider $1<\bar{p}<\bar{q}=2$.

    \begin{definition}[Lighter-than-Gaussian tails]\label{def:lighter-than-gauss} 
   We say that a random variable $X$  has lighter-than-Gaussian tails if there exists a $q>2$ and $G, C>0$  such that 
    \[\mathbb{P}(|X|\ge a)\le G\exp\left(-Ca^{q}\right), \forall a\ge0.\]
    It implies that there exists a $\bar p\in(1,2)$%
    and a $\nu>0$ such that $\Expect[\exp(tX)]\leq 2\exp(\nu t^{\bar{p}})$ for all $t\geq 0$, see Lemma~\ref{lemma:equivalence:mgf_tails} in Appendix \ref{append:bias_correct} for the proof.
    \end{definition}    
Next, we will describe properties that need to hold for $\mathbb Q_N$ almost surely in order for the variance effect to be mitigated.

\begin{assert}[Affine equivariance of fitting procedure]\label{property:affEquiFit}
    $\Q_N$ is equivariant to affine transformations of the loss sample set $\DataN$. Namely, %
    the fitting procedure is such that, for all $(a,b)\in\mathbb{R}\times\mathbb{R}_+$, if $\zeta\sim \Q_N$, fitted on $\{\ell(\z,\hat{\vecxi}_i)\}_{i=1}^N$, and $\zeta'\sim \Q_N'$, fitted on $\{a+b\ell(\z, \hat{\vecxi}_i)\}_{i=1}^N$, then $a+b \zeta%
    \stackrel{F}{=}\zeta'%
    $.%
\end{assert}

\begin{assert}[Empirical mean matched]\label{property:unifBoundedMeanFit}
The mean of $\zeta\sim\Q_N$  is almost surely matched to the empirical mean $\hat{\mu}_N:=\Expect_{\hat{\Prob}_N}[\ell(\z,\vecxi)]$. Namely, with probability one, we have that $\Expect_{\Q_N}[\zeta]= \hat{\mu}_N$.
\end{assert}

\begin{assert}[Uniformly Heavy Gaussian Standardized  Right Tails]\label{property:unifHeavyGaussFit}
Letting $\zeta\sim\Q_N$ and $(\hat{\mu}_N,\hat{\sigma}_N)$ the mean and variance of $\hat{\Prob}_N$, the right tail of $\tilde{\xi}:=(\zeta-\hat{\mu}_N)/\hat{\sigma}_N$ is uniformly heavy Gaussian. Namely, there exists  some $\mathfrak{c}_1, \mathfrak{c}_2, \bar{a}> 0$ such that with probability one, we have $\zeta\sim\Q_N$ satisfies $\Prob_{\Q_N}((\zeta-\hat{\mu}_N)/\hat{\sigma}_N\geq a)\geq \mathfrak{c}_1\exp(-a^2/\mathfrak{c}_2^2)$ for all $a\geq \bar{a}$. %
\end{assert}

\begin{assert}[Uniformly Subgaussian Standardized Tails]\label{property:unifSubGaussFit}
Letting $\zeta\sim\Q_N$ and $(\hat{\mu}_N,\hat{\sigma}_N)$ the mean and variance of $\hat{\Prob}_N$, the standardized variable $\tilde{\xi}:=(\zeta-\hat{\mu}_N)/\hat{\sigma}_N$  
has uniformly subgaussian tails. Namely, there exists some $\mathfrak{c}_3>0$ such that with probability one, we have $\Prob_{\Q_N}(|(\zeta-\hat{\mu}_N)/\hat{\sigma}_N|\geq a)\leq 2\exp(-a^2/\mathfrak{c}_3^2)$ for all $a\geq 0$. %
\end{assert}

The above properties jointly control the tail behavior of the standardized losses in a way that yields a controlled overestimator of the entropic risk. Affine equivariance (Property \ref{property:affEquiFit}) links the fit on original losses to the fit on standardized losses: standardizing the data affects the fitted model only through the corresponding affine map. As a result,  the bias correction can be analyzed on the standardized scale and expressed explicitly in terms of the growth of the standardized cumulant generating function. Property \ref{property:unifBoundedMeanFit} controls the mean so that the conservatism induced by Property \ref{property:unifHeavyGaussFit} is not offset by an uncontrolled mean shift. Indeed, Property \ref{property:unifHeavyGaussFit} enforces a sufficiently heavy right tail for the fitted model, yielding a cumulant generating function whose growth is $\Omega(\Var(\ell(\vecz, \vecxi)))$ and thus producing systematic overestimation when the true loss is lighter-than-Gaussian. Finally, Property \ref{property:unifSubGaussFit} prevents this overestimation from becoming excessive by imposing subgaussian tails on the standardized fit, ensuring the estimation error grows at most linearly in the variance, i.e., $\mathcal O(\Var(\ell(\vecz, \vecxi)))$,  and providing the exponential moment bounds needed for strong asymptotic consistency.

\begin{theorem}\label{thm:our_estimator_overestimate_omega}\label{thm:our_estimator_overestimate_O}
    Let Assumption \ref{ass:meanVarControl} be satisfied and the loss $\ell(\vecz, \vecxi)$ have lighter-than-Gaussian tails. Further let $\Q_N$ and its fitting procedure satisfy properties \ref{property:affEquiFit}, \ref{property:unifBoundedMeanFit}, and \ref{property:unifHeavyGaussFit}.   Then:
    \begin{equation}
    \Expect[\rho_{\hat{\Prob}_N}(\ell(\z,\vecxi))+\delta_N(\Q_N)] - \rho_{\Prob}(\ell(\vecz, \vecxi)) = \Omega(\mbox{Var}(\ell(\vecz, \vecxi))).\label{eq:thm:OmegaVar}
    \end{equation}
    If $\Q_N$ additionally satisfies Property \ref{property:unifSubGaussFit}, then 
    \begin{equation}
    |\rho_{\Prob}(\ell(\vecz, \vecxi))-(\Expect[\rho_{\hat{\Prob}_N}(\ell(\z,\vecxi))+\delta_N(\Q_N)])| = \mathcal O(\mbox{Var}(\ell(\vecz, \vecxi))), %
    \label{eq:thm:OVar}
    \end{equation}    
    and $\rho_{\hat{\Prob}_N}(\ell(\z,\vecxi))+\delta_N(\Q_N)$ is strongly asymptotically consistent. %
\end{theorem}

The theorem above shows that, when $\mbox{Var}(\ell(\vecz,\vecxi))$ is sufficiently large and $\ell(\vecz, \vecxi)$ has lighter-than-Gaussian tails, our bias-corrected estimator is guaranteed to overestimate the entropic risk. This resolves the underestimation exhibited by the empirical entropic-risk estimator and by standard bias correction methods. Unlike LOOCV, this overestimation does not come at a high price: the estimation error grows at the same rate as the empirical risk estimator (i.e., it remains $\mathcal O(\mbox{Var}(\ell(\vecz, \vecxi)))$). Moreover, the proposed parametric-bootstrap estimator is strongly consistent, i.e., it converges to the true entropic risk almost surely as $N\to\infty$, even when the fitted model $\Q_N$ is misspecified. This allows one to search over convenient distributional families that admit closed-form expression of entropic risk and efficient sampling, so that the bootstrap bias correction can be computed cheaply. In practice, the chosen family should also be flexible enough to capture multimodality in the loss distribution.}

\subsection{\texorpdfstring{Employing a Gaussian Mixture Model as $\Q_N$}{\Q N is a Gaussian Mixture Model}}\label{sec:Q:gaussian}

Among the options for choosing the distribution $\Q_N$, we utilize a Gaussian Mixture Model (GMM), $\Q^\btheta$, with parameters $\btheta:= (\bm{\pi}, \bm{\mu}, \bm{\sigma})$, 
 where $\mathfrak J:=\{1, \cdots, J\}$ denotes the index set of mixture components, $\bm \pi =(\pi_{\mathfrak j})_{\mathfrak j\in \mathfrak J} \in\mathbb R^{J}$ denotes the weights %
 and $\bm \mu =(\mu_{\mathfrak j})_{\mathfrak j\in \mathfrak J}\in\mathbb R^{J}$ and $\bm \sigma=(\sigma_{\mathfrak j})_{\mathfrak j\in \mathfrak J}\in\mathbb R^{J}$ denote the means and standard deviations of the mixtures, respectively.  %
There are  several %
 advantages for using GMM. \revised{First, GMMs are universal density approximators, meaning they can approximate any smooth density up to any arbitrary accuracy given sufficient number of components \citep{goodfellow2016deep}}, and second, the moment-generating function of a random variable $\zeta \sim \Q^\btheta$ exists for all $\alpha$, and thus  the entropic risk $\rho_{\mathbb Q^\btheta}(\zeta) = \frac{1}{\alpha}\log\left(\sum_{\mathfrak j\in \mathfrak J}\pi_{\mathfrak j}\exp(\alpha\mu_{\mathfrak j} + \frac{\alpha^2}{2} \sigma_{\mathfrak j}^2)\right)$ can be obtained in closed form. This eliminates the need to estimate the entropic risk through simulation in step \ref{bias:corr} of Algorithm \ref{alg:bootstrap_with_GMM}. 
Further, one can efficiently sample from a GMM, and the sampling operation can be made differentiable. %
 Finally, we will show that a  GMM properly fitted to standardized losses in $\bar{\DataN}_N$  satisfies  Properties \ref{property:affEquiFit}-\ref{property:unifSubGaussFit}.%

\revised{Our calibration procedure can be described generically as Algorithm \ref{alg:fittingGMM}. We note that step \ref{algstep:standardize} is a common step of standardization of the data. This is followed with a less common but reasonable step \ref{algstep:meanmatch}, which ensures that $\Q^{\btheta^*}$ matches the empirical mean in $
\bar{\DataN}_N$.

\begin{algorithm}[htb]
\caption{Fitting a parametric distribution to $\DataN$ to satisfy properties  \ref{property:affEquiFit} and \ref{property:unifBoundedMeanFit}}\label{alg:fittingGMM}
\begin{algorithmic}[1]
\Function{AffineEquivariantMeanPreservingCalibration}{$\DataN$, $\{\Q^\btheta\}_{\btheta\in\Theta}$}
        \State $\hat{\mu}_N\gets (1/N)\sum_{\zeta\in\DataN} \zeta$
        \State $\hat{\sigma}_N \gets \sqrt{(1/N)\sum_{\zeta\in\DataN} (\zeta-\hat{\mu}_N)^2}$
        \State $\bar{\DataN}_N \gets \{\zeta_i'\}_{i=1}^N$ with $\zeta_i' := (\zeta_i-\hat{\mu}_N)/\hat{\sigma}_N$ for all $\zeta_i\in\DataN$ \label{algstep:standardize}
        \State $\bar{\Theta} \gets \{\btheta\in\Theta| \Expect_{\Q^\btheta}[\xi']= %
        0\}$\label{algstep:meanmatch}%
        \State Fit a distribution with parameter in $\bar{\Theta}$ to $\bar{\DataN}_N$ to obtain $\Q^{\btheta^*}$ \label{algstep:fit}
        \State Identify as $\Q_N$ the distribution of $\hat{\mu}_N+\hat{\sigma}_N\xi'$, with $\xi'\sim \Q^{\btheta^*}$
        \State \Return $\Q_N$
\EndFunction
\end{algorithmic}
\end{algorithm}

Equipped with the GMM model and calibration procedure described in Algorithm \ref{alg:fittingGMM}, we obtain our main result.

\begin{theorem}\label{thm:GMM}
  Let Assumption \ref{ass:meanVarControl} be satisfied and the loss $\ell(\vecz, \vecxi)$ have lighter-than-Gaussian tails.  If the class of GMM model considered $\{\Q^\btheta\}_{\btheta\in\Theta}$ satisfies the following conditions:
    \begin{enumerate}
\item there exists a bound $B>0$ such that $\max_{\mathfrak j\in \mathfrak J} \sigma_{\mathfrak j} \leq B$ and $\max_{ \mathfrak j \in \mathfrak J} |\mu_{\mathfrak j}| \leq B$ for all $(\pi,\mu,\sigma)\in\Theta$,
        \item there exists  bounds $\epsilon>0$ and $B>-\infty$ %
        and some ${\mathfrak j}\in \mathfrak J$, such that $\pi_{{\mathfrak j}}\geq\epsilon$, $\mu_{{\mathfrak j}}\ge B$, 
        and $\sigma_{{\mathfrak j}} \geq \epsilon$ for all $(\pi,\mu,\sigma)\in\Theta$, 
    \end{enumerate}    
  then $\rho_{\hat{\Prob}_N}(\ell(\z,\vecxi))+\delta_N(\Q_N)$ with $\Q_N$ obtained from Algorithm \ref{alg:fittingGMM} is asymptotically consistent and  satisfies 
     equations \eqref{eq:thm:OmegaVar} and \eqref{eq:thm:OVar}.
\end{theorem}

The proof of the above theorem proceeds by showing that the %
GMM 
fitting procedure described in Algorithm \ref{alg:fittingGMM} satisfies properties \ref{property:affEquiFit}-\ref{property:unifSubGaussFit}; Theorem \ref{thm:our_estimator_overestimate_omega} then applies directly. Consequently, the proposed parametric-bootstrap procedure based on a fitted GMM yields an overestimator of the entropic risk. In the next subsections, we will propose three practical procedures for fitting a GMM to the standardized losses in step \ref{algstep:fit} of Algorithm \ref{alg:fittingGMM}.}

\subsubsection{Learning GMM by Maximum Likelihood Estimation }

The natural approach for fitting the parameters $\btheta$ of a GMM  is to use MLE, typically achieved via the Expectation-Maximization (EM) algorithm \citep{Dempster_Laird_1977}.  The following example demonstrates that with limited samples, %
if we use the EM algorithm to fit a GMM, 
the underestimation persists even for large sample sizes. Note that the true loss distribution assumed in the following example does not satisfy Definition \ref{def:lighter-than-gauss}, and thus the theoretical results do not apply.

 \begin{example}\label{ex:bias_correction}
Consider the problem of estimating the entropic risk of %
loss $\eta$ that follows a Gaussian mixture model with two components
$\eta \sim \text{GMM}(\V{\pi}, \V{\mu}, \V{\sigma})$, $\V{\pi} =
    [0.7\; 0.3]$, $ 
   \V{\mu} =
    [0.5\; 1]$, and  $\V{\sigma} =
    [2\; 1]$, and $\alpha=3$.
       To obtain Figure \ref{fig:bias correction}, we draw $N$ i.i.d.~samples from $\text{GMM}(\V{\pi}, \V{\mu}, \V{\sigma})$ where $N\in\{10^2, 10^3, 10^4, 10^5\}$.
 The true bias correction is obtained by first computing the true entropic risk 
and then subtracting the  expected empirical entropic risk, $\bar{\rho} = \frac{1}{10000}\sum_{i=1}^{10000}\frac{1}{\alpha} \log\left(\frac{1}{N}\sum_{j=1}^N\exp(\alpha\hat{\eta}_{j}^{(i)}) \right)$, obtained by bootstrapping with $10000$ repetitions.\footnote{We repeat this procedure $100$ times, compute the estimate of true entropic risk, and the $95\%$ confidence interval for the true entropic risk is contained in the marker drawn on Figure \ref{fig:bias correction} for the true entropic risk.}
We fit a GMM $\Q^\btheta$ to the samples using the EM algorithm and use Algorithm \ref{alg:bootstrap_with_GMM} to estimate the bias. The boxplots are plotted by resampling 100 times from $\text{GMM}(\V{\pi}, \V{\mu}, \V{\sigma})$. %
The bias of the empirical risk estimation decays at a logarithmic (in N) rate.
We see that this procedure
underestimates the true bias for finite number of samples. Also, we can observe that as the number of training samples increases, the bias estimated by the fitted GMM converges toward~$0$. 
\begin{figure}[t]
    \centering
\revised{\includegraphics[width=0.5\linewidth]{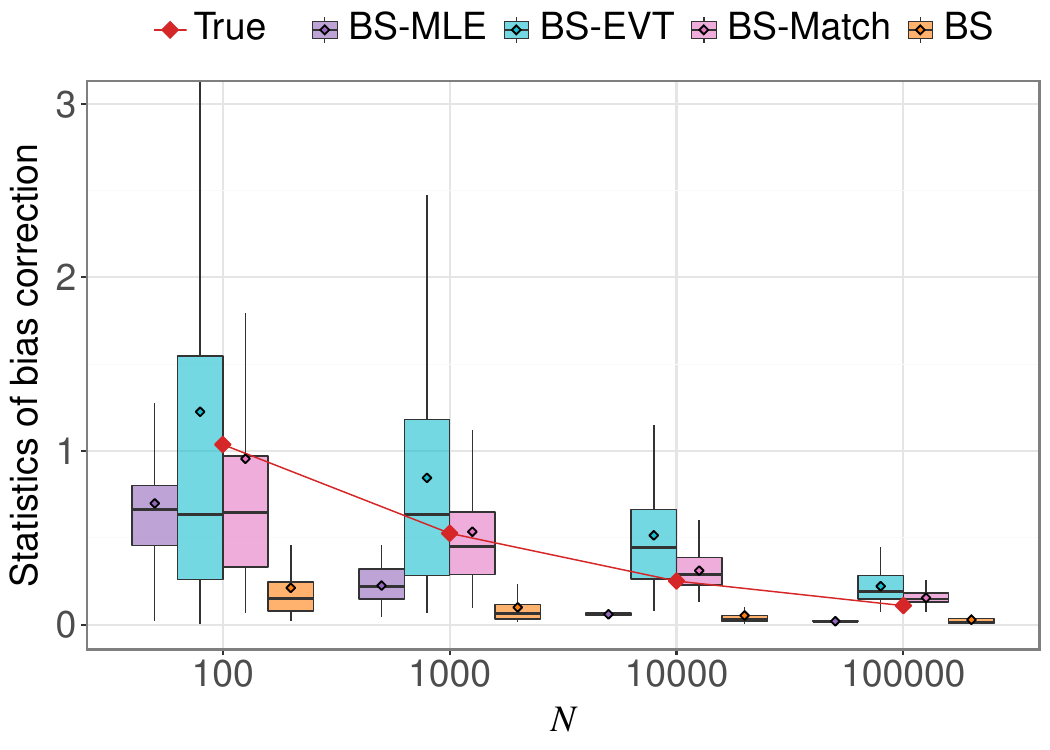}}
    \caption{Statistics of bias correction estimated from non-parametric bootstrap and parametric bootstrap obtained by fitting a GMM by MLE (\model{BS-MLE}), entropic risk matching (\model{BS-Match}) and tail fitting (\model{BS-EVT}) followed by bootstrapping over $100$ resampling from the underlying distribution.}
    \label{fig:bias correction}
\end{figure}
\end{example}
\revised{\cite{jin2016local} show that, even for equally-weighted mixtures of well-separated spherical Gaussians with $J \ge 3$ components, the population (infinite-sample) likelihood admits bad local maxima, and that EM  with random initialization converges to bad critical points with probability at least $1-\exp(-\Omega(J))$. Having observed that the bias of the empirical risk estimator is governed by the cumulant generating function and the distribution of the maxima of N samples drawn from the underlying loss distribution,  we next propose alternative distribution fitting strategies.} %
We fit a distribution such that $N$ i.i.d.\ samples drawn from it replicate the bias observed in $N$ i.i.d.\ samples from the true distribution $\Prob$. We refer to this approach as ``bias-aware" distribution matching. This concept is inspired by ``decision-aware'' learning methods in contextual optimization problems \citep{Elmachtoub_Grigas_2021, Donti_Amos_2017, Grigas_Qi_Shen_2023, Sadana_Chenreddy_2025}, where statistical accuracy is deliberately traded for improved decision outcomes.

 \subsubsection{Learning GMM by Entropic Risk Matching}\label{sec:RiskMatching}

In this section, \revised{we introduce an alternative loss function to learn the parameters of a GMM, replacing the likelihood function. %
Given a candidate GMM $\Q^{\btheta}$, we compare (i) the sampling distribution of the empirical entropic-risk estimator computed from $n$ i.i.d.\ draws from $\Q^{\btheta}$ to (ii) the empirical sampling distribution obtained by computing the same estimator from $n$ draws taken from the scenario set $\mathcal S$. 
By minimizing this discrepancy, we aim to fit a model whose finite-sample entropic-risk bias matches that observed in the data.
The approach is based on the following proposition, which establishes that the sampling distribution of the entropic risk uniquely identifies the underlying loss distribution.%

\begin{proposition}\label{prop:identifiable}
 Let $\{\hat{\zeta}_i\}_{i=1}^n$ and $\{\hat{\zeta}_i'\}_{i=1}^n$ be $n$ i.i.d. random variables drawn respectively from $\mathbb{Q}$ and $\mathbb{Q}'$. Let $\hat{\rho}_{\mathbb Q, n}:= \frac{1}{\alpha}\log((1/n)\sum_{i=1}^n\exp(\alpha \hat{\zeta}_i))$ and $\hat{\rho}_{\mathbb Q', n}:= \frac{1}{\alpha}\log((1/n)\sum_{i=1}^n\exp(\alpha \hat{\zeta}_i'))$. We must have that $\hat{\rho}_{\mathbb Q, n}\stackrel{F}{=} \hat{\rho}_{\mathbb Q', n}$ if and only if $\hat{\zeta}_i\stackrel{F}{=} \hat{\zeta}'_i$.
\end{proposition}}

Algorithm~\ref{alg:riskmatching} %
 is used to learn the parameters $\btheta$ of the GMM.
\revised{To construct the empirical distribution of entropic risk for $n$ samples drawn from the empirical standardized loss scenarios $\bar{\mathcal{S}}$}, \revised{$\bar{\mathcal{S}}$} is divided into $B$ bins, with each bin containing $n = N / B$ scenarios. The entropic risk is computed for each bin, forming the set $\mathcal{R}_{\bar{\mathcal S}}$. The corresponding empirical distribution, $\hat{\mathbb{P}}_{\mathcal{R}_{\bar{\mathcal S}}}$, over the set $\mathcal{R}_{\bar{\mathcal S}}$, captures the variability of entropic risk across the $B$ bins. For the former distribution, with a fixed $\btheta$, $B' \times n$ i.i.d.\ samples are drawn from $\Q^{\btheta}$ and divided into $B'$ bins. The entropic risk is then computed for the scenarios in each bin, yielding the set $\mathcal{R}_{\btheta}$. The corresponding empirical distribution, $\hat{\mathbb{P}}_{\mathcal{R}_\btheta}$, captures the variability of entropic risk  across the $B'$ bins.
 
Next, the algorithm compares the empirical distribution $\hat{\mathbb{P}}_{\mathcal{R}_{\bar{\mathcal S}}}$ with the model-based distribution $\hat{\mathbb{P}}_{\mathcal{R}_\btheta}$ using the following Wasserstein distance:
\begin{align*}
\mathcal{W}^2\left(\hat{\mathbb{P}}_{\mathcal{R}_{\bar{\mathcal S}}},\hat{\mathbb{P}}_{\mathcal{R}_\btheta}\right) = \left(\int_{0}^1 \lvert F_{\mathcal{R}_{\bar{\mathcal S}}}^{-1}(q)-F_{\mathcal{R}_\btheta}^{-1}(q) \rvert^2 dq\right)^{1/2},
\end{align*}
where $F_{\mathcal{R}_{\bar{\mathcal S}}}^{-1}$ and $F_{\mathcal{R}_\btheta}^{-1}$ are quantile functions associated with sets $\mathcal{R}_{\bar{\mathcal S}}$ and $\mathcal{R}_\btheta$, respectively. This distance quantifies the discrepancy between the two distributions. The algorithm iteratively adjusts the GMM parameters to minimize this distance. It uses gradient descent to update the parameters $\btheta_t$ at each iteration as shown in step \ref{grad_descent},
where $\gamma$ is the step size. \revised{The parameters are projected onto the feasible space $\bar{\Theta}$ in each iteration, thereby ensuring that the mean of $\zeta \sim \Q^{\btheta}$ is $0$}. 

To enable computation of the gradients of the Wasserstein distance with respect to $\btheta$, the algorithm employs differentiable sampling techniques that enable automatic differentiation through the sampling process (see Algorithm \ref{alg:samplegmm} in Appendix~\ref{appen:samplegmm}). This is based on reparameterization approach \citep{Kingma_Salimans_2015}, which allows stochastic sampling operations to be expressed in a differentiable manner. The iterative process continues until the Wasserstein distance $\mathcal{W}^2\left(\hat{\mathbb{P}}_{\mathcal{R}_{\bar{\mathcal S}}},\hat{\mathbb{P}}_{\mathcal{R}_{\btheta_t}}\right)$ falls below a predefined convergence threshold $\epsilon$, or until a maximum number of iterations $T$ is reached. Further details of the algorithm can be found in Appendix \ref{appen:fit_dist}.

Even though computing the Wasserstein distance between distribution of losses has a worst-case complexity $O(B'\log(B'))$ \citep{Kolouri_Pope_2019}, there is a significant total cost associated with the gradient descent procedure described in Algorithm \ref{alg:riskmatching}. In the next section, we provide a semi-analytic procedure to learn a two-component GMM that can account for the tail scenarios.

\begin{algorithm}
\caption{Fit GMM \revised{in step \ref{algstep:fit} of Algorithm \ref{alg:fittingGMM}} by entropic risk matching \label{alg:riskmatching}}
\begin{algorithmic}[1]
\Function{\model{BS-Match}}{$\bar{\mathcal{S}}, \bar{\Theta}, J$}
  \State Divide standardized loss scenarios in $\bar{\mathcal{S}}$ into $B$ bins, each of size $n$
\State Compute the entropic risk in $B$ bins, forming the empirical distribution $\hat{\mathbb{P}}_{\mathcal{R}_{\bar{\mathcal S}}}$%
  \State $\btheta_0 \gets \textsc{EM}(\bar{\mathcal S},J)$ \revised{\Comment{EM algorithm ensures that $\btheta_0 \in \bar\Theta$}} %
    \State Initialize the iteration counter $t \gets 0$ and $\mathfrak{D} \gets \infty$
  \While{$\mathfrak{D}>\epsilon$ and $t<T$} 
  \State Draw $B'\times n$ i.i.d.~samples from $\Q^{\btheta_t}$, split  into $B'$ bins
\State Compute entropic risk in each bin, forming $\hat{\mathbb{P}}_{\mathcal{R}_{\btheta_t}}$
    \State \label{grad_descent} Update GMM parameters: $\V{\theta}_{t+1} \gets \V{\theta}_t - \gamma \nabla_{\V{\theta}_t} \mathcal{W}^2\left(\hat{\mathbb{P}}_{\mathcal{R}_{\bar{\mathcal S}}}, \hat{\mathbb{P}}_{\mathcal{R}_{\btheta_t}}\right)$
    \State Project $\V{\theta}_{t+1}$ onto \revised{$\bar{\Theta}$ and} the feasible region of a GMM \revised{described in Theorem \ref{thm:GMM}}  %
    \State Update distance: $\mathfrak{D} \gets \mathcal{W}^2\left(\hat{\mathbb{P}}_{\mathcal{R}_{\bar{\mathcal S}}},\hat{\mathbb{P}}_{\mathcal{R}_{\btheta_t}}\right)$
    \State Increment iteration counter: $t \gets t + 1$ 
  \EndWhile
  \State \Return $\Q^{\btheta_t}$
\EndFunction
\end{algorithmic}
\end{algorithm}

 \subsubsection{Learning GMM by matching the extremes}\label{sec:extremes}

\revised{The next procedure to fit GMM in step \ref{algstep:fit} of Algorithm \ref{alg:fittingGMM} is motivated by an %
upper bound on the bias that depends on the maxima of the samples from the underlying loss distribution and its cumulant generating function. In particular, Lemma~\ref{lemma_almst_sure} in the Appendix~\ref{appendix:2} implies that for  $\ell(\vecz,\vecxi)\stackrel{F}{=}\mu+\sigma\xi$, $\Expect[\rho_{\hat{\Prob}_N}(\ell(\z,\vecxi))] -\rho_\Prob(\ell(\z,\vecxi))   \leq   \sigma \Expect[M_N] - (1/\alpha)\Expect[\Lambda_{\Prob^{\xi}}(\alpha \sigma )] %
$ where $M_N =\max_{1\leq i \leq N} \hat{\xi}_i$ with $\hat{\xi}_i=(\ell(\z,\hat{\vecxi}_i)-\mu)/\sigma\sim\Prob^\xi$ independently.  Motivated by this inequality, we propose to fit one of the GMM components of the $\mathbb{Q}^\btheta$   to the distribution of $M_N$.}
\revised{From the extreme value theory (EVT), the maxima of distributions, %
such as Gaussian, Gamma, and Laplace, fall in the Gumbel maximum domain of attraction \citep{embrechts1997modelling, de_Haan_Ferreira_2006}. This means that, after an appropriate centering and scaling that depends on the sample size, the distribution of the maximum converges to a Gumbel limit. Because both the target losses and the Gaussian family share this same limiting extreme-value behavior, using the maximum of a fitted normal distribution yields an analytically tractable surrogate for the right tail. } %

We construct an equally-weighted two-component GMM to represent the loss distribution (see Algorithm~\ref{alg:fitGMMevt} in the Appendix \ref{appen:fit_dist}). The first component aims to estimate the distribution of maxima $M_n$ of the loss scenarios. To this end, first  we divide the \revised{standardized} loss scenarios in  $\bar{\mathcal{S}}$ into $B$ bins, each of size $n = N/B$. We store the maximum within each bin in set $\mathcal{M}$, where  $F_\mathcal{M}$ denotes the cdf of scenarios in $\mathcal{M}$. This approach is typically referred to as the block maxima method \citep{de_Haan_Ferreira_2006}.  Motivated by the extreme value theory, the algorithm estimates the parameters of the first component by matching the cdf of the maximum of $n$ i.i.d. samples from $\mathcal{N}(\mu^e, \sigma^e)$, denoted by $\Phi_{\mu^e, \sigma^e}^n$ to  $F_\mathcal{M}$. 
The calculation of the parameters $(\mu^e, \sigma^e)$ can be done in a semi-analytic way as discussed in Appendix \ref{appen:fit_dist}. For the second component of the GMM, we set the mean to $-\mu^e$ 
and the standard deviation to 0. This choice ensures that the mean of the overall GMM is equal to the mean of the loss scenarios $\mu_{\bar{\mathcal{S}}}$\revised{, thus ensuring that the conditions in Algorithm \ref{alg:fittingGMM} are satisfied, and that Theorem  \ref{thm:GMM} holds}.  %

\revised{Proposition~\ref{prop:saa:laplaceLB} showed that for Laplace-distributed losses, the negative bias becomes unbounded as $\mbox{Var}(\ell(\vecz,\vecxi)) \to 2/\alpha^2$. %
In contrast, by construction, the estimation error of the proposed two-component GMM is asymptotically bounded by $\mathcal O(\mbox{Var}(\ell(\vecz, \vecxi)))$. Thus, one cannot expect the two-component GMM to fully remove the negative bias of the empirical risk estimator. %
 Designing an estimator that provably overestimates the entropic risk uniformly over all exponentially bounded distributions (e.g., Laplace distribution) is therefore nontrivial and requires fitting a distribution with tails at least as heavy as the underlying distribution, together with guarantees that any induced conservatism  remains controlled. 
 Controlled overestimation with GMMs is  guaranteed for lighter-than-gaussian tails, with \model{BS-EVT} providing a practical bias mitigation tool for subgaussian losses as illustrated in examples~\ref{ex:bias_correction} and \ref{ex:projects}. The same holds true for \model{BS-Match}.

}

\subsection{Benchmarking bias correction estimators}\label{benchmark_section}
\revised{In this section, we review several methods already discussed for estimating entropic risk and show through a numerical example how the variance of investments affect their estimation bias.} %
The Median-of-Means (\model{MoM}) estimator is constructed by dividing the loss scenarios into $\floor{\sqrt{N}}$ blocks, calculating the entropic risk  within each block, and then taking the median of these entropic risk values \citep{Lugosi_Mendelson_2019}. Thus, the \model{MoM} also suffers from the underestimation issue inherent in the empirical risk estimator.

Similar to Example~\ref{ex:bias_correction}, the true loss distribution in the following example does not satisfy Definition~3, and therefore the theoretical results do not apply.
  \begin{example}\label{ex:projects}
       Consider a project selection problem with three projects.  Let  $\xi \sim \text{GMM}\left(\V{\pi}, \V{\mu}, \V{\sigma}\right) $  with 5 components  (see Appendix \ref{append:param:example} for parameter values of the GMM).  Each project is affected by the same random variable $\xi$.
       Suppose the losses associated with the three projects are given by $z\xi$ with $z =0.4, 0.6, 0.8$, respectively. Let   the  risk aversion parameter be $\alpha=3$. The true entropic risk admits a closed-form expression
   $\rho(\ell(z, \eta)) = (1/3) \log(\sum_{\mathfrak j=1}^5\pi_{\mathfrak j}\exp\left(3z\mu_{\mathfrak j} +(9/2)(\sigma_{\mathfrak j}z)^2\right))$.  To evaluate each estimator, we draw $100$ instances with a sample size of $N=10000$ from the $\text{GMM}\left(\V{\pi}, \V{\mu}, \V{\sigma}\right)$. 
In Figure \ref{fig:project}, we observe the behavior of the bootstrap   (Algorithm~\ref{alg:bootstrap_with_GMM}) in conjunction with the entropic risk matching (Algorithm~\ref{alg:riskmatching}) denoted by \model{BS-Match}, and the bootstrap   (Algorithm~\ref{alg:bootstrap_with_GMM}) in conjunction with the extremes matching  (Algorithm~\ref{alg:fitGMMevt}) denoted by \model{BS-EVT}. 
It can be seen that the true entropic risk of project 1 is lower than that  of project 3. Furthermore project $3$ has a higher standard deviation of $0.8\sqrt{\text{Var}(\xi)}$, while that of project 1 is $0.4\sqrt{\text{Var}(\xi)}$. Nevertheless, several  estimators, nonparametric bootstrap (\model{BS}), sample–average approximation (\model{SAA}), median-of-means (\model{MOM}), optimizer’s information criterion (\model{OIC}), and maximum likelihood (\model{BS-MLE}), systematically underestimate the entropic risk of the riskier project~3. As a result, these procedures tend to make project~3 appear more attractive than project~1 from a risk-adjusted perspective. In contrast, our proposed estimators, \model{BS-Match} and \model{BS-EVT}, substantially inflate the estimated risk of the high-variance projects relative to the empirical estimator, and are therefore more informative about the true risk exposure faced by the decision maker.
Finally, the \model{LOOCV} estimator is guaranteed to exhibit overestimation of risk in expectation, yet our numerical results highlight an important limitation: its overestimation grows exponentially in the standard deviation of the loss. Consistent with this behavior, the empirical distribution of the \model{LOOCV} estimates over the 100 experiments is highly skewed, with the median estimate lying significantly below the true entropic risk.

\begin{figure}[t]
    \centering
\revised{\includegraphics[width=0.5\linewidth]{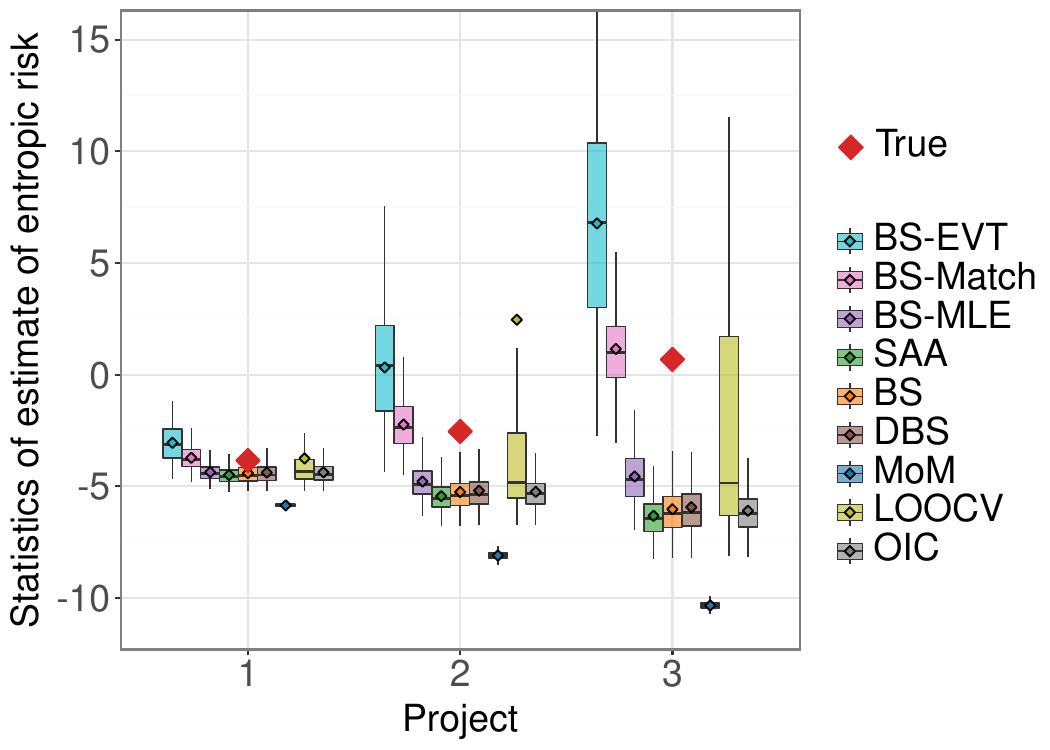}}
    \caption{Statistics of the estimates of the true entropic risk obtained from different models for each  project.}
    \label{fig:project}
\end{figure}

\end{example}

In the next section, we will show that the proposed procedures for  mitigating estimation bias can also be applied to mitigate the optimistic bias when solving entropic risk minimization problems. As discussed earlier, optimistic bias occurs due to lack of data, in which case regularization type techniques (such as distributionally robust optimization) are employed to correct the bias. By providing better estimates of the validation risk, we can more accurately calibrate the hyperparameters compared to traditional CV methods.

\section{\revised{Bias-aware cross validation in entropic risk optimization}}\label{sec:dro}

Entropic risk minimization considers the following problem:
\begin{align}\label{eq:true:opt}
   \rho^* =  \min_{\z \in \Z} \rho_\Prob(\ell(\z, \vecxi)):= \frac{1}{\alpha} \log \left(\mathbb{E}_{\Prob}[\exp(\alpha \ell(\z,\vecxi))]\right).
\end{align}
As the true underlying distribution $\Prob$ is typically unknown, it is common practice to replace it with the empirical distribution $\hat\Prob_N$, solving the corresponding SAA problem:%
\begin{equation}\label{eq:saa:opt}
       \rho_{\text{\model{SAA}}}^* = \min_{\z \in \Z} \rho_{\hat{\Prob}_N}(\ell(\z, \vecxi)):= \frac{1}{\alpha} \log \left(\mathbb{E}_{\hat{\Prob}_N}[\exp(\alpha \ell(\z,\vecxi))]\right).
\end{equation}
Under certain assumptions, it can be shown that $\rho_{\text{\model{SAA}}^*} \rightarrow \rho^*$ as \( N \) grows, as shown in Proposition \ref{lemma:convrate:saa} in Appendix~\ref{sec:appenddro}.  
In the limited data setting, the risk produced by solving problem \eqref{eq:saa:opt}   underestimates the true risk $\rho^*$ due to overfitting on the empirical distribution. Distributionally robust optimization (DRO) is one of the approaches to mitigate the optimistic bias of \model{SAA} by robustifying decisions against perturbations in the empirical distribution \citep{Delage_Ye_2010, Wiesemann_Kuhn_2014, Mohajerin_Esfahani_Kuhn_2018, Rahimian_Mehrotra_2022}.  It is assumed that nature perturbs the empirical distribution within a distributional ambiguity set $\mathcal{B}(\epsilon)$ containing all distributions $\Q$ that are at a ``distance'' $\epsilon\geq 0$ away from the empirical distribution $\hat{\Prob}_N$, so as to maximize the entropic risk of the decision maker, while decision maker aims to minimize the worst-case risk resulting in the following min-max problem:%
\begin{equation}\label{eq:DRO}
\rho_{\texttt{DRO}}^*:=\min_{\z\in \mathcal{Z}}\sup_{\Q \in \mathcal{B}(\epsilon)}\frac{1}{\alpha} \log \left(\mathbb{E}_{\Q}[\exp(\alpha \ell(\z,\vecxi))]\right). 
\end{equation}
In Theorem \ref{thm:converge:dro} in Appendix~\ref{sec:appenddro}, we show that $\rho_{\model{DRO}}^* \rightarrow \rho^*$ in probability. Furthermore, we show that type-$\infty$ Wasserstein ambiguity set is a suitable choice  for problem~\eqref{eq:DRO}.
The type-$\infty$ Wasserstein ambiguity set $\mathcal{B}_\infty(\epsilon)$  of radius $\epsilon\geq 0$, can be defined as follows:
\begin{equation} 
\mathcal{B}_\infty(\epsilon) := \left\{
         \Q \in \mathcal{M}(\Xi)\lvert  \Q\left\{\vecxi \in \Xi\right\} = 1,    \mathcal{W}_\infty (\Q, \hat{\Prob}_{N})\leq \epsilon \right\}, \label{def:wass_infty}
\end{equation}
where the Wasserstein distance is defined as
\begin{align*}
\mathcal{W}_\infty\left(\Prob_1,\Prob_{2}\right):=\inf_{\pi\in \mathcal{M}(\Xi\times\Xi)}\left\{\text{ess.sup}\|{\bzeta}_1-{\bzeta}_2\| \,\pi (d\bzeta_1,d\bzeta_2)\right\}.
\end{align*}
Here, $\pi$ is a joint distribution of the random vectors $\bzeta_1$ and $\bzeta_2$ with marginals $\Prob_1$ and $\Prob_2$, respectively, $\operatorname*{ess\,sup}$ denotes the essential supremum, and  $\| \cdot\|$ denotes the norm.

A common approach to select the radius $\epsilon$ of the ambiguity set is through $K$-fold CV. For each $\epsilon\in\mathcal E$, $K$-fold CV aims to estimate the true performance of policy $\z^*(\hat{\mathbb P}_{N},\epsilon)$ resulting from problem~\eqref{eq:DRO}, i.e.,   $\rho_{\mathbb P}(\ell(\z^*(\hat{\mathbb P}_{N},\epsilon), \vecxi))$, and subsequently select the  $\epsilon$ that minimizes this risk. The approach divides the dataset into $K$ folds. For each fold, we optimize the DRO model on
$K-1$ folds and evaluate the solution's performance on the remaining fold, repeating this process for all folds.  Specifically, for each candidate value of $\epsilon$, the model in problem~\eqref{eq:DRO} is solved using the training data $\hat{\mathbb P}_{-k}^K$ from all folds except the $k$-th fold to determine $\z^*(\hat{\mathbb P}_{-k}^K,\epsilon)$ which is then evaluated on the validation data to obtain $\rho_{\vecxi\sim\hat{\mathbb P}_k^K}(\ell(\z^*(\hat{\mathbb P}_{-k}^K,\epsilon), \vecxi))$, where $\hat{\mathbb P}_k^K$ denotes the empirical distribution of scenarios in fold $k$. The resulting estimator for a given radius $\epsilon$ is then given by $\rho_{k\sim U(K)}(\rho_{\vecxi\sim\hat{\mathbb P}_k^K}(\ell(\z^*(\hat{\mathbb P}_{-k}^K,\epsilon), \vecxi)))$, where $U(K)$ is the  uniform distribution over the set $\{1,2,\ldots,K\}$.  Since the goal is to minimize  risk, we choose  $\epsilon$ that minimizes the validation risk, i.e., $\epsilon^* = \argmin_{\epsilon\in\mathcal E}\rho_{k\sim U(K)}(\rho_{\vecxi\sim \hat{\mathbb P}_k^K}(\ell(\z^*(\hat{\mathbb P}_{-k}^K,\epsilon), \vecxi)))$. 
However, for each value of radius $\epsilon$ and choice of $K$, the following proposition shows that  the  entropic risk estimator     $\rho_{k\sim U(K)}(\rho_{\vecxi\sim\hat{\mathbb P}_k^K}(\ell(\z^*(\hat{\mathbb P}_{-k}^K,\epsilon), \vecxi)))$   based on the K-fold CV, underestimates the entropic risk of the policy constructed using $N(1-\frac{1}{K})$ data points.%
\begin{proposition}\label{prop:k-fold}
    Given $\epsilon\in\mathcal E$, 
     \begin{equation}\label{Kfold}
        \Expect[\rho_{k\sim U(K)}(\rho_{\vecxi\sim \hat{\mathbb P}_k^K}(\ell(\z^*(\hat{\mathbb P}_{-k}^K,\epsilon), \vecxi)))]<\rho_{\mathbb P}(\ell(\z^*(\hat{\mathbb P}_{N(1-\frac{1}{K})},\epsilon), \vecxi)).
         \end{equation} 
\end{proposition}
The proof of the above proposition follows from Jensen's inequality and tower property of entropic risk measure which states that $\rho(\bzeta)=\rho(\rho(\bzeta|\bzeta'))$ for random variables $\bzeta', \bzeta$. Note that this property is satisfied only by  entropic risk measure  in the family of  law-invariant risk measures \citep{Kupper_Schachermayer_2009}.
Notice that for large values of $K<N$, $N(1-\frac{1}{K})$ approaches $N$, thus the right-hand-side of \eqref{Kfold} mimics the performance of the  solution that uses all $N$ data points, that is, $\rho_{\mathbb P}(\ell(\z^*(\hat{\mathbb P}_{N},\epsilon), \vecxi))$.
To mitigate the underestimation of the entropic risk, we propose using the bias-aware bootstrap procedure described in Algorithm~\ref{alg:bootstrap_with_GMM} together with Algorithm \ref{alg:fittingGMM}, where GMM fitting in step \ref{algstep:fit}  either  uses maximum likelihood estimation, or Algorithm~\ref{alg:riskmatching} based on entropic risk matching,  or  Algorithm~\ref{alg:fitGMMevt} based on extreme value theory.  Algorithm~\ref{alg:radiustune} describes our proposed approach for selecting the optimal $\epsilon$, with Algorithm~\ref{alg:KFold} in Appendix \ref{sec:appenddro} describing the $K$-fold CV step. One can recover the traditional biased CV  procedure by setting $\delta = 0$ in line \ref{corrct:radius} of Algorithm~\ref{alg:radiustune}.

As we will see in the following sections, the solution based on our proposed approach significantly outperforms traditional CV procedure. 

\begin{algorithm}[H]
\caption{Radius selection for DRO \label{alg:radiustune}}
\begin{algorithmic}[1]
\Function{RadiusTuning}{$\Dist_N , K, M$}
    \For{$\epsilon \in \mathcal{E}$}
       \State $\mathcal{S}, \hat{\rho} \gets \text{K-foldCV}(K, \Dist_N, \epsilon)$ 
            \State \fbox{$\delta \gets \text{BootstrapBiasCorrection}(\mathcal{S}, M)$} \Comment{ Algorithm~\ref{alg:bootstrap_with_GMM} }\label{corrct:radius}
            \State $\rho(\epsilon) \gets \hat{\rho}+ \delta$ 
    \EndFor
    \State $\epsilon^* \gets \arg\min_{\epsilon \in \mathcal{E}} \rho(\epsilon)$ \label{alg:step:opt_radius}
\EndFunction
\end{algorithmic}
\end{algorithm}

\removed{
\section{Distributionally Robust Optimization}\label{sec:dro}

Entropic risk minimization considers the following problem:
\begin{align}\label{eq:true:opt}
   \rho^* =  \min_{\z \in \Z} \rho_\Prob(\ell(\z, \bxi)):= \frac{1}{\alpha} \log \left(\mathbb{E}_{\Prob}[\exp(\alpha \ell(\z,\bxi))]\right),
\end{align}
where $\Z \subseteq\mathbb{R}^d$ denotes the set of all feasible decisions,  $\bxi\in \mathbb{R}^d$ denotes the uncertain vector following the probability distribution $\Prob$, and  $\ell(\bm z, \vecxi)$ denotes the loss function. For the optimal solution $\bm z^*$ of problem \eqref{eq:true:opt} to be well defined, we make the following standard assumptions:
\begin{assum}\label{assum:loss}
    We assume that: 
    \begin{enumerate}[label=(A.\arabic*), ref=(A.\arabic*)]
        \item \label{assum:compact_convex} $\mathcal{Z}$ is a compact and convex set.
        \item \label{assum:convex_loss} $\ell(\z, \bxi)$ is convex in $\z$ for almost every $\bxi \in \Xi$.
\item \label{ass:lipchitz_loss_xi} $\ell(\z, \bxi)$ is $L$-Lipschitz continuous in $\bxi$ for all $\z\in \Z$.
        \item \label{ass:lipchitz_loss_z} $\ell(\z, \bxi)$ is $L(\bxi)$-Lipschitz continuous in $\z$ for all $\bxi\in \Xi$ with $\Expect_\Prob[L(\bxi)^q]<\infty$ for all $q\geq1$.
        \item \label{assum:tail_bound_loss_z} $| \ell( \z, \bxi)| \leq \bar{L}(\bxi)$ for all $\z \in \mathcal{Z}$ almost surely, with the tail of $\bar{L}(\bxi)$ exponentially bounded:
        \[
        \Prob\big(\bar{L}(\bxi) > a\big) \leq G \exp(-a\alpha C) \quad \text{for all } a \geq 0,
        \]
        for some constants $G > 0$ and $C > 2$.
    \end{enumerate}
\end{assum}

As in Assumption \ref{assum:fit_tail_bound}, the  last assumption ensures that 
the mean and variance of the loss $\exp(\alpha \ell(\z, \bxi))$ are finite for each $\z\in\mathcal Z$. We make the technical assumptions \ref{ass:lipchitz_loss_z} and \ref{assum:tail_bound_loss_z} to ensure the convergence of SAA solution to the true risk (see Proposition \ref{lemma:convrate:saa}).

As the true underlying distribution $\Prob$ is typically unknown, it is common practice to replace it with the empirical distribution $\hat\Prob_N$, solving the corresponding SAA problem:\EDcomments{$\rho_{\text{\model{SAA}}}$ conflicts in notation with the estimator from section 4. Maybe call it $\rho_{\text{\model{SAA}}}^*$} 
\begin{equation}\label{eq:saa:opt}
       \rho_{\text{\model{SAA}}} = \min_{\z \in \Z} \rho_{\hat{\Prob}_N}(\ell(\z, \bxi)):= \frac{1}{\alpha} \log \left(\mathbb{E}_{\hat{\Prob}_N}[\exp(\alpha \ell(\z,\bxi))]\right).
\end{equation}
Under Assumption \ref{assum:loss}, it follows that $\rho_{\text{\model{SAA}}} \rightarrow \rho^*$ as \( N \) grows, as shown in Proposition \ref{lemma:convrate:saa}.  
\begin{proposition} \label{lemma:convrate:saa}
  Suppose that Assumption \ref{assum:loss} holds. 
  Then, for any $\gamma >0$, 
  there exists a constant $A>0$ 
  for which, \begin{align}\label{eq:conv:utility}
\Prob &\left( \left| \rho_{\text{\model{SAA}}} - \rho^* \right| \geq  \frac{A}{\sqrt{N\gamma}\alpha \exp(\alpha \rho^*)} \right) \leq  \gamma,
\end{align}
as long as $N$ is sufficiently large. Consequently, $\rho_{\text{\model{SAA}}}\rightarrow \rho^*$ in probability.
\end{proposition}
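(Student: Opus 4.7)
The plan is to reduce the problem to controlling a uniform deviation of the exponential-moment functions and then transfer that control through the logarithm using a careful lower bound. Introduce the auxiliary functions $g(\z) := \Expect_\Prob[\exp(\alpha \ell(\z,\bxi))]$ and $\hat g_N(\z) := \Expect_{\hat\Prob_N}[\exp(\alpha \ell(\z,\bxi))]$, so that $\rho^* = \min_{\z\in\Z}(1/\alpha)\log g(\z) = (1/\alpha)\log g(\z^*)$ and $\rho_{\text{\model{SAA}}} = \min_{\z\in\Z}(1/\alpha)\log \hat g_N(\z)$. Both minima are attained: $\Z$ is compact by Assumption~\ref{assum:compact_convex}, and continuity of $g$ and $\hat g_N$ in $\z$ follows from Assumption~\ref{ass:lipchitz_loss_z} together with the envelope $\exp(\alpha\bar L(\bxi))$. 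The elementary inequality $|\min_\z f(\z) - \min_\z h(\z)|\leq \sup_\z|f(\z)-h(\z)|$ then reduces the task to bounding the uniform log-deviation $\sup_{\z\in\Z}(1/\alpha)|\log\hat g_N(\z)-\log g(\z)|$.

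To produce the target prefactor $1/(\alpha\exp(\alpha\rho^*))$, observe that $g(\z) \geq g(\z^*) = \exp(\alpha\rho^*)$ for every $\z \in \Z$. On the event where $\hat g_N$ is uniformly bounded below by $\tfrac12\exp(\alpha\rho^*)$, the mean value theorem applied to $\log$ on the interval between $g(\z)$ and $\hat g_N(\z)$ yields
\[
\frac{1}{\alpha}\bigl|\log\hat g_N(\z) - \log g(\z)\bigr| \leq \frac{|\hat g_N(\z)-g(\z)|}{\alpha\min(g(\z),\hat g_N(\z))} \leq \frac{2\,|\hat g_N(\z)-g(\z)|}{\alpha\exp(\alpha\rho^*)}.
\]
The uniform lower bound on $\hat g_N$ is automatic once the uniform deviation $\sup_\z|\hat g_N(\z)-g(\z)|$ is smaller than $\tfrac12\exp(\alpha\rho^*)$, which occurs for $N$ sufficiently large by the argument below.

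The remaining work is to show that $\sup_{\z\in\Z}|\hat g_N(\z)-g(\z)| = O(1/\sqrt{N\gamma})$ with probability at least $1-\gamma$. At a fixed $\z$, Lemma~\ref{thm:boundedExpVar2} applied under Assumption~\ref{assum:loss}(\ref{assum:tail_bound_loss_z}) bounds $\text{Var}_\Prob(\exp(\alpha\ell(\z,\bxi)))$ by $2G/(C-2)$, and Chebyshev's inequality gives $\Prob(|\hat g_N(\z)-g(\z)|>t) \leq 2G/(Nt^2(C-2))$. To promote this to a supremum over $\Z$, I would cover $\Z$ by an $\eta$-net of size $O((D/\eta)^d)$, union-bound the Chebyshev estimate across the net, and absorb the off-net error using the Lipschitz bound $\alpha L(\bxi)\exp(\alpha\bar L(\bxi))$ on $\z\mapsto\exp(\alpha\ell(\z,\bxi))$. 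The envelope $L(\bxi)\exp(\alpha\bar L(\bxi))$ has finite mean by Cauchy--Schwarz, since $L(\bxi)$ has all moments by Assumption~\ref{ass:lipchitz_loss_z} and $\Expect[\exp(2\alpha\bar L(\bxi))]<\infty$ follows from $C>2$ in Assumption~\ref{assum:tail_bound_loss_z}. Choosing $\eta$ polynomially small in $N$ delivers a uniform deviation of order $1/\sqrt{N\gamma}$, and combining with the log-Lipschitz step yields the claimed inequality with a constant $A$ that absorbs dimension and envelope contributions. Convergence $\rho_{\text{\model{SAA}}} \to \rho^*$ in probability then follows by fixing $\gamma$ and sending $N \to \infty$.

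The principal obstacle is the pointwise-to-uniform step, since Chebyshev is too weak to afford a naive union bound over a growing discretization of $\Z$. The remedy is to combine Chebyshev on a fixed $\eta$-net with a Lipschitz residual whose envelope has only a few finite moments; the delicate part is balancing $\eta$ and verifying that the random Lipschitz constant has enough moments for the off-net error to be absorbed into the $1/\sqrt{N\gamma}$ rate, which is precisely what the moment condition on $L(\bxi)$ in Assumption~\ref{ass:lipchitz_loss_z} together with $C>2$ in Assumption~\ref{assum:tail_bound_loss_z} provide.
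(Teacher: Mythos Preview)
Your overall architecture mirrors the paper's: establish a uniform deviation bound for $\hat g_N - g$ and then push it through the logarithm using the lower bound $g(\z)\geq \exp(\alpha\rho^*)$. The log step is fine; your mean-value-theorem argument is essentially equivalent to the paper's use of the elementary inequalities $\log(1+\epsilon)\leq\epsilon$ and $\log(1-\epsilon)\geq-\epsilon/(1-1/e)$ on $[0,1-1/e]$.

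The gap is in the uniform-deviation step. A Chebyshev bound at each of $M=(D/\eta)^d$ net points followed by a union bound gives $\Prob\bigl(\max_j|\hat g_N(\z_j)-g(\z_j)|>t\bigr)\leq M\,\text{Var}/(Nt^2)$, forcing $t$ of order $\sqrt{M/(N\gamma)}$, while the off-net residual is of order $\eta\,\Expect[\kappa(\bxi)]$ no matter how many moments $\kappa(\bxi)$ possesses. Balancing $\sqrt{(D/\eta)^d/(N\gamma)}$ against $\eta$ yields $\eta\sim N^{-1/(d+2)}$ and a total uniform deviation of order $N^{-1/(d+2)}$, not $N^{-1/2}$; your sentence ``choosing $\eta$ polynomially small in $N$ delivers a uniform deviation of order $1/\sqrt{N\gamma}$'' is therefore incorrect, because Chebyshev's quadratic tail cannot pay for a union bound over a net whose cardinality grows with $N$. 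This is exactly the obstacle the paper sidesteps by invoking Theorem~3.2 of \cite{Jiang_Chen_2020}, a uniform SAA convergence result for heavy-tailed losses that delivers the $1/\sqrt{N}$ rate under only a finite second moment on the Lipschitz modulus $\kappa(\bxi)=\alpha L(\bxi)\exp(\alpha\bar L(\bxi))$; the paper's work consists in verifying that hypothesis via H\"older's inequality using Assumptions~\ref{ass:lipchitz_loss_z} and~\ref{assum:tail_bound_loss_z}. Your covering argument would still give $\rho_{\text{\model{SAA}}}\to\rho^*$ in probability, since any vanishing rate suffices for that conclusion, but it does not establish the specific bound~\eqref{eq:conv:utility}.
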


To prove Proposition \ref{lemma:convrate:saa}, we show that $\exp(\alpha \ell(\bxi, \z))$ is $\kappa(\bxi)$-Lipschitz continuous in $\z$ for all $\bxi\in \Xi$ with $\kappa(\bxi)=\alpha L(\bxi)\exp(\alpha \bar{L}(\bxi))$.  
Then, we apply the uniform convergence results for heavy tailed distributions in \citet[][Theorem 3.2]{Jiang_Chen_2020} to show the uniform convergence of empirical utility to true utility for all $\z \in \Z$. This theorem only requires that the second moment of $\kappa(\bxi)$ is finite instead of the usual light-tailed assumptions that don't hold for $\kappa(\bxi)$. 
Subsequently, we use the properties of logarithm function in the neighborhood of zero to  show the uniform convergence of empirical risk to optimal risk.  This convergence result underpins the prevalent use of the SAA approach for entropic risk minimization problems \citep{Chen_He_2024, Chen_Ramachandra_2024}.
For a detailed examination of the SAA methodology within stochastic programming, see \cite{Shapiro_Dentcheva_2009}.

In the limited data setting, the risk produced by solving problem \eqref{eq:saa:opt}   underestimates the true risk $\rho^*$ due to overfitting on the empirical distribution. Distributionally robust optimization (DRO) is one of the approaches to mitigate the optimistic bias of \model{SAA} by robustifying decisions against perturbations in the empirical distribution \citep{Delage_Ye_2010, Wiesemann_Kuhn_2014, Mohajerin_Esfahani_Kuhn_2018, Rahimian_Mehrotra_2022}.  It is assumed that nature perturbs the empirical distribution within a distributional ambiguity set $\mathcal{B}(\epsilon)$ containing all distributions $\Q$ that are at a ``distance'' $\epsilon\geq 0$ away from the empirical distribution $\hat{\Prob}_N$, so as to maximize the entropic risk of the decision maker, while decision maker aims to minimize the worst-case risk resulting in the following min-max problem:
\begin{equation}\label{eq:DRO}
\rho_{\texttt{DRO}}:=\min_{\z\in \mathcal{Z}}\sup_{\Q \in \mathcal{B}(\epsilon)}\frac{1}{\alpha} \log \left(\mathbb{E}_{\Q}[\exp(\alpha \ell(\z,\bxi))]\right). 
\end{equation}
The size of the ambiguity set, \(\epsilon\), is chosen by the decision maker; however, as we will discuss later, in practice it is treated as a hyperparameter tuned through CV to optimize the performance of the optimization model on unseen data.

In the literature, different ambiguity sets have been considered with the Kullback Leibler (KL)-divergence \citep{Hu_Hong_2012} and Wasserstein ambiguity sets \citep{Mohajerin_Esfahani_Kuhn_2018} being the most commonly used \citep{Rahimian_Mehrotra_2022}. For KL-divergence-based ambiguity sets, the standard formulation \citep{Hu_Hong_2012} restricts the worst-case distribution to be absolutely continuous with respect to the empirical distribution, limiting its support to the same points as the empirical distribution. This poses a problem because it prevents the representation of worst-case scenarios that typically occur in the tails of the loss distribution. An alternative formulation does allow worst-case distributions with support beyond the empirical distribution, enabling a richer ambiguity set \citep{Chan_Van_Parys_2024}. However, with an unbounded loss function, this flexibility allows nature to exploit the  tail, resulting in infinite loss for the decision maker. Consequently, KL-divergence-based ambiguity sets are ill-suited to our problem, which involves unbounded support and heavy-tailed losses. This also holds for  type-$p$  Wasserstein ambiguity set with $p<\infty$  due to the following result.
\begin{proposition}\label{prop:infinite_cost}
    The $p$-Wasserstein DRO with entropic risk measure results in unbounded loss if $p<\infty$. 
\end{proposition}
Next, we show that type-$\infty$ Wasserstein ambiguity set is a suitable choice  for problem~\eqref{eq:DRO}. The type-$\infty$ Wasserstein distance is defined as:
\begin{align*}
\mathcal{W}^\infty\left(\Prob_1,\Prob_{2}\right):=\inf_{\pi\in \mathcal{M}(\Xi\times\Xi)}\left\{\text{ess.sup}\|{\bzeta}_1-{\bzeta}_2\| \,\pi (d\bzeta_1,d\bzeta_2)\right\},
\end{align*}
where $\pi$ is a joint distribution of ${\bzeta}_1$ and ${\bzeta}_2$
with marginals $\Prob_1$ and $\Prob_2$, respectively,  ess.sup denotes essential supremum, and  $\| \cdot\|$ denotes the norm.  
Then,  type-$\infty$ Wasserstein ambiguity set $\mathcal{B}_\infty(\epsilon)$  of radius $\epsilon\geq 0$, can be defined as follows:
\begin{equation} 
\mathcal{B}_\infty(\epsilon) := \left\{
         \Q \in \mathcal{M}(\Xi)\lvert  \Q\left\{\bxi \in \Xi\right\} = 1,    \mathcal{W}_\infty (\Q, \hat{\Prob}_{N})\leq \epsilon \right\}. \label{def:wass_infty}
\end{equation}
\cite{Bertsimas_Shtern_2023} have shown that  problem \eqref{eq:DRO} can be equivalently written as:  
\begin{equation}\label{eq:drotypeinf}
    \min_{\z \in \Z}  \sup_{\Q \in \tilde{\mathcal{B}}_{\infty}(\epsilon)}\frac{1}{\alpha}\log\left(\mathbb{E}_{\Q}[\exp(\alpha \ell(\z,\bxi))]\right)= \min_{\z \in \Z}\frac{1}{\alpha}\log\left(\frac{1}{N}\sum_{i \in [N]} \sup_{\bxi:\|\bxi-\hat{\bxi}_i\| \leq \epsilon} \exp(\alpha \ell(\z, \bxi))  \right),
\end{equation}
where the ambiguity set $\tilde{\mathcal{B}}_\infty(\epsilon) $  is defined as: 
 \begin{equation*}
\tilde{\mathcal{B}}_\infty(\epsilon) :=\left\{ \Q \in \mathcal{M}(\Xi) \rvert \exists\,\, \bxi_i \in \Xi,  \|\bxi_i-\hat{\bxi}_i\| \leq \epsilon,\, \forall i\in [N],\, \Q(\bxi) = \frac{1}{N}\sum_{i=1}^N  \delta_{\bxi_i}(\bxi)  \right\}.
 \end{equation*}
The equality in \eqref{eq:drotypeinf} follows since  the logarithm function is  monotone. From the formulation of problem~\eqref{eq:drotypeinf}, we can clearly see the behavior of the adversarial uncertainty, which allows each  scenario $\hat{\bxi}_i$ to be perturbed within a norm ball of radius $\epsilon$. Thus, by construction, the   worst-case loss is always bounded for finite radius $\epsilon$ of the uncertainty set.

For piecewise concave loss functions, the following theorem  gives an equivalent reformulation of the DRO problem as the finite dimensional convex optimization problem using Fenchel duality \citep{Ben-Tal_denHertog_2015}.
 \begin{theorem}\label{thm:reg:cone}
Let $\ell(\z,\vecxi) = \max_{j\in[m]} \ell_j(\z,\bxi)$ where $\ell_j(\z,\bxi)$ is a concave function in $\bxi$ for each $j\in [m]$ and $\z \in \Z$. Then, the     DRO problem \eqref{eq:drotypeinf} with type-$\infty$ Wasserstein ambiguity set is equivalent to 
\begin{equation}\label{eq:model:wassinfty}
    \begin{array}{lll}
       \min  &    \frac{1}{\alpha}\log \left(\frac{1}{N}\sum_{i=1}^{N}\exp(\alpha t_i)\right)\\ 
     \text{s.t.} & \bm t\in\mathbb{R}^N,\,\bm z\in\mathcal{Z},\,\bm\varphi_{ij}\in\mathbb{R}^d & \forall i\in [N],\,j\in [m]\\
     &   \V{\varphi}_{ij}^\top \hat{\bxi}_i - \ell_{j*}(\z,\V{\varphi}_{ij})+ \epsilon \| \V{\varphi}_{ij} \|_{*}\leq t_i &\forall i\in [N],\,j\in [m],
    \end{array}
\end{equation}
where  $\ell_{j*}(\z,\V{\varphi}_{ij}) := \inf_{\bxi} \{\V{\varphi}_{ij}^\top \bxi-\ell_j(\z, \bxi)\}$ is the partial concave conjugate of $\ell_j(\z, \bxi)$, and $\|\cdot\|_*$ denotes the dual norm.
\end{theorem}
The DRO reformulation and reformulation technique simplify significantly when the loss function is either piecewise linear or linear, rather than piecewise convex in $\bm z$ and concave in $\vecxi$. The following corollary presents these special cases. 

\begin{corollary}\label{Corollary:DRO}
    Let $\ell(\z, \bxi) := \max_{k\in \mathcal{K}}\left\{a_k(\z^\top \bxi)+b_k\right\}$ be a piecewise linear function for given parameters $a_k$ and $b_k$.  Then, the DRO problem \eqref{eq:drotypeinf} with type-$\infty$ Wasserstein ambiguity set is equivalent to
    \begin{equation}\label{eq:DRO_piece}
    \begin{aligned}
\rho_{\model{DRO}}:=\quad
\min   \quad &  \frac{1}{\alpha}\log \left(\frac{1}{N}\sum_{i=1}^{N}t_i\right)\\
\text{s.t.}  \quad  & \bm t\in\mathbb{R}^N,\,\z\in \Z\\
\quad & \exp\left(\alpha \left(a_k(\z^\top \hat{\bxi}_i) +b_k\right) +\epsilon \| a_k\z \|_{*}\right)\leq t_i  & \forall i, k\in \mathcal{K}
\end{aligned}
\end{equation}
which for a linear loss $\ell(\z, \bxi)=\z^\top \bxi$ can be further simplified to 
\begin{equation}\label{eq:DRO_lin}
    \min_{\z\in \Z}   \frac{1}{\alpha}\log \left(\frac{1}{N}\sum_{i=1}^{N}\exp(\alpha \z^\top \hat{\bxi}_i)\right)+\epsilon \norm{\z}_{*}.
\end{equation}
\end{corollary}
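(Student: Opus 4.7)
The plan is to derive both reformulations as direct specializations of Theorem~\ref{thm:reg:cone}, using the fact that each piece $\ell_k(\z,\bxi):=a_k(\z^\top\bxi)+b_k$ is linear (hence concave) in $\bxi$, so the partial concave conjugate can be computed in closed form.

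First I would compute $\ell_{k*}(\z,\V{\varphi}_{ik})=\inf_{\bxi}\{\V{\varphi}_{ik}^\top\bxi-a_k(\z^\top\bxi)-b_k\}=\inf_{\bxi}\{(\V{\varphi}_{ik}-a_k\z)^\top\bxi\}-b_k$. Because this linear-in-$\bxi$ infimum is $-\infty$ unless $\V{\varphi}_{ik}=a_k\z$, any optimal solution of \eqref{eq:model:wassinfty} must force $\V{\varphi}_{ik}=a_k\z$ for all $i,k$ (otherwise the constraint is vacuously infeasible via $-\ell_{k*}=+\infty$). Substituting this identification into the constraint of \eqref{eq:model:wassinfty} eliminates the auxiliary variables $\V{\varphi}_{ik}$ and reduces the constraint to
\begin{equation*}
a_k(\z^\top\hat{\bxi}_i)+b_k+\epsilon\|a_k\z\|_{*}\le t_i\qquad\forall i\in[N],\,k\in\mathcal{K}.
\end{equation*}

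Next I would perform the change of variable $\tilde{t}_i:=\exp(\alpha t_i)$, which is monotone and hence preserves the optimization. The objective $\frac{1}{\alpha}\log(\frac{1}{N}\sum_i\exp(\alpha t_i))$ becomes $\frac{1}{\alpha}\log(\frac{1}{N}\sum_i \tilde{t}_i)$, and the linear constraint above turns, after exponentiation by $\alpha>0$, into $\exp\!\bigl(\alpha(a_k(\z^\top\hat{\bxi}_i)+b_k)+\alpha\epsilon\|a_k\z\|_*\bigr)\le \tilde{t}_i$, which (up to absorbing the harmless factor $\alpha$ into the radius parameter) yields exactly \eqref{eq:DRO_piece}.

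For the linear special case $\ell(\z,\bxi)=\z^\top\bxi$, the index set reduces to a single element with $a_1=1$, $b_1=0$, so the constraint collapses to $\z^\top\hat{\bxi}_i+\epsilon\|\z\|_{*}\le t_i$. Since the objective is increasing in each $t_i$, the optimum is attained at equality, and substituting back gives
\begin{equation*}
\tfrac{1}{\alpha}\log\!\Bigl(\tfrac{1}{N}\sum_{i=1}^{N}\exp\bigl(\alpha\z^\top\hat{\bxi}_i+\alpha\epsilon\|\z\|_{*}\bigr)\Bigr)=\tfrac{1}{\alpha}\log\!\Bigl(\tfrac{1}{N}\sum_{i=1}^{N}\exp(\alpha\z^\top\hat{\bxi}_i)\Bigr)+\epsilon\|\z\|_{*},
\end{equation*}
where the last equality follows by factoring the common term $\exp(\alpha\epsilon\|\z\|_*)$ out of the sum and cancelling it against $\frac{1}{\alpha}\log(\cdot)$, yielding \eqref{eq:DRO_lin}. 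The only delicate point is justifying that the $-\infty$ value of $\ell_{k*}$ outside the affine set $\{\V{\varphi}_{ik}=a_k\z\}$ truly rules out those choices in \eqref{eq:model:wassinfty}; this follows because a $-\infty$ conjugate makes the left-hand side of the constraint equal to $+\infty$, so feasibility forces the identification and no other technical obstacle arises.
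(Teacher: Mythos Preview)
Your proof is correct but follows a different route from the paper's. You specialize Theorem~\ref{thm:reg:cone} by computing the partial concave conjugate of each linear piece (forcing $\V{\varphi}_{ik}=a_k\z$) and then apply the monotone substitution $\tilde t_i=\exp(\alpha t_i)$. The paper, by contrast, explicitly announces an ``alternative proof \dots\ that does not rely on Fenchel duality'': it works directly from the right-hand side of \eqref{eq:drotypeinf}, pushes the $\exp$ through the $\max$, and evaluates $\sup_{\|\bxi-\hat{\bxi}_i\|\le\epsilon}a_k\z^\top\bxi$ via the dual-norm identity. Both arguments are short; yours has the pedagogical advantage of showing the corollary really is a corollary of Theorem~\ref{thm:reg:cone}, while the paper's direct computation is self-contained and avoids conjugate machinery altogether. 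One small point: your derivation (like the paper's own proof) produces $\alpha\epsilon\|a_k\z\|_*$ in the exponent, whereas the displayed constraint in \eqref{eq:DRO_piece} shows only $\epsilon\|a_k\z\|_*$; this is a typo in the statement, not a gap in your argument, so your parenthetical about ``absorbing $\alpha$'' should simply note the discrepancy rather than suggest a reparametrization. For the linear case your factorization is exactly right and the extra $\alpha$ cancels cleanly against the $1/\alpha$ outside the logarithm.
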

\EDcomments{Note to us: Now that we understand the role of variance on misestimation. This result seems inconsistent with the first part of the paper. It would make more sense to use: $\min_{\z\in \Z}   \frac{1}{\alpha}\log \left(\frac{1}{N}\sum_{i=1}^{N}\exp(\alpha \z^\top \hat{\bxi}_i)\right)+\epsilon \norm{\z}_{*}^2$. to correct in a way that is quadratic wrt the scale of $\z$. Maybe we could reverse engineer to identify the right ambiguity set... Future work.}

    \begin{proof}
        Here, we provide an alternative proof for the piecewise linear loss functions  $\ell(\z, \bxi) := \max_{k\in \mathcal{K}}\left\{a_k(\z^\top \bxi)+b_k\right\}$ that does not rely on Fenchel duality \citep{Ben-Tal_denHertog_2015}. 
The supremum of $\exp(\max_{k\in \mathcal{K}}\left\{\alpha\left(a_k(\z^\top \bxi)+b_k\right)\right\})$ over the set $\{\bxi:\|\bxi-\hat{\bxi}_i\| \leq \epsilon\}$ is given by:
       \begin{align*}
          \sup_{\bxi:\|\bxi-\hat{\bxi}_i\| \leq \epsilon} &\exp\left(\max_{k\in \mathcal{K}}\left\{\alpha\left(a_k(\z^\top \bxi)+b_k\right)\right\}\right) \\&=  \sup_{\bxi:\|\bxi-\hat{\bxi}_i\| \leq \epsilon} \max_{k\in \mathcal{K}} \left\{\exp\left(\alpha\left(a_k(\z^\top \bxi)+b_k\right)\right)\right\}\\
          &=\max_{k\in \mathcal{K}} \left\{ \exp\left(\alpha \left(a_k\z^\top\hat{\bxi}_i +b_k\right)+\alpha \sup_{\bxi:\|\bxi\| \leq \epsilon}\left(a_k\z^\top \bxi\right) \right)\right\}\\&=
          \max_{k\in \mathcal{K}} \left\{ \exp\left(\alpha \left(a_k(\z^\top \hat{\bxi}_i) +b_k\right) +\alpha \epsilon \| a_k\z \|_{*}\right)\right\},
       \end{align*}
       where the first equality follows from interchanging $\exp$ and $\max$ operations and then using the fact that $\exp(\cdot)$ is increasing in its arguments,  last equality follows from  the definition of the dual norm and $\| \cdot\|_{*}$ denotes the dual norm of $\| \cdot\|$.
       On combining with the objective function in \eqref{eq:drotypeinf}, we obtain:
       \begin{align*}
          &\frac{1}{\alpha}\log\left(\frac{1}{N}\sum_{i =1}^N 
          \max_{k\in \mathcal{K}} \left\{ \exp\left(\alpha \left(a_k(\z^\top \hat{\bxi}_i) +b_k\right) +\alpha \epsilon \| a_k\z \|_{*}\right)\right\} \right). 
       \end{align*}
       So, with a piecewise linear loss function, problem \eqref{eq:drotypeinf} is equivalent to the  convex optimization problem in \eqref{eq:DRO_piece}.
Further, specializing the result to a linear loss function $\ell(\bxi,\z)=\z^\top \bxi$, problem \eqref{eq:drotypeinf} is equivalent to the regularized risk-averse SAA problem in \eqref{eq:DRO_lin}.
    \end{proof}

It is interesting to see that for the linear case, the DRO problem  reduces to the regularized SAA problem where the %
regularization penalty   is controlled by the size $\epsilon$ of the ambiguity set  and that the type of penalty depends on the dual of the norm used to define the ambiguity set. To complement these results, Appendix~\ref{appen:additional_results} provides reformulations of the distributionally robust newsvendor  and  regression problems as exponential cone programs.

Our next theorem formalizes that as the sample size $N$ tends to infinity, the DRO value $\rho_{\model{DRO}}$ with a properly chosen radius will converge to the true optimal risk $\rho^*$ in probability. 
The proof follows from showing that for  Lipschitz continuous (in $\z$) loss functions, $\rho_\model{SAA}\leq \rho_\model{DRO}^*\leq \rho_\text{\model{SAA}}+L\epsilon$ and using Proposition \ref{lemma:convrate:saa} that establishes that $\rho_\model{SAA}$ converges to $\rho^*$ at the rate $\mathcal{O}(1/\sqrt{N})$ for  locally Lipschitz continuous (in $\bxi$) loss functions. Finally, choosing the radius to decay at the rate $\mathcal{O}(1/\sqrt{N})$ preserves the rate of convergence of SAA.
  \begin{theorem}\label{thm:converge:dro}
Suppose that Assumption \ref{assum:loss} holds. %
Then for any $\gamma>0$ and  %
using $\mathcal{B}_\infty(c/\sqrt{N})$, for some $c>0$, then there exists  a constant \(A > 0\) such that %
\[ \Prob\left(\lvert \rho_\model{DRO}^* - \rho^* \rvert \geq \frac{A}{\sqrt{N\gamma}\alpha \exp(\alpha \rho^*)}+\frac{c}{\sqrt{N}} \right) \leq \gamma, \]
as long as $N$ is sufficiently large. Consequently, $\rho_\model{DRO}^*\rightarrow \rho^*$ in probability.
\end{theorem}
While Theorem~\ref{thm:converge:dro} provides a rate for $\epsilon$ that ensures convergence of the DRO risk to the true optimal risk in probability, the values of the constants depend on the unknown underlying probability distribution.  In practice, $\epsilon$ needs to be estimated using CV. However, as discussed in the previous sections, estimating true risk from finite data is challenging. To address this, we next employ the bias-aware estimation procedure described in Section~\ref{sec:bias_correct}.

\subsection{Bias-aware cross validation}
A common approach to select the radius $\epsilon$ of the ambiguity set is through $K$-fold CV. For each $\epsilon\in\mathcal E$, $K$-fold CV aims to estimate the true performance of policy $\z^*(\hat{\mathbb P}_{N},\epsilon)$ resulting from problem~\eqref{eq:model:wassinfty}, i.e.,   $\rho_{\mathbb P}(\ell(\z^*(\hat{\mathbb P}_{N},\epsilon), \bxi))$, and subsequently select the  $\epsilon$ that minimizes this risk. The approach divides the dataset into $K$ folds. For each fold, we optimize the DRO model on
$K-1$ folds and evaluate the solution's performance on the remaining fold, repeating this process for all folds.  Specifically, for each candidate value of $\epsilon$, the model in problem~\eqref{eq:model:wassinfty} is solved using the training data $\hat{\mathbb P}_{-k}^K$ from all folds except the $k$-th fold to determine $\z^*(\hat{\mathbb P}_{-k}^K,\epsilon)$ which is then evaluated on the validation data to obtain $\rho_{\bxi\sim\hat{\mathbb P}_k^K}(\ell(\z^*(\hat{\mathbb P}_{-k}^K,\epsilon), \bxi))$, where $\hat{\mathbb P}_k^K$ denotes the empirical distribution of scenarios in fold $k$. The resulting estimator for a given radius $\epsilon$ is then given  $\rho_{k\sim U(K)}(\rho_{\bxi\sim\hat{\mathbb P}_k^K}(\ell(\z^*(\hat{\mathbb P}_{-k}^K,\epsilon), \bxi)))$, where $U(K)$ is the  uniform distribution over the set $\{1,2,\ldots,K\}$.  Since the goal is to minimize  risk, we choose  $\epsilon$ that minimizes the validation risk, i.e., $\epsilon^* = \argmin_{\epsilon\in\mathcal E}\rho_{k\sim U(K)}(\rho_{\bxi\sim \hat{\mathbb P}_k^K}(\ell(\z^*(\hat{\mathbb P}_{-k}^K,\epsilon), \bxi)))$. 
However, for each value of radius $\epsilon$ and choice of $K$, the following proposition shows that  the  entropic risk estimator     $\rho_{k\sim U(K)}(\rho_{\bxi\sim\hat{\mathbb P}_k^K}(\ell(\z^*(\hat{\mathbb P}_{-k}^K,\epsilon), \bxi)))$   based on the K-fold CV, underestimates the entropic risk of the policy constructed using $N(1-\frac{1}{K})$ data points.
\begin{proposition}\label{prop:k-fold}
    Given $\epsilon\in\mathcal E$, 
     \begin{equation}\label{Kfold}
        \Expect[\rho_{k\sim U(K)}(\rho_{\bxi\sim \hat{\mathbb P}_k^K}(\ell(\z^*(\hat{\mathbb P}_{-k}^K,\epsilon), \bxi)))]<\rho_{\mathbb P}(\ell(\z^*(\hat{\mathbb P}_{N(1-\frac{1}{K})},\epsilon), \bxi)).
         \end{equation} 
\end{proposition}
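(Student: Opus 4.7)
The plan is to derive the inequality via a conditional application of strict Jensen's inequality combined with a symmetry argument across folds. The crucial structural feature of K-fold CV that the proof will exploit is that, for each fold $k$, the policy $\z_k^* := \z^*(\hat{\mathbb P}_{-k}^K,\epsilon)$ is a function only of training samples \emph{outside} fold $k$ and is therefore independent of the validation samples in $\hat{\mathbb P}_k^K$, which remain i.i.d.\ from $\mathbb{P}$ conditional on $\z_k^*$.

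The first step is to fix the training data in folds $\{1,\ldots,K\}\setminus\{k\}$, so that $\z_k^*$ becomes deterministic. Writing out the definition of the empirical entropic risk and applying strict Jensen's inequality to the strictly concave function $\frac{1}{\alpha}\log(\cdot)$ over the $N/K$ i.i.d.\ validation samples $\{\hat{\bxi}_{k,j}\}_{j=1}^{N/K}$ yields
\[
\Expect\!\left[\rho_{\bxi\sim\hat{\mathbb P}_k^K}(\ell(\z_k^*,\bxi)) \,\Big|\, \z_k^*\right] < \frac{1}{\alpha}\log\!\left(\Expect_{\bxi\sim\mathbb{P}}[\exp(\alpha\ell(\z_k^*,\bxi))]\right) = \rho_\mathbb{P}(\ell(\z_k^*,\bxi)),
\]
with strict inequality holding whenever $\ell(\z_k^*,\bxi)$ is not almost surely constant (the standard non-degeneracy condition on the loss). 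This is just a conditional version of the Jensen bound already used earlier in the paper to establish that the empirical entropic risk underestimates the true entropic risk.

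The second step is to aggregate this per-fold bound. Taking expectations over the training data, combining with the outer aggregation $\rho_{k\sim U(K)}$ (which, interpreted as the empirical entropic risk across folds, reduces by construction to the empirical entropic risk over all $N$ validation samples with fold-dependent policies), and re-applying Jensen's inequality to the pooled validation samples produces
\[
\Expect\!\left[\rho_{k\sim U(K)}\big(\rho_{\bxi\sim\hat{\mathbb P}_k^K}(\ell(\z_k^*,\bxi))\big)\right] < \Expect\!\left[\tfrac{1}{K}\sum_{k=1}^K \rho_\mathbb{P}(\ell(\z_k^*,\bxi))\right].
\]
Finally, by symmetry of the K-fold partitioning, each $\z_k^*$ has the same distribution as $\z^*(\hat{\mathbb P}_{N(1-1/K)},\epsilon)$ (it is a measurable function of $N(1-1/K)$ i.i.d.\ draws from $\mathbb{P}$), so the per-fold expectations coincide, and the right-hand side collapses to $\Expect[\rho_\mathbb{P}(\ell(\z^*(\hat{\mathbb P}_{N(1-1/K)},\epsilon),\bxi))]$, as desired.

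The main obstacle is ensuring that the direction of the inequality is preserved through the outer $\rho_{k\sim U(K)}$ aggregation: the composition of $\frac{1}{\alpha}\log(\cdot)$ with the empirical average over folds would ordinarily yield a quantity that is \emph{larger} than the simple average of per-fold risks, which cuts against the desired conclusion. Handling this cleanly requires flattening the two nested log-averages into a single log-average over all $N$ validation points (where each point is paired with the policy of its fold), so that only one invocation of Jensen's inequality is made on an expression whose conditional mean is exactly $\tfrac{1}{K}\sum_k \exp(\alpha\rho_\mathbb{P}(\ell(\z_k^*,\bxi)))$; the final symmetry step then yields the expected true risk of a policy trained on $N(1-1/K)$ points.
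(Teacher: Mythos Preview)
Your overall strategy—flattening the nested entropic risks into one log-average, applying Jensen once, then using fold symmetry—is exactly the paper's route. But the execution has a genuine gap.

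Your displayed Step-2 inequality and your final right-hand side are not what the argument delivers. After flattening, the left side reads $\Expect\bigl[\tfrac{1}{\alpha}\log\bigl(\tfrac{1}{N}\sum_{k,j}\exp(\alpha\ell(\z_k^*,\hat{\bxi}_{k,j}))\bigr)\bigr]$, and a single Jensen application gives $\tfrac{1}{\alpha}\log\bigl(\Expect[\,\cdot\,]\bigr)$, not an expectation of a log. Computing the inner expectation term-by-term (conditioning each summand on its own $\z_k^*$ and using independence of $\hat{\bxi}_{k,j}$ from $\z_k^*$) yields $\Expect[\exp(\alpha\ell(\z_k^*,\hat{\bxi}_{k,j}))]=\Expect\bigl[\exp\bigl(\alpha\,\rho_{\mathbb P}(\ell(\z_k^*,\bxi))\bigr)\bigr]$. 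After fold symmetry the upper bound becomes $\tfrac{1}{\alpha}\log\Expect\bigl[\exp\bigl(\alpha\,\rho_{\mathbb P}(\ell(\z_1^*,\bxi))\bigr)\bigr]$, i.e.\ the \emph{entropic risk over the training randomness} applied to $\rho_{\mathbb P}(\ell(\z_1^*,\bxi))$. By the tower property this equals the entropic risk over both the training data and a fresh $\bxi\sim\mathbb P$, which is precisely how the proposition's right-hand side is meant to be read. Your claimed bound $\Expect\bigl[\tfrac{1}{K}\sum_k\rho_{\mathbb P}(\ell(\z_k^*,\bxi))\bigr]$ is a \emph{smaller} quantity (Jensen again), and nothing in your argument establishes it.

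Relatedly, the ``conditional mean'' claim in your last paragraph is ill-posed: you cannot condition on all of $\z_1^*,\dots,\z_K^*$ simultaneously and still treat each $\hat{\bxi}_{k,j}$ as a fresh draw from $\mathbb P$, because every validation point in fold $k$ sits in the training set of $\z_{k'}^*$ for all $k'\neq k$. The only clean move is the unconditional expectation followed by per-term conditioning on the matching $\z_k^*$—and that lands on the paper's right-hand side, not yours. (Your Step~1 is never actually used once you flatten.)
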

The proof of the above proposition follows from Jensen's inequality and tower property of entropic risk measure which states that $\rho(\bzeta)=\rho(\rho(\bzeta_1|\bzeta))$ for random variables $\bzeta_1, \bzeta_2$. Note that this property is satisfied only by  entropic risk measure  in the family of  law-invariant risk measures \citep{Kupper_Schachermayer_2009}.
Notice that for large values of $K<N$, $N(1-\frac{1}{K})$ approaches $N$, thus the right-hand-side of \eqref{Kfold} mimics the performance of the  solution that uses all $N$ data points, that is, $\rho_{\mathbb P}(\ell(\z^*(\hat{\mathbb P}_{N},\epsilon), \bxi))$.
\begin{algorithm}[t]
\caption{K-fold cross validation \label{alg:KFold}}
\begin{algorithmic}[1]
\Function{K-foldCV}{$K, \Dist_N, \epsilon$}
    \State $\mathcal{S} \gets \emptyset$
    \For{$k \gets 1$ \textbf{to} $K$}
        \State $\Dist_{-k} \gets \Dist_N \setminus \Dist_k$ \Comment{Training data (all samples except those in fold $k$)}
       \State $\hat{\Prob}^K_{-k} \gets$ empirical distribution of scenarios in $\Dist_{-k}$
        \State Solve  problem \eqref{eq:model:wassinfty} with distribution $\hat{\Prob}^K_{-k}$ and radius $\epsilon$ to get $\z^*(\hat{\Prob}^K_{-k} , \epsilon)$  
        \State $\mathcal{S} \gets \mathcal{S} \cup \{\ell(\bxi, \z^*(\hat{\Prob}^K_{-k} , \epsilon)) \mid \bxi \in \Dist_k\}$ 
    \EndFor
    \State \Return $\mathcal{S}, \rho_{k\sim U(K)}(\rho_{\bxi\sim \hat{\Prob}_{k}^K}(\ell( \z^*(\hat{\Prob}^K_{-k} , \epsilon), \bxi)))$
\EndFunction
\end{algorithmic}
\end{algorithm} 
To mitigate the underestimation of the entropic risk, we propose using the bias-aware bootstrap procedure described in Algorithm~\ref{alg:bootstrap_with_GMM} together with either   Algorithm~\ref{alg:riskmatching} based on entropic risk matching,  or  with Algorithm~\ref{alg:fitGMMevt} based on extreme value theory.  Algorithm~\ref{alg:radiustune} describes our proposed approach for selecting the optimal $\epsilon$, with Algorithm~\ref{alg:KFold} describing the $K$-fold CV step. One can recover the traditional biased CV  procedure by setting $\delta = 0$ in line \ref{corrct:radius} of Algorithm~\ref{alg:radiustune}.

A pictorial representation of the debiasing effect in optimization problems can be seen in  Figure~\ref{fig:project}. If the aim of Example \ref{ex:projects} is to   select the project with the lowest estimated risk, then most methods would favor project 3. However, by accurately estimating and correcting for bias--using our proposed approaches--project 1 becomes the preferred choice. Similar behavior is also observed when choosing  $\epsilon$ using $K$-fold CV, where the problem parallels   Example~\ref{ex:projects} in which the projects can be seen as corresponding to different regularization parameters $\epsilon$, with lower values of $\epsilon$ representing riskier projects.
As we will see in the following section, the solution based on our proposed approach significantly outperforms traditional CV procedure. 

\begin{algorithm}[H]
\caption{Radius selection for DRO \label{alg:radiustune}}
\begin{algorithmic}[1]
\Function{RadiusTuning}{$\Dist_N , K, M$}
    \For{$\epsilon \in \mathcal{E}$}
       \State $\mathcal{S}, \hat{\rho} \gets \text{K-foldCV}(K, \Dist_N, \epsilon)$ 
            \State \fbox{$\delta \gets \text{BootstrapBiasCorrection}(\mathcal{S}, M)$} \Comment{ Algorithm~\ref{alg:bootstrap_with_GMM} }\label{corrct:radius}
            \State $\rho(\epsilon) \gets \hat{\rho}+ \delta$ 
    \EndFor
    \State $\epsilon^* \gets \arg\min_{\epsilon \in \mathcal{E}} \rho(\epsilon)$ \label{alg:step:opt_radius}
\EndFunction
\end{algorithmic}
\end{algorithm}

}
\section{Distributionally robust insurance policy}\label{sec:DRIP}
The US National Flood Insurance Program (NFIP) provides flood coverage at subsidized premiums but faces significant challenges due to the large, correlated losses it insures against. These losses often result in claims exceeding the cumulative premiums collected over time \citep{Marcoux_H_Wagner_2023}. Consequently, the NFIP currently operates with a deficit exceeding \$20 billion and is compelled to consider raising premiums \citep{Marcoux_H_Wagner_2023}. However, higher premiums often deter households from purchasing coverage. This reluctance stems from how individuals perceive risk, which is frequently shaped by empirical losses rather than statistical estimates \citep{Kousky_Cooke_2012}. As a result, households tend to underestimate the risks associated with rare events. Demand for insurance, therefore, typically spikes only after catastrophic disasters \citep{Gallagher_2014}. In other words, individuals who have not experienced a catastrophic flood event are more likely to underestimate the associated risks. Surveys indicate that people exposed to flood risk but without firsthand experience of similar disasters often exhibit overly optimistic views about the threats posed by climate change. This optimism has been linked to houses in high flood-risk areas being overvalued by 6–9\% \citep{Bakkensen_Barrage_2021}. Furthermore, NFIP premium \revised{subsidy} reductions, combined with advances in risk estimation and flood risk mapping, have been shown to decrease house prices in high-risk areas \citep{Hino_Burke_2021}. Such behavioral responses are not unique to flood insurance markets. \cite{Herrnstadt_Sweeney_2024} use a difference-in-differences method to show that the prices of houses within 500 feet of a gas pipeline in the Bay Area dropped by \$383 per household following the deadly 2010 pipeline explosion in San Francisco. Moreover, residents’ perceptions of risk increased significantly above the empirical average for several years after the explosion. Interestingly, however, the prices of properties located 2,000 feet from the pipeline remained unaffected, despite being classified as high-risk. This discrepancy underscores the impact of firsthand experiences of catastrophic events on risk perception. \revised{This directly impacts the premium and coverage policies that are acceptable to a household. Consequently, an insurer's assessment of their risk exposure needs to take into account the households' risk perception.}

We consider an insurance pricing problem, with one risk-averse insurer and $M$ representative risk-averse households. The proposed model can account for correlated losses and asymmetry in the perception of risk measured by the empirical loss distributions at the household level.
Let $\alpha_h$ denote the risk aversion of household $h$, and let $\alpha_0$ represent the insurer's risk aversion parameter. The uncertain loss faced by household $h$ is represented by $\xi_h$. The insurer offers a policy $(z_h, \pi_h)$ to household $h$, where the indemnity function $z_h \xi_h$ specifies the coverage provided for their loss $\xi_h$, and $\pi_h$ is the corresponding premium paid by household $h$. Consequently, the net loss faced by household $h$ under this policy is given by $ \pi_h+(1-z_h)\xi_h$. Let $\hat{\Prob}_{h, N}$ be the empirical distribution of losses faced by household $h$. The insurer's demand response model assumes that household $h$ will accept the policy $(z_h, \pi_h)$ if the empirical entropic risk with insurance is less than the entropic risk without insurance. This condition is expressed by the following constraint:
\begin{equation}\label{demand_response}\rho^{\alpha_h}_{\hat{\Prob}_{h,N}}\left(\pi_h+(1-z_h)\xi_h \right) \leq \rho^{\alpha_h}_{\hat{\Prob}_{h,N}}\left(\xi_h \right).
 \end{equation}

The insurer aims to minimize risk across the policies offered to all households. Accordingly, the insurer's true entropic risk is given by $\rho^{\alpha_0}_{\mathbb P}\left(\boldsymbol{z}^\top \bxi - \1^\top \bm\pi \right)$, where $\1$ is the vector of ones of the appropriate dimension. Since the true joint distribution, $\mathbb{P}$, of losses across all households with marginals $\mathbb P_h$, is unknown, the insurer replaces it with the empirical distribution $\hat{\Prob}_N$ and solves the following optimization problem to determine the policies offered to the $M$ households:
\begin{equation}\label{SAA_insurance}
\begin{array}{lll}
    \min \quad & \rho^{\alpha_0}_{\hat{\Prob}_N}\left( \boldsymbol{z}^\top \bxi- \1^\top \bm \pi \right)\\
    \text{s.t.} & \bm \pi\in\mathbb R_+^M,\, \bm z\in[0,1]^M\\
    &\rho^{\alpha_h}_{\hat{\Prob}_{h,N}}\left(\pi_h+(1-z_h)\xi_h\right) \leq \rho^{\alpha_h}_{\hat{\Prob}_{h,N}}\left( \xi_h \right) &\forall h \in [M].
    \end{array}
\end{equation}
As discussed in the previous section, decisions based on the empirical data can be optimistically biased. To address this, the insurer solves the following distributionally robust insurance pricing problem, which minimizes the worst-case entropic risk:
\begin{equation*}
\begin{array}{lll}
    \min &\displaystyle\sup_{\Q \in \mathcal{B}_{\infty}(\epsilon)} \; \rho^{\alpha_0}_{\Q}\left( \boldsymbol{z}^\top \bxi-\1^\top \bm \pi \right)\\
    \text{s.t.} & \bm \pi\in\mathbb R_+^M,\, \bm z\in[0,1]^M\\
    & \rho^{\alpha_h}_{\hat{\Prob}_{h,N}}\left(\pi_h+(1-z_h)\xi_h \right) \leq \rho^{\alpha_h}_{\hat{\Prob}_{h,N}}\left( \xi_h \right) &\forall h \in [M],
\end{array}
\end{equation*}
where $\Q$ lies in the type-$\infty$ Wasserstein ambiguity set given in \eqref{def:wass_infty}. Since the loss function is linear, it follows from  Corollary~\ref{Corollary:DRO} in Appendix~\ref{sec:appenddro}  that the   problem can be reformulated as the following regularized exponential cone program:
\begin{equation}\label{insurance_regularized}
\begin{array}{lll}
    \min & \rho^{\alpha_0}_{\hat{\Prob}_N}\left( \boldsymbol{z}^\top \bxi-\1^\top \bm \pi \right) + \epsilon\|\V{z}\|_{*}\\
    \text{s.t.}  & \bm \pi\in\mathbb R_+^M,\, \bm z\in[0,1]^M\\
    & \rho^{\alpha_h}_{\hat{\Prob}_{h,N}}\left( \pi_h+(1-z_h)\xi_h \right) \leq \rho^{\alpha_h}_{\hat{\Prob}_{h,N}}\left(\xi_h \right) &\forall h \in [M].
\end{array}
\end{equation}

The proposed setting is motivated by \cite{Bernard_Liu_2020}, who assumes that both the insurer and households are expected utility maximizers, with complete information on the true loss distributions for each household and their risk aversion parameters. Our approach relaxes the assumption of known loss distributions by providing only samples of the loss distribution to both the insurer and the households.  While \cite{Bernard_Liu_2020} impose some additional assumptions to analytically characterize pricing and coverage decisions under partially correlated risks, our data-driven method formulates the problem as a tractable exponential cone program \eqref{SAA_insurance} which needs to be solved numerically. Additionally, the insurer's robustified (DRO) problem \eqref{insurance_regularized} retains this tractability. 
At  optimality, the constraints hold with equality due to the monotonicity of entropic risk measure,  that is,   $\rho^{\alpha_h}_{\hat{\Prob}_{h,N}}\left( \pi_h+(1-z_h)\xi_h \right) = \rho^{\alpha_h}_{\hat{\Prob}_{h,N}}\left(\xi_h \right) $ which can be equivalently written as: 
\revised{\begin{equation}\label{policy}
\pi_h = \rho^{\alpha_h}_{\hat{\Prob}_{h,N}}\left(\xi_h \right) -\rho^{\alpha_h}_{\hat{\Prob}_{h,N}}\left((1-z_h)\xi_h \right) = \frac{1}{\alpha_h} \log \left(\frac{\Expect_{\hat{\Prob}_{h,N}}[\exp(\alpha_h \xi_h)]}{\Expect_{\hat{\Prob}_{h,N}}[\exp(\alpha_h (1-z_h)\xi_h)]} \right).
\end{equation}}
The demand response model \eqref{demand_response}, which links premiums to coverage, accounts for the asymmetry in risk perception between households, who rely on $\hat{\mathbb{P}}_{h, N}$, and the insurer, who uses $\hat{\mathbb{P}}_{N}$. This flexible framework adapts to households' evolving responses as additional information is incorporated through observed loss events, thereby updating the empirical distributions. 
It is worth noting that our study could easily be adapted to
accommodate alternative demand response models as long as the insurer's valuations of the insurance is a concave function of coverage. Moreover, the regulatory constraints enforcing minimum coverage requirements for each household could easily be integrated.

\section{Numerical Experiments}\label{sec:num_ins_price}

The numerical experiments conducted in this section demonstrate the effectiveness of our proposed distributionally robust insurance pricing model under various conditions. Our main goal is to evaluate how different calibration methods for the radius $\epsilon$ influence the insurer's out-of-sample entropic risk and investigate the structure of the optimal policies $(\bm z,\bm \pi)$ offered to the households. Each household’s loss follows a Gamma distribution $ \Gamma(\kappa_h, \lambda_h)$, with shape $\kappa_h$ and scale $\lambda_h$ parameters specific to each household \( h \in [M] \). 
The correlation among the losses of different households is modeled using a Gaussian copula, with $\Sigma = r \mathbf{1}\mathbf{1}^\top + (1 - r) I$, 
where $r$ controls the amount of correlation among the different households, \( \mathbf{1} \) is a vector of all ones, and \( I \) is the identity matrix, \revised{see \cite{genest2007metaelliptical} and \cite{shi2020regression} for a detailed discussion on flood risk modeling using copulas and gamma distributions.}

In the following experiments, we consider $M = 5$ households with risk aversion parameters $\alpha_1, \alpha_2, \alpha_3, \alpha_4, \alpha_5 = 2.9, 2.7, 2.5, 2.3, 2.1$ and set the insurer's risk aversion parameter to $\alpha_0=2$. In  Algorithm~\ref{alg:radiustune}, we select the radius of the ambiguity set $\epsilon$ from set $\mathcal E$ which contains twenty equally-spaced values in the interval $[0, 6]$. 
To generate an instance, we sample $N$ loss scenarios for each household and evaluate the out-of-sample performance by generating \revised{$10^7$} i.i.d.\ data points (test data). This procedure is repeated over $100$ instances to obtain the statistics presented in the subsequent sections.

We consider five different calibration methods. Models \model{BS-Match} and \model{BS-EVT} use the $5$-fold CV procedure in
Algorithm~\ref{alg:radiustune} together with Algorithm~\ref{alg:riskmatching} and   Algorithm~\ref{alg:fitGMMevt} within the bootstrapping procedure, respectively, i.e., $K = 5$. \model{CV} model corresponds to the traditional $5$-fold CV where we set $\delta = 0$ in step \ref{corrct:radius} in Algorithm~\ref{alg:radiustune}.  The model labeled as \model{Oracle} uses the test data \revised{to calibrate} the radius $\epsilon$ at the validation step. Finally, model \model{SAA} solves problem~\eqref{SAA_insurance} and does not involve any calibration.

All experiments described in the following sections were conducted in Python. The MOSEK 10.1 solver was used to solve exponential cone programs, while the entropic risk matching was performed on a GPU using the POT library \citep{Flamary_Courty_2021}.
\subsection{Case with mild correlation}
\begin{figure}[htbp]
    \centering
    \begin{subfigure}[b]{0.49\linewidth}
        \centering
        \includegraphics[width=0.85\linewidth]{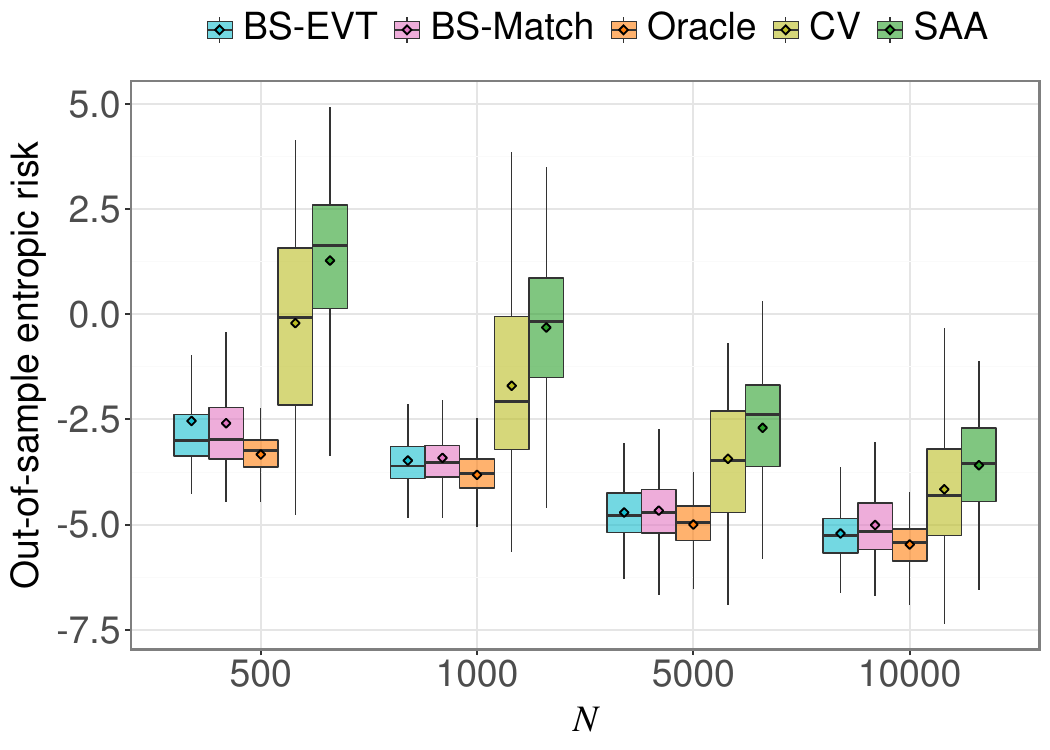}
        \caption{Insurer’s out-of-sample entropic risk.}
        \label{fig:risk_with_N}
    \end{subfigure}
    \hfill
    \begin{subfigure}[b]{0.49\linewidth}
        \centering
        \includegraphics[width=0.85\linewidth]{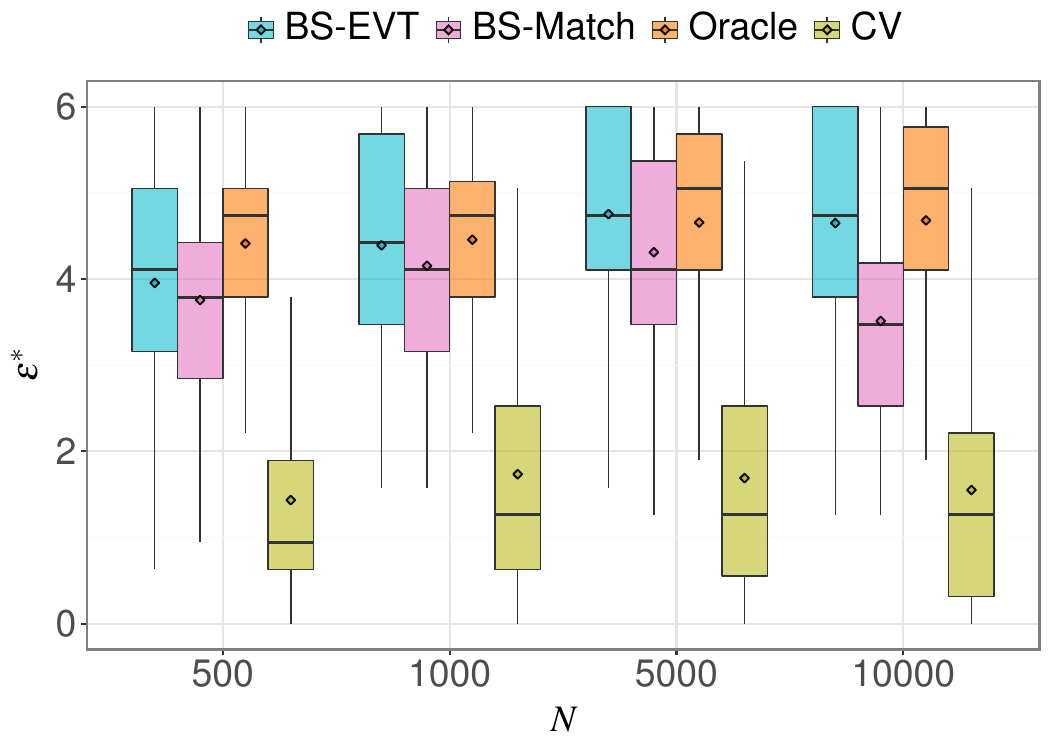}
        \caption{
        Optimal radius $\epsilon^*$ selected by different methods.}
        \label{fig:radius_with_N}
    \end{subfigure}
    \caption{Comparison of the effects of training sample size $N$ on out-of-sample entropic risk (left) and optimal radius $\epsilon^*$ (right). Boxplots present the statistics after $100$ resampling of datasets, and diamonds \revised{present} the mean for each $N$.}
    \label{fig:combined_N}
\end{figure}

\begin{figure}[t]
    \centering
\includegraphics[width=0.75\linewidth]{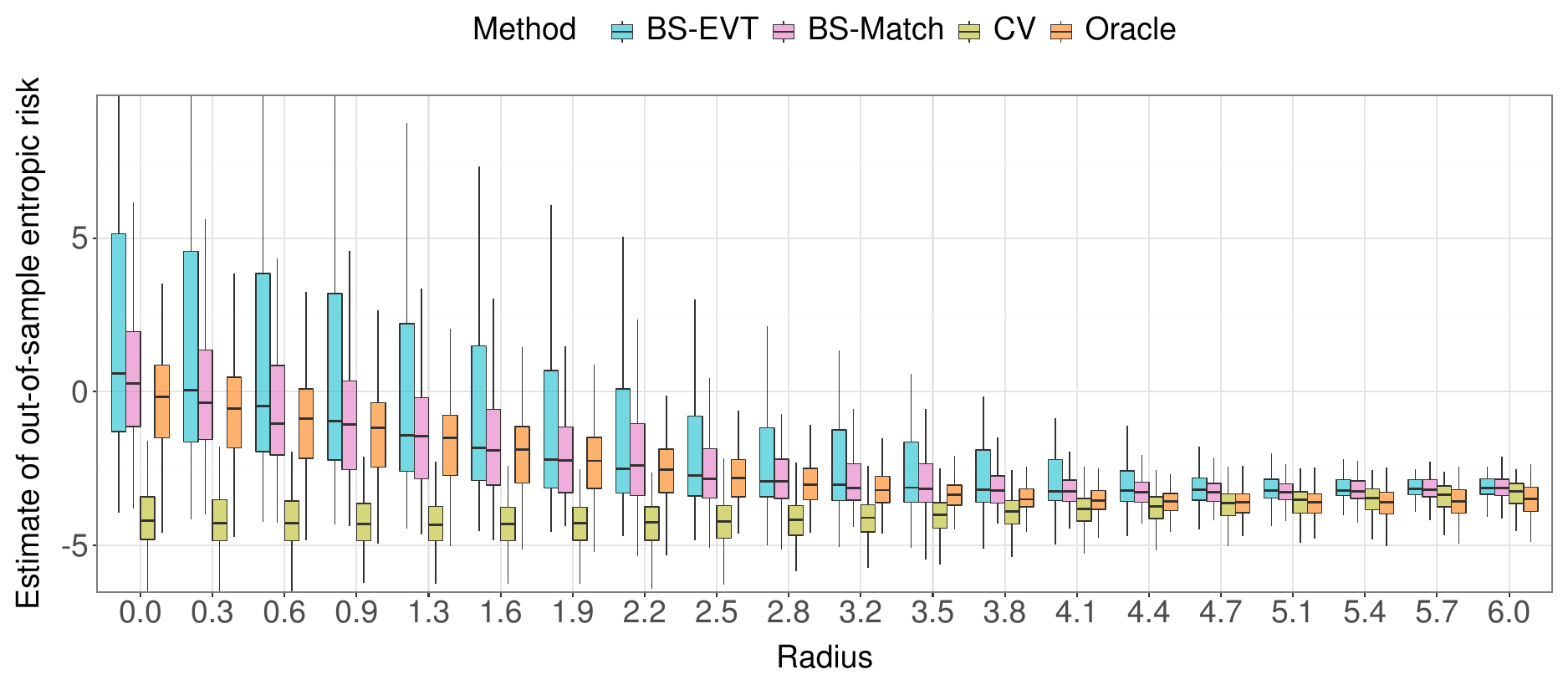}
    \caption{Statistics of entropic risk estimators for different radius and $N=1000$ after $100$ resampling of datasets} 
    \label{fig:est_risk_with_eps_1000}
\end{figure}

In the first experiment, we examine the effect of sample size on the out-of-sample risk observed by the insurer.
In the base scenario, all households have a common Gamma-distributed marginal loss distribution, \revised{$\Gamma(\kappa,\theta)=\Gamma(10,0.45)$ (in thousands of dollars)}, with a correlation coefficient $r = 0.5$ \revised{implying a mean loss of about 4.5K on a 100K property, and a coefficient of variation $\text{CV}=1/\sqrt{\kappa}=0.32$ which is a low-dispersion baseline to represent a mild-loss regime \citep{NRCan_FFDEG_2025_web} for %
pluvial flood losses compared to coastal floods \citep{nelson2025pluvial}}. %
This configuration allows us to %
focus on the effects of sample size $N$ and correlation coefficient $r$ on the insurer’s decisions, controlling for variability from differing marginal distributions. Similar insights hold for cases with heterogeneous marginal distributions among households, discussed in Appendix~\ref{append:diff_marginal}. The results are summarized in Figures~\ref{fig:combined_N}-\ref{fig:premium_with_N_house}.  Figure~\ref{fig:risk_with_N} shows that both \model{BS-Match} and \model{BS-EVT} consistently outperform \model{CV} and \model{SAA} across different sample sizes, while achieving an out-of-sample entropic risk similar to the oracle-based calibration. This can be explained by looking at the ambiguity radius $\epsilon^*$ chosen by each method. In Figure~\ref{fig:radius_with_N}, we observe that \model{CV} typically chooses $\epsilon^*$ values that are significantly lower than the optimal choice, while the \model{BS-Match} and \model{BS-EVT} choices are closer to optimal. This discrepancy results from \model{CV}'s estimation procedure, which underestimates the true entropic risk for each $\epsilon$ (as discussed in Section~\ref{sec:dro}), leading to \revised{selecting} an overly optimistic $\epsilon^*$ in step \ref{alg:step:opt_radius} in Algorithm~\ref{alg:radiustune}. This can be seen in Figure \ref{fig:est_risk_with_eps_1000} where we plot the variation in the estimate of the out-of-sample entropic risk with the radius $\epsilon$ for each model with $N=1000$ (similar plots are obtained in Appendix \ref{append:sec:estimate_out} for $N\in \{500, 5000, 10000\}$). In contrast, \model{BS-Match} and \model{BS-EVT} better estimate the trend in the variation of true entropic risk with $\epsilon$, thus making a more informed choice for  $\epsilon^*$. 

\begin{figure}[t]
    \centering
    \begin{subfigure}[htbp]{0.49\linewidth}
        \centering
        \includegraphics[width=0.85\linewidth]{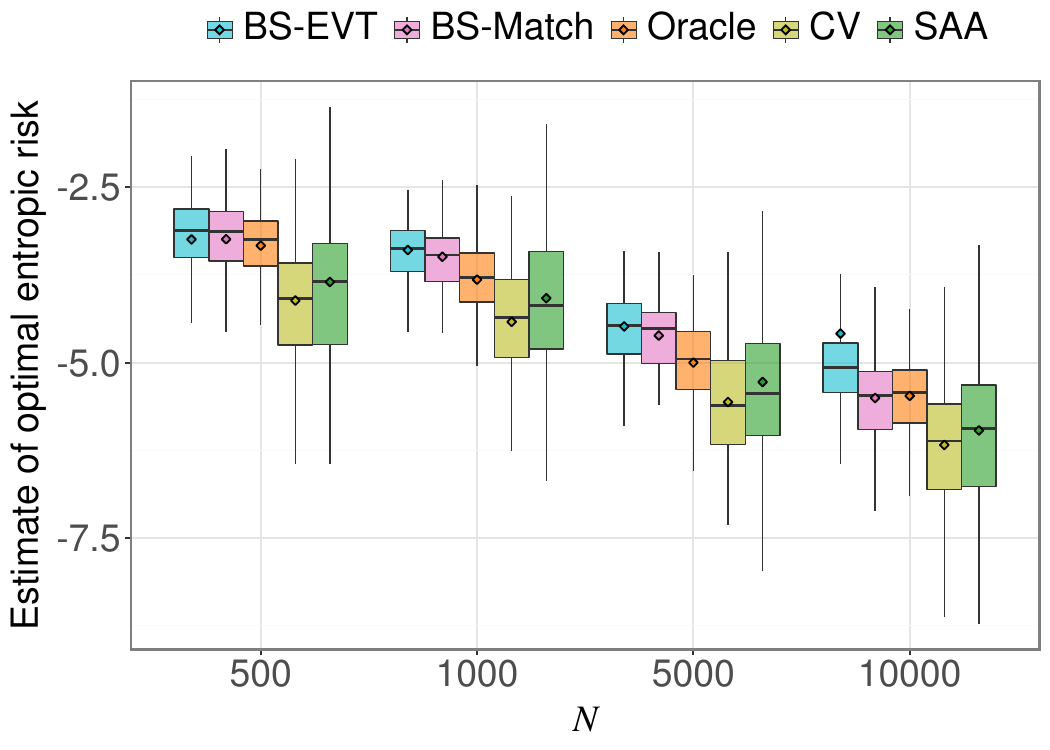}
        \caption{Insurer’s estimate of the optimal entropic risk.}
        \label{fig:est_risk_with_N}
    \end{subfigure}
    \hfill
    \begin{subfigure}[htbp]{0.49\linewidth}
        \centering
        \includegraphics[width=0.85\linewidth]{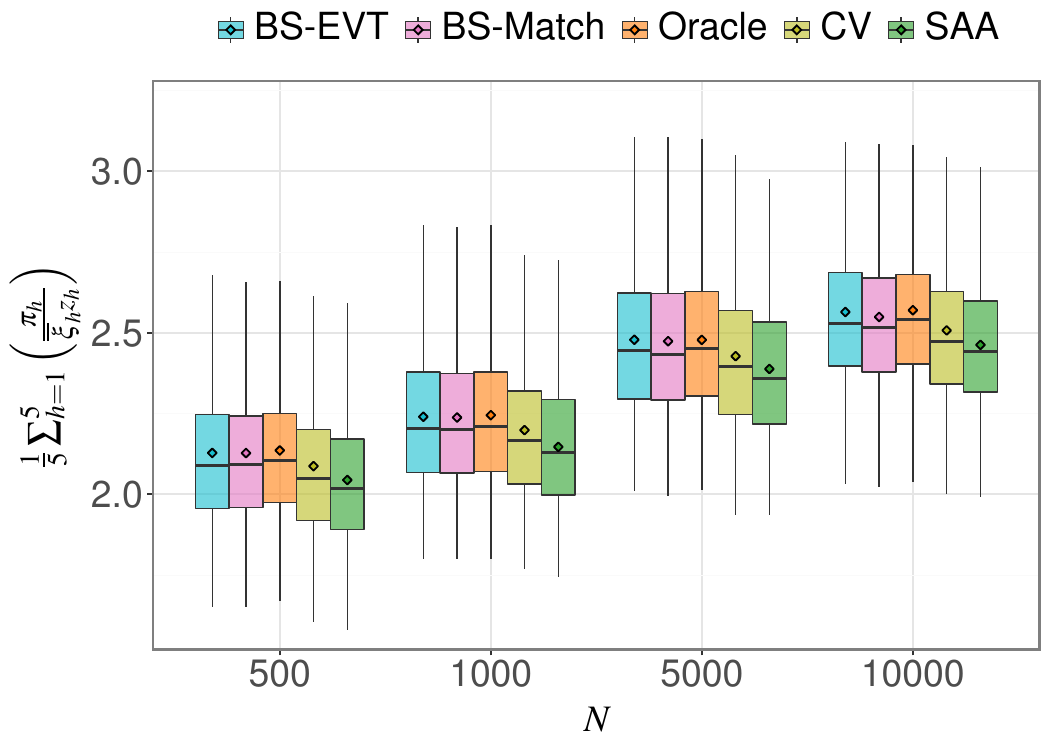}
        \caption{Average 
    optimal premium per unit of expected coverage.}
       \label{fig:premium_N}
    \end{subfigure}
    \caption{Comparison of the effects of number of training samples $N$ on insurer's estimate of optimal entropic risk and average premium per unit of expected coverage.}
    \label{fig:combined_N_est_premium}
\end{figure}

Figure~\ref{fig:est_risk_with_N} depicts the estimation of the optimal entropic risk computed by each method. The results have a similar interpretation as Figure~\ref{fig:project} in Section~\ref{benchmark_section}. 
We observe a significant underestimation of the true entropic risk by \model{CV} and \model{SAA}, while the estimates of the \model{BS-Match} and \model{BS-EVT} stay close to the estimates of the optimal calibration \model{Oracle}. We observe that as the sample size increases, the optimal risk decreases, and the same holds for each method’s estimate of the optimal risk.
This is because it is optimal for households to pay higher premiums due to the increase in their estimate of the risk of their respective loss.
  \begin{figure}[b]
    \centering
\includegraphics[width=0.6\linewidth]{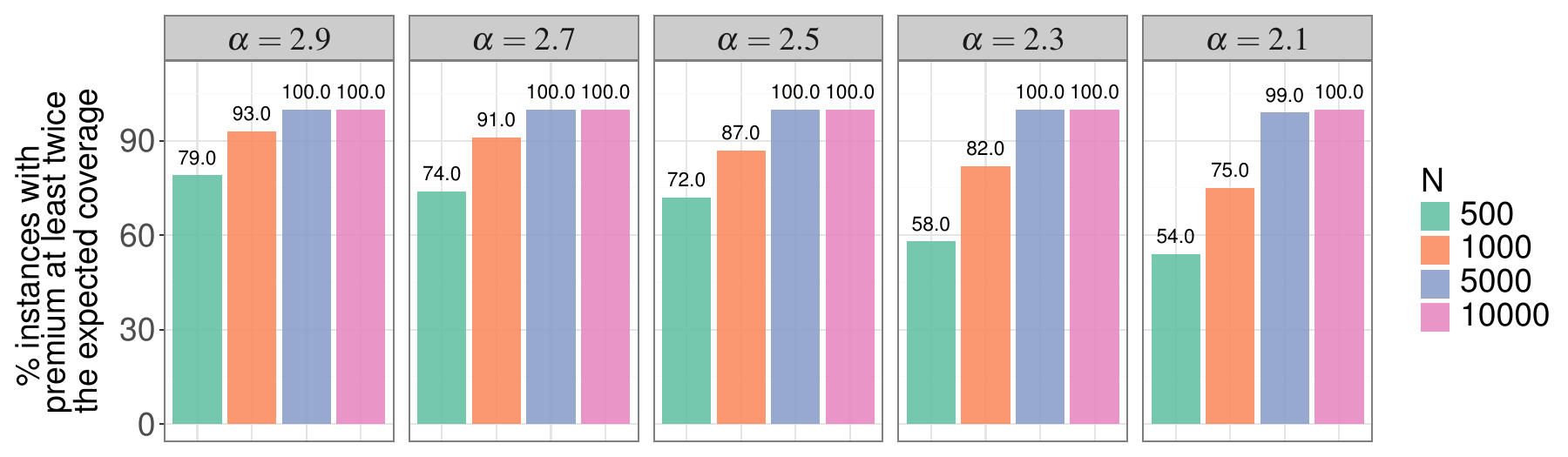}
    \caption{Effect of the number of training samples on the proportion of instances where premium exceeds twice the expected coverage for \model{BS-Match}. Households become less risk averse as we go from the left of the panel to the right one. }
    \label{fig:premium_with_N_house}
\end{figure}
Consequently, the insurer can charge higher premiums per unit expected coverage  as a function of $N$, see Figure \ref{fig:premium_N} 
where the average premium per unit expected coverage across the $5$ households is given $(1/5)\sum_{h=1}^5\pi_h/(\bar{\xi}_hz_h)$ with $\bar{\xi}_h$ as the expected loss of household $h$. We observe an increase in this ratio as $N$ increases, which can be explained by the observation that both the insurer and households become more capable of accurately estimating their true risk, thus enabling the insurer to extract higher premiums from households for the same coverage level. Moreover, \model{CV} and \model{SAA} charge a lower premium per unit of expected coverage compared to our proposed approaches because they underestimate the risk. Namely, both methods are overly optimistic regarding how to correct the estimation error due to sampling, effectively using an $\epsilon$ that is too low. 

To illustrate the variation in the households' willingness to pay for insurance as a function of the number of training samples, we use as a proxy the proportion of instances where the premium is at least twice the expected coverage. For \model{BS-Match}, Figure~\ref{fig:premium_with_N_house} illustrates that highly risk-averse households pay higher premiums per unit of expected coverage more frequently, even when the number of training samples is low. Additionally, as  $N$ increases, the proportion approaches $100\%$ for all households.

\begin{figure}[t]
    \centering
    \begin{subfigure}[b]{0.49\linewidth}
        \centering
        \includegraphics[width=0.85\linewidth]{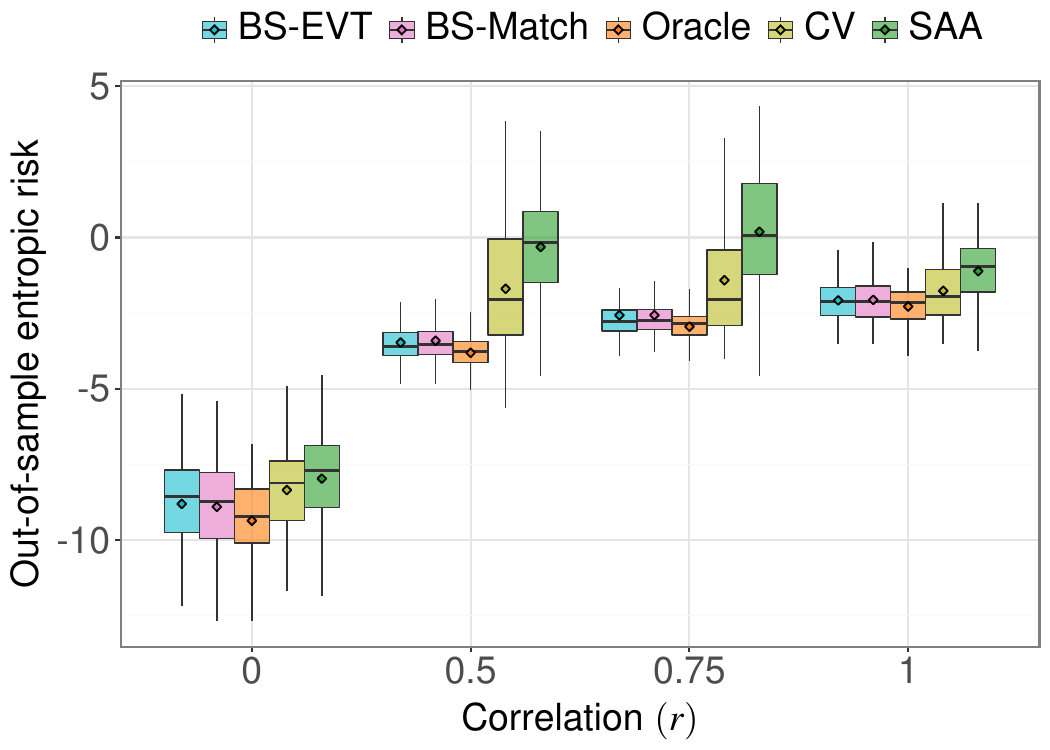}
        \caption{Insurer’s out-of-sample entropic risk.}
        \label{fig:risk_with_r}
    \end{subfigure}
    \hfill
    \begin{subfigure}[b]{0.49\linewidth}
        \centering
        \includegraphics[width=0.85\linewidth]{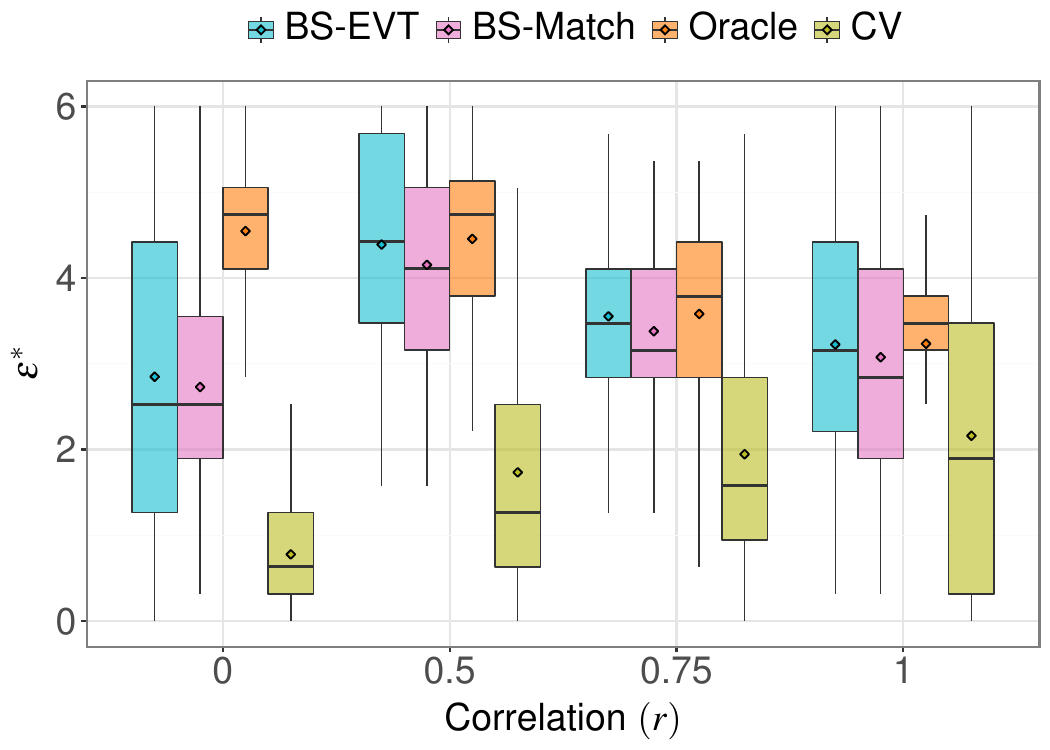}
        \caption{Optimal radius $\epsilon^*$ selected by different methods.}
        \label{fig:radius_with_r}
    \end{subfigure}
    \caption{Comparison of the effects of correlation coefficient $r$ on out-of-sample entropic risk (left) and optimal radius $\epsilon^*$ (right).}
    \label{fig:combined_r}
\end{figure}

\begin{figure}[htbp]
    \centering
    \begin{subfigure}[t]{0.49\linewidth}
        \centering
        \includegraphics[width=0.85\linewidth]{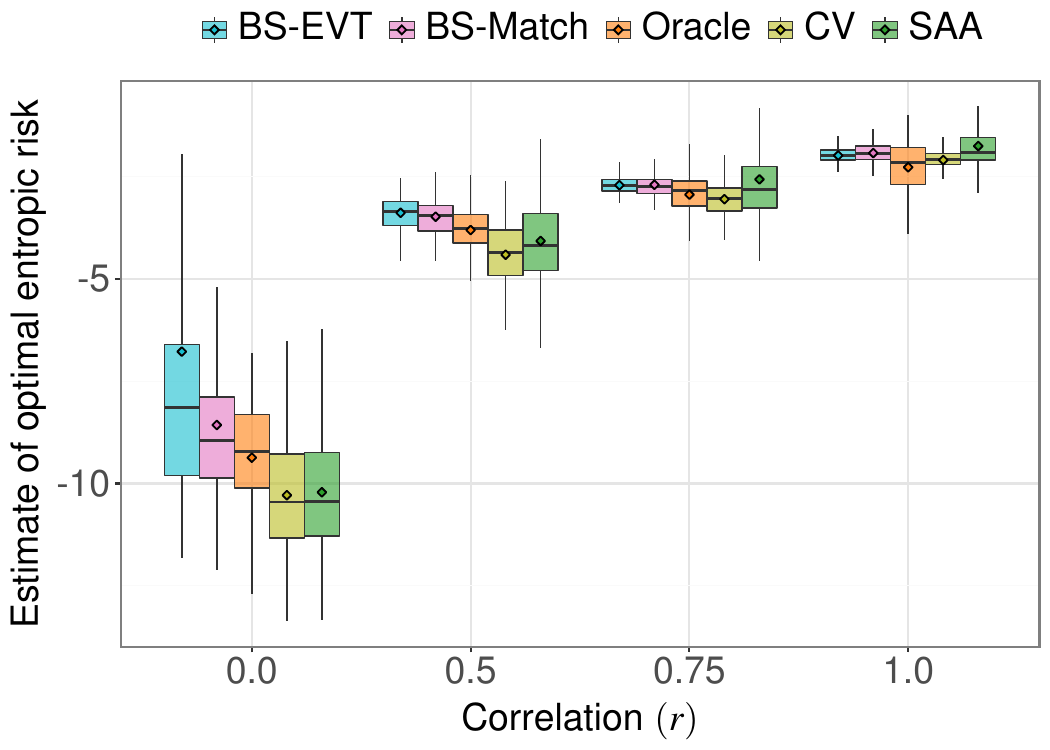}
        \caption{Insurer’s estimate of the optimal  entropic risk.}
        \label{fig:est_risk_with_r}
    \end{subfigure}
    \hfill
    \begin{subfigure}[t]{0.49\linewidth}
        \centering
        \includegraphics[width=0.85\linewidth]{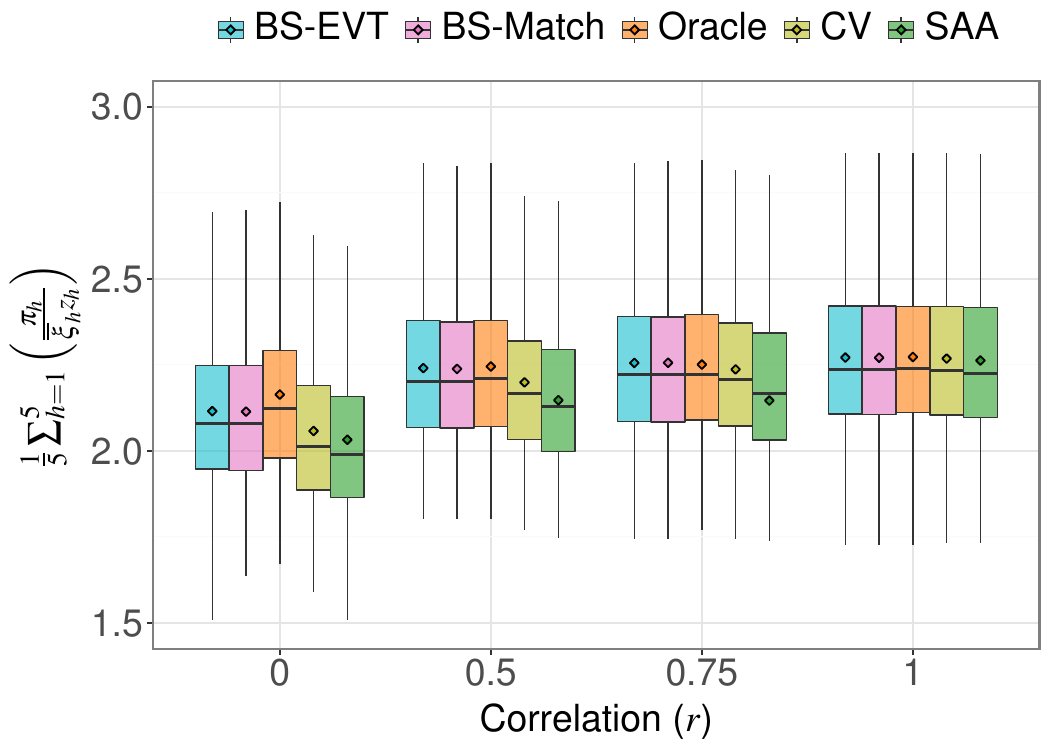}
        \caption{Average 
        optimal premium per unit of expected coverage.}
       \label{fig:premium_with_r}
    \end{subfigure}
    \caption{Comparison of the effects of correlation coefficient $r$ on estimate of optimal entropic risk and premium per unit expected coverage.}
    \label{fig:combined_r_est_premium}
\end{figure}

\begin{figure}[htbp]
    \centering
    \begin{subfigure}[t]{\linewidth}
        \centering
        \includegraphics[width=0.85\linewidth]{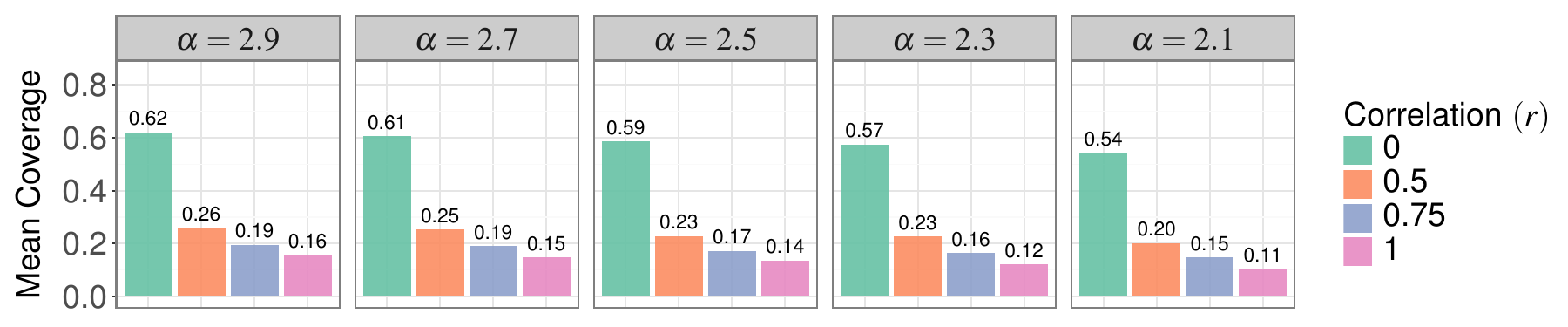}
        \caption{\model{BS-Match}.}
        \label{fig:cov_r_house}
    \end{subfigure}

    \vspace{0.5cm} %

    \begin{subfigure}[t]{\linewidth}
        \centering
        \includegraphics[width=0.85\linewidth]{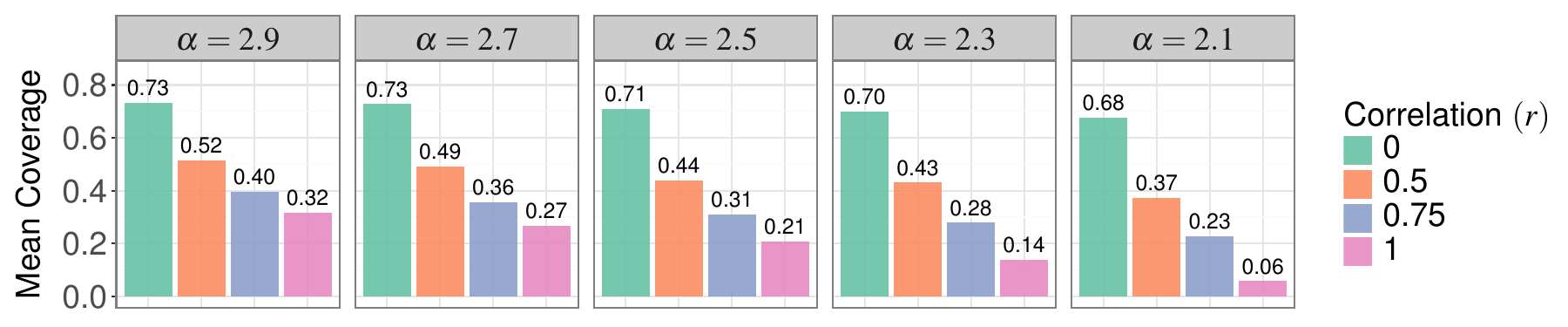}
        \caption{\model{SAA}. }
        \label{fig:cov_r_house_saa}
    \end{subfigure}

    \caption{Effect of correlation on the coverage proportion offered to households
        averaged across $100$ instances. Each panel represents a different household. Households become less risk-averse as we go from the left panel to the right one.}
    \label{fig:cov_comparison}
\end{figure}
\subsection{Effect of correlations}

In the second experiment, we fix the sample size at \( N = 1000 \). Similar to the previous experiment, we assume that all households share a common marginal loss distribution modeled by a Gamma distribution, \(\Gamma(10, 0.45)\). However, in this experiment, we vary the parameter \( r \), which controls the pairwise correlation of losses between households. The correlation coefficient ranges from 
$0$ (independent losses) to $1$ (comonotone losses). Figures~\ref{fig:combined_r}-\ref{fig:premium_with_r_house} summarize the results. 
First, note that similar insights about the effectiveness of our proposed approaches can also be observed in this setting.  Indeed, Figure~\ref{fig:radius_with_r} shows that  both \model{CV} and \model{SAA} are over-optimistic, leading them to select lower $\epsilon^*$ values than those chosen by our proposed approaches.
With regards to the behavior in terms of $r$, Figure \ref{fig:risk_with_r} demonstrates that the out-of-sample entropic risk initially increases with the correlation coefficient $r$, but eventually stabilizes. This trend is intuitive: higher correlation among households' losses means that extreme loss events are more likely to occur simultaneously, increasing the insurer's risk exposure. Figure \ref{fig:est_risk_with_r} shows the estimates of the optimal entropic risk produced by each model. 
\texttt{BS-EVT} and \texttt{BS-Match} overestimate the optimal entropic risk of the insurer compared to \model{Oracle}. %
Figure \ref{fig:premium_with_r} shows that the average optimal premium per unit of expected coverage also increases with the correlation coefficient. This reflects the insurer's response to higher risk by charging higher premiums to compensate for the increased likelihood of large, simultaneous payouts. However, there is a diminishing return effect; beyond a certain point, further increases in correlation do not lead to significantly higher premiums per unit of expected coverage. As the correlation between household losses increases, the benefits of risk pooling diminish, so the insurer reduces coverage levels significantly to reduce the risk exposure, see  Figure \ref{fig:cov_r_house} and \ref{fig:cov_r_house_saa}. While Figures  \ref{fig:cov_r_house} and \ref{fig:cov_r_house_saa} show that households receive more coverage with \texttt{SAA}  than \texttt{BS-Match}, this high coverage exposes the insurer to higher risks than the optimal coverage in the case of highly correlated losses.  Figure \ref{fig:premium_with_r_house} demonstrates that as the correlation $r$ across households increases, the proportion of instances with  premiums exceeding twice the expected coverage also increase, and this effect is more pronounced for more risk-averse households. The high risk-averse households secure greater coverage by paying high premiums to reduce their risk exposure.

\begin{figure}[htbp]
    \centering
\includegraphics[width=0.6\linewidth]{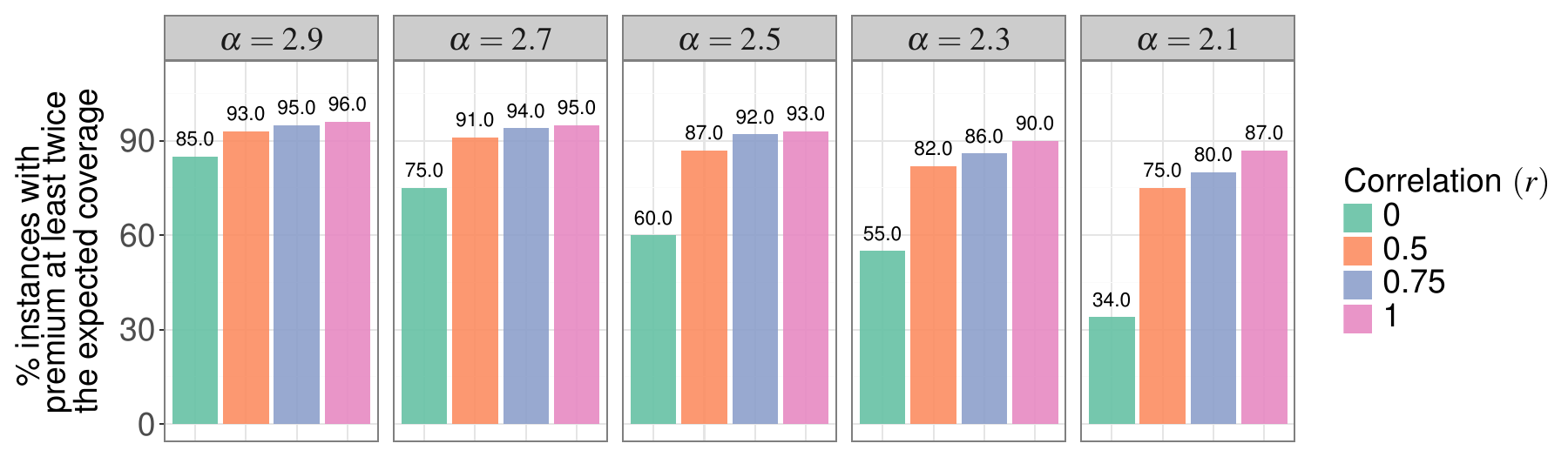}
    \caption{Effect of correlation $r$ on proportion of instances where premium exceeds two times the expected coverage. Each panel represents a different household.  Households become less risk averse as we go from the left panel to the right one.}
    \label{fig:premium_with_r_house}
\end{figure}

\section{\revised{Conclusions and Future Work}}\label{sec:conclusions}

This paper highlights a critical managerial challenge: existing estimators designed to correct the negative bias of empirical risk estimator can be ineffective for the widely-studied entropic risk measure in the finite-sample regime, leading to distorted decisions. In practice, this bias can push managers toward riskier actions by understating downside exposure, or toward unnecessarily conservative choices by overstating risk. Either outcome can negatively affect pricing, coverage design, and investment decisions in risk-sensitive applications. 

We  propose a new estimator that converges almost surely to the true entropic risk.  Under certain light-tailed assumptions on the loss distribution, our theoretical results show that the estimator eventually eliminates the negative finite-sample bias inherent in the empirical risk estimator as positions become riskier. From a managerial perspective, the resulting estimator provides the flexibility to choose a distribution that satisfies fairly mild conditions and ensures conservative risk assessment.  Our theoretical results show that one such family of distributions is a Gaussian mixture model (GMM) that offers the right balance: it eliminates the negative bias of the empirical estimator with an estimation error that is proportional to the variance of the loss, captures multimodal loss distributions, admits closed-form expressions for entropic risk, and enables efficient sampling for bias estimation through bootstrapping.
Even when we deviate from the light-tailed assumptions on the loss distributions, our estimators significantly mitigate the optimistic bias. %
This translates into decision-making with lower risk exposure without requiring precise knowledge of the true loss distribution.

We also demonstrate that optimistic bias persists when entropic risk is embedded within optimization models, such as those used for pricing and investment decisions. %
We illustrate the value of our bias-aware procedure through an insurance pricing application involving risk-averse insurers and households facing correlated losses. The results show that bias-aware DRO models yield premium and coverage policies with lower realized entropic risk than those obtained using conventional validation technique. From a policy and managerial standpoint, the findings underscore the importance of explicitly accounting for estimation uncertainty when pricing insurance against correlated losses %
such as those driven by climate change. %
Overall, our results suggest that accurate and conservative risk estimation is not merely a statistical concern, but a central managerial lever. By explicitly addressing finite-sample bias, decision makers can design pricing and investment policies that reduce risk exposure, especially in the presence of correlated losses. %

There are several avenues for extending this research. First, further work is needed to develop estimation procedures that can provably overestimate the entropic risk for loss distributions with subgaussian tails, for example, by modifying the fitting step in the GMM-based implementations of our three estimators, while preserving tractability and providing controlled overestimation guarantees.  Another direction is to extend the proposed procedure to account for both high variance and negative bias of the empirical risk estimator. A third direction is to investigate  how the proposed bias correction procedures can be adapted to reduce estimation bias for other convex risk measures, e.g., optimized certainty equivalent (OCE) risk measures and utility-based shortfall risk (UBSR) measures. %
Finally, exploring the effectiveness of these bias corrections in multi-stage settings, where entropic risk measure is %
widely used, such as control theory and reinforcement learning, would be valuable.  %

\section*{Acknowledgments}
The first author's research is supported by the NSERC, Canada, grant RGPIN-2024-05067. This research is supported by compute resources
provided by Calcul Quebec  and the Digital Research
Alliance of Canada. We thank Tianyu Wang for his helpful comments on an earlier version of the paper.

\setlength{\bibsep}{0pt}
\bibliographystyle{apalike}
\bibliography{biblio}

 \appendix
\renewcommand{\thesection}{\Alph{section}}
\renewcommand{\thesubsection}{\Alph{section}.\arabic{subsection}}
\section{Literature Review} \label{sec:lit_review}
\subsection{Risk estimation}
Quantitative risk measurement often relies on precise estimation of risk measures commonly used in finance and actuarial science \citep{McNeil_Frey_2005}. Calculating risk for multidimensional random variables can be challenging due to the need for complex integrals, often approximated using Monte Carlo simulation. When the underlying distribution isn't directly accessible and only limited samples are available, Monte Carlo-based risk estimators tend to underestimate the actual risk. \cite{Kim_Hardy_2007} addressed this by using \revised{double-}bootstrap to correct bias in Value at Risk (VaR) and Conditional Tail Expectation (CTE) estimates, with \cite{Kim_2010} extending this approach to general distortion risk measures. \revised{The authors note that for distortion risk measures, the empirical risk estimate is a linear combination of order statistics, for which the bootstrap and double bootstrap estimates can be analytically computed.} %

Several approaches have been proposed in the literature for estimating tail risks. \cite{Lam_Mottet_2017} introduce a distributionally robust optimization based method to construct worst-case bounds on tail risk, assuming the density function is convex beyond a certain threshold. Extreme Value Theory (EVT) is commonly used to estimate tail risk measures, such as CVaR, by fitting a Generalized Pareto Distribution to values exceeding a threshold. \cite{Troop_Godin_2021} develop an asymptotically unbiased CVaR estimator by correcting the bias in the estimates obtained via maximum likelihood. In \cite{Kuiper_Yang_2024}, the authors derive DRO-based estimators for  EVT statistics to account for model misspecification due to scenarios outside the asymptotic tails. While these methods focus on upper-tail risk, they are not directly applicable to the entropic risk measure.   Instead of fitting an extreme value distribution, we utilize a parametric two-component Gaussian Mixture Model (GMM), which provides a closed-form expression for the entropic risk. %

 One related field of research is to derive concentration bounds on the risks estimates depending on whether the random variable is subgaussian, subexponential or heavy-tailed.  For optimized certainty equivalent risk measures that are Hölder continuous, \cite{L.A._Bhat_2022} link estimation error to the Wasserstein distance between the empirical and true distributions, for which concentration bounds are available. While the Central Limit Theorem (CLT) ensures asymptotic convergence of the sample average to the true mean, this guarantee doesn't always hold for finite samples unless the tails are Gaussian or subgaussian \citep{Catoni_2012, Bartl_Mendelson_2022}. Robust statistics literature offers alternative estimators, like the median-of-means (MoM) estimator \citep{Lugosi_Mendelson_2019}, which ensure the estimator is close to the true mean with high confidence. However, these approaches differ from our focus, which is on constructing estimators with minimal bias.

\subsection{Correcting optimistic bias}
 
Our work on entropic risk minimization relates to correcting the optimistic bias of SAA policies to achieve true decision performance \citep{Smith_Winkler_2006, Beirami_Razaviyayn_2017}. SAA is analogous to empirical risk minimization in machine learning, where the goal is to minimize empirical risk. Methods such as DRO, hold-out, and K-fold CV are used to correct this bias. These approaches involve partitioning data into training, validation, and test sets, and then selecting the hyperparameter that results in the smallest validation risk \citep{Bousquet_Elisseeff_2000}. Our hyper-parameter selection employs the debiased validation risk.

Several methods have been proposed to correct the bias of SAA in linear optimization problems under the assumption that the true data distribution is Gaussian. For example, \cite{Ito_Yabe_2018} introduce a perturbation technique that generates parameters around the true values under Gaussian error assumptions to achieve an asymptotically unbiased estimator of the true loss. Similarly, \cite{Gupta_Huang_2022} derive estimators for the out-of-sample performance of in-sample optimal policies under a Gaussian distribution and offer extensions to approximate Gaussian cases, leveraging the structure of linear optimization problems. However, extending these methods to our nonlinear problem is challenging. Moreover, our objective is to find optimal policies with low out-of-sample risk rather than merely estimating this risk. Since CV risk could be biased estimate of the out-of-sample risk,  we employ our bias correction procedure to correct it, enabling appropriate calibration of the regularization parameter. Importantly, our approach does not rely on Gaussian assumptions for the uncertain parameter or the structure of the objective function.

In \cite{Siegel_Wagner_2023}, the authors analytically characterize the bias in SAA policies for a data-driven newsvendor problem, providing an asymptotically debiased profit estimator by leveraging the asymptotic properties of order statistics. \revised{In \cite{siegel2021profit}, the authors assume a parametric form for the demand distribution, and correct the asymptotic bias in the estimate of maximized estimated profit in a newsvendor problem.} \cite{Iyengar_Lam_2023} introduce an Optimizer's Information Criterion (OIC) to correct bias in SAA policies, generalizing the approach by \cite{Siegel_Wagner_2023} to other loss functions. However, OIC requires access to the gradient, Hessian, and influence function of the decision rule, which can be challenging to obtain in general constrained optimization problems. Moreover, 
the form of optimal policy is known for risk-neutral newsvendor problems but not for entropic risk minimization problems.

\subsection{Insurance pricing}

The design of insurance contracts has been widely studied since the foundational work of Arrow \citep{Arrow_1963, Arrow_1971}. Under the assumption that premiums are proportional to the policy's actuarial value, it has been shown that an expected utility-maximizing policyholder will choose full coverage above a deductible. Various extensions of Arrow's model have been proposed to account for the risk aversion of both the insured and the insurer, using criteria such as mean-variance \citep{Kaluszka_2004a, Kaluszka_2004b}, Value at Risk (VaR), and Tail VaR \citep{Cai_Tan_2008}. \cite{Bernard_Tian_2010} incorporate regulatory constraints on the insurer’s insolvency risk through VaR. \cite{Cheung_Sung_2014} extend these models to multiple policyholders with fully dependent risks (comonotonicity), where the insurer utilizes convex law-invariant risk measures. \cite{Bernard_Liu_2020} further explore different levels of dependence among policyholders, with both insurers and policyholders %
using exponential utility functions. However, these studies typically assume that the loss distribution is known.
We extend the model proposed by \cite{Bernard_Liu_2020} to account for ambiguity regarding the true loss distribution when only a limited number of samples are available.

\noindent
\section{Proof of results in Section \ref{sec:property_entropic_risk}}\label{append:property_entropic_risk}

\begin{lemma}\label{lemma:equiv:mgf:tail}
The following conditions are equivalent:
\begin{enumerate}
    \item There exist some constants  $G > 0$ and $C > 2$ such that $\Prob(|\ell(\vecz, \vecxi)| > a) \leq G \exp(-a \alpha C), \,\forall a \geq 0$,
\item The moment-generating function of $\ell(\vecz, \vecxi)$ satisfies $\Expect[\exp(t \ell(\vecz, \vecxi))] \in \mathbb{R}$ for all $t \in (-\alpha C, \alpha C)$, for some $C > 2$.
\end{enumerate}
\end{lemma}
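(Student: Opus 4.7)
The plan is to establish the two directions of the equivalence using standard tools from concentration inequality theory: Markov's inequality (the Chernoff bound) for the direction from MGF finiteness to tail decay, and a layer-cake integration against the tail bound for the reverse direction. Since the statement only requires existence of \emph{some} constant $C > 2$ on each side, we have the flexibility to shrink $C$ slightly when passing between the two directions, which resolves a minor subtlety about open versus closed intervals.

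For the direction from condition~1 to condition~2, I would mimic the approach used in the proof of Lemma~\ref{thm:boundedExpVar2}. Writing the MGF via the layer-cake representation of the nonnegative random variable $\exp(t\ell(\bxi))$ and applying the substitution $y = e^{ta}$ yields, for any $t$ with $0 < t < \alpha C$,
\[
\Expect[\exp(t\ell(\bxi))] = \int_{-\infty}^{\infty} \Prob(\ell(\bxi) > a)\, t\, e^{ta}\, da.
\]
Splitting at $a=0$, the integral over $(-\infty,0]$ is bounded by $1$ using $\Prob(\ell(\bxi)>a) \le 1$, and the integral over $[0,\infty)$ is bounded using assumption~1 by $\int_0^\infty G e^{-a\alpha C} t e^{ta}\,da = \tfrac{Gt}{\alpha C - t} < \infty$. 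A symmetric argument handles $t<0$ by integrating $\Prob(\ell(\bxi) < a)\,|t|\,e^{ta}$ and invoking $\Prob(\ell(\bxi)<a) \le \Prob(|\ell(\bxi)| > -a)$ on $(-\infty,0]$. Thus the MGF is finite throughout the open interval $(-\alpha C,\alpha C)$; replacing $C$ by any $C' \in (2,C)$ then yields finiteness on the closed interval $[-\alpha C',\alpha C']$, which is condition~2.

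For the reverse direction, from condition~2 to condition~1, I would apply the Chernoff bound twice. Let $M^{+} := \Expect[\exp(\alpha C \ell(\bxi))]$ and $M^{-} := \Expect[\exp(-\alpha C\ell(\bxi))]$, both finite by assumption. Markov's inequality applied to $\exp(\alpha C\ell(\bxi))$ and $\exp(-\alpha C\ell(\bxi))$ gives, for every $a \geq 0$,
\[
\Prob(\ell(\bxi) > a) \leq M^{+} e^{-a\alpha C}, \qquad \Prob(\ell(\bxi) < -a) \leq M^{-} e^{-a\alpha C}.
\]
Summing these bounds produces $\Prob(|\ell(\bxi)| > a) \leq (M^{+} + M^{-}) e^{-a\alpha C}$, which is condition~1 with $G := M^{+} + M^{-}$ and the same constant $C > 2$.

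The main (minor) obstacle is the endpoint issue: direction~1 $\Rightarrow$ 2 only delivers finiteness of the MGF on the \emph{open} interval $(-\alpha C,\alpha C)$, while condition~2 is stated on the closed interval. This is handled transparently by the phrase ``for some $C > 2$'' in both conditions, which lets us shrink $C$ by an arbitrarily small amount when translating between them while preserving the threshold $C > 2$. Beyond this bookkeeping, no real difficulty is expected, as both halves are routine once the layer-cake integration and Chernoff bounds are set up properly.
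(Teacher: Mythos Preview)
Your proposal is correct and follows essentially the same route as the paper: layer-cake integration against the exponential tail bound for $1\Rightarrow 2$, and the Chernoff/Markov bound plus a union bound for $2\Rightarrow 1$. The only notable difference is cosmetic---the paper symmetrizes via $\exp(t\ell(\bxi))\le\exp(|t|\,|\ell(\bxi)|)$ to treat both signs of $t$ at once rather than splitting cases---and in fact your explicit shrinking of $C$ to $C'\in(2,C)$ handles the endpoint $|t|=\alpha C$ more carefully than the paper's own write-up, whose bound $G|t|/(\alpha C-|t|)$ is silent about that boundary case.
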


\begin{proof}
The property $\Prob(|\ell(\vecz, \vecxi)| > a) \leq G \exp(-a\alpha C),\, \forall a\geq 0$
for some $G>0$ and $C>2$, implies that when $t\in (-\alpha C,\alpha C)$, $C>2$, we have that:
\begin{equation}
\begin{aligned}\label{bound_mgf_gamma}
    0 &\leq \Expect[\exp(t\ell(\vecz, \vecxi))]\leq \Expect[\exp(|t||\ell(\vecz, \vecxi)|)]=  \int_0^{\infty} \Prob(\exp(|t||\ell(\vecz, \vecxi)|)> x) dx\\&= \int_{0}^{\infty} \Prob\left(|\ell(\vecz, \vecxi)|> \frac{\log(x)}{|t|}\right) dx\leq 1+\int_{1}^{\infty} \Prob\left(|\ell(\vecz, \vecxi)|> \frac{\log(x)}{|t|}\right) dx\\&= 1+\int_1^{\infty} G\exp\left(-\frac{\alpha C\log(x)}{|t|}\right) dx= 1+\int_1^{\infty} Gx^{-\frac{\alpha C}{|t|}}dx=1+\frac{G|t|}{\alpha C- |t|}\\&\leq \frac{\alpha C- |t|}{\alpha C- |t|}+ \frac{G\alpha C}{\alpha C- |t|}\le \frac{\alpha C}{\alpha C- |t|}+\frac{G\alpha C}{\alpha C- |t|}= \frac{{\tilde{G}}\alpha C}{\alpha C- |t|},
\end{aligned}
\end{equation}
where {$\tilde{G}:=G+1$}, and we %
exploit the fact that $\log(x)\leq 0$ for $x\in (0,1]$ and $|t|<\alpha C$.

\noindent
Alternatively, $\Expect[\exp(t\ell(\vecz, \vecxi))]\in\mathbb{R}$ for all $t\in(-\alpha C,\alpha C)$ for some $C> 2$ implies that for any $a\geq 0$, 
\[\Prob(\ell(\vecz, \vecxi)>a)=\Prob(\exp(\alpha C\ell(\vecz, \vecxi))>\exp(\alpha a C ))\leq \frac{\Expect[\exp(\alpha C \ell(\vecz, \vecxi))]}{\exp(\alpha a C)}=G^+\exp(-\alpha a C),\]
where  we have used Markov's inequality to obtain the upper bound and $G^+:=\Expect[\exp(\alpha C \ell(\vecz, \vecxi))]\in\mathbb{R}$. A similar argument holds for $\Prob(-\ell(\vecz, \vecxi)>a)\leq G^-\exp(-\alpha aC)$ with  $G^-:=\Expect[\exp(-\alpha C \ell(\vecz, \vecxi))]\in\mathbb{R}$. Thus, by the union bound, we have:
\[\Prob(|\ell(\vecz, \vecxi)| > a) \leq (G^++G^-) \exp(-a\alpha C), \quad \forall a\geq 0.\]
\end{proof}

\removed{
\begin{lemma}\label{lemma_delta_bias}
    Suppose that Assumption \ref{assum:tail_bound2} holds with $C>4$. 
    Then, the first-order bias correction based on the Taylor series expansion of empirical risk is given by 
     \[\rho_{\texttt{Delta}}:= \rho_{\hat{\Prob}_N}(\zeta)+\frac{\text{Var}_{\hat{\Prob}_N}(\exp(\alpha \zeta))}{2N\alpha (\mathbb{E}_{{\hat{\Prob}_N}}[\exp(\alpha \zeta)])^2}.\]
\end{lemma}
\begin{proof}
   Take the Taylor series expansion of $\frac{1}{\alpha}\log(\bar{Y}_N)$ around $Y:=\Expect\left(\exp(\alpha \zeta)\right)$ as follows: 
\begin{align*}
  &  \frac{1}{\alpha}\log(\bar{Y}_N) =     \frac{1}{\alpha}\log(Y) + (1/(\alpha Y)) (\bar{Y}_N-Y)-(1/(2\alpha Y^2)) (\bar{Y}_N-Y)^2+r_n\\
      & \implies  \Expect(\frac{1}{\alpha}\log(\bar{Y}_N)) -     \frac{1}{\alpha}\log(Y) = -\frac{1}{2\alpha Y^2}\Expect(\bar{Y}_N-Y)^2+ \mathcal O(N^{-2})\\& \implies  \Expect(\rho_{\hat{\Prob}_N}(\zeta)) -     \rho(\zeta) = -\frac{1}{2\alpha N (\mathbb{E}[\exp(\alpha \zeta)])^2}\text{Var}(\exp(\alpha \ell(\bm \xi)))+ \mathcal O(N^{-2}).
\end{align*}
where  $\Expect(r_n) = \mathcal O(N^{-2})$, and the second equality is obtained by taking the expectation on both sides and using $\Expect (\bar{Y}_N)=Y$. Taking the first-order term and replacing $\Prob$ with $\hat{\Prob}_N$, we obtain the bias correction based on the SAA estimator.
\end{proof}

    \begin{lemma}\label{lemma_oic}\citep{Iyengar_Lam_2023}
   The data-driven estimator of entropic risk based on the optimizer's information criterion is given by:
   \[\rho_{\model{OIC}}:= \rho_{\hat{\Prob}_N}(\zeta)+\frac{\text{Var}_{\hat{\Prob}_N}(\exp(\alpha \zeta))}{N\alpha (\mathbb{E}_{{\hat{\Prob}_N}}[\exp(\alpha \zeta)])^2}.\]
\end{lemma}
\begin{proof}
    From Theorem 1 in \cite{Iyengar_Lam_2023}, it follows that for a loss function $h(t,\zeta)$ with decision $\hat{t}$:
    \[
\Expect[\Expect_{\Prob}(h(\hat{t}, \zeta))] =      \Expect[h(\hat{t}, \hat{\zeta}_i))]  \underbrace{-\frac{1}{N}\Expect_{\Prob}[\nabla_{t} h(\hat{t},\zeta) \text{IF}(\hat{t})]}_{\delta_{\model{OIC}}} + o\left(\frac{1}{N}\right),\]
where expectation is with respect to the randomness of $\Dist_N$. For $h(t,\zeta) = t+ \frac{1}{\alpha}\exp(\alpha (\zeta-t))-\frac{1}{\alpha}$, we know that $\nabla_{t}h(\hat{t},\zeta)=1-\exp(\alpha(\zeta-\hat{t}))$ and $\nabla_{t,t}h(\hat{t},\zeta)=\alpha\exp(\alpha(\zeta-\hat{t}))$. The influence function in the expression of $\delta_{\model{OIC}}$ is obtained as follows: \begin{equation*}
    \text{IF}( \hat{t}) \,=\,  -\left(\Expect_\Prob[\nabla^2_{t,t}h(\hat{t},\zeta)]\right)^{-1}\nabla_{t}h(\hat{t},\zeta) \, = \,  - \frac{1-\exp(\alpha(\zeta-\hat{t}))}{\Expect_{\Prob}[\alpha\exp(\alpha(\zeta-\hat{t}))]} 
\end{equation*}
Next, we substitute the value of $\text{IF}( \hat{t})$ and $\nabla_{t}h(\hat{t},\zeta)$ to obtain  the bias of a decision $\hat{t}$:
\begin{equation*}
\begin{aligned}\delta_{\texttt{OIC}} \, = \,  &- \frac{1}{N}\Expect_{\Prob}(\nabla_{t} h(\hat{t},\zeta) \text{IF}(\hat{t}))\\
\, =\,&\frac{1}{N} \left(\frac{\Expect_\Prob\left[1-\exp(\alpha(\zeta-\hat{t}))\right]^2}{\alpha \Expect_{\Prob}[\exp(\alpha (\zeta-\hat{t}))]}\right)\\
\, =\,&\frac{1}{N} \left(\frac{\Expect_\Prob\left[\Expect_{\Prob}(\exp(\alpha \zeta))-\exp(\alpha \zeta)\right]^2}{\alpha \left(\Expect_{\Prob}[\exp(\alpha \zeta)]\right)^2}\right)\\
\, =\, & \frac{\text{Var}_{\Prob}(\exp(\alpha \zeta))}{N\alpha (\mathbb{E}_{\Prob}[\exp(\alpha \zeta)])^2}.
\end{aligned}
\end{equation*}
Since $\Prob$ is not known, \cite{Iyengar_Lam_2023} replace $\Prob$ with $\hat{\Prob}$ to obtain their estimator, $\rho_{\model{OIC}}:= \hat{t}+\text{Var}_{\hat{\Prob}_N}(\exp(\alpha \zeta))/(N\alpha (\mathbb{E}_{{\hat{\Prob}_N}}[\exp(\alpha \zeta)])^2)$.
\end{proof}
}
\subsection{Proof of Lemma \ref{thm:boundedExpVar2} }
\begin{proof}
Using the layer cake representation of a non-negative, real-valued measurable function, we have that:
    \begin{align*}
        \Expect[\exp(\alpha \ell(\vecz, \vecxi))]&= \int_0^{\infty} \Prob(\exp(\alpha \ell(\vecz, \vecxi))> x) dx\\
        & = \int_{{-\infty}} ^{\infty} \Prob(\exp(\alpha \ell(\vecz, \vecxi))> \exp(\alpha y)) \alpha \exp(\alpha y) dy\\
        & = \int_{{-\infty}}^{\infty} \Prob(\ell(\vecz, \vecxi)> y) \alpha \exp(\alpha y) dy\\
        &{= \alpha\int_{-\infty}^{0} \mathbb{P}(\ell(\vecz, \vecxi)> y) \exp(\alpha y) \,dy+\alpha \int_{0}^{\infty} \mathbb{P}(\ell(\vecz, \vecxi)> y) \exp(\alpha y) \,dy}\\
&\leq{\alpha\int_{-\infty}^{0}  \exp(\alpha y) \,dy +} \int_0^\infty  \Prob(\ell(\vecz, \vecxi)> y) \alpha \exp(\alpha y) dy  \\
        & \leq { 1+} \int_0^{\infty} \Prob(|\ell(\vecz, \vecxi)|> y) \alpha \exp(\alpha y) dy \\
        &\leq { 1+} \alpha \int_0^{\infty} G \exp(-(C-1)\alpha y) dy={ 1+} \frac{G}{C-1} %
    \end{align*}
    where  the last inequality follows from Assumption \ref{assum:tail_bound2}, and $C>2$ is used to obtain the final result. 
From Jensen's inequality, we also have that:
    \begin{align*}
        \log(\Expect[\exp(\alpha \ell(\vecz, \vecxi))]) &\geq \Expect[\log(\exp(\alpha \ell(\vecz, \vecxi)))] = \Expect[\alpha \ell(\vecz, \vecxi)]\geq  -\alpha \Expect[ |\ell(\vecz, \vecxi)|]\\
        & = -\alpha \int_{0}^\infty \Prob(| \ell(\vecz, \vecxi)| \geq y) dy 
          \geq  - \alpha\int_{0}^\infty G\exp({-C y \alpha}) dy =  -G/C,
    \end{align*}
    where, the second inequality comes from $x\ge -|x|,\, \forall x$, and the  last inequality follows from Assumption \ref{assum:tail_bound2}.
    Hence, we have that $\Expect[\exp(\alpha \ell(\vecz, \vecxi))]\geq \exp({-G/C})$.
        
    Similarly,
    \begin{align*}
        \Expect[\exp(2\alpha \ell(\vecz, \vecxi))]
        & \ {=} \  \int_{-\infty}^{\infty} \Prob(\exp(2\alpha \ell(\vecz, \vecxi))> \exp(2\alpha y)) 2\alpha \exp(2\alpha y) dy\\
        & \ \leq \  {1+} \int_0^{\infty} \Prob(|\ell(\vecz, \vecxi)|> y) 2 \alpha \exp(2\alpha y) dy\\
        & \ \leq \ {1+} 2\alpha \int_0^{\infty} G \exp(-(C-2)\alpha y) dy={1+} \frac{2G}{C-2} %
    \end{align*}
    where we used $C>2$ to obtain the last equality, see Assumption \ref{assum:tail_bound2}.
    Hence,
    \[0\leq \text{Var}(\exp(\alpha \ell(\vecz, \vecxi)))=\Expect[\exp(2 \alpha\ell(\vecz, \vecxi))]-\left(\Expect[\exp(\alpha \ell(\vecz, \vecxi))]\right)^2\leq\Expect[\exp(2\alpha \ell(\vecz, \vecxi))]\leq {1}+\frac{2G}{C-2}%
    .\]
    \end{proof}

\section{\revised{Proof of the results in Section \ref{sec:asymptotic:effect:variance_on_bias}}}\label{appendix:2}
In this section, we will provide proofs of all the propositions given in Section \ref{sec:asymptotic:effect:variance_on_bias}. The next lemma will allow us to economize notation in the subsequent proofs:  the bias associated with estimating the loss $\ell(\vecz, \vecxi)$ of a decision $\vecz$ is equal to the bias associated with estimating it through its location-scale-equivalent $\mu+\sigma \xi$ representation. Hence, we may work equivalently with estimating the risk of the random loss $\mu+\sigma \xi$ with $\xi\sim\Prob^\xi$ based on $\hat{\Prob}_N^\xi$, the empirical distribution constructed from $\{\hat{\xi}_i\}_{i=1}^N$ drawn i.i.d. from $\Prob^\xi$. %

\subsection{Some useful lemmas}

\begin{lemma}\citep[ Proposition 2.5.2]{Vershynin2025}\label{lemma:cgf:subgaussian:centered}
    If a random variable $\zeta\sim\Q$, with $\Expect_\Q[\zeta]=0$, has subgaussian tails (see Definition \ref{assum:subgaussian}) then there exists $
\nu_0>0$ such that its cumulant generating function satisfies:
   \[     \Lambda_{\Q}(t)\le 
\nu_0^2t^2    \;\; \forall t\in \mathbb{R}. \]
\end{lemma}

\begin{lemma}%
\label{recur_lemma}
   Let $\{\hat{\zeta}_1,\dots,\hat{\zeta}_N\}$ be the set of equiprobable scenarios captured by an empirical distribution $\Q$, and let $(a,b)\in\mathbb{R}\times\mathbb{R}_+$, then
    \begin{equation}\label{eq:recurr_lemma}a+b \max_{1\leq j \leq N}\hat{\zeta}_j - \log(N)/\alpha \leq \rho_{\Q}(a+b\zeta) \leq a+b \max_{1\leq j \leq N}\hat{\zeta}_j.\end{equation}
\end{lemma}
\begin{proof}
  We bound  the entropic risk using the relation $a+b\hat{\zeta}_i \leq  \max_{1\le j\le N} a+b\hat{\zeta}_j\, \forall  i=\{1, \cdots, N\}$: 
  \begin{equation}\label{eq:upper_bound_emp_risk}
    \begin{aligned}
      \rho_{\Q}(a+b\zeta) &=  (1/\alpha) \log \left((1/N)\sum_{i=1}^N \exp(\alpha (a+b\hat{\zeta}_i))\right) \\&\leq (1/\alpha) \log \left((1/N)\sum_{i=1}^N  \exp(\alpha \max_{1\leq j \leq N} a+b\hat{\zeta}_j)\right)\\
        &= \max_{1\leq j\leq N} a + b \hat{\zeta}_j = a + b \max_{1\leq j\leq N} \hat{\zeta}_j.
        \end{aligned}
        \end{equation}
        We use the relation $\sum_{i=1}^N  y_i \geq \max_{1\leq j\leq N}y_j$, for all $i$, for non-negative  $y_i$ values, to obtain:
    \begin{equation}\label{eq:lower_bound_emp_risk}
    \begin{aligned}
    \rho_{\Q}(a+b\zeta)&=(1/\alpha)\log((1/N)\sum_{i=1}^N \exp(\alpha (a+b\hat{\zeta}_{j}))) \\&\geq (1/\alpha)\log((1/N)\max_{1\leq j\leq N} \exp(\alpha (a+b\hat{\zeta}_{j})))\\
    &= \max_{1\leq j\leq N}a+b\hat{\zeta}_{j} - \log(N)/\alpha \\& = a+b \max_{1\leq j\leq N} \hat{\zeta}_j - \log(N)/\alpha.
    \end{aligned}
        \end{equation}
\end{proof}

\begin{lemma}\label{lemma_almst_sure}
     If Assumption \ref{ass:meanVarControl} is satisfied, then 
\begin{subequations}
\begin{align}
&\rho_{\hat{\Prob}_N}(\ell(\z,\vecxi)) \leq \rho_\Prob(\ell(\z,\vecxi)) -   (1/\alpha)\Lambda_{\Prob^{\xi}}(\alpha\sigma) +\sigma M_N, \mbox{ a.s.},\label{eq:almost_sure}
\\
   & \rho_{\hat{\Prob}_N}(\ell(\z,\vecxi)) \geq \rho_\Prob(\ell(\z,\vecxi)) -(1/\alpha)\Lambda_{\Prob^{\xi}}(\alpha \sigma) + \sigma M_N-\log(N)/\alpha, \mbox{ a.s.},
\end{align}
where $M_{N}:=\max_{1\leq j \leq N}\hat{\xi}_j$, with $\hat{\xi}_j=(\ell(\z,\hat{\vecxi}_j)-\mu)/\sigma\sim\Prob^\xi$ independently.
\end{subequations}
\end{lemma}
\begin{proof}
From Lemma \ref{recur_lemma}, we have that
\begin{equation}\label{eq:upper_bound_emp_risk_cumulant}
    \begin{aligned}
      \rho_{\hat{\Prob}_N}(\ell(\z,\vecxi)) &=\rho_{\hat\Prob_N}\left(\mu +\sigma\frac{\ell(\vecz,\vecxi)-\mu}{\sigma}\right)\\ &\leq  \mu+\sigma M_N\\
        &= \mu + \sigma M_N + \rho_\Prob(\ell(\z,\vecxi))-\rho_\Prob(\ell(\z,\vecxi))\\
        &= \rho_\Prob(\ell(\z,\vecxi)) +  \sigma M_N - \rho_{\Prob^\xi}(\sigma \xi) = \rho_\Prob(\ell(\z,\vecxi)) +  \sigma M_N - (1/\alpha)\Lambda_{\Prob^{\xi}}(\alpha \sigma),
    \end{aligned}
    \end{equation}
    where third equality is obtained from the translation invariance of entropic risk measure, i.e., $\rho_\Prob(\ell(\z,\vecxi))=\rho_{\Prob^\xi}(\mu+\sigma\xi)=\mu+\rho_{\Prob^\xi}(\sigma \xi)$.
    Similarly from Lemma \ref{recur_lemma}, we have
\begin{equation}\label{eq:lower_bound_emp_risk_cumulant}
    \begin{aligned}
              \rho_{\hat{\Prob}_N}(\ell(\z,\vecxi))  &\geq \mu+\sigma M_N - \log(N)/\alpha \\ &= \mu + \sigma M_N  - \log(N)/\alpha + \rho_\Prob(\ell(\z,\vecxi))-\rho_\Prob(\ell(\z,\vecxi))\\
        &= \rho_\Prob(\ell(\z,\vecxi)) +  \sigma M_N  - \log(N)/\alpha - \rho_{\Prob^\xi}(\sigma \xi) \\
        &= \rho_\Prob(\ell(\z,\vecxi)) +  \sigma M_N  - \log(N)/\alpha - (1/\alpha)\Lambda_{\Prob^{\xi}}(\alpha \sigma),
    \end{aligned}
    \end{equation}
    where we used translation invariance of entropic risk measure, i.e., $\rho_{\Prob^\xi}(\mu+\sigma\xi)=\mu+\rho_{\Prob^\xi}(\sigma \xi)$, to obtain the second equality.
\end{proof}
\subsection{\texorpdfstring{Detailed derivation of $\rho_{\model{Delta}}$ and $\rho_{\model{OIC}}$}{derive}}\label{proof:derive:oic:delta}
\begin{lemma}\label{lemma_delta_bias}
    Suppose that Assumption \ref{assum:tail_bound2} holds with $C>4$. 
    Then, the first-order bias correction based on the Taylor series expansion of empirical risk is given by 
     \[\rho_{\model{Delta}}:= \rho_{\hat{\Prob}_N}(\ell(\z,\vecxi))+\frac{\text{Var}_{\hat{\Prob}_N}(\exp(\alpha \ell(\z,\vecxi)))}{2N\alpha (\mathbb{E}_{{\hat{\Prob}_N}}[\exp(\alpha \ell(\z,\vecxi))])^2}.\]
\end{lemma}
\begin{proof}
   Let $\bar{Y}_N= \Expect_{\hat{\Prob}_N}[\exp(\alpha \ell(\z,\vecxi))]$. With $C>4$, $\exp(\alpha \ell(\z,\vecxi))$ has finite fourth order moments (see Assumption \ref{assum:tail_bound2}). Then the %
   Taylor series expansion of $\frac{1}{\alpha}\log(y)$ around $\bar{y}:=\Expect\left[\exp(\alpha \ell(\z,\vecxi))\right]=\Expect[\bar{Y}_N]$ given in \citep[Equation 3.2]{Horowitz_2001} gives that: %
\begin{align*}
  &  \rho_{\hat{\Prob}_N}(\ell(\z,\vecxi))=\frac{1}{\alpha}\log(\bar{Y}_N) =     \frac{1}{\alpha}\log(\bar{y}) + (1/(\alpha \bar{y})) (\bar{Y}_N-\bar{y})-(1/(2\alpha \bar{y}^2)) (\bar{Y}_N-\bar{y})^2+r_N\\
      & \implies  \Expect[\rho_{\hat{\Prob}_N}(\ell(\z,\vecxi))] - \rho_\Prob(\ell(\z,\vecxi))= \Expect[\frac{1}{\alpha}\log(\bar{Y}_N)] -     \frac{1}{\alpha}\log(\bar{y}) = -\frac{1}{2\alpha \bar{y}^2}\Expect[(\bar{Y}_N-\bar{y})^2]+ \mathcal O(N^{-2})\\
      &\qquad\qquad = -\frac{1}{2\alpha N (\mathbb{E}_\Prob[\exp(\alpha \ell(\z,\vecxi))])^2}\mbox{Var}_\Prob(\exp(\alpha \ell(\z,\vecxi) ))+ \mathcal O(N^{-2}).
\end{align*}
where $\Expect[r_N]=\mathcal O(N^{-2})$. We take the expectation and variance over the empirical distribution $\hat{\Prob}_N$, to obtain the bias correction. %
\end{proof}

    \begin{lemma}\label{lemma_oic}\citep{Iyengar_Lam_2023}
   The data-driven estimator of entropic risk based on the optimizer's information criterion (OIC) is given by:
   \[\rho_{\model{OIC}}:= \rho_{\hat{\Prob}_N}(\ell(\vecz, \vecxi))+\frac{\text{Var}_{\hat{\Prob}_N}(\exp(\alpha \ell(\vecz, \vecxi)))}{N\alpha (\mathbb{E}_{{\hat{\Prob}_N}}[\exp(\alpha \ell(\vecz, \vecxi))])^2}.\]
\end{lemma}
\begin{proof}

From Theorem 1 in \cite{Iyengar_Lam_2023}, it follows that for a loss function $h(t,\zeta)$ with optimal decision $t^*=\rho_{\Prob}(\ell(\vecz, \vecxi))$ and SAA decision $\hat{t}=\rho_{\hat{\Prob}_N}(\ell(\vecz, \vecxi))$:
    \[
\Expect[\Expect_{\Prob}[h(\hat{t}, \ell(\vecz, \vecxi))]] =      \Expect[h(\hat{t}, \ell(\vecz, \vecxi))]  \underbrace{-\frac{1}{N}\Expect_{\Prob}[\nabla_{t} h(t^*,\ell(\vecz, \vecxi)) \text{IF}({\ell(\z,\vecxi)}%
)]}_{\delta_{\model{OIC}}} + o\left(\frac{1}{N}\right),\]
where expectation is with respect to the randomness of $\Dist_N$. For $h(t,\ell(\vecz, \vecxi)) = t+ \frac{1}{\alpha}\exp(\alpha (\ell(\vecz, \vecxi)-t))-\frac{1}{\alpha}$, we know that $\nabla_{t}h(t^*,\ell(\vecz, \vecxi))=1-\exp(\alpha(\ell(\vecz, \vecxi)-t^*))$ and $\nabla^2_{t,t}h(t^*,\ell(\vecz, \vecxi))=\alpha\exp(\alpha(\ell(\vecz, \vecxi)-t^*))$. The influence function in the expression of $\delta_{\model{OIC}}$ is obtained as follows: 
{
\begin{align*}
    \text{IF}( {\ell(\z,\vecxi)}%
    ) \,&=\,  -\left(\Expect_\Prob[\nabla^2_{t,t}h(t^*,\ell(\vecz, \vecxi))]\right)^{-1}\nabla_{t}h(t^*,\ell(\vecz, \vecxi)) \\ 
    &= \,  - \frac{1-\exp(\alpha(\ell(\vecz, \vecxi)-t^*))}{\Expect_{\Prob}[\alpha\exp(\alpha(\ell(\vecz, \vecxi)-t^*))]} =  -(1/\alpha)\left(1-\exp(\alpha(\ell(\vecz, \vecxi)-t^*))\right),
\end{align*}
since $\Expect_\Prob[\exp(\alpha(\ell(\vecz, \vecxi)-t^*))]=1$.
Next, we substitute the value of $\text{IF}( \ell(\z,\vecxi)%
)$ and $\nabla_{t}h(t^*,\ell(\vecz, \vecxi))$ to obtain  the bias of a decision $t^*$:
\begin{equation*}
\begin{aligned}\delta_{\texttt{OIC}} \, = \,  &- \frac{1}{N}\Expect_{\Prob}[\nabla_{t} h(t^*,\ell(\vecz, \vecxi)) \text{IF}({\ell(z,\vecxi)}%
)] =\,\frac{1}{ N \alpha} \Expect_\Prob\left[(1-\exp(\alpha\ell(\vecz, \vecxi)-\alpha t^*))^2\right]\\
\, =\,&\frac{1}{N\alpha} \exp(-2 \alpha t^*)\Expect_\Prob\left[\left(\Expect_\Prob[\exp(\alpha\ell(\vecz, \vecxi))]-\exp(\alpha\ell(\vecz, \vecxi))\right)^2\right]\\
\, =\,&\frac{1}{N\alpha} \left(\frac{\Expect_\Prob\left[(\Expect_{\Prob}[\exp(\alpha \ell(\vecz, \vecxi))]-\exp(\alpha \ell(\vecz, \vecxi))\right)^2]}{\left(\Expect_{\Prob}[\exp(\alpha \ell(\vecz, \vecxi))]\right)^2}\right)\\
\, =\, & \frac{\text{Var}_{\Prob}(\exp(\alpha \ell(\vecz, \vecxi)))}{N\alpha (\mathbb{E}_{\Prob}[\exp(\alpha \ell(\vecz, \vecxi))])^2},
\end{aligned}
\end{equation*}
\removed{
\begin{equation*}
\begin{aligned}\delta_{\texttt{OIC}} \, = \,  &- \frac{1}{N}\Expect_{\Prob}[\nabla_{t} h(t^*,\ell(\vecz, \vecxi)) \text{IF}({\ell(z,\vecxi)}%
)]\\
\, =\,&\frac{1}{N} \left(\frac{\Expect_\Prob\left[(1-\exp(\alpha(\ell(\vecz, \vecxi)-t^*)))^2\right]}{\alpha \Expect_{\Prob}[\exp(\alpha (\ell(\vecz, \vecxi)-t^*))]}\right)\\
\, =\,&\frac{1}{N} \left(\frac{\exp(-2 \alpha t^*)\Expect_\Prob\left[\left(\Expect_\Prob[\exp(\alpha\ell(\vecz, \vecxi))]-\exp(\alpha\ell(\vecz, \vecxi))\right)^2\right]}{\alpha \exp(-\alpha t^*) \Expect_{\Prob}[\exp(\alpha (\ell(\vecz, \vecxi)))]}\right)\\
\, =\,&\frac{1}{N} \left(\frac{\Expect_\Prob\left[(\Expect_{\Prob}[\exp(\alpha \ell(\vecz, \vecxi))]-\exp(\alpha \ell(\vecz, \vecxi))\right)^2]}{\alpha \left(\Expect_{\Prob}[\exp(\alpha \ell(\vecz, \vecxi))]\right)^2}\right)\\
\, =\, & \frac{\text{Var}_{\Prob}(\exp(\alpha \ell(\vecz, \vecxi)))}{N\alpha (\mathbb{E}_{\Prob}[\exp(\alpha \ell(\vecz, \vecxi))])^2},
\end{aligned}
\end{equation*}
}
where the fourth equality comes from $\Expect_\Prob[\exp(\alpha(\ell(\vecz, \vecxi)))]=\exp(\alpha t^*)$.}
Since $\Prob$ is not known, \cite{Iyengar_Lam_2023} replace $\Prob$ with $\hat{\Prob}$ to obtain their estimator, \[\rho_{\model{OIC}}:= \hat{t}+\text{Var}_{\hat{\Prob}_N}(\exp(\alpha \ell(\vecz, \vecxi)))/(N\alpha (\mathbb{E}_{{\hat{\Prob}_N}}[\exp(\alpha \ell(\vecz, \vecxi))])^2).\]

\end{proof}
\subsection{Proof of Proposition \ref{prop:bias:saa:superlinear}}
\begin{proof}
From 
Lemma \ref{lemma_almst_sure}, we have that
\begin{align}\label{eq:bias:superlinear}
    \mathbb{E}[\rho_{\hat{\Prob}_N}(\ell(\z,\vecxi))] \leq \rho_\Prob(\ell(\z,\vecxi)) + \sigma\mathbb{E}[M_N] - (1/\alpha)\Lambda_{\Prob^{\xi}}(\alpha\sigma)    
\end{align}
    This proof relies on showing that the cumulant generating function grows faster than any linear function in $t$,   $\Lambda_{\Prob^{\xi}}(t) = \omega(t)$, due to the unbounded right tail of $\xi$. In particular, the unbounded tail implies that $\Prob(\xi\geq y)=\Prob((\ell(\z,\vecxi)-\mu)/\sigma\geq y) = \Prob(\ell(\z,\vecxi)\geq \mu+\sigma y)>0$ for all $y$. Let us consider an arbitrary $k>0$, and fix $\bar{x}:=2k$ and $\bar{t}:=-\log\left(\Prob(\xi\geq \bar{x})\right)/k$. One can confirm that for all $t\geq \bar{t}$, we have:
    \begin{equation}\label{eq:CGFomega}
    \begin{aligned}
       \Lambda_{\Prob^{\xi}}(t)=\log(\mathbb{E}[\exp(t\xi)]) &=\log(\mathbb{E}[\exp(t\xi)\mathbbm{1}_{\xi\geq \bar{x}}]+\mathbb{E}[\exp(t\xi)\mathbbm{1}_{\xi<\bar{x}}])\\&\;\geq \log(\exp(t\bar{x})\Prob(\xi\geq \bar{x}))\\&\;\;=2kt + \log(\Prob(\xi\geq \bar{x}))\\&\;\;\;\geq kt + k\bar{t} + \log(\Prob(\xi\geq \bar{x})) \\&\;\;\;= kt, 
    \end{aligned}
    \end{equation}
    where $\xi\sim\Prob^\xi$. Thus, $\Lambda_{\Prob^{\xi}}(t) = \omega(t)$.
    
    Equipped with $\Lambda_{\Prob^{\xi}}(t) = \omega(t)$, we can obtain our claim. Namely, given any $k>0$, we know that for $\bar{k}:=k+\mathbb{E}[M_N]$, there exists a $\bar{t}$ such that $\Lambda_{\Prob^{\xi}}(t) \geq \bar{k}t$ for all $t\geq \bar{t}$. Hence, letting $\bar{\sigma}:=\bar{t}/\alpha$, we thus get that for all $\sigma\geq \bar{\sigma}$:
\begin{align}\label{Lambda_diff_max}
 (1/\alpha)\Lambda_{\Prob^{\xi}}(\alpha\sigma) -  \sigma\mathbb{E}[M_N] &\geq  (1/\alpha)(k+\mathbb{E}[M_N])(\alpha \sigma) - \sigma\mathbb{E}[M_N] = k\sigma,
\end{align}
which combined with \eqref{eq:bias:superlinear} gives 
\[
\rho_{\Prob}(\ell(\vecz, \vecxi)) - \mathbb{E}[\rho_{\hat{\Prob}_N}(\ell(\vecz, \vecxi))] \geq (1/\alpha)\Lambda_{\Prob^{\xi}}(\alpha\sigma) -  \sigma\mathbb{E}[M_N]   \geq k\sigma.\]
Since $k>0$ was chosen arbitrarily, we conclude that Proposition \ref{prop:bias:saa:superlinear} holds.
\end{proof}

\subsection{Proof of Proposition \ref{prop:saa:normalLB}}
\begin{proof}
    Similar to the proof of Proposition \ref{prop:bias:saa:superlinear}, from Lemma \ref{lemma_almst_sure}, we have that:
    \begin{align*}
     \rho_{\Prob}(\ell(\vecz, \vecxi)) - \mathbb{E}[\rho_{\hat{\Prob}_N}(\ell(\vecz, \vecxi))] %
     &\geq   (1/\alpha)\Lambda_{\Prob^{\xi}}(\alpha\sigma) - \sigma\mathbb{E}[ M_N]= \alpha\sigma^2/2 - \sigma\mathbb{E}[M_N] =\Omega(\sigma^2),
         \end{align*}
         where the second to the last equality is obtained using the cumulant generating function of the standard normal distribution associated to $\xi$.
\end{proof}

\subsection{Proof of Proposition \ref{prop:saa:laplaceLB}}
\begin{proof}
    Similar to the proof of Proposition \ref{prop:bias:saa:superlinear}, from Lemma \ref{lemma_almst_sure}, we have that:
    \begin{align*}
       \rho_{\Prob}(\ell(\vecz, \vecxi)) - \mathbb{E}[\rho_{\hat{\Prob}_N}(\ell(\vecz, \vecxi))] %
       \geq   (1/\alpha)\Lambda_{\Prob^{\xi}}(\alpha\sigma) - \sigma\mathbb{E}[ M_N],
       \end{align*}
       where $\xi\sim\Prob^\xi$ follows a Laplace distribution with density $f(x):=(1/\sqrt{2})\exp(-\sqrt{2}|x|)$.
The exponential of the cumulant generating function of $\xi$ is given as follows:
\begin{align*}
\exp(\Lambda_{\Prob^{\xi}}(t))&=\frac{1}{\sqrt{2}}\int_{-\infty}^\infty \exp(tx)\exp(-\sqrt{2}|x|)dx=\frac{1}{\sqrt{2}}\left(\int_0^\infty \exp((t-\sqrt{2})x)dx + \int_0^\infty \exp((-t-\sqrt{2})x)dx\right)    \\
&=\frac{1}{\sqrt{2}} \left(\frac{1}{\sqrt{2}-t} +\frac{1}{t+\sqrt{2}}\right)=\frac{2}{2-t^2}
\end{align*}
where, the two integrals are finite only if $t^2<2$. %
Hence, $\Lambda_{\Prob^{\xi}}(\alpha \sigma)=\log(2)-\log(2-\alpha^2 \sigma^2)$ for all $\sigma^2 < 2/\alpha^2$, and otherwise infinite. Therefore, $\Lambda_{\Prob^{\xi}}(\alpha \sigma) \rightarrow \infty$ as $\sigma^2 \rightarrow 2/\alpha^2$, which gives:
\[ \rho_{\Prob}(\ell(\vecz, \vecxi)) - \mathbb{E}[\rho_{\hat{\Prob}_N}(\ell(\vecz, \vecxi))] \rightarrow \infty \text{ as } \mbox{Var}(\ell(\vecz, \vecxi)) \rightarrow 2/\alpha^2.\]
\end{proof}
\subsection{Proof of Proposition \ref{prop:saa:UB}}
\begin{proof}
    From Lemma \ref{lemma_almst_sure}, we have that:
\begin{equation}\label{eq:bias_cumulant}
\begin{aligned}
    \mathbb{E}[\rho_{\hat{\Prob}_N}(\ell(\z,\vecxi))] 
    &\geq \rho_{\Prob}(\ell(\z,\vecxi)) + \sigma\mathbb{E}[M_N] - \log(N)/\alpha-(1/\alpha)\Lambda_{\Prob^{\xi}}(\alpha\sigma)\\
    &\geq \rho_{\Prob}(\ell(\z,\vecxi)) + \sigma\mathbb{E}[M_N] - \log(N)/\alpha-(1/\alpha)
\nu_0^2 (\alpha \sigma)^2,
\end{aligned}
\end{equation}
where the second inequality follows from Lemma \ref{lemma:cgf:subgaussian:centered}. %
We rearrange to obtain:
\[\rho_{\Prob}(\ell(\vecz, \vecxi)) - \mathbb{E}[\rho_{\hat{\Prob}_N}(\ell(\vecz, \vecxi))] \leq 
\nu_0^2 \alpha \sigma^2 -\sigma \mathbb{E}[M_N] + \log(N)/\alpha=\mathcal O(\sigma^2). \]
Since $\rho_{\Prob}(\ell(\vecz, \vecxi)) \ge \mathbb{E}[\rho_{\hat{\Prob}_N}(\ell(\vecz, \vecxi))]$ from Jensen's inequality, we have:
\[|\rho_{\Prob}(\ell(\vecz, \vecxi)) - \mathbb{E}[\rho_{\hat{\Prob}_N}(\ell(\vecz, \vecxi))]| = \mathcal O(\sigma^2). \]
\end{proof}

 \begin{lemma}\label{lem:var_linear_underest_short}
    Let Assumption \ref{ass:meanVarControl} be satisfied, the loss $\ell(\vecz, \vecxi)$ be normally distributed, and $(\bar{\mu},\bar{\sigma})\in\mathcal{A}$ with $\bar{\sigma}>0$. Then, there exists $\alpha>0$ and some $C>0$, such that 
\[
 \rho^\alpha_{\Prob}(\ell(\vecz, \vecxi)) - \mathbb{E}[\rho^\alpha_{\hat{\Prob}_N}(\ell(\vecz, \vecxi))] \geq C{\alpha} \mbox{Var}(\ell(\vecz, \vecxi))
\]
for all $\z$ such that $\mbox{Var}(\ell(\vecz, \vecxi))\geq \bar{\sigma}{^2}$.
\end{lemma}
\begin{proof}
Let $\bar{\alpha}:=1$, then by Proposition~\ref{prop:saa:normalLB},  we have that for some $C_1>0$ and some $\sigma_1>0$:
\[\rho^{\bar{\alpha}}_{\Prob^\xi}(\mu+\sigma\xi) - \mathbb{E}[\rho^{\bar{\alpha}}_{\hat{\Prob}_N^\xi}(\mu+\sigma\xi)] \geq C_1\sigma^2\]
 for all $\sigma\geq \sigma_1$. Let $\alpha := %
 \sigma_1/\bar{\sigma}$ then for all $\z$ such that $\mbox{Var}(\ell(\z,\vecxi))\geq \bar{\sigma}^2$:
    \begin{align*}
    \rho^\alpha_{\Prob}(\ell(\vecz, \vecxi)) - \mathbb{E}[\rho^\alpha_{\hat{\Prob}_N}(\ell(\vecz, \vecxi))] &= \rho^\alpha_{\Prob^\xi}(\mu+\sigma\xi) - \mathbb{E}[\rho^\alpha_{\hat{\Prob}_N^\xi}(\mu+\sigma\xi)]\\
    &= \frac{1}{\alpha}\left(\rho^{\bar{\alpha}}_{\Prob^\xi}(\alpha(\mu+\sigma\xi)) - \mathbb{E}[\rho^{\bar{\alpha}}_{\hat{\Prob}_N^\xi}(\alpha(\mu+\sigma\xi))]\right) \\
    &\geq \frac{1}{\alpha} C_1 \left(\alpha\sigma\right)^2 = C_1\alpha \mbox{Var}(\ell(\vecz, \vecxi)),
    \end{align*}
    where $\mu:=\Expect[\ell(\z,\vecxi)]$ and $\sigma:=\sqrt{\mbox{Var}(\ell(\z,\vecxi))}$, and where we used the fact that $\alpha\sigma =%
    (\sigma_1/\bar{\sigma}) \bar{\sigma} = \sigma_1$.
\end{proof}
\subsection{Proof of Proposition \ref{prop:oic:delta}}

\begin{proof}
\removed{
From Popoviciu’s inequality \citep{popoviciu1935equations}, we have that
\begin{equation*}
  \Var_{\hat{\mathbb{P}}_N}(\exp(\alpha \ell(\z,\vecxi)))\le   \frac{(\max_{1\le i\le N}\exp(\alpha \ell(\z,\hat{\vecxi}_i))-\min_{1\le i\le N}\exp(\alpha \ell(\z,\hat{\vecxi}_i)))^2}{4} \le \frac{(\max_{1\le i\le N} \exp(\alpha \zeta_i))^2}{4}.
\end{equation*}
Dividing the above inequality by $(\sum_{i=1}^N\exp(\alpha \ell(\z,\hat{\vecxi}_i))/N)^2>0$, we obtain:
\[
\frac{\Var_{\hat{\mathbb{P}}_N}(\exp(\alpha \zeta))}{((1/N)\sum_{i=1}^N\exp(\alpha \zeta_i))^{\,2}}\leq \frac{\max_{1\le i\le N} \exp(\alpha \zeta_i))^2/4}{((1/N)\sum_{i=1}^N\exp(\alpha \zeta_i))^{\,2}}
\le \frac{(\max_{1\le i\le N}\exp(\alpha \zeta_i))^2/4}{(\max_{1\le i\le N}\exp(\alpha \zeta_i)/N)^2}=\frac{N^2}{4},
\]
where we exploited that $(\sum_{i=1}^N\exp(\alpha \zeta_i)/N)^2\ge (\max_{1\le i\le N}\exp(\alpha \zeta_i)/N)^2$.
}
From Popoviciu’s inequality \citep{popoviciu1935equations}, we have that
\begin{align*}
  \Var_{\hat{\mathbb{P}}_N}(\exp(\alpha \ell(\z,\vecxi)))&\le   \frac{(\max_{1\le i\le N}\exp(\alpha \ell(\z,\hat{\vecxi}_i))-\min_{1\le i\le N}\exp(\alpha \ell(\z,\hat{\vecxi}_i)))^2}{4} \\
  &\le \frac{(\max_{1\le i\le N} \exp(\alpha \ell(\z,\hat{\vecxi}_i)))^2}{4}\leq \frac{(\sum_{i=1}^N\exp(\alpha \ell(\z,\hat{\vecxi}_i)))^2}{4}\\
  &=(N^2/4)(\Expect_{\hat{\mathbb{P}}_N}[\exp(\alpha \ell(\z,\vecxi))])^2.
\end{align*}
where we exploited again that $\sum_{i=1}^N\exp(\alpha y_i)\ge \max_{1\le i\le N}\exp(\alpha y_i)$ for any set $\{y_i\}_{i=1}^N$.
Dividing the above inequality by $(\Expect_{\hat{\mathbb{P}}_N}[\exp(\alpha \ell(\z,\vecxi))])^2>0$, we obtain:
\[
\frac{\Var_{\hat{\mathbb{P}}_N}(\exp(\alpha \ell(\z,\vecxi)))}{(\Expect_{\hat{\mathbb{P}}_N}[\exp(\alpha \ell(\z,\vecxi))])^2}\leq %
\frac{N^2}{4}.
\]
Thus, we obtain the following almost sure bound on the two estimators:
\begin{align*}
\rho_{\model{OIC}}&=\rho_{\hat{\Prob}_N}(\ell(\z,\vecxi))+\frac{1}{\alpha N}\,
\frac{\mathrm{Var}_{\hat{\mathbb{P}}_N}(\exp(\alpha\ell(\z,\vecxi)))}
     {\big(\mathbb{E}_{\hat{\mathbb{P}}_N}[\exp(\alpha\ell(\z,\vecxi))]\big)^2}
\le \rho_{\hat{\Prob}_N}(\ell(\z,\vecxi))+\frac{N}{{4}\alpha}
\\
\rho_{\model{Delta}} &=\rho_{\hat{\Prob}_N}(\ell(\z,\vecxi))+ \frac{1}{2\alpha N}\,
\frac{\mathrm{Var}_{\hat{\mathbb{P}}_N}(\exp(\alpha\ell(\z,\vecxi)))}
     {\big(\mathbb{E}_{\hat{\mathbb{P}}_N}[\exp(\alpha\ell(\z,\vecxi))]\big)^2}
\le \rho_{\hat{\Prob}_N}(\ell(\z,\vecxi))+\frac{N}{{8}\alpha}.
\end{align*}
Clearly, $\rho_{\model{OIC}}\geq \rho_{\hat{\Prob}_N}(\ell(\z,\vecxi))$ and $\rho_{\model{Delta}}\geq \rho_{\hat{\Prob}_N}(\ell(\z,\vecxi))$. 

We will represent the $\model{Delta}$ and $\model{OIC}$ estimators in a unified way by introducing a generic estimator $\model{M}\in\{\model{Delta},\model{OIC}\}$. Specifically, we set $\kappa_{\model{Delta}}=8$ and $\kappa_{\model{OIC}}=4$. With this notation, we summarize our findings as:
\begin{align}\label{eq:ub:oic}
\rho_{\hat{\Prob}_N}(\ell(\z,\vecxi)) \leq \rho_{\model{M}}
\le \rho_{\hat{\Prob}_N}(\ell(\z,\vecxi))+\frac{N}{\kappa_{\model{M}} \alpha}.
\end{align}

Taking expectation on all sides of the two  inequalities in \eqref{eq:ub:oic} before also subtracting $\rho_{\Prob}(\ell(\vecz, \vecxi))$, we obtain:
\begin{align*}
\rho_{\Prob}(\ell(\vecz, \vecxi))  -\Expect[\rho_{\hat{\Prob}_N}(\ell(\vecz, \vecxi))]- \frac{N}{\kappa_{\model{M}}\alpha} \leq \rho_{\Prob}(\ell(\vecz, \vecxi)) - \mathbb{E}[\rho_{\model{M}}]  \leq  \rho_{\Prob}(\ell(\vecz, \vecxi))  -\Expect[\rho_{\hat{\Prob}_N}(\ell(\vecz, \vecxi))],
\end{align*}
which also implies that 
\[|\rho_{\Prob}(\ell(\vecz, \vecxi)) - \mathbb{E}[\rho_{\model{M}}]| \leq |\rho_{\Prob}(\ell(\vecz, \vecxi))  -\Expect[\rho_{\hat{\Prob}_N}(\ell(\vecz, \vecxi))]| + \frac{N}{\kappa_{\model{M}}\alpha}\]
Since the term $N/(k_{\model{M}}\alpha)$ is constant with respect to $(\mu,\sigma)$, we thus conclude that the asymptotic properties of  $\rho_{\Prob}(\ell(\vecz, \vecxi))  -\Expect[\rho_{\hat{\Prob}_N}(\ell(\vecz, \vecxi))]$ and $|\rho_{\Prob}(\ell(\vecz, \vecxi))  -\Expect[\rho_{\hat{\Prob}_N}(\ell(\vecz, \vecxi))]|$ carry through to $\rho_{\Prob}(\ell(\vecz, \vecxi)) - \mathbb{E}[\rho_{\model{M}}]$ and $|\rho_{\Prob}(\ell(\vecz, \vecxi)) - \mathbb{E}[\rho_{\model{M}}]|$ respectively. Namely, propositions \ref{prop:bias:saa:superlinear}, \ref{prop:saa:normalLB}, \ref{prop:saa:laplaceLB}, and \ref{prop:saa:UB} hold with $\rho_{\hat{\Prob}_N}(\ell(\vecz, \vecxi))$ replaced with both $\rho_{\model{Delta}}$ or $\rho_{\model{OIC}}$.

\removed{
\noindent
We have that $\rho_{\model{M}}(\ell(\vecz, \vecxi))\geq \rho_{\hat{\Prob}_N}(\ell(\vecz, \vecxi)) \implies \Expect[\rho_{\model{M}}(\ell(\vecz, \vecxi))]\geq \Expect[\rho_{\hat{\Prob}_N}(\ell(\vecz, \vecxi))]$. We take expectations on both sides in \eqref{eq:ub:oic} to bound the gap between the expected value of empirical risk estimator and estimator $\model{M}$:
\begin{align*}
     &\Expect[\rho_{\model{M}}(\ell(\vecz, \vecxi))]\le \Expect[\rho_{\hat{\Prob}_N}(\ell(\vecz, \vecxi))]+\frac{N}{\kappa_\model{M}\alpha} \\ &\implies |\Expect[\rho_{\model{M}}(\ell(\vecz, \vecxi))]- \Expect[\rho_{\hat{\Prob}_N}(\ell(\vecz, \vecxi))]|\le \frac{N}{\kappa_{\model{M}}\alpha}. 
\end{align*}
Equipped with the above inequality and with the estimation error of the empirical risk estimator equal to $\mathcal O(\sigma^2)$ for subgaussian random variables,  the estimation error of \model{M} is also $\mathcal O(\sigma^2)$ using the triangle inequality:
\begin{align*}
|\rho_{\Prob}(\ell(\vecz, \vecxi)) - \mathbb{E}[\rho_{\model{M}}(\ell(\vecz, \vecxi))]| &\le   |\rho(\ell(\vecz, \vecxi))-\Expect[\rho_{\hat{\Prob}_N}(\ell(\vecz, \vecxi))]-\Expect[\rho_{\model{M}}(\ell(\vecz, \vecxi))]+ \Expect[\rho_{\hat{\Prob}_N}(\ell(\vecz, \vecxi))]|\\
  &\leq  |\rho(\ell(\vecz, \vecxi))-\Expect[\rho_{\hat{\Prob}_N}(\ell(\vecz, \vecxi))]|+|\Expect[\rho_{\model{M}}(\ell(\vecz, \vecxi))]- \Expect[\rho_{\hat{\Prob}_N}(\ell(\vecz, \vecxi))]|\\
  & \leq \mathcal O(\sigma^2)+\frac{N}{\kappa_{\model{M}}\alpha} = \mathcal O(\sigma^2).
\end{align*}
}
\end{proof}

\subsection{Proof of Proposition \ref{prop:bs:dbs}}
\begin{proof}
 Starting with the first-level bootstrap, let $\tilde{i}(j)_{j=1}^N$ denote the sampled indices with $\tilde{i}(j)\sim U(\{1,\dots,N\})$ independently, which is responsible for the set $\{\hat{\vecxi}_{\tilde{i}(j)}\}_{j=1}^N$ behind the  empirical distribution $\hat{\Prob}_{N,N}$. Conditional on the first-level bootstrap procedure, consider the second-level bootstrap sample  $\{\hat{\vecxi}_{\tilde{i}(\tilde{j}(k))}\}_{k=1}^N$, with each $\tilde{j}(k)\sim U(\{1,\dots,N\})$ independently and independently of $\tilde{i}$ responsible for the second-level bootstrap empirical distribution $\hat{\mathbb{P}}_{N,N, N}$. %
    
    From Lemma \EDmodified{\ref{recur_lemma}}, %
    we have that almost surely:
        \begin{equation}\label{eq:bound_dbs}
    \begin{aligned}
         &\mu+\sigma M_{N} -\log(N)/\alpha \le 
         \rho_{\hat{\mathbb{P}}_{N}}(\ell(\vecz, \vecxi)) \le \mu+\sigma M_{N},\\
    &\mu+\sigma M_{N,N} - \log(N)/\alpha \le 
    \rho_{\hat{\mathbb{P}}_{N,N}}(\ell(\vecz, \vecxi)) \le \mu+\sigma M_{N,N} ,\\
    &\mu+\sigma M_{N,N, N} -\log(N)/\alpha  \le
    \rho_{\hat{\mathbb{P}}_{N,N, N}}(\ell(\vecz, \vecxi)) \le \mu+\sigma M_{N,N, N},
    \end{aligned}
        \end{equation}
 where $M_N := \max_{1\le i\le N} \hat{\xi}_i$, with $\hat{\xi}_i:=(\ell(\z,\hat{\vecxi}_i)-\mu)/\sigma$, for all $i=1,\dots,N$, and independently drawn from $\Prob^\xi$, while
 \begin{align*}
     M_{N,N}&:= \max_{1\leq j\leq N} (\ell(\z,\hat{\vecxi}_{\tilde{i}(j)})-\mu)/\sigma = \max_{1\leq j\leq N}\hat{\xi}_{\tilde{i}(j)}\\
     M_{N,N,N}&:= \max_{1\leq k\leq N} (\ell(\z,\hat{\vecxi}_{\tilde{i}(\tilde{j}(k))})-\mu)/\sigma = \max_{1\leq k\leq N}\hat{\xi}_{\tilde{i}(\tilde{j}(k))}
 \end{align*}
  
  Both the bootstrap estimator with $(\nu_1, \nu_2, \nu_3)=(2, 1, 0)$ and the double bootstrap estimator with $(\nu_1, \nu_2, \nu_3)=(3, 3, 1)$ can be written in a unified form by introducing  a generic estimator $\model{M}$:
\begin{align}\label{eq:unify:M}
\rho_{\texttt{M}}:=\nu_1\rho_{\hat{\Prob}_N}(\ell(\vecz, \vecxi)) -\nu_2\Expect[\rho_{\hat{\mathbb{P}}_{N,N}}(\ell(\vecz, \vecxi))|\hat{\Prob}_N] +\nu_3\Expect[\rho_{\hat{\mathbb{P}}_{N,N, N}}(\ell(\vecz, \vecxi))|\hat{\Prob}_{N}],
\end{align}
with $\nu_1\geq 1$, $\nu_2\geq0$, $\nu_3\geq 0$ and $\nu_1-\nu_2+\nu_3=1$. Based on \eqref{eq:bound_dbs}, 
we have almost surely that
    \begin{align}\label{eq:bs:bound}
   \rho_{\texttt{M}}%
        &\leq \rho_{\hat{\mathbb{P}}_{N}}(\ell(\vecz, \vecxi))+(\nu_1-1)\rho_{\hat{\Prob}_N}(\ell(\vecz, \vecxi))- \nu_2\Expect[\mu+\sigma M_{N,N} - \log(N)/\alpha|\hat{\Prob}_N] +\nu_3\Expect[\mu+\sigma M_{N,N,N}|\hat{\Prob}_N] \notag\\
        &\leq \rho_{\hat{\mathbb{P}}_{N}}(\ell(\vecz, \vecxi))+\mu(\nu_1-\nu_2+\nu_3-1) -\nu_2\sigma  \Expect[M_{N,N} |\hat{\Prob}_N] + \nu_3\sigma  \Expect[M_{N,N, N} |\hat{\Prob}_N] \notag\\&\qquad+ \nu_2\log(N)/\alpha +(\nu_1-1)\sigma M_{N}\notag\\
        &= \rho_{\hat{\mathbb{P}}_{N}}(\ell(\vecz, \vecxi)) +((\nu_1-1) M_{N} + \nu_3 \Expect[M_{N,N, N} |\hat{\Prob}_N] -\nu_2  \Expect[M_{N,N} |\hat{\Prob}_N])\sigma + \nu_2\log(N)/\alpha.
     \end{align}
        We take expectations on both sides in the above, subtract $\rho(\ell(\vecz, \vecxi))$ from both sides and rearrange to obtain:
        \begin{align}\label{eq:rhoDBSLB}
         \rho_{\Prob}&(\ell(\vecz, \vecxi)) - \mathbb{E}[\rho_{\texttt{M}}]\geq \notag\\
         &\rho(\ell(\vecz, \vecxi))-\Expect[\rho_{\hat{\mathbb{P}}_{N}}(\ell(\vecz, \vecxi))] -((\nu_1-1) \Expect[M_{N}] -\nu_2  \Expect[M_{N,N}] + \nu_3 \Expect[M_{N,N, N}] )\sigma - \nu_2\log(N)/\alpha,
        \end{align}
        which add a term that is only linear in $\sigma$ to the bias obtained from the empirical estimator.
        Equipped with the asymptotic properties of the empirical risk estimator, namely i) $\rho(\ell(\vecz, \vecxi))  -\Expect[\rho_{\hat{\Prob}_N}(\ell(\vecz, \vecxi))] =\omega(\sigma)$ for distributions with unbounded right tail, ii) $\rho(\ell(\vecz, \vecxi))  -\Expect[\rho_{\hat{\Prob}_N}(\ell(\vecz, \vecxi))] =\Omega(\sigma)$ for normal distribution,  and iii) $\rho(\ell(\vecz, \vecxi))  -\Expect[\rho_{\hat{\Prob}_N}(\ell(\vecz, \vecxi))]\rightarrow \infty$ as $\sigma^2 \rightarrow 2/\alpha^2$ for Laplace distribution, we have that    Propositions \ref{prop:bias:saa:superlinear}, \ref{prop:saa:normalLB}, and \ref{prop:saa:laplaceLB} also hold for \model{BS} and \model{DBS} estimators.

From \eqref{eq:bound_dbs}, we also have that:
\begin{align*}%
   \rho_{\texttt{M}} &\geq \rho_{\hat{\mathbb{P}}_{N}}(\ell(\vecz, \vecxi))+(\nu_1-1)(\mu+\sigma M_N - \log(N)/\alpha)- \nu_2\Expect[\mu+\sigma M_{N,N} |\hat{\Prob}_N] \notag\\&\qquad+\nu_3\Expect[\mu+\sigma M_{N,N,N}- \log(N)/\alpha|\hat{\Prob}_N] \notag\\
   &= \rho_{\hat{\mathbb{P}}_{N}}(\ell(\vecz, \vecxi))+((\nu_1-1)M_N-\nu_2 \Expect[M_{N,N}|\Prob_N] + \nu_3 \Expect[M_{N,N,N}|\Prob_N])\sigma - (\nu_1+\nu_3-1)\log(N)/\alpha.
\end{align*}
This implies that
\begin{align*}
\rho_{\Prob}&(\ell(\vecz, \vecxi)) -\Expect[\rho_{\model{M}}]\\
&\leq \rho_{\Prob}(\ell(\vecz, \vecxi)) - \Expect[\rho_{\hat{\mathbb{P}}_{N}}(\ell(\vecz, \vecxi))] -((\nu_1-1)\Expect[M_N]-\nu_2 \Expect[M_{N,N}] + \nu_3 \Expect[M_{N,N,N}])\sigma \\&\qquad +(\nu_1+\nu_3-1)\log(N)/\alpha,     
\end{align*}
and thus combining with \eqref{eq:rhoDBSLB}, we get that
\[|\rho_{\Prob}(\ell(\vecz, \vecxi)) - \mathbb{E}[\rho_{\model{M}}]|\leq |\rho_{\Prob}(\ell(\vecz, \vecxi)) - \Expect[\rho_{\hat{\mathbb{P}}_{N}}(\ell(\vecz, \vecxi))]| + c_1\sigma + c_2,\]
where $c_1:=|(\nu_1-1)\Expect[M_N]-\nu_2 \Expect[M_{N,N}] + \nu_3 \Expect[M_{N,N,N}]|$ and $c_2:=(\nu_1+\nu_2+
\nu_3-1)\log(N)/\alpha$. We conclude that when the loss has subgaussian tails, since Proposition \ref{prop:saa:UB} states that $|\rho_{\Prob}(\ell(\vecz, \vecxi)) - \Expect[\rho_{\hat{\mathbb{P}}_{N}}(\ell(\vecz, \vecxi))]|=\mathcal O(\sigma^2)$, it must be that:
\[|\rho_{\Prob}(\ell(\vecz, \vecxi)) - \mathbb{E}[\rho_{\model{M}}]|\leq \mathcal O(\sigma^2) + \mathcal O(\sigma) = \mathcal O(\sigma^2).\]

\removed{
\begin{align*}
         &
         |\Expect[\rho_{\hat{\mathbb{P}}_{N}}(\ell(\vecz, \vecxi))]| \le |\mu|+\sigma |\Expect[M_{N}]| +\log(N)/\alpha\\
    &
    |\Expect[\rho_{\hat{\mathbb{P}}_{N,N}}(\ell(\vecz, \vecxi))]| \le |\mu|+|\sigma \Expect[M_{N,N}]| + \log(N)/\alpha,\\
    &|\Expect[\rho_{\hat{\mathbb{P}}_{N,N, N}}(\ell(\vecz, \vecxi))]| \le |\mu|+\sigma |\Expect[M_{N,N, N}]| + \log(N)/\alpha.
    \end{align*}
From \eqref{eq:unify:M}, we have that:
\begin{align*}
\Expect[\rho_{\texttt{M}}]-\Expect[\rho_{\hat{\Prob}_N}(\ell(\vecz, \vecxi))]&=(\nu_1-1)\Expect[\rho_{\hat{\Prob}_N}(\ell(\vecz, \vecxi))] -\nu_2\Expect[\Expect[\rho_{\hat{\mathbb{P}}_{N,N}}(\ell(\vecz, \vecxi))|\hat{\Prob}_N]] \\&+\nu_3\Expect[\Expect[\rho_{\hat{\mathbb{P}}_{N,N, N}}(\ell(\vecz, \vecxi))|\hat{\Prob}_{N,N}]]\; \mbox{a.s.}.
\end{align*}
Thus, we have that:
    \begin{align*}
   |\Expect[\rho_{\texttt{M}}] -\Expect[\rho_{\hat{\mathbb{P}}_{N}}(\ell(\vecz, \vecxi))]|
        &\leq |\mu|(\nu_1+\nu_2+\nu_3+1) +\nu_2\sigma  |\Expect[\Expect(M_{N,N} |\hat{\Prob}_{N}]| + \nu_3\sigma  |\Expect[\Expect(M_{N,N, N} |\hat{\Prob}_{N,N})]| \notag\\&\qquad+ (\nu_1+\nu_2+\nu_3-1)\log(N)/\alpha +(\nu_1+1)\sigma |\Expect[M_{N}]|=\mathcal{O}(\sigma),
     \end{align*}
Yet, the estimation error of empirical entropic risk is at most $\mathcal O(\sigma^2)$ if $\ell(\vecz, \vecxi)$ has subgaussian tails, which allows us to bound the estimation error for BS and DBS estimators using triangle inequality
\begin{align*}
 |\rho_{\Prob}(\ell(\vecz, \vecxi)) - \mathbb{E}[\rho_{\model{M}}(\ell(\vecz, \vecxi))]| &\le   |\rho(\ell(\vecz, \vecxi))-\Expect[\rho_{\hat{\Prob}_N}(\ell(\vecz, \vecxi))]-\Expect[\rho_{\model{M}}(\ell(\vecz, \vecxi))]+ \Expect[\rho_{\hat{\Prob}_N}(\ell(\vecz, \vecxi))]|\\
  &\leq  |\rho(\ell(\vecz, \vecxi))-\Expect[\rho_{\hat{\Prob}_N}(\ell(\vecz, \vecxi))]|+|\Expect[\rho_{\model{M}}(\ell(\vecz, \vecxi))]- \Expect[\rho_{\hat{\Prob}_N}(\ell(\vecz, \vecxi))]|\\
  & = \mathcal O(\sigma^2)+\mathcal O(\sigma) = \mathcal O(\sigma^2).
\end{align*}}
Thus both \model{BS} and $\model{DBS}$ estimators satisfy Proposition \ref{prop:saa:UB}.

\end{proof}

\subsection{Proof of Proposition \ref{thm:loocv_overestimate}}
\begin{proof}
We have almost surely that:
  \begin{align*}
\rho_{\model{LOOCV}}
&= \frac{1}{N}\sum_{i=1}^N \left( \hat{t}_{-i} + \frac{1}{\alpha}\Big(\exp\big(\alpha(\ell(\z,\hat{\vecxi}_i)-\hat{t}_{-i})\big)-1\Big)\right)\\
&\geq \frac{1}{N}\sum_{i=1}^N \left( \mu+\sigma M_{N}^{-i} - \frac{\log(N-1)}{\alpha} + \frac{1}{\alpha}\Big(\exp\big(\alpha(\mu+\sigma\hat{\xi}_i-\mu-\sigma M_{N}^{-i})\big)-1\Big)\right)\\
&=\mu- \frac{\log(N-1)}{\alpha}-\frac{1}{\alpha}+\sigma\frac{1}{N}\sum_{i=1}^N M_{N}^{-i}  + \frac{1}{\alpha N}\sum_{i=1}^N\exp\big(\alpha\sigma(\hat{\xi}_i- M_{N}^{-i})\big)\\
&\geq\mu- \frac{\log(N-1)}{\alpha}-\frac{1}{\alpha}+\sigma\frac{1}{N}\sum_{i=1}^N M_{N}^{-i}  + \frac{1}{\alpha N}\max_{1\le i\le N}\exp\big(\alpha\sigma(\hat{\xi}_i- M_{N}^{-i})\big)\\
&=\mu- \frac{\log(N-1)}{\alpha}-\frac{1}{\alpha}+\sigma\frac{1}{N}\sum_{i=1}^N M_{N}^{-i}  + \frac{1}{\alpha N}\exp\big(\alpha\sigma \mathfrak{M}_N\big),   
  \end{align*}
  where $\hat{\xi}_i:=(\ell(\z,\hat{\vecxi}_i)-\mu)/\sigma$, $M_N^{-i}:=\max_{j\neq i} \hat{\xi}_j$, $\mathfrak{M}_N:=\max_i (\hat{\xi}_i- M_{N}^{-i})$, and the inequality comes from  Lemma~\ref{recur_lemma}.

  This implies that:
  \begin{align*}
\Expect[\rho_{\model{LOOCV}}]
&\geq \mu- \frac{\log(N-1)}{\alpha}-\frac{1}{\alpha}+\sigma\Expect[\frac{1}{N}\sum_{i=1}^N M_{N}^{-i}]  + \Expect[\frac{1}{\alpha N}\exp\big(\alpha\sigma \mathfrak{M}_N\big)]\\
&\geq \mu- \frac{\log(N-1)}{\alpha}-\frac{1}{\alpha}+\sigma\Expect[\frac{1}{N}\sum_{i=1}^N M_{N}^{-i}]  + \frac{1}{\alpha N} \exp(\alpha\sigma \Expect[\mathfrak{M}_N])\\
&= \mu + \Omega(\exp(k_1\sigma))
  \end{align*}
  for $k_1:=\alpha\Expect[\mathfrak{M}_N] >0$, and where we applied Jensen's inequality.  
  
  Note that $\Expect[\mathfrak{M}_N]>0$ is due to $\mbox{Var}(\xi)=1$. %
  Specifically, the proof will identify a threshold $\bar y$ such that, with positive probability, exactly one of the $N$ observations exceeds $\bar y$ while the remaining $N-1$ do not. %
  Indeed, a strictly positive variance implies that there must be some $\bar{y}\in\mathbb{R}$ such that $\Prob(\xi\leq \bar{y})\in(0,\,1)$, otherwise $\xi$ is deterministic with a variance of zero. Furthermore, we can exploit the fact that $\mathfrak{M}_N=\hat{\xi}_{(1)}-\hat{\xi}_{(2)}\geq 0$ with $\hat{\xi}_{(i)}$ the $i$-th largest sample in the set and observe that:
  \[\Prob(\mathfrak{M}_N >0) = \Prob(\hat{\xi}_{(1)}>\hat{\xi}_{(2)}) \geq \Prob(\hat{\xi}_{(1)}>\bar{y} \,\&\, \hat{\xi}_{(2)}\leq \bar{y}) >0,\]
  since $\Prob(\hat{\xi}_{(1)}>\bar{y} \,\&\, \hat{\xi}_{(2)}\leq \bar{y})$ is the probability that $N-1$ success is observed among $N$ experiments with success probability of $\Prob(\xi\leq \bar{y})\in(0,\,1)$. We thus can conclude that
  \[\Expect[\mathfrak{M}_N] \geq \Expect[\mathfrak{M}_N|\mathfrak{M}_N >0] \Prob(\mathfrak{M}_N >0)>0.\]
Finalizing our proof %
for the lower bound on estimation error, %
we obtain that
  \begin{align*}
      \Expect[\rho_{\model{LOOCV}}] &= \mu + \Omega(\exp(k_1\sigma)) + \rho_\Prob(\ell(\z,\vecxi))-\rho(\ell(\z,\vecxi))\\
      &= \mu + \Omega(\exp(k_1\sigma)) + \rho_\Prob(\ell(\z,\vecxi))-\mu -\frac{1}{\alpha}\Lambda_{\Prob_\xi}(\alpha\sigma)\\
      &\geq \Omega(\exp(k_1\sigma)) + \rho_\Prob(\ell(\z,\vecxi)) - \frac{\nu_0(\alpha\sigma)^2}{\alpha}\\
      &= \Omega(\exp(k_1\sigma)) + \rho_\Prob(\ell(\z,\vecxi)),
      \end{align*}
 where first inequality follows from  Lemma \ref{lemma:cgf:subgaussian:centered}. %
 Thus, we obtain the estimation error of the \model{LOOCV} estimator \[|\rho_{\Prob}(\ell(\vecz, \vecxi)) - \mathbb{E}[\rho_{\model{LOOCV}}]|\geq\Expect[\rho_{\model{LOOCV}}] -\rho_\Prob(\ell(\vecz, \vecxi)) = \Omega(\exp(k_1\sigma)). \]
 
 Next, we show that the estimation error has an upper bound that is exponential in the variance $\sigma^2$, that is, $ | \rho(\ell(\vecz, \vecxi)) - \Expect[\rho_{\model{LOOCV}}]| = \mathcal O(\exp(k_2\sigma^2))$ for some $k_2>0$.  We have almost surely that:
  \begin{align*}
\rho_{\model{LOOCV}}
&= \frac{1}{N}\sum_{i=1}^N  \left(\hat{t}_{-i} + \frac{1}{\alpha}\Big(\exp\big(\alpha(\ell(\vecz, \hat{\vecxi}_i)-\hat{t}_{-i})\big)-1\Big)\right)\\
&\leq \frac{1}{N}\sum_{i=1}^N  \left(\mu+\sigma M_{N}^{-i}+ \frac{1}{\alpha}\exp\big(\alpha(\ell(\vecz, \hat{\vecxi}_i)-\hat{t}_{-i})\big)\right) \\
&\leq \mu+\frac{1}{N}\sum_{i=1}^N  \sigma M_{N}^{-i}  + \frac{N-1}{\alpha N}\sum_{i=1}^N\exp\big(\alpha(\sigma(\hat{\xi}_i- M_{N}^{-i}))\big),
  \end{align*}
  where the first inequality comes from  Lemma~\ref{recur_lemma}. %
  The upper bound in the second inequality follows by combining  Lemma~\ref{recur_lemma} with $\ell(\vecz, \hat{\vecxi}_i) = \mu+\sigma \hat{\xi}_i$ yielding $\ell(\vecz, \hat{\vecxi}_i) - \hat t_{-i}
\le \sigma(\hat\xi_i - M_N^{-i}) + \log(N-1)/\alpha.$

Let $i^*$ be the index of the largest observation in $\{\hat\xi_1, \cdots, \hat\xi_N\}$, hence $ \hat\xi_{(1)}=\hat\xi_{i^*}$, $\hat\xi_{(2)} = M_N^{-i^*}$. Then for any $i\neq i^*$, we have $\hat\xi_i \leq M_N^{-i}$ and hence $\exp(\alpha\sigma(\hat\xi_i - M_N^{-i})) \le 1$. Thus, we have that 
\begin{align*} \sum_{i=1}^N &\exp(\alpha\sigma(\hat\xi_i - M_N^{-i})) = \sum_{i\neq i^*} \exp(\alpha\sigma(\hat\xi_i - M_N^{-i})) + \exp(\alpha\sigma(\hat\xi_{(1)} - \hat\xi_{(2)}))
\le
N-1 + \exp(\alpha\sigma \mathfrak M_N)
\end{align*}
Moreover, one can show that:
\begin{align*}
    \exp(\alpha\sigma  \mathfrak M_N) &= \exp(\alpha\sigma ( \hat\xi_{(1)} - \hat\xi_{(2)}))
\le \exp(\alpha\sigma (|\hat\xi_{(1)}| + |\hat\xi_{(2)}|))
\le  \exp(2\alpha \sigma \max_{1\le j\le N}|\hat\xi_j|)\\
&\leq \sum_{j=1}^N \exp(2\alpha \sigma |\hat\xi_j|) \leq \sum_{j=1}^N (\exp(2\alpha \sigma \hat\xi_j)+\exp(-2\alpha \sigma \hat\xi_j))
\end{align*}

Thus, we have that:
\begin{align*}
\mathbb E[\rho_{\model{LOOCV}}] &\leq \mu
+ \sigma\,\mathbb E\Big[\frac{1}{N}\sum_{i=1}^N M_N^{-i}\Big]
+ \frac{(N-1)^2}{\alpha N}
+ \frac{N-1}{\alpha N}\Expect[\exp(\alpha \sigma \mathfrak M_N)]
\\
&\leq\mu + \sigma\,\mathbb E\Big[\frac{1}{N}\sum_{i=1}^N M_N^{-i}\Big]
+ \frac{(N-1)^2}{\alpha N}
+ \frac{(N-1)}{\alpha N}\sum_{j=1}^N \mathbb E\big[\exp(2\alpha\sigma \hat\xi_j)+\exp(-2\alpha\sigma \hat\xi_j)\big]\\
&\leq\mu
+ \sigma\,\mathbb E\Big[\frac{1}{N}\sum_{i=1}^N M_N^{-i}\Big]
+ \frac{(N-1)^2}{\alpha N}
+ \frac{2(N-1)}{\alpha}\exp(4\nu_0^2\alpha^2\sigma^2)\\
&
= \mu + \mathcal{O}(\exp(k_2\sigma^2)),
\end{align*}
with $k_2:=4\nu_0^2\alpha^2>0$ and where
 the third inequality follows from Lemma \ref{lemma:cgf:subgaussian:centered}.
Finally, we can obtain the upper bound on the estimation error of the $\model{LOOCV}$ estimator:
\begin{align*}
    |\rho_{\Prob}(\ell(\vecz, \vecxi)) - \mathbb{E}[\rho_{\model{LOOCV}}]|&= \mathbb{E}[\rho_{\model{LOOCV}}]- \rho_{\Prob}(\ell(\vecz, \vecxi))= \mu + \mathcal{O}(\exp(k_2\sigma^2)) - \mu - (1/\alpha)\Lambda_{\Prob^\xi}(\sigma\alpha)\\
    &\leq \mathcal{O}(\exp(k_2\sigma^2)), 
\end{align*}
where the first equality follows from \eqref{eq:loocvposBias}, and where we exploited $\Lambda_{\Prob^\xi}(t)=\log(\Expect_{\Prob^\xi}[\exp(t\xi)])\geq \Expect_{\Prob^\xi}[\log(\exp(t\xi))]=t\Expect_{\Prob^\xi}[\xi]=0$ by the concavity of the logarithm function.
\end{proof}

\removed{\subsection{Old subsection }

 The next proposition  provides upper and lower bounds on the empirical entropic risk in terms of the sample maxima and the cumulant generating function (CGF).

\begin{proposition}\label{lemma_almst_sure}
    Let $\hat{\xi}_1,\dots,\hat{\xi}_N \stackrel{\text{iid}}{\sim} \mathcal F_{\xi}$ with $\mathbb{E}[\xi]=0$, $\zeta_i:= \mu + \sigma \hat{\xi}_i$, for some $\mu\in\mathbb{R}$ and $\sigma>0$, and $M_N := \max_{1\le i\le N} \hat{\xi}_i$.
Then
\begin{subequations}
\begin{align}
&\rho_{\hat{\Prob}_N}(\zeta) \leq \rho(\zeta) -   (1/\alpha)\Lambda_{\Prob^{\xi}}(\alpha\sigma) +\sigma M_N, \mbox{ a.s.},\label{eq:almost_sure}
\\
   & \rho_{\hat{\Prob}_N}(\zeta) \geq \rho(\zeta) -(1/\alpha)\Lambda_{\Prob^{\xi}}(\alpha \sigma) + \sigma M_N-\log(N)/\alpha, \mbox{ a.s.}.
\end{align}
\end{subequations}
\end{proposition}

In the above proposition, for any data set $\mathcal{D}_N$, we have $M_N<\infty$ almost surely, and the amount of underestimation is controlled by $(1/\alpha)\Lambda_{\Prob^{\xi}}(\alpha\sigma) -\sigma M_N$. If the cumulant generating function grows faster than linearly, that is, if $\Lambda_{\Prob^{\xi}}(\alpha\sigma) = \Omega(\sigma^p)$ for some $p>1$, then the  mean and any fixed quantile of the empirical risk $\rho_{\hat{\Prob}_N}(\zeta)$ lie at least $\Omega(\sigma^p)$ below the true risk $\rho(\zeta)$. Indeed, this holds for a wide class of distributions including Gaussian  distributions. \UScomments{This holds for all subweibull distributions with $p\leq 2$. A random variable $X$ is sub-Weibull(p) if $\mathbb P(|X| > t) \le C \exp(-c t^{p})$  $ (t \ge 0)$ where $p = 2$: subgaussian;
$p = 1$: subexponential; $0 < p < 2$: heavier than Gaussian; $p > 2$: lighter-than-Gaussian}

\noindent
\begin{remark}\label{rem:GaussianAlmostSure}
From Proposition \ref{lemma_almst_sure}, we have that:
\begin{enumerate}
    \item[i)] if $\xi\sim \mathbb{N}(0,1)$, then   $\Lambda_{\Prob^{\xi}}(\alpha \sigma)=\alpha^2\sigma^2/2$. Thus $\rho(\zeta) - \Expect[\rho_{\hat{\Prob}_N}(\zeta)] = \Omega(\sigma^2)$. %
    \item[ii)] if $\xi\sim\text{Laplace}(0,\lambda)$ with   density $F_{\xi}(x)=\frac{\lambda}{2}e^{-\lambda|x|}$. Then
\begin{align*}
\exp(\Lambda_{\Prob^{\xi}}(t))&=\frac{\lambda}{2}\int_{-\infty}^\infty e^{tx}e^{-\lambda|x|}dx=\frac{\lambda}{2}\left(\int_0^\infty e^{(t-\lambda)x}dx + \int_0^\infty e^{(-t-\lambda)x}dx\right)    \\
&=\frac{\lambda}{2} \left(\frac{1}{\lambda-t} +\frac{1}{t+\lambda}\right)=\frac{\lambda^2}{\lambda^2-t^2}
\end{align*}
Hence, $\Lambda_{\Prob^{\xi}}(\alpha \sigma)=\log(\lambda^2)-\log(\lambda^2-\alpha^2 \sigma^2)$ for all $\sigma < \lambda/\alpha$ and otherwise infinite. 
Thus $\rho(\zeta)- \mathbb{E}[\rho_{\hat{\Prob}_N}(\zeta)] =\Omega(\sigma^p)$ for all $p\geq 0$.
\UScomments{Decide later if we need to specify that $\rho(\zeta)- \rho_{\hat{\Prob}_N}(\zeta) =\Omega(\sigma^p)$ for all $p\geq 0$ as $\sigma \uparrow \lambda/\alpha$. (perhaps just write for large values of $\sigma$. We don't need to specify the relationship with $\lambda$)}
\end{enumerate}
\end{remark}

Building on Proposition~\ref{lemma_almst_sure},  the following result, using \eqref{eq:almost_sure}, shows that the underestimation in the empirical entropic risk scales  superlinearly with $\sigma$. %
We show that
$\Lambda_{\mathbb P_{\xi}}(\alpha \sigma) = \omega(\sigma)$,
that is, the cumulant generating function eventually lies above every linear function of $\sigma$, in particular $\sigma \mathbb E[M_N]$ and, more generally, 
$\sigma \phi(M_N)$ for any fixed quantile $\phi(M_N)$. This systematic underestimation raises serious concerns for managerial applications such as portfolio selection, option hedging, and insurance pricing, where practitioners often rely on high empirical quantiles as conservative risk estimates. In volatile markets, these supposedly ``conservative'' estimates can in fact be highly optimistic, thereby inducing investments in high risk assets.  %

\begin{proposition}\label{prop:bias:saa:superlinear}
    Let $\hat{\xi}_1,\dots,\hat{\xi}_N \stackrel{\text{iid}}{\sim} \mathcal F_{\xi}$ with $\mathbb{E}[\xi]=0$ and an unbounded right tail, and let $\zeta_i:= \mu + \sigma \hat{\xi}_i$, for some $\mu\in\mathbb{R}$ and $\sigma>0$. Then:
    \[\rho(\zeta) - \mathbb{E}[\rho_{\hat{\Prob}_N}(\zeta)] =\omega(\sigma).\]
\end{proposition}

\UScomments{Respond to reviewer's comment on why we reduce the bias but ignore the variance, which is high for SAA.}%

To obtain an upper bound on the bias of the empirical entropic risk estimator, we assume that the tails of $\zeta$ are subgaussian:
\begin{definition}\citep[Proposition 2.5.2]{Vershynin2025}\label{assum:subgaussian}
    The tails of $X$ are subgaussian, if there exists $K_0>0$ such that 
    \[\mathbb P(|X|\ge a) \le 2\exp\Big(-\frac{a^2}{2K_0^2}\Big) \qquad \forall a\ge 0.\]
    Equivalently, if the random variable $X$ has mean zero, there exists $K>0$ such that its cumulant generating function satisfies:
    \[
    \Lambda^{\Prob_X}(t)\le \frac{1}{2}K^2t^2
    \]
\end{definition}
\begin{proposition}\label{prop:saa:UB}
    Let $\hat{\xi}_1,\dots,\hat{\xi}_N \stackrel{\text{iid}}{\sim} \mathcal F_{\xi}$ with $\mathbb{E}[\xi]=0$, and let $\zeta_i:= \mu + \sigma \hat{\xi}_i$, for some $\mu\in\mathbb{R}$ and $\sigma>0$. If $\zeta$ has subgaussian tails  (see Definition \ref{assum:subgaussian}), then 
    \[\rho(\zeta)- \Expect[\rho_{\hat{\Prob}_N}(\zeta)]  = \mathcal O(\sigma^2).\]
\end{proposition}

\begin{remark} 
   Gaussian random variables $\xi\sim\mathcal{N}(0, 1)$, satisfy Definition \ref{assum:subgaussian} with $K=\sigma$. Proposition \ref{lemma_almst_sure} (through Remark \ref{rem:GaussianAlmostSure}) and Proposition \ref{prop:saa:UB}  give: \[\rho(\zeta)- \Expect[\rho_{\hat{\Prob}_N}(\zeta)]  = \Theta(\sigma^2).\]
\end{remark}

\subsection{CLT-based bias correction}
Several approaches rely on CLT to devise asymptotically unbiased estimators. 
Assuming finite second moment for $\exp(\alpha \ell(\boldsymbol \xi))$, CLT combined with the Delta Method gives $\rho_\Prob(\zeta)-\Expect[\rho_{\hat{\Prob}_N}(\zeta)] \sim \mathcal{N}(0, \frac{\text{Var}(\exp(\alpha\zeta))}{2\alpha N (\mathbb{E}(\exp(\alpha \zeta)))^2})$. One can simply replace the SAA estimator with \[\rho_{\model{Delta}}:=\rho_{\hat{\Prob}_N}(\zeta) +\frac{\text{Var}_{\hat{\Prob}_N}(\exp(\alpha\zeta))}{2\alpha N (\mathbb{E}_{\hat{\Prob}_N}(\exp(\alpha \zeta)))^2}.\]
$\rho_{\model{Delta}}$ is obtained using the Delta Method \citep{Horowitz_2019} by taking the Taylor series expansion of empirical entropic risk, see Lemma \ref{lemma_delta_bias}.
However, with heavy-tailed random variable $\exp(\alpha \zeta)$ (high risk aversion), a large number of samples is required before the estimator's error tails exhibit Gaussian behavior  (see Figure \ref{fig:risk_gamma}). More specifically, we have already shown in Lemma \ref{lemma_almst_sure} that SAA may almost surely underestimate the entropic risk with finite samples. Using the Taylor series expansion of the empirical risk, a first-order bias correction can be obtained.

Another closely related estimator is obtained by noticing that entropic risk measure can be equivalently written as  an optimized certainty equivalent risk measure \citep{Ben-Tal_Teboulle_1986}, i.e., 
\begin{align}
   \rho_\Prob(\zeta) = \inf_t  \Expect_\Prob[h(t, \zeta)], \label{eq:oce:rep}
\end{align}
where  $h(t,\zeta) = t+ \frac{1}{\alpha}\exp(\alpha (\zeta-t))-\frac{1}{\alpha}$. 

The \model{SAA} estimator is obtained by solving \model{SAA} of problem \eqref{eq:oce:rep}: \[\rho_{\text{\model{SAA}}} := \frac{1}{\alpha} \log(\Expect_{\hat{\Prob}_N}[\exp(\alpha\zeta)])=\inf_t  \Expect_{\hat{\Prob}_N}[h(t, \zeta)],\]   where $\Prob$ is replaced with $\hat{\Prob}_N$. It is well-known that decisions based on the SAA can suffer from optimizer's curse, leading to an optimistic bias \citep{Smith_Winkler_2006}.

To correct the first-order optimistic bias associated with the \model{SAA}, \cite{Iyengar_Lam_2023} introduced an  estimator based on the optimizer's information criterion\footnote{It is to be noted that the \model{OIC} estimator is designed to address the optimizer's curse, while we aim to debias the empirical entropic risk estimator.}:
   \[\rho_{\model{OIC}}:= \rho_{\hat{\Prob}_N}(\zeta)+\frac{\text{Var}_{\hat{\Prob}_N}(\exp(\alpha \zeta))}{N\alpha (\mathbb{E}_{{\hat{\Prob}_N}}[\exp(\alpha \zeta)])^2}.\]

By showing that the ratio of the sample variance to the squared mean of $\exp(\alpha \zeta)$ grows at most linearly with the sample size, the following proposition and corollary establish the almost sure underestimation of the entropic risk.
\begin{proposition}\label{prop:oic_underestimate}
   Let $\hat{\xi}_1,\dots,\hat{\xi}_N \stackrel{\text{iid}}{\sim} \mathcal F_\xi$ with $\mathbb{E}[\xi]=0$, $\zeta_i:= \mu + \sigma \hat{\xi}_i$, for some $\mu\in\mathbb{R}$ and $\sigma>0$, and $M_N := \max_{1\le i\le N} \hat{\xi}_i$.
Then
\begin{align*}
&\rho_{\model{Delta}}\leq \rho(\zeta) + \sigma M_N + N/(8\alpha) - (1/\alpha)\Lambda_\xi(\alpha\sigma) \mbox{ a.s.},\\
&\rho_{\model{OIC}}\leq \rho(\zeta) + \sigma M_N + N/(16\alpha) - (1/\alpha)\Lambda_\xi(\alpha\sigma) \mbox{ a.s.},
\end{align*}
where $\Lambda_\xi(t)$ is the cumulant generating function of $\xi$.
\end{proposition}
\begin{corollary}\label{cor:oic_underestimate}
   Let $\hat{\xi}_1,\dots,\hat{\xi}_N \stackrel{\text{iid}}{\sim} \mathcal F_\xi$ with $\mathbb{E}[\xi]=0$, $\zeta_i:= \mu + \sigma \hat{\xi}_i$, for some $\mu\in\mathbb{R}$ and $\sigma>0$.
Then
\begin{align*}
&\rho(\zeta) -\mathbb E[\rho_{\model{Delta}}]=\omega(\sigma),\\
&\rho(\zeta) -\mathbb E[\rho_{\model{OIC}}]=\omega(\sigma).
\end{align*}
\end{corollary}

\begin{algorithm}[htb]
\caption{Non-parametric bootstrap bias correction \label{bootstrap_non_param}}
\begin{algorithmic}[1]
\Function{NonParametricBootstrapBiasCorrection}{$\mathcal{S}, M$}
        \State $\hat{\Prob}_N \gets$ Empirical distribution of loss scenarios $\mathcal{S}$
        \For{$n \gets 1$ \textbf{to} $M$}
            \State $\hat{\Prob}_{N,N} \gets  \text{Draw } N \text{ i.i.d.  samples  from } \hat{P}_N$ 
            \State $\rho_n \gets \rho_{\hat{\Prob}_{N,N}}(\zeta)$  
        \EndFor
        \State $\delta_N(\hat{\Prob}_N) \gets \text{mean}(\{\rho_{\hat{\Prob}_N}(\zeta)-\rho_n\}_{n=1}^N)$ \label{bias:corr2}
        \State \Return $\delta_N(\hat{\Prob}_N)$
\EndFunction
\end{algorithmic}
\end{algorithm}
\subsection{Non-parametric bootstrapping}

 A typical approach in the literature to devise an unbiased estimator is to use non-parametric bootstrapping. %
The \model{BS} estimator  is calculated by repeatedly sampling $N$ observations with replacement from the empirical distribution of loss scenarios as shown in Algorithm \ref{bootstrap_non_param}.   For each bootstrap sample, the empirical entropic risk $\rho_{\hat{\mathbb P}_{N,N}}(\zeta)$ is computed. The bias is given by $\rho_{\hat{\Prob}_N}(\zeta) - \Expect_{\hat{\Prob}_N}[\rho_{\hat{\Prob}_{N,N}}(\zeta)]$. Then, $\rho_{\texttt{BS}}=\rho_{\hat{\Prob}_N}(\zeta)+(\rho_{\hat{\Prob}_N}(\zeta) - \Expect_{\hat{\Prob}_N}[\rho_{\hat{\Prob}_{N,N}}(\zeta)])$ provides the non-parametric bootstrap estimator. By Jensen's inequality, we know that $\Expect_{\hat{\Prob}_N} [\rho_{\hat{\Prob}_{N,N}}(\zeta)]< \rho_{\hat{\Prob}_N}(\zeta)$ almost surely and therefore $\rho_{\texttt{BS}}>\rho_{\hat{\Prob}_N}(\zeta)$ almost surely. Such a bootstrapping procedure has been shown to be weakly consistent \citep{DasGupta_2008}, however, it can almost surely the same underestimation issue as the SAA estimator. %

 \begin{proposition}\label{lemma_almst_sure_bootstrap}
    Let $\hat{\xi}_1,\dots,\hat{\xi}_N \stackrel{\text{iid}}{\sim} \mathcal F_\xi$ with $\mathbb{E}[\xi]=0$, $\zeta_i:= \mu + \sigma \hat{\xi}_i$, for some $\mu\in\mathbb{R}$ and $\sigma>0$, and $M_N := \max_{1\le i\le N} \hat{\xi}_i$ and $M_{N,N}:= \max_{1\leq j\leq N} \hat{\xi}_{\tilde{i}(j)}$ where $\tilde{i}_j\sim U(\{1,\dots,N\})$, and let $\hat{\mathbb{P}}_{N,N}$ be the distribution obtain from non-parametric bootstrap.
Then
\begin{equation}
\begin{array}{ll}\rho_{\hat{\Prob}_N}(\zeta) + \phi(\rho_{\hat{\Prob}_N}(\zeta) - \rho_{\hat{\mathbb{P}}_{N,N}}(\zeta)|\{\hat{\xi}_i\}_{i=1}^N) \\\qquad \leq \rho(\zeta) + \sigma (2M_N - \phi(M_{N,N}|\{\hat{\xi}_i\}_{i=1}^N)) + \log(N)/\alpha - (1/\alpha)\Lambda_\xi(\alpha\sigma) \text{ a.s.},
\end{array}
\end{equation}
where $\phi(X|Y)$ is conditional mean or median, and where $\Lambda_\xi(t)$ is the cumulant generating function of $\xi$.
\end{proposition}

\begin{corollary}
    Let $\hat{\xi}_1,\dots,\hat{\xi}_N \stackrel{\text{iid}}{\sim} \mathcal F_\xi$ with $\mathbb{E}[\xi]=0$ and an unbounded right tail, and let $\zeta_i:= \mu + \sigma \hat{\xi}_i$, for some $\mu\in\mathbb{R}$ and $\sigma>0$. Then
        \[\rho(\zeta)  - \mathbb{E}[\rho_{\hat{\Prob}_N}(\zeta)+ \phi(\rho_{\hat{\Prob}_N}(\zeta) - \rho_{\hat{\mathbb{P}}_{N,N}}(\zeta)|\{\hat{\xi}_i\}_{i=1}^N)] = \omega(\sigma),\]
        where $\phi(X|Y)$ is conditional mean or median, and where $\Lambda_\xi(t)$ is the cumulant generating function of $\xi$.
\end{corollary}

\subsection{Leave-one out cross validation}
To  mitigate the underestimation of the optimal value $ \rho_\Prob(\zeta)$ by the \model{SAA} estimator,  leave-one out CV is proposed in the literature.
Let $\hat{\mathbb{P}}_{N_{-i}}$ denote the empirical distribution without the $i$th scenario, and let $\hat{t}_{-i}$ denote the optimal solution of \eqref{eq:oce:rep} in which $\hat{\mathbb{P}}_{N_{-i}}$ is used. The   estimator is then defined as: $$\rho_{\model{\model{LOOCV}}}:=  \frac{1}{N}\sum_{i=1}^{N} \left(\hat{t}_{-i}+\frac{1}{\alpha}\left(\exp(\alpha (\hat{\zeta}_i) -\hat{t}_{-i}))-1\right)\right).$$
Since $\hat{t}_{-i}$ is a feasible solution of \eqref{eq:oce:rep}, we have $\rho_{\Prob}(\zeta) \leq  \Expect_{\Prob}[h(\hat{t}_{-i}, \zeta)]$ almost surely with respect to the randomness of $\hat{t}_{-i}$ for all $i\in [N]$. Thus, 
\begin{equation*}
\begin{aligned}
\rho_{\Prob}(\zeta) &\,\leq \,\Expect\left[\frac{1}{N}\sum_{i=1}^N\Expect_{\Prob}(h(\hat{t}_{-i}, \zeta))\right]\\
&=\frac{1}{N}\sum_{i=1}^N\Expect\left[\Expect_{\Prob}(h(\hat{t}_{-i}, \zeta))\right]\\
& \, = \,\frac{1}{N}\sum_{i=1}^N\Expect\left[\Expect(h(\hat{t}_{-i}, \hat{\zeta}_i))|\{\hat{\bxi}_j\}_{j\in [N]_{-i}})\right]\\
& \, =\, \frac{1}{N}\sum_{i=1}^N\Expect[h(\hat{t}_{-i}, \hat{\zeta}_i))]\\
& \, = \, \Expect [\rho_{\model{\model{LOOCV}}}].  
\end{aligned}
\end{equation*}
where the first equality follows from linearity of expectation, the second is due to the independence of $\hat{\bxi}_i\sim \Prob$ and $\{\hat{\bxi}_j\}_{j\in [N]_{-i}}$ and the third follows from the law of iterated expectations.
Thus, $\rho_{\model{\model{LOOCV}}}$ is a positively biased estimator of $\rho_{\Prob}(\zeta)$. 

However, as shown in the next proposition, the {\color{red}(positive)} bias of the LOOCV estimator is at least exponential in $\sigma$, {\color{red}while the negative bias} of SAA is $\mathcal{O}(\sigma^2)$.

\begin{proposition}\label{thm:loocv_overestimate}
    Let $\hat{\xi}_1,\dots,\hat{\xi}_N \stackrel{\text{iid}}{\sim} \mathcal F_\xi$ with $\mathbb{E}[\xi]=0$, $\mbox{Var}(\xi)=1$, and let $\zeta_i:= \mu + \sigma \hat{\xi}_i$, for some $\mu\in\mathbb{R}$ and $\sigma>0$. Given that Assumption \ref{ass:superGaussian3} is satisfied, then there exists $k>0$ such that 
    \[\Expect[\hat{\rho}_{\model{LOOCV}}] -\rho(\zeta) = \Omega(e^{k\sigma}).\]%
\end{proposition}

}

\section{Proof of results in Section \ref{sec:bias_correct}}\label{append:bias_correct}

We use the notation $\ell(\vecz, \vecxi)=\mu+\sigma\xi$ for some $(\mu,\sigma)\in\mathbb{R}\times\mathbb{R}_+$ and some $\xi\sim \Prob^\xi$ with mean of 0 and variance of 1. 
The proof of Theorem \ref{thm:our_estimator_overestimate_omega} relies on the following 
lemmas.

\subsection{\revised{Some useful lemmas}}
 \begin{lemma}\label{lemma:eq:tail:cgf:lighter-than-gauss}
             If a random variable $X$ satisfies Definition~\ref{def:lighter-than-gauss}, then it is a subgaussian random variable, that is, there exists a constant $\mathfrak{c}_2>0, %
             $ such that
\[\mathbb P(|X|\ge a)\le  2\exp(-a^2/\mathfrak{c}_2^2)\; \forall a\ge 0.\]%
    \end{lemma}
    \begin{proof}

Fix $G>0$, $C>0$, and $q>2$ from Definition \ref{def:lighter-than-gauss}, and let $\tilde G:=\max(G, 1)$,  $a_0:=\max(1, (2\log\tilde G/C)^{1/q})$, and 
$\mathfrak{c}_2^2:=\max(2/C, a_0^2/\log 2)$.
If $a\ge a_0$, then $\log\tilde G\le (C/2)a_0^{q}\le (C/2)a^{q}$, hence
\[\Prob(|X|\ge a)%
\le \tilde G \exp(-C a^{q})\le \exp(-(C/2)a^{q})\le \exp(-(C/2)a^{2})
\le \exp(-a^{2}/\mathfrak{c}_2^{2})\le 2\exp(-a^{2}/\mathfrak{c}_2^{2}),\]
where we used $a\ge1\Rightarrow a^{q}\ge a^{2}$ and $\mathfrak{c}_2^2\ge 2/C$.
If $0\le a\le a_0$, then $\Prob(|X|\ge a)\le1$ and $\mathfrak{c}_2^2\ge a_0^2/\log 2$ gives
$2\exp(-a^{2}/\mathfrak{c}_2^{2})\ge 2\exp(-a_0^{2}/\mathfrak{c}_2^{2})\ge1$.
Therefore $\Prob(|X|\ge a)\le 2\exp(-a^{2}/\mathfrak{c}_2^{2})$ for all $a\ge0$.
    \end{proof}%
    \begin{lemma}\label{lemma:equivalence:mgf_tails}
    
If a random variable $X$ has lighter-than-Gaussian tails (see Definition \ref{def:lighter-than-gauss}), then %
     there exists a $\bar{p}\in(1,2)$ and a $\nu>0$ such that $\Expect[\exp(tX)]\leq 2\exp(\nu t^{\bar{p}})$ for all $t\geq 0$.
    \end{lemma}
    \begin{proof}
We have:
\[
\mathbb{P}(|X|>a)
\le G \exp\bigl(-Ca^q\bigr).
\]

We now show that this tail bound implies $\mathbb{E}[\exp(tX)]\le 2\exp(\nu t^{\bar{p}})$ for some ${\bar{p}}\in(1,2)$ and $\nu>0$.

\textbf{Step 1: MGF bound.} %
 For $t \ge 0$, using the layer cake representation of a non-negative, real-valued measurable function $\exp(tX)$ and the tail bound:
\begin{align*}
\mathbb{E}[\exp(tX)]
&= \int_0^{\infty} \mathbb{P}(\exp(tX)> x) \,dx = \int_0^{\infty} \mathbb{P}(tX >\log x) \,dx 
= t\int_{-\infty}^{\infty} \mathbb{P}(X > a) \exp(ta) \,da %
\\
&= t\int_{-\infty}^{0} \mathbb{P}(X > a) \exp(ta) \,da+t\int_{0}^{\infty} \mathbb{P}(X > a) \exp(ta) \,da\\
&\leq  t\int_{-\infty}^0 1 \exp(ta)da +t\int_0^\infty \mathbb{P}(|X|>a) \exp(ta) \,da  \\
&\leq  1+ Gt\int_0^\infty \exp\bigl(ta-Ca^{q}\bigr)\,da,
\end{align*}
where we substituted $x=\exp(ta)$ to obtain the third equality.

\textbf{Step 2: Young's inequality.}
Let ${\bar{p}} = q/(q-1)$, so that $1/{\bar{p}}+1/q=1$ and ${\bar{p}} \in (1,2)$ since $q>2$. By Young's inequality, $\eta \gamma \le \eta^{{\bar{p}}}/{\bar{p}} + \gamma^{q}/q$ for any $\eta>0$ and $\gamma>0$. Letting $\eta:=t/C^{1/q}$ and $\gamma:=C^{1/q}a$:
$ta \le \frac{t^{\bar{p}}}{\bar{p} C^{{\bar{p}}/q}}+\frac{Ca^q}{q} \implies ta -Ca^q \leq \frac{t^{\bar{p}}}{\bar{p} C^{{\bar{p}}/q}}+Ca^q(1/q-1)$. %
From this, we get 
\[Gt\int_0^\infty \exp(ta-Ca^q)\,da
\le Gt \exp\Bigl(\frac{t^{\bar{p}}}{{\bar{p}}\,C^{{\bar{p}}/q}}\Bigr)\int_0^\infty \exp(-C(1-1/q)a^q)da=G C_2 t\exp(C_1 t^{\bar{p}}),\]
with  %
$C_1:=(\bar{p}C^{{\bar{p}}/q})^{-1}>0$, $C_2:=\int_0^\infty \exp\left(-C_3a^q\right)da$, and $C_3:=C(1-1/q)>0$.%

\textbf{Step 3: $C_2$ is finite}. %
To verify that the integral in $C_2$ is finite, substitute $u = C_3a^q>0$, so that $a = C_3^{-1/q}u^{1/q}$ and $da = \frac{u^{(1/q)-1}}{qC_3^{1/q}}du$:
\begin{align*}
C_2=\int_0^\infty \exp\!\left(-C_3a^q\right)\,da
&= \frac{1}{qC_3^{1/q}}\int_0^\infty \exp(-u)u^{1/q-1}\,du  = \frac{1}{qC_3^{1/q}}\Gamma(1/q) < \infty,
\end{align*}
where the Gamma function $\Gamma(1/q)$ is finite since $1/q > 0$.
Thus, we have that:
\[\mathbb{E}[\exp(tX)]\leq 1+GC_2t\exp(C_1t^{\bar{p}}) = 1+ \tilde Gt\exp(C_1 t^{\bar{p}})\]
where $\tilde G = G C_2$.

\textbf{Step 4: Absorbing the linear term.}
We will show that $\tilde Gt\exp(C_1 t^{\bar{p}}) \leq \exp(\nu t^{\bar{p}})$ with $\nu:=\tilde G^{\bar{p}}+C_1>0$. First, in the case that $t\in [0,1/\tilde G]$, we have
\[\tilde Gt\exp(C_1t^{\bar{p}})\leq \exp(C_1 t^{\bar{p}}) \leq \exp((\tilde G^{\bar{p}}+C_1)t^{\bar{p}}).\]
Next, if $t\geq 1/\tilde G$, then
\[\tilde Gt\exp(C_1t^{\bar{p}})\leq  \exp(\tilde Gt+C_1 t^{\bar{p}})\leq \exp((\tilde Gt)^{\bar{p}}+C_1 t^{\bar{p}}) = \exp((\tilde G^{\bar{p}}+C_1)t^{\bar{p}}).\]
where we first exploit the convexity of $f(x):=\exp(x)\geq f(0)+xf'(0)=1+x\geq x$, and later the convexity of $f(x):=x^{\bar{p}}\geq f(1)+(x-1)f'(1) = 1+{\bar{p}}(x-1)\geq 1+x-1=x$ when $x\geq 1$, since ${\bar{p}}>1$. Thus, we have that $\Expect[\exp(tX)]\leq  1+\exp(\nu t^{\bar{p}}) \leq 2\exp(\nu t^{\bar{p}}) \;\forall t\geq 0$, where the second inequality is due to $\exp(\nu t^{\bar{p}})\geq 1$ for $t\ge 0$ and $\nu>0$.
\end{proof}
    
\begin{lemma}\label{thm:biasCorrectBound}
    Let $\hat{\xi}_i:=(\ell(\z,\hat{\vecxi}_i)-\mu)/\sigma$ for all $i=1,\dots,N$. Given that Property \ref{property:affEquiFit} is satisfied, then, almost surely, we have that
    \[\rho_{\hat{\Prob}_N}(\ell(\z,\vecxi))+\delta_N(\Q_N) \geq \mu - \log(N)/\alpha +\sigma (M_N - \median(M_{N,\mathbb{Q}_N^\xi}|\mathbb{Q}_N^\xi)) + (1/\alpha)\Lambda_{\mathbb{Q}_N^\xi}(\sigma\alpha)\]
    and
    \[\rho_{\hat{\Prob}_N}(\ell(\z,\vecxi))+\delta_N(\Q_N) \leq \mu + \log(N)/\alpha +\sigma (M_N - \median(M_{N,\mathbb{Q}_N^\xi}|\mathbb{Q}_N^\xi)) + (1/\alpha)\Lambda_{\mathbb{Q}_N^\xi}(\sigma\alpha)\]
    where $M_{N,\mathbb{Q}_N^\xi}:=\max_{1\leq j \leq N}\tilde{\xi}_j$, with each $\tilde{\xi}_j\sim \mathbb{Q}_N^\xi$ with $\mathbb{Q}_N^\xi$ the distribution that is fitted on $\{\hat{\xi}_i\}_{i=1}^N$, and $M_N := \max_{1\le i\le N}\hat{\xi}_i$. %
\end{lemma}

\begin{proof}
Based on Property \ref{property:affEquiFit}, we have that
\begin{equation}\label{eq:risk_Q}
    \rho_{\Q_N}(\zeta) = \rho_{\Q_N^\xi}(\mu+\sigma \xi) = \mu+(1/\alpha)\Lambda_{\mathbb{Q}_N^\xi}(\sigma\alpha)\mbox{ a.s.}.
\end{equation}
Also, given $\mathbb{Q}_N^\xi$ and letting $\hat{\mathbb{Q}}_{N,N}^\xi$ be the empirical distribution of $\{\tilde{\xi}_j\}_{j=1}^N$ drawn from $\mathbb{Q}_N^\xi$, we almost surely have  %
\begin{align}
\rho_{\hat{\mathbb{Q}}_{N,N}^\xi}(\mu+\sigma\xi) &= \frac{1}{\alpha}\log\left(\frac{1}{N}\sum_{j=1}^N\exp(\alpha (\mu+\sigma \tilde{\xi}_j))\right)\notag\\
&\leq \frac{1}{\alpha}\log\left(\frac{1}{N}\sum_{j=1}^N\max_{1\leq i\le N}\exp(\alpha (\mu+\sigma \tilde{\xi}_i))\right) = \mu+\sigma M_{N,\mathbb{Q}_N^\xi}.\label{eq:risk_Q_N}
\end{align}
Hence, together we get:
    \begin{align*}
    \rho_{\hat{\Prob}_N}&(\ell(\z,\vecxi))+\delta_N(\Q_N) =
        \rho_{\hat{\Prob}_N}(\ell(\z,\vecxi))+ \median(\rho_{\Q_N}(\zeta) - \rho_{\hat{\mathbb{Q}}_{N,N}}(\zeta)|\Q_N) \\
        &= \rho_{\hat{\Prob}_N}(\ell(\z,\vecxi))+ \rho_{\Q_N}(\zeta) -\median(\rho_{\hat{\mathbb{Q}}_{N,N}}(\zeta)|\Q_N)\\
        &= \rho_{\hat{\Prob}_N}(\ell(\z,\vecxi))+ \rho_{\Q_N}(\zeta) -\median(\rho_{\hat{\mathbb{Q}}_{N,N}^\xi}(\mu+\sigma \xi)|\mathbb{Q}_N^\xi)\\ 
        &\geq \mu+\sigma M_N - \log(N)/\alpha  + \mu+(1/\alpha)\Lambda_{\mathbb{Q}_N^\xi}(\sigma\alpha)- \median( \mu+\sigma M_{N,\mathbb{Q}_N^\xi}|\mathbb{Q}_N^\xi)\\
        &= \mu - \log(N)/\alpha   + \sigma (M_N- \median(M_{N,\mathbb{Q}_N^\xi}|\mathbb{Q}_N^\xi)) + (1/\alpha)\Lambda_{\mathbb{Q}_N^\xi}(\sigma\alpha),
    \end{align*}
    where the first, third, and last
    equalities exploit affine equivariance of $\median$, the second equality follows from Property \ref{property:affEquiFit}, and the first inequality uses Lemma \ref{recur_lemma}, \eqref{eq:risk_Q} and \eqref{eq:risk_Q_N}.
    
Similarly, given $\Q_N^\xi$, we have almost surely that %
\begin{align}
\rho_{\hat{\mathbb{Q}}_{N,N}}(\zeta) &= (1/\alpha)\log((1/N)\sum_{j=1}^N\exp(\alpha (\mu+\sigma \tilde{\xi}_j)))
\geq(1/\alpha)\log((1/N)\max_{1\le j \le N}\exp(\alpha (\mu+\sigma \tilde{\xi}_j)))\notag\\
&= \mu + \sigma \max_{1\le j \le N} \tilde{\xi}_j-\log(N)/\alpha = \mu + \sigma M_{N,\mathbb{Q}_N^\xi}-\log(N)/\alpha.\label{eq:risk_Q_N2}
\end{align}
Hence, together with Lemma \ref{recur_lemma}, we get:
    \begin{align*}
        \rho_{\hat{\Prob}_N}&(\ell(\z,\vecxi))+\delta_N(\Q_N) = \rho_{\hat{\Prob}_N}(\zeta)+ \rho_{\Q_N}(\zeta) -\median(\rho_{\hat{\mathbb{Q}}_{N,N}}(\zeta)|\Q_N)\\
        &\leq \mu+\sigma M_N + \mu +(1/\alpha)\Lambda_{\mathbb{Q}_N^\xi}(\sigma\alpha) 
    - \median(\mu+\sigma M_{N,\mathbb{Q}_N^\xi}- \log(N)/\alpha |\mathbb{Q}_N^\xi)\\
            &= \mu + \log(N)/\alpha   + \sigma (M_N-\median( M_{N,\mathbb{Q}_N^\xi}|\mathbb{Q}_N^\xi))+ (1/\alpha)\Lambda_{\mathbb{Q}_N^\xi}(\sigma\alpha),%
    \end{align*}
    where all equalities exploit affine equivariance of $\median$ and the first inequality uses Lemma \ref{recur_lemma}, \eqref{eq:risk_Q} and \eqref{eq:risk_Q_N2}.%
    
\end{proof}

\begin{lemma}\label{thm:noGaussBound}
    If $\zeta\sim\Q$, with  $\Expect_{\Q}[\zeta] = 0$, then $\Lambda_{\Q}(t)\geq 0$ for all $t\geq 0$.
\end{lemma}
\begin{proof}
From Jensen's inequality, we obtain:
    \begin{align*}
        \Lambda_{\Q}(t)&= \log(\Expect_{\Q}[\exp(t \zeta)]) \geq \Expect_{\Q}[\log(\exp(t\zeta))]=\Expect_{\Q}[t\zeta]=t\Expect_{\Q}[\zeta] =%
        0.
    \end{align*}

\end{proof}

\begin{lemma}\label{thm:superGaussBound}
    If $\zeta\sim\Q$ and
    there exist some $\mathfrak{c}_1,\mathfrak{c}_2,\bar{t}>0$ such that $\Prob(\zeta\geq t)\geq \mathfrak{c}_1\exp(-t^2/\mathfrak{c}_2^2)$ for all $t\geq \bar{t}$, 
    then $\Lambda_{\Q}(t)\geq t^2\mathfrak{c}_2^2/4+\log(t\mathfrak{c}_1 \mathfrak{c}_2\sqrt{\pi/4})$ for all $t\geq 2\bar{t}/\mathfrak{c}_2^2$.
\end{lemma}

\begin{proof}
Using the layer cake representation of a non-negative, real-valued measurable functions, we obtain:
    \begin{align*}
        \exp(\Lambda_{\Q}(t)) &= \Expect_{\Q}[\exp(t\zeta)] = \int_0^{\infty} \mathbb{P}_{\Q}(\exp(t\zeta)\geq x) \,dx %
        \\
&= \int_{-\infty}^{\infty} t\mathbb{P}_{\Q}(\zeta \geq y) \exp(ty) \,dy \quad \text{(substitute $x = \exp(ty)$)} \\&\geq \int_{\bar{t}}^\infty t\exp(ty)\Prob_{\Q}(\zeta\geq y)dy\\
        &\geq \int_{\bar{t}}^\infty t\exp(ty)\mathfrak{c}_1\exp(-y^2/\mathfrak{c}_2^2)dy \quad (\text{substitute $ty-y^2/\mathfrak{c}_2^2=t^2\mathfrak{c}_2^2/4-(1/\mathfrak{c}_2^2)(y- t\mathfrak{c}_2^2/2)^2$})\\
        &= t\mathfrak{c}_1\exp(t^2\mathfrak{c}_2^2/4)\int_{\bar{t}}^\infty \exp(-(1/\mathfrak{c}_2^2)(y- t\mathfrak{c}_2^2/2)^2)dy\\
        &\geq t\mathfrak{c}_1\exp(t^2\mathfrak{c}_2^2/4)\int_{t\mathfrak{c}_2^2/2}^\infty \exp(-(1/\mathfrak{c}_2^2)(y- t\mathfrak{c}_2^2/2)^2)dy\\
        &=t\mathfrak{c}_1\exp(t^2\mathfrak{c}_2^2/4)\int_{0}^\infty \exp(-m^2/\mathfrak{c}_2^2)dm \quad \text{(substitute $m=y-t\mathfrak{c}_2^2/2$)} \\
        &=t\mathfrak{c}_1\exp(t^2\mathfrak{c}_2^2/4)\int_{0}^\infty \exp(-z^2)\mathfrak{c}_2dz \quad \text{(substitute $z = m/\mathfrak{c}_2$)}\\
        &=t\mathfrak{c}_1\mathfrak{c}_2\sqrt{\pi/4}\exp(t^2\mathfrak{c}_2^2/4)\left((2/\sqrt{\pi})\int_{0}^\infty \exp(-z^2)dz\right)\\
        &=t\mathfrak{c}_1\mathfrak{c}_2\sqrt{\pi/4}\exp(t^2\mathfrak{c}_2^2/4)
    \end{align*}
    where the second inequality follows from our assumed lower bound on $\Prob_\Q(\zeta\geq y)$, the third inequality is obtained using the relation $t\mathfrak{c}_2^2/2\geq \bar{t}$, and where we used $(2/\sqrt{\pi})\int_{0}^\infty \exp(-z^2)dz=1$ to obtain the last equality.
\end{proof}

\begin{lemma}\label{thm:bigOmegaLoss}
    Let Assumption \ref{ass:meanVarControl} be satisfied. Further let $\Q_N$ and its fitting procedure satisfy properties \ref{property:affEquiFit}, \ref{property:unifBoundedMeanFit}, and \ref{property:unifHeavyGaussFit}. Then:
    \[\Expect[\rho_{\hat{\Prob}_N}(\ell(\z,\vecxi))+\delta_N(\Q_N)]  = \mu+ \Omega(\mbox{Var}(\ell(\vecz, \vecxi))).\]
\end{lemma}
\begin{proof}
Let $\mathbb{Q}_N^\xi$ be the distribution that is fitted on $\{\hat{\xi}_i\}_{i=1}^N$, with $\hat{\xi}_i:=(\ell(\z,\hat{\vecxi}_i)-\mu)/\sigma$, and $\tilde{\mathbb{Q}}_N^{{\xi}}$ be the distribution that is fitted on $\{\tilde{\xi}_i\}_{i=1}^N$, with $\tilde{\xi}_i:=(\ell(\vecz,\hat{\vecxi}_i)-\hat{\mu}_N)/\hat{\sigma}_N$ using the mean $\hat{\mu}_N$ and standard deviation $\hat{\sigma}_N$ of empirical distribution $\hat{\mathbb{P}}_N$. We start by establishing a relation between $\Lambda_{\mathbb{Q}_N^{\xi}}(t)$ and $\Lambda_{\tilde{\mathbb{Q}}_N^{\xi}}(t)$ that follows from Property \ref{property:affEquiFit}. Namely, we first observe that:
\[\tilde{\xi}_i:=(\ell(\z,\hat{\vecxi}_i)-\hat{\mu}_N)/\hat{\sigma}_N=(\mu+\sigma\hat{\xi}_i-\hat{\mu}_N)/\hat{\sigma}_N=(\hat{\xi}_i-\tilde{\mu}_N)/\tilde{\sigma}_N\]
where 
\[\tilde{\mu}_N:=\frac{\hat{\mu}_N-\mu}{\sigma}=(1/N)\sum_{i=1}^N\hat{\xi}_i,\]
and 
\[\tilde{\sigma}_N:=\hat{\sigma}_N/\sigma=\sqrt{(1/N)\sum_{i=1}^N(\hat{\xi}_i-(1/N)\sum_{j=1}^N\hat{\xi}_j)^2}.\]
Both $\tilde{\mu}_N$ and $\tilde{\sigma}_N$ only depend on the sample $\{\hat{\xi}_i\}_{i=1}^N$ drawn i.i.d. from $\Prob^\xi$.

Next, we exploit the affine equivariance %
property of the fitting procedure and the entropic risk to express the cumulant generating function fitted on the data to that fitted to the standardized data. Letting $\xi'\sim \mathbb{Q}_N^\xi$ and $\tilde{\xi}'\sim \tilde{\mathbb{Q}}_N^{{\xi}}$, since Property %
\ref{property:affEquiFit} states that $\hat{\xi}_i = \tilde{\mu}_N+\tilde{\sigma}_N\tilde{\xi}_i$ implies that $\xi' \stackrel{F}{=} \tilde{\mu}_N+\tilde{\sigma}_N\tilde{\xi}'$, we must have:
\begin{align}\label{eq:affine_equiv}
\Lambda_{\mathbb{Q}_N^{\xi}}(t)=\log(\Expect_{\mathbb{Q}_N^{\xi}}[\exp(t\xi')]) &= \log(\Expect_{\tilde{\mathbb{Q}}_N^{\xi}}[\exp(t(\tilde{\mu}_N+\tilde{\sigma}_N\tilde{\xi}'))]) = t\tilde{\mu}_N+\Lambda_{\tilde{\mathbb{Q}}_N^{{\xi}}}(t\tilde{\sigma}_N).
\end{align}

Turning to our main objective, we employ Lemma \ref{thm:biasCorrectBound} to get that
    \begin{align}
        \Expect[\rho_{\hat{\Prob}_N}(\ell(\z,\vecxi))&+\delta_N(\Q_N)]%
         \geq \Expect[\mu - \log(N)/\alpha +\sigma (M_N - \mbox{median}(M_{N,\mathbb{Q}_N^\xi}|\Q_N^\xi)) + (1/\alpha)\Lambda_{\mathbb{Q}_N^\xi}(\sigma\alpha)]\notag\\
        &= \mu - \log(N)/\alpha + \sigma \Expect[ M_N - \mbox{median}(M_{N,\mathbb{Q}_N^\xi}|\Q_N^\xi%
        )] + (1/\alpha)\Expect[\Lambda_{\mathbb{Q}_N^\xi}(\sigma\alpha)].\label{eq:overestimate_bias}
        \end{align}

Exploiting Property \ref{property:unifHeavyGaussFit}, we obtain that $\tilde{\Q}_N^\xi$ has a uniformly heavy standardized right Gaussian tail. Hence, there exists some $\mathfrak{c}_1, \mathfrak{c}_2, \bar{t} >0$ such that with probability one  $\mathbb P_{\tilde{\Q}_N^\xi}(\xi\geq t)\geq \mathfrak{c}_1\exp(-t^2/\mathfrak{c}_2^2)$ for all $t\geq \bar{t}$. Now, let $\bar{\sigma}$ such that $\Prob(\bar{\sigma}\geq 2\bar{t}/(\alpha\tilde{\sigma}_N \mathfrak{c}_2^2))>0$ and $\mathcal{E}$ be the event that $\bar{\sigma}\geq 2\bar{t}/(\alpha\tilde{\sigma}_N \mathfrak{c}_2^2)$ and $\bar{\mathcal E}$ denotes its complement. If $\sigma\geq \bar{\sigma}$, we can use \eqref{eq:affine_equiv} to express the expected cumulant generating function as follows:
        \begin{align*}
        \Expect[\Lambda_{\mathbb{Q}_N^\xi}(\sigma\alpha)]  
        &= \left(\Expect[\sigma\alpha\tilde{\mu}_N]+\Prob(\mathcal{E}) \Expect[\Lambda_{\tilde{\mathbb{Q}}_N^\xi}(\sigma\alpha\tilde{\sigma}_N)|\mathcal{E}]+\Prob(\bar{\mathcal{E}})\Expect[\Lambda_{\tilde{\mathbb{Q}}_N^\xi}(\sigma\alpha\tilde{\sigma}_N)|\bar{\mathcal{E}}]\right)\\  
        &\geq \left(\Expect[\sigma\alpha\tilde{\mu}_N]+\Prob(\mathcal{E}) \Expect[(\sigma\alpha\tilde{\sigma}_N \mathfrak{c}_2)^2/4+\log(\mathfrak{c}_1 \mathfrak{c}_2\sigma\alpha\tilde{\sigma}_N \sqrt{\pi/4})|\mathcal{E}]\right),        
        \end{align*}
        where in the inequality, the bound on $\Expect[\Lambda_{\tilde{\mathbb{Q}}_N^\xi}(\sigma\alpha\tilde{\sigma}_N)|\mathcal{E}]$  comes from Lemma \ref{thm:superGaussBound}, and where\\ 
        $\Expect[\Lambda_{\tilde{\mathbb{Q}}_N^\xi}(\sigma\alpha\tilde{\sigma}_N)|\bar{\mathcal{E}}]\ge%
        0$ comes from Lemma \ref{thm:noGaussBound} given that Property \ref{property:unifBoundedMeanFit} implies that $\Expect_{\tilde{\Q}_N^\xi}[\xi]=\Expect_{\Q_N}[(\zeta-\hat{\mu}_N)/\hat{\sigma}_N]= 0$ almost surely.
        Next, we substitute the cumulant generating function in \eqref{eq:overestimate_bias}
        \begin{align*}
         &\Expect[\rho_{\hat{\Prob}_N}(\zeta)+ \delta_N(\Q_N)] \\
         &\geq \mu - \log(N)/\alpha +(1/\alpha) \Prob(\mathcal{E})\Expect[\log(\mathfrak{c}_1 \mathfrak{c}_2\alpha\tilde{\sigma}_N \sqrt{\pi/4})|\mathcal{E}]+\sigma \Expect[M_N - \mbox{median}%
         (M_{N,\mathbb{Q}_N^\xi}|\Q_N^\xi%
         )+\tilde{\mu}_N] \\
        &\qquad\qquad+ (1/\alpha)\Prob(\mathcal{E})\Expect[(\alpha\tilde{\sigma}_N \mathfrak{c}_2)^2/4|\mathcal{E}]\sigma^2+\Prob(\mathcal{E})\log(\sigma)/\alpha\\                 
        &=\mu+\Omega(\sigma^2).
    \end{align*}

\end{proof}

\removed{
\correction{\begin{lemma}\label{lemma:tail:cgf}
Let $\zeta\sim \Q_N$ have subgaussian standardized tails (see Property \ref{property:unifSubGaussFit}),  then for some $K_N, b_N>0$, $\mathbb E [\exp(t\tilde{\xi})] \le \exp(b_N+K_N^2t^2)\; \forall t\in \mathbb{R}$ with  $\tilde{\xi}:= (\zeta-\hat{\mu}_N)/\hat{\sigma}_N$.
\end{lemma}
\begin{proof}
$\tilde{\xi}-\Expect[\tilde{\xi}]$ is subgaussian \citep[Lemma 2.6.8]{Vershynin2025}. For a centered random variable $\tilde{\xi}-\Expect[\tilde{\xi}]$ with subgaussian tails, the moment generating function is given in \citet[][see Proposition 2.5.2]{Vershynin2025}: $\mathbb E [\exp(t(\tilde{\xi}-\Expect[\tilde{\xi}]))]= \exp(H_N^2t^2)\; \forall t\in \mathbb{R}$ and for some $H_N>0$. Equipped with this result, we will bound the cumulant generating function of $\tilde{\xi}$. We have that
\[
\mathbb E|\tilde{\xi}|
= \int_0^\infty \mathbb P(|\tilde{\xi}|\ge a)\,da
\le \int_0^\infty 2e^{-a^2/k_N^2}\,da= \int_0^\infty 2k_Ne^{-y^2}dy
\le C_0 k_N.
\]
 Therefore, for all $t\in \mathbb R$,
\[
\mathbb E [\exp(t\tilde{\xi})]
= \exp(t\mathbb E(\tilde{\xi}))\mathbb E [\exp(t(\tilde{\xi}-\Expect[\tilde{\xi}]))]
\le \exp\big(C_0 k_N|t| + H_N^2 t^2\big).
\]
For all $t\in \mathbb{R}$, we have that $C_0 k_N|t| \le H_N k_N^2 t^2 + \frac{C_0^2}{2H_N}$,
hence
\[
\mathbb E [\exp(t\tilde{\xi})]%
\le \exp\left({\frac{C_0^2}{2H_N}}
+ {\Big(H_N^2+H_Nk_N^2\Big)} t^2\right).
\]
where $K_N = \sqrt{H_N^2+H_Nk_N^2}>0$ and $b_N=\frac{C_0^2}{2H_N}>0$.
\end{proof}}
}

\begin{lemma}\label{thm:bigOLoss}
  Let Assumption \ref{ass:meanVarControl} be satisfied and the loss $\ell(\vecz, \vecxi)$ have lighter-than-Gaussian tails. Then there exists a $\bar{p}<2$ such that:
    \[\rho_{\Prob}(\ell(\vecz, \vecxi))  = \mu + \mathcal{O} (\mbox{Var}(\ell(\vecz, \vecxi))^{{\bar{p}}/2}).\]
\end{lemma}

\begin{proof}
We start with showing that $\xi=(\ell(\vecz, \vecxi)-\mu)/\sigma$ must also have lighter-than-Gaussian tails. Namely, given that $\ell(\vecz, \vecxi)$ has lighter-than-Gaussian tails, we must have that for all $a\ge 0$:
\begin{align*}
\mathbb P(|\xi|\ge a)&=\mathbb P(|\ell(\vecz, \vecxi)-\mu|\ge \sigma a)
\le \mathbb P(|\ell(\vecz, \vecxi)|\ge \sigma a-|\mu|) 
\end{align*}
Let $a_0:=\frac{2|\mu|}{\sigma}$, $\tilde{C}:=C\sigma^q/2^q,\, \tilde{G}:=\max(G,\exp(\tilde C a_0^q))$. We will show that $\mathbb P(|\xi|\ge a)\le \tilde{G}\exp(-\tilde{C}a^q)$ for all $a\geq 0$.

If $ a\ge a_0$, then $\sigma a-|\mu|\ge \sigma a/2$, so
\[\mathbb P(|\xi|\ge  a)\le \mathbb P(|\ell(\vecz, \vecxi)|\ge \sigma a/2)\le  G\exp(- C(\sigma a/2)^q)
=  G\exp(-( C\sigma^q /2^q)\,a^q)\leq \tilde G\exp(-\tilde C a^q).\]

If $0\le a\le a_0$, then 
\[
\Prob(|\xi|\ge a)\le 1 = \exp(\tilde C a_0^q)\exp(-\tilde C a_0^q) \le \exp(\tilde C a_0^q)\exp(-\tilde C a^q) \le \tilde G\exp(-\tilde C a^q) ,
\]
where the first inequality comes from the monotonicity of $\exp(-\tilde C y^q)$ in $y$. 
Exploiting the fact that $\xi$ has lighter-than-Gaussian tails, Lemma \ref{lemma:equivalence:mgf_tails} ensures that there exists some $\nu>0$ and ${\bar{p}}<2$ such that $\Expect[\exp(t\xi)]\leq 2\exp(\nu t^{\bar{p}})$. We therefore can conclude that:
    \begin{align*}
    \rho_\Prob(\ell(\z,\vecxi)) &= (1/\alpha)\log(\Expect[\exp(\alpha (\mu+\sigma\xi))])=\mu+(1/\alpha)\log(\Expect[\exp(\alpha\sigma\xi)])\\ &\leq \mu+(\nu/\alpha)  (\alpha\sigma)^{\bar{p}}+(1/\alpha)\log(2) = \mu +\mathcal{O}(\sigma^{\bar{p}}).
    \end{align*}

\end{proof}

\begin{lemma}\label{thm:bigOUpperBoundLoss}
    Let Assumption \ref{ass:meanVarControl} be satisfied. Further let $\Q_N$ and its fitting procedure satisfy properties \ref{property:affEquiFit}, \ref{property:unifBoundedMeanFit}, and  \ref{property:unifSubGaussFit}. Then:
    \[\Expect[\rho_{\hat{\Prob}_N}(\ell(\z,\vecxi))+\delta_N(\Q_N)]  \leq \mu+ \mathcal{O}(\mbox{Var}(\ell(\vecz, \vecxi))).\]
\end{lemma}

\begin{proof}
Using similar steps as in the proof of Lemma \ref{thm:bigOmegaLoss}, we start by employing  Lemma \ref{thm:biasCorrectBound} to obtain:
\begin{align*}
\Expect[\rho_{\hat{\Prob}_N}(\ell(\z,\vecxi))+ \delta_N(\Q_N)]
&\leq \Expect[\mu + \log(N)/\alpha +\sigma (M_N - \median(M_{N,\mathbb{Q}_N^\xi}|{\Q_N^\xi})) + (1/\alpha)\Lambda_{\mathbb{Q}_N^\xi}(\sigma\alpha)]\\
&= \mu + \log(N)/\alpha +\sigma \Expect[M_N - \median(M_{N,\mathbb{Q}_N^\xi}|{\Q_N^\xi})] + (1/\alpha)\Expect[\Lambda_{\mathbb{Q}_N^\xi}(\sigma\alpha)].
\end{align*}
We then exploit equation \eqref{eq:affine_equiv} to express the expected cumulant generating functions as follows:
        \begin{align*}
        \Expect[\Lambda_{\mathbb{Q}_N^\xi}(\sigma\alpha)]
        &= \Expect[\sigma\alpha\tilde{\mu}_N]+ \Expect[\Lambda_{\tilde{\mathbb{Q}}_N^\xi}(\sigma\alpha\tilde{\sigma}_N)] \leq  \Expect[\sigma\alpha\tilde{\mu}_N]+ \Expect[\nu_N^2 (\sigma\alpha\tilde{\sigma}_N)^2],%
        \end{align*}
where the inequality comes from properties \ref{property:unifBoundedMeanFit} and \ref{property:unifSubGaussFit}, which imply that $\tilde{\Q}_N^\xi$ is a centered subgaussian distribution, hence by Lemma \ref{lemma:cgf:subgaussian:centered} there exists a ${\nu_N}>0$ such that $\Expect[\exp(t\xi)]\leq \exp(\nu_N^2t^2)$. Therefore, 
\begin{align*}
\Expect[\rho_{\hat{\Prob}_N}(\zeta)+ \delta_N(\Q_N)]
&\leq \mu + \log(N)/\alpha +\sigma \Expect[M_N - \mbox{median}(M_{N,\mathbb{Q}_N^\xi}|{\Q_N^\xi})] + \sigma\Expect[\tilde{\mu}_N]+ \sigma^2\alpha \Expect[\nu_N^2 \tilde{\sigma}_N^2]\\
&=\mu + \mathcal{O}(\sigma^2).
\end{align*}
\end{proof}

\begin{lemma}\label{thm:GaussProp4makeAss3}
    If $\ell(\vecz, \vecxi)$ have lighter-than-Gaussian tails and $\Q_N$ satisfies Property \ref{property:unifSubGaussFit}, then $\Q_N$ satisfies Assumption \ref{assum:fit_tail_bound}.
\end{lemma}
\begin{proof}
    By Lemma \ref{lemma:equivalence:mgf_tails}, there exists a ${\bar{p}}\in(1,2)$ and a $\nu>0$ such that $\Expect_{\Prob}[\exp(t\ell(\z,\vecxi))]\leq 2\exp(\nu t^{\bar{p}})$ for all $t\geq 0$. 
    Thus the first and second moments of $\ell(\vecz, \vecxi)$ are finite and the strong law of large numbers implies $\hat{\mu}_N\rightarrow \Expect[\ell(\vecz, \vecxi)]$ and $(1/N)\sum_{i=1}^N\ell(\vecz, \hat{\vecxi}_i)^2 \to \Expect[(\ell(\vecz, \vecxi))^2]$ almost surely. From continuous mapping theorem \citep{Van_der_Vaart_2000}, we have that $\hat{\sigma}_N = \sqrt{(1/N)\sum_{i=1}^N\ell(\vecz, \hat{\vecxi}_i)^2 -\hat{\mu}_N^2} \rightarrow \sqrt{\Expect[(\ell(\vecz, \vecxi))^2]-(\Expect[\ell(\vecz, \vecxi)])^2}= \sqrt{\mbox{Var}(\ell(\vecz, \vecxi))}$ almost surely. 
    Fixing $\bar{\sigma}:=\sqrt{\mbox{Var}(\ell(\vecz, \vecxi))}+1$ and $\bar{R}:=|\Expect[\ell(\vecz, \vecxi)]|+1$, there must therefore almost surely be some $\tilde{N}\geq 1$ such that $\hat{\sigma}_N\le \bar{\sigma}$ and $|\hat{\mu}_N|\leq \bar{R}$  for all $N\geq \tilde{N}$. We can now show that for all $N\geq \tilde{N}$ and $a\geq 0$:
    \begin{align*}
        &\Prob_{\Q_N}(|\zeta|\geq a)\leq \Prob_{\Q_N}(|\zeta|\geq a - (\bar{R}-|\hat{\mu}_N|)) \leq \Prob_{\Q_N}(|\zeta-\hat{\mu}_N| \geq a - \bar{R})\\
        &\leq \Prob_{\Q_N}(|\zeta-\hat{\mu}_N| \geq \frac{\hat{\sigma}_N}{\bar{\sigma}}(a - \bar{R})) = \Prob_{\Q_N}(|\zeta-\hat{\mu}_N|/\hat{\sigma}_N \geq (a - \bar{R})/\bar{\sigma})\\
        &\leq \left\{\begin{array}{cl}1 & \mbox{ if $a<\bar{R}$}\\ 2\exp(-(a - \bar{R})^2/(\mathfrak c_2^2\bar{\sigma}^2))& \mbox{otherwise}\end{array}\right.\\
        &\leq 2\exp(-(a^2/2 - \bar{R}^2)/(\mathfrak c_2^2\bar{\sigma}^2))\\
        &=G\exp(-a^2/K^2)
    \end{align*}
    where $G:=2\exp(\bar{R}^2/(\mathfrak c_2^2\bar{\sigma}^2))$ and $K:=\sqrt{2}\mathfrak c_2\bar{\sigma}$, and the last inequality comes from the fact that for all $a\ge 0$, we have $(\max(a-\bar R,0))^2\ge \frac{a^2}{2}-\bar R^2$. %
    Clearly, $a\leq \bar{R} \implies a^2/2-\bar{R}^2 \le \bar{R}^2/2-\bar{R}^2\leq 0$, and when $a\geq \bar{R}$, we have that $(a-\bar R)^2\ge a^2/2-\bar{R}^2 \iff a^2/2+2\bar{R}^2-2a\bar{R} \geq 0 \iff (a/\sqrt{2} - \sqrt{2}\bar{R})^2\ge 0$. %

For any $\lambda>0$, one has that  $(a/K-\lambda K/2)^2\ge0$ yields $-a^2/K^2\le -\lambda a+\lambda^2K^2/4$, so $\exp(-a^2/K^2)\le \exp(\lambda^2K^2/4)\exp(-\lambda a)$. With  $\lambda:=\alpha C,$ 
\[\Prob_{\Q_N}(|\zeta|\ge a)\leq G\exp(-a^2/K^2)\le G\exp(\alpha^2 C^2K^2/4)\exp(-a\alpha  C )=:\tilde G\exp(-a \alpha C) \;\forall a\ge0,\] where $\tilde G:=G\exp\big(\alpha^2 C^2K^2/4\big)$.
\end{proof}

\subsection{Proof of Theorem \ref{thm:botstrapconsistent}}
\begin{proof}
We want to show that $\rho_{\hat{\Prob}_N}(\ell(\vecz, \vecxi))+\delta_N(\Q_N) \rightarrow \rho_\Prob(\ell(\vecz, \vecxi))$ almost surely. To do so, we will show that both $\rho_{\hat{\Prob}_N}(\ell(\vecz, \vecxi)) \rightarrow \rho_\Prob(\ell(\vecz, \vecxi))$ and $\delta_N(\Q_N) \rightarrow 0$ almost surely. Indeed, if  both $\rho_{\hat{\Prob}_N}(\ell(\vecz, \vecxi)) \rightarrow \rho_\Prob(\ell(\vecz, \vecxi))$ and $\delta_N(\Q_N) \rightarrow 0$ almost surely, then we can use the fact that the sum of two convergent sequences converges to the sum of their limits to conclude that: 
\begin{align*}
&\Prob(\rho_{\hat{\Prob}_N}(\ell(\vecz, \vecxi)) + \delta_N(\Q_N) \rightarrow \rho_{\Prob}(\ell(\vecz, \vecxi)))\\   &\quad \geq  \Prob(\{\rho_{\hat{\Prob}_N}(\ell(\vecz, \vecxi)) \rightarrow \rho_\Prob(\ell(\vecz, \vecxi))\} \cap  \{\delta_N(\Q_N) \rightarrow 0\})\\
    &\quad =1-\Prob(\{\rho_{\hat{\Prob}_N}(\ell(\vecz, \vecxi)) \not\rightarrow \rho_\Prob(\ell(\vecz, \vecxi))\} \cup  \{\delta_N(\Q_N) \not\rightarrow 0\})\\
    &\quad \geq 1 - (1-\Prob(\rho_{\hat{\Prob}_N}(\ell(\vecz, \vecxi)) \rightarrow \rho_\Prob(\ell(\vecz, \vecxi))))-(1-\Prob(\delta_N(\Q_N) \rightarrow 0))\\
    &\quad =1,
\end{align*}
where the second inequality follows from the union bound.
    
\textbf{Step 1: $\rho_{\hat{\Prob}_N}(\ell(\vecz, \vecxi)) \rightarrow \rho_\Prob(\ell(\vecz, \vecxi))$ almost surely.} Given that $\Expect[\exp(\alpha \ell(\vecz, \vecxi))]$ is finite (see Lemma \ref{assum:tail_bound2}) and each $\hat{\vecxi}_i$ is i.i.d., the strong law of large numbers tells us that:
\begin{align}
    & \frac{1}{N}\sum_{i=1}^N\exp(\alpha \ell(\vecz, \hat{\vecxi}_i)) \rightarrow \Expect[\exp(\alpha\ell(\vecz, \vecxi))]  \text{ almost surely}.\label{eq:slln}
\end{align}
Since the logarithm function is continuous over the strictly positive values, using the continuous mapping theorem \citep{Van_der_Vaart_2000}, we obtain:
\begin{align*}
        & \eqref{eq:slln} \implies \frac{1}{\alpha}\log\left(\frac{1}{N}\sum_{i=1}^N\exp(\alpha \ell(\vecz, \hat{\vecxi}_i))\right) \rightarrow \frac{1}{\alpha}\log\left(\Expect[\exp(\alpha \ell(\vecz, \vecxi))]\right)  \text{ almost surely}.
\end{align*}
Hence, $\rho_{\hat{\Prob}_N}(\ell(\vecz, \vecxi)) \rightarrow \rho_\Prob(\ell(\vecz, \vecxi))$ almost surely.
    
    \textbf{Step 2: $\delta_N(\Q_N) \rightarrow 0$ almost surely.}  
    We will show that $\delta_N(\Q_N) \rightarrow 0$ almost surely by showing that $\exp(-\alpha\delta_N(\Q_N)) \rightarrow 1$ almost surely. 

    Let's consider any sequence $\{\bar{\Q}_N\}_{N=1}^\infty$ that is uniformly exponentially bounded \revised{for $N\geq \bar{N}$}, with each element of the sequence \revised{after $\bar{N}$} satisfying Assumption \revised{\ref{assum:tail_bound2} for the same $G>0$ and $C>2$. We therefore consider the subsequence $\{\bar{\Q}_N\}_{N=\bar{N}}^\infty$ in what follows.} %
    
    For each $N\revised{\geq \bar{N}}$, we have:
    \begin{align*}
        \exp(-\alpha\delta_N(\bar{\Q}_N)) & \ = \ \exp(\alpha \,\mbox{median}(\rho_{\hat{\bar{\Q}}_{N,N}}(\zeta)-\rho_{\bar{\Q}_N}(\zeta)\lvert\bar{\Q}_N))\\
        & \ = \ \mbox{median}(\exp(\alpha\rho_{\hat{\bar{\mathbb Q}}_{N,N}}(\zeta)-\alpha\rho_{\bar{\Q}_N}(\zeta))|\bar{\Q}_N)\\
        & \ = \ \mbox{median}\left(\exp\left(\log\left(\frac{(1/N)\sum_{i=1}^N \exp(\alpha \zeta_i)}{\Expect_{\bar{\Q}_N}[\exp(\alpha \zeta)]}\right) \right)\middle|\bar{\Q}_N\right)\\
        & \ = \ \mbox{median}[X_N|\bar{\Q}_N],
    \end{align*}
    where $X_N:=\frac{(1/N)\sum_{i=1}^N \exp(\alpha \zeta_i)}{\Expect_{\bar{\Q}_N}[\exp(\alpha \zeta)]}$ and \revised{$\hat{\bar{\mathbb Q}}_{N,N}(\zeta)$ denotes the empirical distribution of $N$ samples drawn from $\bar{\Q}_{N}$}. The second equality follows by the monotonicity of the exponential function, and the third equality follows by the definition of the entropic risk and the properties of the logarithm function. Note that  $\{\zeta_i\}_{i=1}^N$ are drawn i.i.d. from $\bar{\Q}_N$. 
    
    To analyze the $\mbox{median}[X_N|\bar{\Q}_N]$, we first compute the mean and variance of $X_N$. It is easy to see that $\Expect_{\bar{\Q}_N}[X_N]=1$, while the variance of $X_N$ can be bounded as follows:
    \begin{align*}
    \mbox{Var}_{\bar{\Q}_N}(X_N)&=\frac{\mbox{Var}_{\bar{\Q}_N}\left(\sum_{i=1}^N \exp(\alpha \zeta_i)\right)}{\left(N\Expect_{\bar{\Q}_N}[\exp(\alpha \zeta)]\right)^2} =\frac{\mbox{Var}_{\bar{\Q}_N}\left(\exp(\alpha \zeta)\right)}{N\left(\Expect_{\bar{\Q}_N}[\exp(\alpha \zeta)]\right)^2}\leq \revised{\frac{(2G+C-2)\exp(\frac{2G}{C})}{N(C-2)}},
    \end{align*}
    where the second equality follows from the fact that  $\{\zeta_i\}_{i=1}^N$ are i.i.d.  The inequality follows since \revised{$\{\bar{\Q}_N\}_{N=\bar{N}}^\infty$  satisfy Assumption \ref{assum:tail_bound2} for the same $G>0$ and $C>2$}, thus Lemma~\ref{thm:boundedExpVar2}  provides bounds for both $\mbox{Var}_{\bar{\Q}_N}\left(\exp(\alpha \zeta)\right)$   and $\Expect_{\bar{\Q}_N}[\exp(\alpha \zeta)]$, resulting in the bound $\frac{\text{Var}_{\bar{\Q}_N}\left(\exp(\alpha\zeta)\right)}{\left(\Expect_{\bar{\Q}_N}[\exp(\alpha\zeta)]\right)^2}\leq \revised{\frac{(2G+C-2)\exp(2G/C)}{C-2}}$.

    We next show that $\mbox{median}[X_N|\bar{\Q}_N]$ is bounded. To this end, consider the  Chebyshev's inequality:
       \begin{align*}
        \Prob_{\bar{\Q}_N}\left(\lvert X_N - 1\rvert\geq k\sqrt{\mbox{Var}_{\bar{\Q}_N}(X_N)}\right) \leq \frac{1}{k^2}. 
    \end{align*}
     Substituting the bound for the $ \mbox{Var}_{\bar{\Q}_N}(X_N)$  and setting $k=2$, which implies an upper tail probability bound of 25\%,   results in \begin{equation*}
        \Prob_{\bar{\Q}_N}\left(|X_N - 1|\geq 2\Delta/\sqrt{N}\right) \leq \frac{1}{4}, 
    \end{equation*}
    where $\Delta:= \revised{\sqrt{(2G+C-2)\exp(\frac{2G}{C})}/\sqrt{C-2}}$. Thus, we   conclude that $\mbox{median}[X_N|\bar{\Q}_N]\in [1-2\Delta/\sqrt{N},\,1+2\Delta/\sqrt{N}]$ since otherwise it would imply that 50\% of the probability is outside this interval (either on the right or the left), which would contradict the fact that the total probability outside the interval is below $1/4$. 
    
    Finally, we show that as $N$ tends to infinity, $\mbox{median}[X_N|\bar{\Q}_N]$ converges to 1 almost surely. For any $\epsilon>0$, there exists  $N_0=4\Delta^2/\epsilon^2$ such that for all $N>\max(N_0, \bar N)$, $|\mbox{median}[X_N|\bar{\Q}_N]-1| \leq \epsilon$ which implies $ \lim_{N\rightarrow \infty} \mbox{median}[X_N|\bar{\Q}_N] = 1$.
    
    We conclude from the above analysis that any sequence $\{\bar{\Q}_N\}_{N=\bar N}^\infty$ that is uniformly exponentially bounded as prescribed in Assumption \ref{assum:fit_tail_bound} must be such that $ \exp(-\alpha\delta_N(\bar{\Q}_N))=\mbox{median}[X_N|\bar{\Q}_N]$ converges to 1 almost surely. Since Assumption \ref{assum:fit_tail_bound} imposes that such sequences are almost surely obtained, we must have that $\delta_N(\Q_N) \rightarrow 0$ almost surely.

    \end{proof}

\removed{
\revised{

\subsection{Proof of Proposition \ref{prop:identifiable}}
\begin{proof}
For a nonnegative random variable $Z$, its Laplace-Stieltjes transform is 
$L_Z(s):=\mathbb{E}[\exp(-sZ)]$ for $s\ge0$ \citep{Feller1971}. 
For the sample mean $\bar{Y}_N$ of i.i.d.\ copies $Y_1,\ldots,Y_N$ of $Y$, we have for all $s\ge0$:
\begin{equation}\label{eq:LT-mean}
L_{\bar{Y}_N}(s)
=\mathbb{E}\!\left[\exp\left(-s\frac{1}{N}\sum_{i=1}^N Y_i\right)\right]
=\prod_{i=1}^N \mathbb{E}\!\left[\exp(-(s/N)Y_i)\right]
=\bigl(L_Y(s/N)\bigr)^{N}.
\end{equation}

The Laplace-Stieltjes transform uniquely determines the distribution of a nonnegative random variable \citep[][Theorem 1a, Chapter XIII]{Feller1971}. 
Therefore, $\bar{Y}_{\mathbb{P}, N} \stackrel{d}{=} \bar{Y}_{\mathbb{Q}, N}$ implies 
$L_{\bar{Y}_{\mathbb{P}, N}}(s) = L_{\bar{Y}_{\mathbb{Q}, N}}(s)$ for all $s \geq 0$. By \eqref{eq:LT-mean}, this gives
\begin{equation}\label{eq:power-equation}
\bigl(L_{Y_{\mathbb{P}}}(s/N)\bigr)^{N} = \bigl(L_{Y_{\mathbb{Q}}}(s/N)\bigr)^{N}
\quad\text{for all }s\ge0.
\end{equation}

Since $Y_{\mathbb{P}}, Y_{\mathbb{Q}} > 0$ almost surely, we have 
$0 < L_{Y_{\mathbb{P}}}(u), L_{Y_{\mathbb{Q}}}(u) \leq 1$ for all $u \geq 0$. 
Taking the unique positive $N$-th root in \eqref{eq:power-equation} yields $
L_{Y_{\mathbb{P}}}(u) = L_{Y_{\mathbb{Q}}}(u)
\quad\text{for all }u\ge0.$ By the uniqueness theorem for Laplace-Stieltjes transforms \citep[][Theorem 1a, Chapter XIII]{Feller1971}, \citep[][Theorem 22.2]{billingsley2013convergence}), equal Laplace-Stieltjes transforms imply equal distribution functions. Thus, $
Y_{\mathbb{P}} \stackrel{d}{=} Y_{\mathbb{Q}}.$
Since $\xi = \frac{1}{\alpha}\log Y_{\mathbb{P}}$ and $\zeta = \frac{1}{\alpha}\log Y_{\mathbb{Q}}$, 
and the map $y \mapsto \frac{1}{\alpha}\log y$ is a strictly increasing bijection from $(0,\infty)$ to $\mathbb{R}$, 
we conclude $
\xi \stackrel{d}{=} \zeta,$
which means $\mathbb{P} \equiv \mathbb{Q}$.
\end{proof} }
}

\subsection{\revised{Proof of Theorem \ref{thm:our_estimator_overestimate_omega}}}
\begin{proof}

 Based on lemmas \ref{thm:bigOmegaLoss} and \ref{thm:bigOLoss}, we thus conclude that 
    \[\Expect[\rho_{\hat{\Prob}_N}(\ell(\vecz,\vecxi))+ \delta_N(\Q_N)] - \rho_\Prob(\ell(\vecz,\vecxi)) = \Omega(\sigma^2) - \mathcal{O} (\sigma^{\bar{p}}) = \Omega(\sigma^2),\]
    since ${\bar{p}}<2$. 
Moreover from Lemma \ref{thm:bigOUpperBoundLoss}, we also have 
\begin{align*}
\Expect[\rho_{\hat{\Prob}_N}(\ell(\vecz,\vecxi))+ \delta_N(\Q_N)] - \rho_\Prob(\ell(\vecz,\vecxi)) &= \mu + \mathcal O(\sigma^2)-\rho_\Prob(\ell(\vecz,\vecxi))\leq \mu + \mathcal O(\sigma^2)-\Expect_\Prob[\ell(\vecz,\vecxi)] \\
&= \mu + \mathcal  O(\sigma^2) -\mu = \mathcal O(\sigma^2),
\end{align*}
since $\rho(X)\geq \Expect[X]$ for any random variable and $\Expect_\Prob[\ell(\vecz,\vecxi)]=\mu$. Hence,
\[| \rho_\Prob(\ell(\vecz,\vecxi)) -\Expect[\rho_{\hat{\Prob}_N}(\ell(\vecz,\vecxi))+ \delta_N(\Q_N)]|\leq \max(\mathcal O(\sigma^2),\Omega(\sigma^2))\leq \mathcal O(\sigma^2).\]

Finally, Lemma \ref{thm:GaussProp4makeAss3} ensures that $\Q_N$ satisfies Assumption \ref{assum:fit_tail_bound}. Hence, based on Theorem \ref{thm:botstrapconsistent} $\rho_{\hat{\Prob}_N}(\ell(\vecz,\vecxi))+\delta_N(\Q_N)$ is asymptotically consistent
\end{proof}
\noindent 

\subsection{\revised{Proof of Theorem \ref{thm:GMM}}}
We will show in turn that both Assumption \ref{assum:fit_tail_bound} and properties \ref{property:affEquiFit}-\ref{property:unifSubGaussFit} are satisfied by $\Q_N$. By theorems \ref{thm:botstrapconsistent} and \ref{thm:our_estimator_overestimate_omega}, this will imply respectively that the estimator is strongly asymptotically consistent, and that equations \eqref{eq:thm:OmegaVar} and \eqref{eq:thm:OVar} are satisfied.

\begin{lemma}
    A $\Q_N$ produced by Algorithm \ref{alg:fittingGMM} satisfies properties \ref{property:affEquiFit} and \ref{property:unifBoundedMeanFit}.
\end{lemma}

\begin{proof}
    For any $(a,b)\in\mathbb{R}\times\mathbb{R}_+$, let $\Q_N$ be obtained from Algorithm \ref{alg:fittingGMM} using $\DataN:=\{\ell(\z, \hat{\vecxi}_i)\}_{i=1}^N$ and $\Q_N'$ be obtained using $\DataN:=\{a+b\ell(\z, \hat{\vecxi}_i)\}_{i=1}^N$. Letting $(\hat{\mu}_N,\hat{\sigma}_N,\bar{\DataN},\bar{\Theta},\btheta^*)$ and $(\hat{\mu}_N',\hat{\sigma}_N',\bar{\DataN}',\bar{\Theta}',{\btheta^*}')$ be the objects assigned as one follows the steps of the algorithm. One can straightforwardly observe that:
    \begin{align*}
        &\hat{\mu}_N' := (1/N)\sum_{\zeta\in\DataN'} \zeta = (1/N)\sum_{\zeta\in\DataN} (a+b\zeta) = a+b\hat{\mu}_N\\
        &\hat{\sigma}_N'  = \sqrt{(1/N)\sum_{\zeta\in\DataN'} (\zeta-\hat{\mu}_N')^2} = \sqrt{(1/N)\sum_{\zeta\in\DataN} (a+b\zeta-a-b\hat{\mu}_N)^2}=b\hat{\sigma}_N\\
        &\bar{\DataN}' :=\{(a+b\ell(\z, \hat{\vecxi}_i)-\hat{\mu}_N')/{\hat{\sigma}_N}'\}_{i=1}^N = \{(a+b\ell(\z,\hat{\vecxi}_i)-a-b\hat{\mu}_N)/{b\hat{\sigma}_N}\}_{i=1}^N=\bar{\DataN}\\
        &\bar{\Theta}' := \{\btheta\in\Theta| \Expect_{\Q^\btheta}[\zeta]= 0\} = \bar{\Theta}.
    \end{align*}
    Given that both $\bar{\DataN}'=\bar{\DataN}$ and $\bar{\Theta}'=\bar{\Theta}$, it must be that ${\btheta^*}'={\btheta^*}$ almost surely. The distribution of $\zeta'\sim\Q_N'$ must therefore be the distribution of $\hat{\mu}_N'+\hat{\sigma}_N'\xi'\stackrel{F}{=}a+b(\hat{\mu}_N+\hat{\sigma}_N\xi')\stackrel{F}{=}a+b\zeta$, with $\xi'\sim \Q^{\btheta^*}$ and $\zeta\sim\Q_N$. This verifies Property \ref{property:affEquiFit}.

    To verify Property \ref{property:unifBoundedMeanFit}, one can simply exploit the fact that $\btheta^*\in\bar{\Theta}$, which ensures that $\Expect_{\Q^{\btheta^*}}[\xi']=0$. This further implies that $\Expect_{\Q_N}[\zeta]=\Expect_{\Q^{\btheta^*}}[\hat{\mu}_N+\hat{\sigma}_N\xi']=\hat{\mu}_N$.

\end{proof}

\begin{lemma}
    If there exists a bound $\epsilon>0$, some $B>-\infty$, %
    and some $\bar{\mathfrak j}\in\mathfrak J$, such that $\pi_{\bar{\mathfrak j}}\geq\epsilon$, $\sigma_{\bar{\mathfrak j}} \geq \epsilon$, and $\mu_{\bar{\mathfrak j}}\geq B$ %
for all $(\pi,\mu,\sigma)\in\Theta$, then a $\Q_N$  produced by Algorithm \ref{alg:fittingGMM} using a GMM within $\Theta$ satisfies Property \ref{property:unifHeavyGaussFit}.
\end{lemma}

\begin{proof}
    We will need \citet[][Proposition 2.1.2]{Vershynin2025}, which states that for $G\sim \mathcal N(0,1)$ and $t\ge 2$ 
\begin{align}\label{eq:lowernound:normal}
\mathbb P(G\ge t)\ &\ge\ \Big(\frac{1}{t}-\frac{1}{t^3}\Big)\frac{1}{\sqrt{2\pi}}\exp(-t^2/2)\notag \\
& \ge \frac{1}{2\sqrt{2\pi}}\frac{1}{t}e^{-t^2/2}
\ge \frac{1}{2\sqrt{2\pi}}e^{-t^2},
\end{align}
where the second inequality comes from 
$1/t-1/t^3=(1/t)(1-1/t^2)\geq 1/(2t)$ since $t\ge 2$, the third inequality comes from $(1/2t)\geq \exp(-t^2/2)$ since for $t\ge 2,\ 1/(2t) \ge e^{-t^2/2} \iff g(t):=t^2/2-\ln(2t)\ge 0,\ g'(t)=t-1/t\ge 2-1/2>0,\ g(2)=2-\ln 4>0 \Rightarrow g(t)\ge g(2)>0$.
    Thus, we have that for $a\geq \bar{a}:=\max(0,  B+2\epsilon)$ %
\begin{align*}
\Prob_{\Q_N}((\zeta-\hat\mu_N)/\hat\sigma_N \ge a) = \Prob_{\Q^{\btheta^*}}(\tilde\xi \ge a)
&= \sum_{\mathfrak j \in \mathfrak J}\pi_{\mathfrak j}\Prob_{\Q^{\btheta^*}}(\tilde\xi\ge a\mid Z=\mathfrak j) 
\geq  \pi_{\bar{\mathfrak  j}} \Prob_{\Q^{\btheta^*}}(\tilde\xi\ge a\mid Z=\bar{\mathfrak j})\\
&=\pi_{\bar{\mathfrak j}} \Prob_{\Q^{\btheta^*}}(G\ge ( a-\mu_{\bar {\mathfrak j}})/\sigma_{\bar{\mathfrak j}}) \\
  & \geq \epsilon  \Prob_{\Q^{\btheta^*}}(G\ge ( a-B)/\epsilon) \\
 &\geq (\epsilon/(2\sqrt{2\pi}))\exp(-((a-B)/\epsilon)^2)\\
 &\geq  \epsilon/(2\sqrt{2\pi}) \exp(-2\left(B/\epsilon \right)^2)\exp(-2 a^2/\epsilon ^2)
\end{align*}
where in the second inequality, we use $\pi_{\bar{\mathfrak j}}\ge \epsilon$, $\sigma_{\bar{\mathfrak j}}\geq \epsilon >0$, $\mu_{\bar{\mathfrak j}} \ge B$, and $a\geq B$, the  third inequality comes from \eqref{eq:lowernound:normal},  the last inequality is due to
$(a-b)^2\leq 2a^2+2b^2$.
With $\mathfrak c_1=\epsilon/(2\sqrt{2\pi}) \exp(-2\left(B/\epsilon\right)^2)>0$ and $\mathfrak c_2=\epsilon/\sqrt{2}>0$, we obtain that for $a\geq \bar{a}:=\max(0, 2\epsilon + B)$:%
\begin{equation}\label{tail:Q:lb}
   \Prob_{\Q_N}((\zeta-\hat\mu_N)/\hat\sigma_N \ge a) \ge \mathfrak c_1 \exp(-a^2/\mathfrak c_2^2).%
\end{equation}
\end{proof} %

\begin{lemma}
    If there exists $B>0$ such that $\max_{\mathfrak j  \in  \mathfrak J} \sigma_{\mathfrak j} \leq B$ and $\max_{\mathfrak j \in \mathfrak J} |\mu_{\mathfrak j}| \leq B$ for all $(\pi,\mu,\sigma)\in\Theta$, then a $\Q_N$  produced by Algorithm \ref{alg:fittingGMM} using a GMM within $\Theta$ satisfies Property \ref{property:unifSubGaussFit}.
\end{lemma}
\begin{proof}
Let $Z$ be the latent variable representing the components of the GMM. Then for all $a\geq 2B$,
\begin{align*}
  \mathbb P_{\Q_N}((\zeta-\hat\mu_N)/\hat\sigma_N \ge a)&=\sum_{\mathfrak j \in \mathfrak J} \pi_{\mathfrak j} \mathbb P(\tilde\xi \ge a \mid Z=\mathfrak j )\\
  &= \sum_{\mathfrak j  \in \mathfrak J} \pi_{\mathfrak j} \mathbb P\Big(G \ge \frac{a - \mu_{\mathfrak j}}{\sigma_{\mathfrak j}}\Big)\\&
   \le  \sum_{\mathfrak j \in \mathfrak J} \pi_{\mathfrak j}  \exp\big(-((a - \mu_{\mathfrak j})/(\sqrt{2}\sigma_{\mathfrak j}))^2\big)\\
  &\le \sum_{\mathfrak j \in \mathfrak J} \pi_{\mathfrak j}  \exp\big(-((a -B)/(\sqrt{2}B))^2\big)\\&
   \le \exp\big(-a^2/(8B^2)\big),
\end{align*}
where $G\sim \mathcal N(0,1)$ and the first inequality uses the standard Gaussian tail bound since $a-\mu_{\mathfrak j}\geq B>0$ and $\sigma_{\mathfrak j}>0$, the second inequality uses $\mu_{\mathfrak j} \le B \iff
(a - \mu_{\mathfrak j}) \ge (a - B)$ and $\sigma_{\mathfrak j} \le B$. The last inequality comes from the equivalences $
a \ge 2B \iff B\leq a/2\iff a- B \ge a/2 \implies
((a - B)/(\sqrt{2}B))^2
  \ge ((a/2)/(\sqrt{2}B))^2= (a/(2\sqrt{2}B))^2$
  
Applying the same argument to $-\tilde\xi$ and using $\mu_{\mathfrak j}\ge -B, \forall\, \mathfrak j \in \mathfrak J$ yields $
  \mathbb P(\tilde\xi \le -a) \le \exp\big(-a^2/(8B^2)\big), \forall a\ge 2B$. Thus, for all $a\ge 2B$, we have that $
  \mathbb P(|\tilde\xi|\ge a) 
  \le 2 \exp\big(-a^2/(8B^2)\big).$

Let $\mathfrak c_2^2 := \max(8B^2,\; \frac{4 B^2}{\log 2})$.

\textbf{Case 1: $a\ge 2B$.} Since $\mathfrak c_2^2 \ge 8B^2$,  we have that:
\begin{align*}
    \mathbb P(|\tilde\xi|\ge a)
\le 2 \exp(-a^2/8B^2)
\le 2 \exp(-a^2/\mathfrak c_2^2).
\end{align*}

\textbf{Case 2: $a\le 2B$.} We combine $\mathbb P(|\tilde\xi|\ge a)\le 1$ with $\mathfrak c_2^2 \ge 4B^2/\log 2$ to have that:
\begin{align*}
      \mathbb P(|\tilde\xi|\ge a) \leq 1 = 2\exp(-\log(2))\le 2\exp(-4B^2/\mathfrak c_2^2)\le 2\exp(-a^2/\mathfrak c_2^2)
\end{align*}
where the last inequality comes from $a\le 2B$. 
Therefore,  we have that for all $a\ge 0$,
\[
 \mathbb P_{\Q_N}(|(\zeta-\hat\mu_N)/\hat\sigma_N| \ge a) = \mathbb P(|\tilde\xi|\ge a) \leq 2\exp(-a^2/\mathfrak c_2^2).
 \]%
\end{proof}

\removed{
\UScomments{It remains to show that we can satisfy equivariance property when all the other properties also hold.} %
\begin{proposition}\label{prop:properties:GMM}
Suppose we fit a GMM  $\tilde{\Q}^\xi_{\theta}$ with $\theta:=(\pi, \mu, \sigma)$ to standardized losses $\{(\ell(\vecz, \hat{\vecxi}_i)-\hat{\mu}_N)/\hat{\sigma}_N\}_{i=1}^N$. Let $\tilde\xi \sim \tilde{\Q}^\xi_\theta$
 \begin{enumerate}
\item[(i)] \UScomments{Conditions for the affine equivariance property to hold needs to be stated and a proof is missing}
\item[(ii)] If we impose $\sum_{k=1}^K \pi_k \mu_k \ge 0$, then $\tilde\xi$ satisfies
Property~\ref{property:unifBoundedMeanFit}.
\item[(iii)] If there exist $\epsilon>0$ and some $\bar{k}\in\{1,\dots,K\}$ such that
$\pi_{\bar{k}}^N\ge \epsilon$ and $\sigma_{\bar{k}}^N\ge \epsilon$, there exists $B_\sigma>0$, $B_\mu>0$ such that $\max_{k:\pi_k^N>0}\sigma_k^N\le B_\sigma$
and  $\max_{k:\pi_k^N>0}|\mu_k^N|\le B_\mu$, then $\tilde\xi$
satisfies Property~\ref{property:unifHeavyGaussFit}. \UScomments{Update Property ~\ref{property:unifHeavyGaussFit} to hold uniformly. }
\item[(iv)] If there exists $B_\sigma>0$, $B_\mu>0$ such that $\max_{k:\pi_k^N>0}\sigma_k^N\le B_\sigma$
and  $\max_{k:\pi_k^N>0}|\mu_k^N|\le B_\mu$,
then $\tilde\xi$ satisfies Property~\ref{property:unifSubGaussFit}. \UScomments{Update Property ~\ref{property:unifSubGaussFit} to hold uniformly. }
 \end{enumerate}
\end{proposition}
\begin{proof}
\begin{enumerate}
\item[i)] \UScomments{Incomplete}
    \item[ii)]  Property \ref{property:unifBoundedMeanFit} holds since we have that $\Expect[\tilde \xi]\geq 0$:
  \[\Expect[\zeta] = \Expect[\hat{\mu}+\sigma \tilde\xi] \geq  \hat{\mu}.\]
\item[iii)] 
Fix any component index $\bar{k}\in\{1,\dots,K\}$ with $\sigma_{\bar{k}}\ge \epsilon >0$. 
    \citet[][Proposition 2.1.2]{Vershynin2025} states that for $G\sim \mathcal N(0,1)$, 
\begin{align}\label{eq:lowernound:normal}
\mathbb P(G\ge y)\ \ge\ \Big(\frac{1}{y}-\frac{1}{y^3}\Big)\frac{1}{\sqrt{2\pi}}\exp(-y^2/2)\, \forall y>0.
\end{align}
    Thus, we have that for $a\geq \bar{a}_N=B_{\mu} +2 B_{\sigma}$ %
\begin{align*}
\Prob(\tilde\xi \ge a)
&= \sum_{k=1}^K \pi_k^N\,\Prob(\tilde\xi\ge a\mid Z=k) 
\geq  \pi_{\bar{k}}^N \Prob(\tilde\xi\ge a\mid Z=\bar k)\\&=\pi_{\bar k}^N \Prob(G\ge ( a-\mu_{\bar k}^N)/\sigma_{\bar k}^N) \\
 & \geq \pi_{\bar k}^N  \Prob(G\ge ( a-\mu_{\bar k}^N)/\sigma_{\bar k}^N) \qquad (\text{substitute } y:=( a-\mu_{\bar k}^N)/\sigma_{\bar k}^N)
 \\& \geq  \pi_{\bar k}^N\frac{1}{\sqrt{2\pi}}\left(1/y-1/y^3\right) \exp(-y^2/2)  \\
 & \geq  \frac{1}{\sqrt{2\pi}}\pi_{\bar k}^N/(2y)\exp(-y^2/2)\\
 &\geq \frac{1}{\sqrt{2\pi}}\exp(-y^2/2)\pi_{\bar k}^N\exp(-y^2/2)\\
 &= \pi_{\bar k}^N/(2\sqrt{2\pi})\exp(-y^2)\\
 &=\pi_{\bar k}^N/(2\sqrt{2\pi})\exp(-((a-\mu_{\bar{k}})/\sigma_{\bar{k}}^N)^2)\\
 &\geq  \pi_{\bar k}^N/(2\sqrt{2\pi}) \exp(-2\left(\mu_{\bar k}/\sigma_{\bar k}^N\right)^2)\exp(-2 a^2/(\sigma_{\bar k}^N)^2)\\
  &\geq  \epsilon/(2\sqrt{2\pi}) \exp(-2\left(B_\mu/\epsilon\right)^2)\exp(-2 a^2/\epsilon^2)
\end{align*}
where  in the second equality, we use $\sigma_{\bar{k}}^N\geq \epsilon >0$, the  second inequality comes from \eqref{eq:lowernound:normal},   the third inequality comes from 
$1/y-1/y^3=(1/y)(1-1/y^2)\geq 1/(2y)$ since $y:=(a-\mu_{\bar k}^N)/\sigma_{\bar{k}^N} \ge 2$, the second to last inequality comes from $(1/2y)\geq \exp(-y^2/2)$ since for $y\ge 2,\ 1/(2y) \ge e^{-y^2/2} \iff g(y):=y^2/2-\ln(2y)\ge 0,\ g'(y)=y-1/y\ge 2-1/2>0,\ g(2)=2-\ln 4>0 \Rightarrow g(y)\ge g(2)>0$, the last inequality is due to
$(a-b)^2\leq 2a^2+2b^2$.
With $c=\epsilon/(2\sqrt{2\pi}) \exp(-2\left(B_\mu/\epsilon\right)^2)$ and $h=2/{\epsilon^2}$%
\begin{equation}\label{tail:Q:lb}
   \Prob(\tilde\xi \geq  a)  \ge c \exp(-ha^2).%
\end{equation}
\item[iv)]  Let $Z$ be the latent variable representing the components of the GMM. Then for all $a\geq 2B_\mu$,
\begin{align*}
  \mathbb P(\tilde\xi \ge a)
 & = \sum_{k=1}^K \pi_k^N \mathbb P(\tilde\xi \ge a \mid Z=k)
  = \sum_{k=1}^K \pi_k^N \mathbb P\Big(G \ge \frac{a - \mu_k^N}{\sigma_k^N}\Big)
   \le  \sum_{k=1}^K \pi_k^N  \exp\big(-((a - \mu_k^N)/(\sqrt{2}\sigma_k^N))^2\big)\\
  &\le \sum_{k=1}^K \pi_k^N  \exp\big(-((a -B_\mu)/(\sqrt{2}B_\sigma))^2\big)
   \le \exp\big(-(a/(2\sqrt{2}B_\sigma))^2\big),
\end{align*}
where $G\sim \mathcal N(0,1)$ and the first inequality uses the standard Gaussian tail bound since $a-\mu_k^N\geq B_\mu>0$ and $\sigma_k^N>0$, the second inequality uses $\mu_k^N \le B_\mu \iff
(a - \mu_k^N) \ge (a - B_\mu)$ and $\sigma_k^N \le B_\sigma$. The last inequality comes from the equivalences $
a \ge 2B_\mu \iff B_\mu\leq a/2\iff a- B_\mu \ge a/2 \implies
((a - B_\mu)/(\sqrt{2}B_\sigma))^2
  \ge ((a/2)/(\sqrt{2}B_\sigma))^2= (a/(2\sqrt{2}B_\sigma))^2$
  
Applying the same argument to $-\tilde\xi$ and using $|\mu_k^N|\le B$ yields $
  \mathbb P(\tilde\xi \le -a) \le \exp\big(-(a/(2\sqrt{2}B_\sigma))^2\big), \forall a\ge 2B_\mu,$
so for all $a\ge 2B_\mu$, we have that $
  \mathbb P(|\tilde\xi|\ge a) 
  \le 2 \exp\big(-(a/(2\sqrt{2}B_\sigma))^2\big).$

Let $k^2 := \max(8B_\sigma^2,\; \frac{4 B_\mu^2}{\log 2})$.

\textbf{Case 1: $a\ge 2B_\mu$.} Since $k^2 \ge 8B_\sigma^2$,  we have that:
\begin{align*}
    \mathbb P(|\tilde\xi|\ge a)
\le 2 \exp(-a^2/8B_\sigma^2)
\le 2 \exp(-a^2/k^2).
\end{align*}

\textbf{Case 2: $a\le 2B_\mu$.} We combine $\mathbb P(|\tilde\xi|\ge a)\le 1$ with $k^2 \ge 4B_\mu^2/\log 2$ to have that:
\begin{align*}
      \mathbb P(|\tilde\xi|\ge a) \leq 1 = 2\exp(-\log(2))\le 2\exp(-4B_\mu^2/k_N^2)\le 2\exp(-a^2/k^2)
\end{align*}
where the last inequality comes from $a\le 2B_\mu$.
\end{enumerate}
\end{proof}}

\removed{
\revised{\subsection{Proof of Proposition \ref{lemma_almst_sure}}
\begin{proof}
From Lemma \ref{recur_lemma}, we have that
\begin{equation}\label{eq:upper_bound_emp_risk_cumulant}
    \begin{aligned}
      \rho_{\hat{\Prob}_N}(\zeta) &\leq  \mu+\sigma M_N\\
        &= \mu + \sigma M_N + \rho(\zeta)-\rho(\zeta)\\
        &= \rho(\zeta) +  \sigma M_N - \rho(\sigma \xi) = \rho(\zeta) +  \sigma M_N - (1/\alpha)\Lambda_{\Prob^{\xi}}(\alpha \sigma)
    \end{aligned}
    \end{equation}
    where second equality is obtained from the translation invariance of entropic risk measure, i.e., $\rho(\zeta)=\mu+\rho(\sigma \xi)$.
    Similarly from Lemma \ref{recur_lemma}, we have
\begin{equation}\label{eq:lower_bound_emp_risk_cumulant}
    \begin{aligned}
              \rho_{\hat{\Prob}_N}(\zeta)  &\geq \mu+\sigma M_N - \log(N)/\alpha \\ &= \mu + \sigma M_N  - \log(N)/\alpha + \rho(\zeta)-\rho(\zeta)\\
        &= \rho(\zeta) +  \sigma M_N  - \log(N)/\alpha - \rho(\sigma \xi) \\&= \rho(\zeta) +  \sigma M_N  - \log(N)/\alpha - (1/\alpha)\Lambda_{\Prob^{\xi}}(\alpha \sigma)
    \end{aligned}
    \end{equation}
    where we used translation invariance of entropic risk measure, i.e., $\rho(\zeta)=\mu+\rho(\sigma \xi)$, to obtain the second equality.
\end{proof}
\subsection{Proof of Proposition \ref{prop:bias:saa:superlinear}}
\begin{proof}
    This proof relies on showing that the cumulant generating function grows faster than any linear function in $t$,   $\Lambda_{\Prob^{\xi}}(t) = \omega(t)$, because of the unbounded right tail of $\xi$. In particular, the unbounded tail implies that $\Prob(\xi\geq y)>0$ for all $y$. Let us consider an arbitrary $k>0$, and fix $\bar{x}:=2k$ and $\bar{t}:=-\log\left(\Prob(\xi\geq \bar{x})\right)/k$. One can confirm that for all $t\geq \bar{t}$, we have:
    \begin{equation}\label{eq:CGFomega}
    \begin{aligned}
       \Lambda_{\Prob^{\xi}}(t)=\log(\mathbb{E}[e^{t\xi}]) &=\log(\mathbb{E}[e^{t\xi}\mathbbm{1}_{\xi\geq \bar{x}}]+\mathbb{E}[e^{t\xi}\mathbbm{1}_{\xi<\bar{x}}])\\&\;\geq \log(e^{t\bar{x}}\Prob(\xi\geq \bar{x}))\\&\;\;=2kt + \log(\Prob(\xi\geq \bar{x}))\\&\;\;\;\geq kt + k\bar{t} + \log(\Prob(\xi\geq \bar{x})) \\&\;\;\;= kt. 
    \end{aligned}
    \end{equation}
From Lemma \ref{lemma_almst_sure}, we have
\begin{align}\label{eq:bias_cumulant}
    \mathbb{E}[\rho_{\hat{\Prob}_N}(\zeta)] \leq \rho(\zeta) + \mathbb{E}[\sigma M_N] - (1/\alpha)\Lambda_{\Prob^{\xi}}(\alpha\sigma) = \rho(\zeta) - ( (1/\alpha)\Lambda_{\Prob^{\xi}}(\alpha\sigma) -  \mathbb{E}[\sigma M_N])
\end{align}
    Equipped with $\Lambda_{\Prob^{\xi}}(t) = \omega(t)$, we can obtain our claim. Namely, given any $k>0$, $\Lambda_{\Prob^{\xi}}(t) \geq \bar{k}t$ implies that there exists $\bar{t}$ such that for all $t\geq \bar{t}$, $\Lambda_{\Prob^{\xi}}(t)\geq (k+\mathbb{E}[M_N])t$ where we set $\bar{k}=k+\mathbb{E}[M_N]$, letting $\bar{\sigma}:=\bar{t}/\alpha$, we thus get that for $\alpha \sigma\geq \bar{t}$:
\begin{align}\label{Lambda_diff_max}
 (1/\alpha)\Lambda_{\Prob^{\xi}}(\alpha\sigma) -  \mathbb{E}[\sigma M_N] &\geq  (1/\alpha)(k+\mathbb{E}[M_N])(\alpha \sigma) - \mathbb{E}[M_N]\sigma = k\sigma,
\end{align}
which combined with \eqref{eq:bias_cumulant} gives $
    \mathbb{E}[\rho_{\hat{\Prob}_N}(\zeta)] \leq \rho(\zeta) -k\sigma$.
\end{proof}
\subsection{Proof of Lemma \ref{lemma:saa_bigo}}
\begin{proof}
From \eqref{eq:recurr_lemma}, we have
\begin{equation}\label{eq:lower_bound_emp_risk_cumulant}
    \begin{aligned}
              \rho_{\hat{\Prob}_N}(\zeta)  &\geq \mu+\sigma M_N - \log(N)/\alpha \\ &= \mu + \sigma M_N  - \log(N)/\alpha + \rho(\zeta)-\rho(\zeta)\\
        &= \rho(\zeta) +  \sigma M_N  - \log(N)/\alpha - \rho(\sigma \xi) \\&= \rho(\zeta) +  \sigma M_N  - \log(N)/\alpha - (1/\alpha)\Lambda_{\Prob^{\xi}}(\alpha \sigma)
    \end{aligned}
    \end{equation}
    where we used translation invariance of entropic risk measure, i.e., $\rho(\zeta)=\mu+\rho(\sigma \xi)$, to obtain the second equality.
\end{proof}

\subsection{Proof of Proposition \ref{prop:saa:UB}}
\begin{proof}
From Proposition \ref{lemma_almst_sure}, %
we have
\begin{equation}\label{eq:bias_cumulant}
\begin{aligned}
    \mathbb{E}[\rho_{\hat{\Prob}_N}(\zeta)] &\geq \rho(\zeta) + \mathbb{E}[\sigma M_N] - \log(N)/\alpha-(1/\alpha)\Lambda_{\Prob^{\xi}}(\alpha\sigma)\\
    & \geq \rho(\zeta) + \mathbb{E}[\sigma M_N] - \log(N)/\alpha-(1/\alpha)\frac{K^2 (\alpha \sigma)^2}{2}
\end{aligned}
\end{equation}
where second inequality follows from Assumption \ref{assum:subgaussian} that states that $
\Lambda_{\Prob^\xi}(t)
\leq 
\frac{K^2 t^2}{2}$
for all $t$ for subgaussian random variables. We rearrange to obtain:
\[\rho(\zeta)-\mathbb{E}[\rho_{\hat{\Prob}_N}(\zeta)] \leq \frac{K^2 \alpha \sigma^2}{2} -\mathbb{E}[\sigma M_N] + \log(N)/\alpha \]
\end{proof}

}

\subsection{Proof of Proposition \ref{prop:oic_underestimate}}
\begin{proof}
From Popoviciu’s inequality \citep{popoviciu1935equations}, we have that
\begin{equation*}
  \mathrm{Var}_{\hat{\mathbb{P}}_N}(\exp(\alpha \zeta))\le   \frac{(\max_{1\le i\le N}\exp(\alpha \zeta_i)-\min_{1\le i\le N}\exp(\alpha \zeta_i))^2}{4} \le \frac{(\max_{1\le i\le N} \exp(\alpha \zeta_i))^2}{4}.
\end{equation*}
We use $\sum_{i=1}^N\exp(\alpha \zeta_i)\ge \max_{1\le i\le N}\exp(\alpha \zeta_i)$ in the above inequality to obtain:
\[
\frac{\mathrm{Var}_{\hat{\mathbb{P}}_N}(\exp(\alpha \zeta))}{((1/N)\sum_{i=1}^N\exp(\alpha \zeta_i))^{\,2}}
\le \frac{(\max_{1\le i\le N}\exp(\alpha \zeta_i))^2/4}{(\max_{1\le i\le N}\exp(\alpha \zeta_i)/N)^2}=\frac{N^2}{4}.
\]
Hence, we have that
\[
\frac{1}{\alpha N}\,
\frac{\mathrm{Var}_{\hat{\mathbb{P}}_N}(\exp(\alpha\zeta))}
     {\big(\mathbb{E}_{\hat{\mathbb{P}}_N}[\exp(\alpha\zeta)]\big)^2}
\le \frac{N}{8\alpha}.
\]
Adding the bound to the inequality in lemma \ref{lemma_almst_sure} yields the claim.
\end{proof}

\subsection{Proof of Lemma \ref{lemma_almst_sure_bootstrap}}
\begin{proof}
    We can first derive that with probability one:
    \begin{equation}\label{eq:bound_empirical_samples}
    \begin{aligned}
    \rho_{\hat{\mathbb{P}}_{N,N}}(\zeta)&=(1/\alpha)\log((1/N)\sum_{j=1}^N \exp(\alpha \zeta_{\tilde{i}_j})) \\ & \geq (1/\alpha)\log((1/N)\max_{1\leq j\leq N} \exp(\alpha \zeta_{\tilde{i}_j}))&\\
    &= \max_{1\leq j\leq N}\zeta_{\tilde{i}_j} - \log(N)/\alpha \\ &= \mu+\sigma M_{N,N} - \log(N)/\alpha,
    \end{aligned}
        \end{equation}
    where the first inequality results from the fact that $\sum_{i=1}^N  y_i \geq y_i \forall i$ for $y_i\geq 0 \forall i$, and the second equality from $\zeta_k = \mu+\sigma \hat{\xi}_k, \forall k$.
    Hence, we have almost surely that
    \begin{equation*}
    \begin{aligned}
        \rho_{\hat{\Prob}_N}(\zeta) &+ \phi(\rho_{\hat{\Prob}_N}(\zeta) - \rho_{\hat{\mathbb{P}}_{N,N}}(\zeta)|\{\hat{\xi}_i\}_{i=1}^N) \\ &= 2\rho_{\hat{\Prob}_N}(\zeta) - \phi(\rho_{\hat{\mathbb{P}}_{N,N}}(\zeta)|\{\hat{\xi}_i\}_{i=1}^N)\\
        &\leq 2\rho_{\hat{\Prob}_N}(\zeta) - \phi(\mu+\sigma M_{N,N} - \log(N)/\alpha|\{\hat{\xi}_i\}_{i=1}^N)\\
        &\leq 2(\mu+\sigma M_N) - \mu - \sigma  \phi(M_{N,N} |\{\hat{\xi}_i\}_{i=1}^N) + \log(N)/\alpha \\
        &= \mu + \sigma( 2 M_N -  \phi(M_{N,N} |\{\hat{\xi}_i\}_{i=1}^N)) + \log(N)/\alpha\\
        &= \mu + \sigma( 2 M_N -  \phi(M_{N,N} |\{\hat{\xi}_i\}_{i=1}^N)) + \log(N)/\alpha + \rho(\zeta)-\rho(\zeta)\\
        &= \sigma( 2 M_N -  \phi(M_{N,N} |\{\hat{\xi}_i\}_{i=1}^N)) + \log(N)/\alpha + \rho(\zeta)-(1/\alpha)\Lambda_\xi(\alpha\sigma)\\
     \end{aligned}
        \end{equation*}
        where the first inequality is obtained using \eqref{eq:bound_empirical_samples} and we exploit the affine equivariance of $\phi$ to obtain the second inequality, and finally, we substitute $\rho(\zeta) = \mu + (1/\alpha)\Lambda_\xi(\alpha\sigma)$.
\end{proof}
\subsection{Proof of Proposition \ref{thm:loocv_overestimate}}
\begin{proof}
We have almost surely that:
  \begin{align*}
\hat\rho_{\model{LOOCV}}
&= \frac{1}{N}\sum_{i=1}^N  \hat{t}_{-i} + \frac{1}{\alpha}\Big(\exp\big(\alpha(\zeta_i-\hat{t}_{-i})\big)-1\Big)\\
&\geq \frac{1}{N}\sum_{i=1}^N  \mu+\sigma M_{N}^{-i} - \frac{\log(N-1)}{\alpha} + \frac{1}{\alpha}\Big(\exp\big(\alpha(\mu+\sigma\hat{\xi}_i-\mu-\sigma M_{N}^{-i}\big)-1\Big)\\
&=\mu- \frac{\log(N-1)}{\alpha}-\frac{1}{\alpha}+\sigma\frac{1}{N}\sum_{i=1}^N M_{N}^{-i}  + \frac{1}{\alpha N}\sum_{i=1}^N\exp\big(\alpha(\sigma(\hat{\xi}_i- M_{N}^{-i})\big)\\
&\geq\mu- \frac{\log(N-1)}{\alpha}-\frac{1}{\alpha}+\sigma\frac{1}{N}\sum_{i=1}^N M_{N}^{-i}  + \frac{1}{\alpha N}\max_i\exp\big(\alpha(\sigma(\hat{\xi}_i- M_{N}^{-i})\big)\\
&=\mu- \frac{\log(N-1)}{\alpha}-\frac{1}{\alpha}+\sigma\frac{1}{N}\sum_{i=1}^N M_{N}^{-i}  + \frac{1}{\alpha N}\exp\big(\alpha\sigma \mathfrak{M}_N\big),   
  \end{align*}
  where $M_N^{-i}:=\max_{j\neq i} \hat{\xi}_j$, $\mathfrak{M}_N:=\max_i (\hat{\xi}_i- M_{N}^{-i})$, and the inequality comes from  Lemma~\ref{recur_lemma}.
  
  This implies that:
  \begin{align*}
\Expect[\hat\rho_{\model{LOOCV}}]
&\geq \mu- \frac{\log(N-1)}{\alpha}-\frac{1}{\alpha}+\sigma\Expect[\frac{1}{N}\sum_{i=1}^N M_{N}^{-i}]  + \Expect[\frac{1}{\alpha N}\exp\big(\alpha\sigma \mathfrak{M}_N\big)]\\
&\geq \mu- \frac{\log(N-1)}{\alpha}-\frac{1}{\alpha}+\sigma\Expect[\frac{1}{N}\sum_{i=1}^N M_{N}^{-i}]  + \frac{1}{\alpha N} \exp(\alpha\sigma \Expect[\mathfrak{M}_N])\\
&= \Omega(\exp(k\sigma))
  \end{align*}
  for $k:=\alpha\Expect[\mathfrak{M}_N] >0$, and where we applied Jensen's inequality.  
  
  Note that $\Expect[\mathfrak{M}_N]>0$ is due to $\mbox{Var}(\xi)=1$. Indeed, a strictly positive variance implies that there must be some $\bar{y}\in\mathbb{R}$ such that $\Prob(\xi\leq \bar{y})\in(0,\,1)$, otherwise $\xi$ is deterministic with a variance of zero. Furthermore, we can exploit the fact that $\mathfrak{M}_N=\hat{\xi}_{(1)}-\hat{\xi}_{(2)}\geq 0$ with $\hat{\xi}_{(i)}$ the $i$-th largest sample in the set and observe that:
  \[\Prob(\mathfrak{M}_N >0) = \Prob(\hat{\xi}_{(1)}>\hat{\xi}_{(2)}) \geq \Prob(\hat{\xi}_{(1)}>\bar{y} \,\&\, \hat{\xi}_{(2)}\leq \bar{y}) >0,\]
  since $\Prob(\hat{\xi}_{(1)}>\bar{y} \,\&\, \hat{\xi}_{(2)}\leq \bar{y})$ is the probability that $N-1$ success is observed among $N$ experiments with success probability of $\Prob(\xi\leq \bar{y})\in(0,\,1)$. We thus can conclude that
  \[\Expect[\mathfrak{M}_N] \geq \Expect[\mathfrak{M}_N|\mathfrak{M}_N >0] \Prob(\mathfrak{M}_N >0)>0.\]
\UScomments{The second to last step in earlier version used super-Gaussian tail bound. Below, I  rewrite the steps for subgaussian case}

Finalizing our proof, we obtain that
  \begin{align*}
      \Expect[\hat\rho_{\model{LOOCV}}] &= \Omega(\exp(k\sigma)) + \rho(\zeta)-\rho(\zeta)\\
      &= \Omega(\exp(k\sigma)) + \rho(\zeta)-\mu -\frac{1}{\alpha}\Lambda_{\Prob_\xi}(\alpha\sigma)\\
      &\geq \Omega(\exp(k\sigma)) + \rho(\zeta)-\mu - \frac{K(\alpha\sigma)^2}{\alpha}\\
      &= \Omega(\exp(k\sigma)) + \rho(\zeta),
      \end{align*}
 for some $K>0$, and where we make use of subgaussian tail bound, see Definition \ref{assum:subgaussian} and the equivalent moment condition for centered subgaussian random variables, $\Lambda_{\Prob_\xi}(t) = \frac{1}{2}Kt^2$ for some $K>0$ and all $t\ge 0$ \cite{Vershynin2025}.
 Thus, we have that \[|\rho(\zeta) - \Expect[\hat\rho_{\model{LOOCV}}]| =\Expect[\hat\rho_{\model{LOOCV}}] -\rho(\zeta) = \Omega(\exp(k\sigma)). \]
 
 \UScomments{Next, we show that $ \Expect[\hat\rho_{\model{LOOCV}}] = \mathcal O(\exp(c\sigma^2))$. We have almost surely that:
  \begin{align*}
\hat\rho_{\model{LOOCV}}
&= \frac{1}{N}\sum_{i=1}^N  \hat{t}_{-i} + \frac{1}{\alpha}\Big(\exp\big(\alpha(\zeta_i-\hat{t}_{-i})\big)-1\Big)\\
&\leq \frac{1}{N}\sum_{i=1}^N  \mu+\sigma M_{N}^{-i} + \frac{1}{\alpha}\exp\big(\alpha(\zeta_i-\hat{t}_{-i})\big)\\
&\leq \mu+\frac{1}{N}\sum_{i=1}^N  \sigma M_{N}^{-i}  + \frac{N-1}{\alpha N}\sum_{i=1}^N\exp\big(\alpha(\sigma(\hat{\xi}_i- M_{N}^{-i})\big)
  \end{align*}
  where the first inequality comes from  Lemma~\ref{recur_lemma} and $\frac{1}{\alpha}(e^x - 1) \le \frac{1}{\alpha} e^x$. The second inequality uses Lemma~\ref{recur_lemma}, that is, $\hat t_{-i} \ge \mu + \sigma M_N^{-i} - \frac{\log(N-1)}{\alpha}$ and $\zeta_i = \mu+\sigma \hat{\xi}_i$ to obtain $\zeta_i - \hat t_{-i}
\le \sigma(\hat\xi_i - M_N^{-i}) + \frac{\log(N-1)}{\alpha}.$

For each $i\in \{1, 2, \cdots, N\}$, we have that if $\hat\xi_i$ is not the global maximum, then $\hat\xi_i \le M_N^{-i}$, so $\hat\xi_i - M_N^{-i} \le 0$ and $e^{\alpha\sigma(\hat\xi_i - M_N^{-i})} \le 1$; otherwise $\hat\xi_{i} - M_N^{-i} = \hat\xi_{(1)} - \hat\xi_{(2)} =: \mathfrak M_N$. Thus, we have that 
\[ \sum_{i=1}^N e^{\alpha\sigma(\hat\xi_i - M_N^{-i})}
\le
(N-1)\cdot 1 + e^{\alpha\sigma \mathfrak M_N}
\le
N-1 + e^{\alpha\sigma \mathfrak M_N}.\]

So we have that:
\[ \mathbb E[\hat\rho_{\model{LOOCV}}]
\le
\mu
+ \sigma\,\mathbb E\Big[\frac{1}{N}\sum_{i=1}^N M_N^{-i}\Big]
+ \frac{(N-1)^2}{\alpha N}
+ \frac{N-1}{\alpha N}\,\mathbb E\big[e^{\alpha\sigma \mathfrak M_N}\big].\]
Furthermore, we have that $\mathfrak M_N = \hat\xi_{(1)} - \hat\xi_{(2)}
\le |\hat\xi_{(1)}| + |\hat\xi_{(2)}|
\le  2 \max_{1\le j\le N}|\hat\xi_j|$ which yields 
\begin{align*}
\mathbb E\big[e^{\alpha\sigma\mathfrak M_N}\big]
\le
 \mathbb E\big[e^{2\alpha\sigma \max_{1\le j\le N}|\hat\xi_j|}\big] \le  \sum_{j=1}^N\mathbb E\big[e^{2\alpha\sigma |\hat\xi_j|}\big]\leq 2N\exp(2K'\alpha^2\sigma^2)
\end{align*}
for some $K'>0$. 
Thus we have that:
\[\mathbb E[\hat\rho_{\model{LOOCV}}]
=\mathcal{O}(\exp(c\sigma^2))\]
} 
\end{proof}

}

\subsection{\revised{Proof of Proposition \ref{prop:identifiable}}}
\begin{proof}
The fact that $\hat{\zeta}_i\stackrel{F}{=} \hat{\zeta}'_i$ implies $\hat{\rho}_{\mathbb Q, n}\stackrel{F}{=} \hat{\rho}_{\mathbb Q', n}$ follows from construction. We therefore focus on the reverse. Given that $\frac{1}{\alpha}\log(y)$ is a strictly increasing bijection from $(0,\infty)$ to $\mathbb{R}$, it is clear that $\hat{\rho}_{\mathbb Q, n}\stackrel{F}{=} \hat{\rho}_{\mathbb Q', n}$ implies that $\bar{Z}_n\stackrel{F}{=}\bar{Z}_n'$, with $\bar{Z}_n:=(1/n)\sum_{i=1}^n\exp(\alpha \hat{\zeta}_i)>0$ and $\bar{Z}_n':=(1/n)\sum_{i=1}^n\exp(\alpha \hat{\zeta}_i')>0$.
Recalling that for any nonnegative random variable $Z$, its Laplace--Stieltjes transform is 
$L_Z(s):=\mathbb{E}[\exp(-sZ)]$ for $s\ge0$ \citep{Feller1971} and uniquely determines the distribution of $Z$  \cite[see][Theorem 1a, Chapter XIII]{Feller1971}, one can obtain that for all $s\geq 0$:
\begin{align*}
L_{\exp(\alpha\hat{\zeta}_1)}(s)^n &= (\Expect[\exp(-s \exp(\alpha\hat{\zeta}_1))])^n %
=\Expect[\exp(-s \sum_{i=1}^n \exp(\alpha\hat{\zeta}_i))]\\
&= L_{\bar{Z}_n}(sn)=L_{\bar{Z}_n'}(sn)=L_{\exp(\alpha\hat{\zeta}_1')}(s)^n,
\end{align*}
where we exploited the fact that $\{\hat{\zeta}_i\}_{i=1}^n$ are i.i.d. and omitted the repeated steps in terms of $\{\hat{\zeta}_i'\}_{i=1}^n$ to get to the final equality.
Taking the unique positive $n$-th root on the first and last expression yields $
L_{\exp(\alpha\hat{\zeta}_1)}(s) = L_{\exp(\alpha\hat{\zeta}_1')}(s)$ for all $s\ge0$. By the uniqueness theorem for Laplace-Stieltjes transforms \citep[][Theorem 1a, Chapter XIII]{Feller1971}, %
we must therefore have that $
\exp(\alpha\hat{\zeta}_1) \stackrel{F}{=} \exp(\alpha\hat{\zeta}_1')$. Again by the bijection property of $(1/\alpha)\log(y)$, we confirm that $\hat{\zeta}_1 \stackrel{F}{=} \hat{\zeta}_1'$.
\end{proof}

\section{Distributionally Robust Optimization}\label{sec:appenddro}
This appendix supports the DRO model proposed for entropic risk minimization in Section \ref{sec:dro}. 
\subsection{Main Theory}

For the optimal solution $\bm z^*$ of problem \eqref{eq:true:opt} to be well defined, we make the following standard assumptions:
\begin{assum}\label{assum:loss}
    We assume that: 
    \begin{enumerate}[label=(A.\arabic*), ref=(A.\arabic*)]
        \item \label{assum:compact_convex} $\mathcal{Z}$ is a compact and convex set.
        \item \label{assum:convex_loss} $\ell(\vecz, \vecxi)$ is convex in $\z$ for almost every $\vecxi \in \Xi$.
\item \label{ass:lipchitz_loss_xi} $\ell(\vecz, \vecxi)$ is $L$-Lipschitz continuous in $\vecxi$ for all $\z\in \Z$.
        \item \label{ass:lipchitz_loss_z} $\ell(\vecz, \vecxi)$ is $L(\vecxi)$-Lipschitz continuous in $\z$ for all $\vecxi\in \Xi$ with $\Expect_\Prob[L(\vecxi)^q]<\infty$ for all $q\geq1$.
        \item \label{assum:tail_bound_loss_z} $| \ell( \z, \vecxi)| \leq \bar{L}(\vecxi)$ for all $\z \in \mathcal{Z}$ almost surely, with the tail of $\bar{L}(\vecxi)$ exponentially bounded:
        \[
        \Prob\big(\bar{L}(\vecxi) > a\big) \leq G \exp(-a\alpha C) \quad \text{for all } a \geq 0,
        \]
        for some constants $G > 0$ and $C > 2$.
    \end{enumerate}
\end{assum}

As in Assumption \ref{assum:fit_tail_bound}, the  last assumption ensures that 
the mean and variance of the loss $\exp(\alpha \ell(\vecz, \vecxi))$ are finite for each $\z\in\mathcal Z$. We make the technical assumptions \ref{ass:lipchitz_loss_z} and \ref{assum:tail_bound_loss_z} to ensure the convergence of SAA solution to the true risk. %

\begin{proposition} \label{lemma:convrate:saa}
  Suppose that Assumption \ref{assum:loss} holds. 
  Then, for any $\gamma >0$, 
  there exists a constant $A>0$ 
  for which, \begin{align}\label{eq:conv:utility}
\Prob &\left( \left| \rho_{\text{\model{SAA}}}^* - \rho^* \right| \geq  \frac{A}{\sqrt{N\gamma}\alpha \exp(\alpha \rho^*)} \right) \leq  \gamma,
\end{align}
as long as $N$ is sufficiently large. Consequently, $\rho_{\text{\model{SAA}}}\rightarrow \rho^*$ in probability.
\end{proposition}

To prove Proposition \ref{lemma:convrate:saa} which details are included in Appendix \ref{proof:lemma:saa}, we show that $\exp(\alpha \ell(\vecz, \vecxi))$ is $\kappa(\vecxi)$-Lipschitz continuous in $\z$ for all $\vecxi\in \Xi$ with $\kappa(\vecxi)=\alpha L(\vecxi)\exp(\alpha \bar{L}(\vecxi))$.  
Then, we apply the uniform convergence results for heavy tailed distributions in \citet[][Theorem 3.2]{Jiang_Chen_2020} to show the uniform convergence of empirical utility to true utility for all $\z \in \Z$. This theorem only requires that the second moment of $\kappa(\vecxi)$ is finite instead of the usual light-tailed assumptions that don't hold for $\kappa(\vecxi)$. 
Subsequently, we use the properties of logarithm function in the neighborhood of zero to  show the uniform convergence of empirical risk to optimal risk.  This convergence result underpins the prevalent use of the SAA approach for entropic risk minimization problems \citep{Chen_He_2024, Chen_Ramachandra_2024}.
For a detailed examination of the SAA methodology within stochastic programming, see \cite{Shapiro_Dentcheva_2009}.

In the literature, different ambiguity sets have been considered with the Kullback Leibler (KL)-divergence \citep{Hu_Hong_2012} and Wasserstein ambiguity sets \citep{Mohajerin_Esfahani_Kuhn_2018} being the most commonly used \citep{Rahimian_Mehrotra_2022}. For KL-divergence-based ambiguity sets, the standard formulation \citep{Hu_Hong_2012} restricts the worst-case distribution to be absolutely continuous with respect to the empirical distribution, limiting its support to the same points as the empirical distribution. This poses a problem because it prevents the representation of worst-case scenarios that typically occur in the tails of the loss distribution. An alternative formulation does allow worst-case distributions with support beyond the empirical distribution, enabling a richer ambiguity set \citep{Chan_Van_Parys_2024}. However, with an unbounded loss function, this flexibility allows nature to exploit the  tail, resulting in infinite loss for the decision maker. Consequently, KL-divergence-based ambiguity sets are ill-suited to our problem, which involves unbounded support and heavy-tailed losses. This also holds for  type-$p$  Wasserstein ambiguity set with $p<\infty$  due to the following result.
\begin{proposition}\label{prop:infinite_cost}
    The $p$-Wasserstein DRO with entropic risk measure results in unbounded loss if $p<\infty$. 
\end{proposition}
Next, we show that type-$\infty$ Wasserstein ambiguity set is a suitable choice  for problem~\eqref{eq:DRO}. %
\cite{Bertsimas_Shtern_2023} have shown that  problem \eqref{eq:DRO} can be equivalently written as:
 \begin{equation}\label{eq:drotypeinf}
    \min_{\z \in \Z}  \sup_{\Q \in \tilde{\mathcal{B}}_{\infty}(\epsilon)}\frac{1}{\alpha}\log\left(\mathbb{E}_{\Q}[\exp(\alpha \ell(\z,\vecxi))]\right)= \min_{\z \in \Z}\frac{1}{\alpha}\log\left(\frac{1}{N}\sum_{i \in [N]} \sup_{\vecxi:\|\vecxi-\hat{\vecxi}_i\| \leq \epsilon} \exp(\alpha \ell(\vecz, \vecxi))  \right),
\end{equation}
 where the ambiguity set $\tilde{\mathcal{B}}_\infty(\epsilon) $  is defined as: 
 \begin{equation*}
\tilde{\mathcal{B}}_\infty(\epsilon) :=\left\{ \Q \in \mathcal{M}(\Xi) \rvert \exists\,\, \vecxi_i \in \Xi,  \|\vecxi_i-\hat{\vecxi}_i\| \leq \epsilon,\, \forall i\in [N],\, \Q(\vecxi) = \frac{1}{N}\sum_{i=1}^N  \delta_{\vecxi_i}(\vecxi)  \right\}.
  \end{equation*}
  
For piecewise concave loss functions, the following theorem  gives an equivalent reformulation of the DRO problem as the finite dimensional convex optimization problem using Fenchel duality \citep{Ben-Tal_denHertog_2015}.
 \begin{theorem}\label{thm:reg:cone}
Let $\ell(\z,\vecxi) = \max_{j\in[m]} \ell_j(\z,\vecxi)$ where $\ell_j(\z,\vecxi)$ is a concave function in $\vecxi$ for each $j\in [m]$ and $\z \in \Z$. Then, the     DRO problem \eqref{eq:drotypeinf} with type-$\infty$ Wasserstein ambiguity set is equivalent to 
\begin{equation}\label{eq:model:wassinfty}
    \begin{array}{lll}
       \min  &    \frac{1}{\alpha}\log \left(\frac{1}{N}\sum_{i=1}^{N}\exp(\alpha t_i)\right)\\ 
     \text{s.t.} & \bm t\in\mathbb{R}^N,\,\bm z\in\mathcal{Z},\,\bm\varphi_{ij}\in\mathbb{R}^d & \forall i\in [N],\,j\in [m]\\
     &   \V{\varphi}_{ij}^\top \hat{\vecxi}_i - \ell_{j*}(\z,\V{\varphi}_{ij})+ \epsilon \| \V{\varphi}_{ij} \|_{*}\leq t_i &\forall i\in [N],\,j\in [m],
    \end{array}
\end{equation}
where  $\ell_{j*}(\z,\V{\varphi}_{ij}) := \inf_{\vecxi} \{\V{\varphi}_{ij}^\top \vecxi-\ell_j(\z, \vecxi)\}$ is the partial concave conjugate of $\ell_j(\z, \vecxi)$, and $\|\cdot\|_*$ denotes the dual norm.
\end{theorem}
The DRO reformulation and reformulation technique simplify significantly when the loss function is either piecewise linear or linear, rather than piecewise convex in $\bm z$ and concave in $\vecxi$. The following corollary presents these special cases. 

\begin{corollary}\label{Corollary:DRO}
    Let $\ell(\z, \vecxi) := \max_{k\in \mathcal{K}}\left\{a_k(\z^\top \vecxi)+b_k\right\}$ be a piecewise linear function for given parameters $a_k$ and $b_k$.  Then, the DRO problem \eqref{eq:drotypeinf} with type-$\infty$ Wasserstein ambiguity set is equivalent to
    \begin{equation}\label{eq:DRO_piece}
    \begin{aligned}
\rho_{\model{DRO}}:=\quad
\min   \quad &  \frac{1}{\alpha}\log \left(\frac{1}{N}\sum_{i=1}^{N}t_i\right)\\
\text{s.t.}  \quad  & \bm t\in\mathbb{R}^N,\,\z\in \Z\\
\quad & \exp\left(\alpha \left(a_k(\z^\top \hat{\vecxi}_i) +b_k\right) +\epsilon \| a_k\z \|_{*}\right)\leq t_i  & \forall i, k\in \mathcal{K}
\end{aligned}
\end{equation}
which for a linear loss $\ell(\z, \vecxi)=\z^\top \vecxi$ can be further simplified to 
\begin{equation}\label{eq:DRO_lin}
    \min_{\z\in \Z}   \frac{1}{\alpha}\log \left(\frac{1}{N}\sum_{i=1}^{N}\exp(\alpha \z^\top \hat{\vecxi}_i)\right)+\epsilon \norm{\z}_{*}.
\end{equation}
\end{corollary}

    \begin{proof}
        Here, we provide an alternative proof for the piecewise linear loss functions  $\ell(\z, \vecxi) := \max_{k\in \mathcal{K}}\left\{a_k(\z^\top \vecxi)+b_k\right\}$ that does not rely on Fenchel duality \citep{Ben-Tal_denHertog_2015}. 
The supremum of $\exp(\max_{k\in \mathcal{K}}\left\{\alpha\left(a_k(\z^\top \vecxi)+b_k\right)\right\})$ over the set $\{\vecxi:\|\vecxi-\hat{\vecxi}_i\| \leq \epsilon\}$ is given by:
       \begin{align*}
          \sup_{\vecxi:\|\vecxi-\hat{\vecxi}_i\| \leq \epsilon} &\exp\left(\max_{k\in \mathcal{K}}\left\{\alpha\left(a_k(\z^\top \vecxi)+b_k\right)\right\}\right) \\&=  \sup_{\vecxi:\|\vecxi-\hat{\vecxi}_i\| \leq \epsilon} \max_{k\in \mathcal{K}} \left\{\exp\left(\alpha\left(a_k(\z^\top \vecxi)+b_k\right)\right)\right\}\\
          &=\max_{k\in \mathcal{K}} \left\{ \exp\left(\alpha \left(a_k\z^\top\hat{\vecxi}_i +b_k\right)+\alpha \sup_{\vecxi:\|\vecxi\| \leq \epsilon}\left(a_k\z^\top \vecxi\right) \right)\right\}\\&=
          \max_{k\in \mathcal{K}} \left\{ \exp\left(\alpha \left(a_k(\z^\top \hat{\vecxi}_i) +b_k\right) +\alpha \epsilon \| a_k\z \|_{*}\right)\right\},
       \end{align*}
       where the first equality follows from interchanging $\exp$ and $\max$ operations and then using the fact that $\exp(\cdot)$ is increasing in its arguments,  last equality follows from  the definition of the dual norm and $\| \cdot\|_{*}$ denotes the dual norm of $\| \cdot\|$.
       On combining with the objective function in \eqref{eq:drotypeinf}, we obtain:
       \begin{align*}
          &\frac{1}{\alpha}\log\left(\frac{1}{N}\sum_{i =1}^N 
          \max_{k\in \mathcal{K}} \left\{ \exp\left(\alpha \left(a_k(\z^\top \hat{\vecxi}_i) +b_k\right) +\alpha \epsilon \| a_k\z \|_{*}\right)\right\} \right). 
       \end{align*}
       So, with a piecewise linear loss function, problem \eqref{eq:drotypeinf} is equivalent to the  convex optimization problem in \eqref{eq:DRO_piece}.
Further, specializing the result to a linear loss function $\ell(\vecz, \vecxi)=\z^\top \vecxi$, problem \eqref{eq:drotypeinf} is equivalent to the regularized risk-averse SAA problem in \eqref{eq:DRO_lin}.
    \end{proof}

It is interesting to see that for the linear case, the DRO problem  reduces to the regularized SAA problem where the %
regularization penalty   is controlled by the size $\epsilon$ of the ambiguity set  and that the type of penalty depends on the dual of the norm used to define the ambiguity set. To complement these results, we also  %
provide reformulations of the distributionally robust newsvendor  and  regression problems as exponential cone programs.

Our next theorem formalizes that as the sample size $N$ tends to infinity, the DRO value $\rho_{\model{DRO}}^*$ with a properly chosen radius will converge to the true optimal risk $\rho^*$ in probability. 
The proof follows from showing that for  Lipschitz continuous (in $\z$) loss functions, $\rho_\model{SAA}^*\leq \rho_\model{DRO}^*\leq \rho_\model{SAA}^*+L\epsilon$ and using Proposition \ref{lemma:convrate:saa} that establishes that $\rho_\model{SAA}^*$ converges to $\rho^*$ at the rate $\mathcal{O}(1/\sqrt{N})$ for  locally Lipschitz continuous (in $\vecxi$) loss functions. Finally, choosing the radius to decay at the rate $\mathcal{O}(1/\sqrt{N})$ preserves the rate of convergence of SAA.
  \begin{theorem}\label{thm:converge:dro}
Suppose that Assumption \ref{assum:loss} holds. %
Then for any $\gamma>0$ and  %
using $\mathcal{B}_\infty(c/\sqrt{N})$, for some $c>0$, then there exists  a constant \(A > 0\) such that %
\[ \Prob\left(\lvert \rho_\model{DRO}^* - \rho^* \rvert \geq \frac{A}{\sqrt{N\gamma}\alpha \exp(\alpha \rho^*)}+\frac{c}{\sqrt{N}} \right) \leq \gamma, \]
as long as $N$ is sufficiently large. Consequently, $\rho_\model{DRO}^*\rightarrow \rho^*$ in probability.
\end{theorem}
While Theorem~\ref{thm:converge:dro} provides a rate for $\epsilon$ that ensures convergence of the DRO risk to the true optimal risk in probability, the values of the constants depend on the unknown underlying probability distribution.  In practice, $\epsilon$ needs to be estimated using K-Fold CV described in Algorithm \ref{alg:KFold}. However, %
estimating true risk from finite data is challenging. To address this, we %
employ the bias-aware estimation procedure  described in Section~\ref{sec:bias_correct}.

\begin{algorithm}[t]
\caption{K-fold cross validation \label{alg:KFold}}
\begin{algorithmic}[1]
\Function{K-foldCV}{$K, \Dist_N, \epsilon$}
    \State $\mathcal{S} \gets \emptyset$
    \For{$k \gets 1$ \textbf{to} $K$}
        \State $\Dist_{-k} \gets \Dist_N \setminus \Dist_k$ \Comment{Training data (all samples except those in fold $k$)}
       \State $\hat{\Prob}^K_{-k} \gets$ empirical distribution of scenarios in $\Dist_{-k}$
        \State Solve  problem \eqref{eq:model:wassinfty} with distribution $\hat{\Prob}^K_{-k}$ and radius $\epsilon$ to get $\z^*(\hat{\Prob}^K_{-k} , \epsilon)$  
        \State $\mathcal{S} \gets \mathcal{S} \cup \{\ell( \z^*(\hat{\Prob}^K_{-k} , \epsilon), \vecxi) \mid \vecxi \in \Dist_k\}$ 
    \EndFor
    \State \Return $\mathcal{S}, \rho_{k\sim U(K)}(\rho_{\vecxi\sim \hat{\Prob}_{k}^K}(\ell( \z^*(\hat{\Prob}^K_{-k} , \epsilon), \vecxi)))$
\EndFunction
\end{algorithmic}
\end{algorithm}

\subsection{Detailed proofs}
\subsubsection{Proof of Proposition~\ref{lemma:convrate:saa}} \label{proof:lemma:saa}
\begin{proof} The proof relies on the following lemma.

\begin{lemma}\label{lemma:log}
The following inequalities follow from the properties of the logarithm function:
\begin{equation*}
\begin{array}{ll}
    \log(1+\epsilon)\leq \epsilon &\text{ if } \epsilon\geq0, \\
        \log(1-\epsilon)\geq -\epsilon/(1-1/e) & \text{ if } \epsilon\in [0, 1-1/e]. 
\end{array}
\end{equation*}
\end{lemma}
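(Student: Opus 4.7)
Both inequalities follow from the concavity of the logarithm and can be established by comparing $\log$ with an affine function agreeing with it at suitable points.

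For the first inequality, the plan is to use the tangent-line bound. Since $\log$ is concave, the function $x \mapsto \log(x)$ lies below any of its tangent lines. Taking the tangent at $x = 1$, which has slope $1$ and passes through $(1,0)$, yields $\log(x) \leq x - 1$ for all $x > 0$. Substituting $x = 1 + \epsilon$ with $\epsilon \geq 0$ gives the desired $\log(1+\epsilon) \leq \epsilon$. Alternatively, one can define $f(\epsilon) := \epsilon - \log(1+\epsilon)$, note that $f(0) = 0$, and check that $f'(\epsilon) = \epsilon/(1+\epsilon) \geq 0$ on $\epsilon \geq 0$, so $f(\epsilon) \geq 0$.

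For the second inequality, the plan is to exploit the concavity of the map $\epsilon \mapsto \log(1-\epsilon)$ on $[0, 1-1/e]$ together with the chord inequality. Concavity (with second derivative $-1/(1-\epsilon)^2 < 0$) implies that on any interval the function lies above its chord. At $\epsilon = 0$ we have $\log(1-0) = 0$, and at $\epsilon = 1-1/e$ we have $\log(1 - (1-1/e)) = \log(1/e) = -1$. The chord joining $(0, 0)$ to $(1-1/e, -1)$ is exactly the line $\epsilon \mapsto -\epsilon/(1-1/e)$. Therefore, for every $\epsilon \in [0, 1-1/e]$, concavity yields $\log(1-\epsilon) \geq -\epsilon/(1-1/e)$.

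There is no serious obstacle here: both inequalities are one-line consequences of concavity once the right reference line is identified. The only thing to be careful about is the domain restriction $\epsilon \in [0, 1-1/e]$ in the second inequality, which is precisely the interval on which the chord endpoint sits inside the domain of $\log$ and for which the bound can be written with the constant $1/(1-1/e)$; outside this interval, concavity would still give a chord bound, but with a different slope.
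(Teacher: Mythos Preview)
Your proof is correct and follows essentially the same approach as the paper: the first inequality is the tangent-line bound from concavity of $\log$ at $x=1$, and the second is the chord (secant) bound from concavity of $\epsilon\mapsto\log(1-\epsilon)$ between the endpoints $(0,0)$ and $(1-1/e,-1)$. The paper presents both steps a bit more tersely but the underlying arguments are identical.
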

\begin{proof}
    The logarithm function is concave, thus it follows that
    \begin{equation*}
    \log(1+\epsilon) \leq \log(1)+ \epsilon \frac{d\log(x)}{dx}\bigg|_{x=1} = \epsilon.
    \end{equation*}
    Moreover, by concavity of $\log(1-\epsilon)$ for $\epsilon\in [0, 1-1/e]$, we have:
    \begin{equation*}
    \log(1-\epsilon) \geq  \log(1) + \epsilon \frac{\log(1/e)-\log(1)}{1-1/e}= -\frac{\epsilon}{1-1/e}.
    \end{equation*}
Figure~\ref{fig:combinedlog} gives a pictorial representation of the result.
\end{proof}

  We will show that  $\rho_\model{SAA}^* $ converges to $\rho^*$ by demonstrating that $\rho_{\hat{\mathbb P}_N}(\ell(\z, \vecxi))$ converges to $\rho_{{\mathbb P}}(\ell(\z, \vecxi))$ for all $\z\in \Z$, i.e., a uniform rate of convergence. The proof is conducted in two steps. In the first step, we apply \citet[][Theorem 3.2]{Jiang_Chen_2020} to show that the empirical exponential utility, $\Expect_{\hat{\mathbb{P}}_N}[\exp(\alpha \ell(\z, \vecxi))]$, converges uniformly to the true exponential utility, $\Expect_{\mathbb{P}}[\exp(\alpha \ell(\z, \vecxi))]$. In the second step, we leverage the properties of the logarithm function to extend this uniform convergence from the empirical exponential utility to the entropic risk measure. 

To apply \citet[][Theorem 3.2]{Jiang_Chen_2020}, we need to verify several properties for the exponential utility minimization problem. It is easy to see that  $(i)$  $\mathcal{Z}$ is compact by Assumption \ref{assum:loss}\ref{assum:compact_convex}; $(ii)$ the exponential utility $\Expect_{\mathbb{P}}[\exp(\alpha \ell(\z, \vecxi))]$ is continuous in $\bm z$ and  $(iii)$ the empirical distribution $\hat{\Prob}_N$ is constructed using $N$  i.i.d samples from $\Prob$; $(iv)$ Assumption \ref{assum:loss}\ref{assum:tail_bound_loss_z} implies that $\ell(\z,\vecxi)$ satisfies Assumption \ref{assum:tail_bound2} for all $\z\in\mathcal{Z}$ so that Lemma \ref{thm:boundedExpVar2} confirms that $\Expect[\left(\exp(\alpha \ell(\z, \vecxi)\right)^2]<\infty$ for all $\z\in \Z$.
Lastly, $(v)$ the following inequalities verify that $\exp(\alpha \ell(\z, \vecxi))$ is H-calm from above  with Lipschitz modulus $\kappa(\vecxi):= \alpha L(\vecxi)\exp(\alpha \bar{L}(\vecxi))$ and order one: 
\begin{align*} 
\exp(\alpha \ell(\z_2, \vecxi)) - \exp(\alpha \ell(\z_1, \vecxi))   & \ \leq \left(\left.\partial_\z \exp(\alpha \ell(\z, \vecxi))\right|_{\z = \z_2}\right)^\top\left(\z_2-\z_1\right) \\ 
& \ =   \alpha\exp(\alpha \ell(\z_2, \vecxi)) \left( \partial_{\z}\ell(\z_2, \vecxi)\right)^\top\left(\z_2-\z_1\right) \\
& \ \leq \ \alpha\exp(\alpha \ell(\z_2, \vecxi)) \| \partial_\z\ell(\z_2, \vecxi)\|_* \| \z_2-\z_1\| \\
& \ \leq \ \alpha \exp(\alpha \bar{L}(\vecxi)) L(\vecxi)\lVert  \z_2 - \z_1 \rVert. 
\end{align*}
The first inequality follows from the convexity of the exponential utility, where $\partial_\z $ denotes the subgradient operator with respect to $\z$, and the  equality follows by applying the chain rule. The second inequality follows from the definition of the dual norm. Since 
by Assumption~\ref{assum:loss}, $\ell(\z, \vecxi)$ is locally Lipschitz continuous with Lipschitz constant $L(\vecxi)$, and  the dual norm of the subgradient of $\ell(\z, \vecxi)$ is bounded by the Lipschitz constant $L(\vecxi)$ \citep[Lemma 2.6]{Shalev-Shwartz_2012}, we obtain the final inequality.  Finally, we can confirm that the second moment of $\kappa(\vecxi)$ is bounded. Namely, letting $0<\varsigma<(C/2)-1$, based on Hölder's inequality, we have that:
\begin{align*}
\Expect[\kappa(\vecxi)^2]&=\alpha^2 \Expect[L(\vecxi)^2\exp(2\alpha\bar{L}(\vecxi))]\leq \alpha^2 \Expect[L(\vecxi)^{2(1+1/\varsigma)}]^{\varsigma/(1+\varsigma)}\Expect[\exp(2(1+\varsigma)\alpha\bar{L}(\vecxi))]^{1/(1+\varsigma)}
\end{align*}
We can further show using Assumption \ref{assum:loss}\ref{ass:lipchitz_loss_z} that :
\begin{align*}
\Expect[\exp(2(1+\varsigma)\alpha\bar{L}(\vecxi))]&\leq  \int_0^{\infty} \Prob(\exp(2(1+\varsigma)\alpha\bar{L}(\vecxi))> x) dx\\
        & = \int_0^{\infty} \Prob(\exp(2(1+\varsigma)\alpha\bar{L}(\vecxi))> \exp(2(1+\varsigma)\alpha y)) 2(1+\varsigma)\alpha \exp(2(1+\varsigma)\alpha y) dy\\
        & = \int_0^{\infty} \Prob(\bar{L}(\vecxi)> y) 2(1+\varsigma)\alpha \exp(2(1+\varsigma)\alpha y) dy\\
        &\leq 2(1+\varsigma)\alpha \int_0^{\infty} G \exp(-(C-2(1+\varsigma))\alpha y) dy=\frac{2G(1+\varsigma)}{C-2(1+\varsigma)}<\infty,    
\end{align*}
Moreover, $\Expect[L(\vecxi)^{2(1+1/\varsigma)}]$ is finite   based on Assumption \ref{assum:loss}\ref{assum:tail_bound_loss_z}, which allows us to conclude that $\Expect[\kappa(\vecxi)^2]<\infty$.

We now have all the components to apply \citet[][Theorem 3.2]{Jiang_Chen_2020} which states that for any $\gamma>0$, there exists an $A>0$, independent of $N$, such that: 
\begin{equation}\label{eq:conv:utility2}
\Prob \left( \sup_{\z \in \mathcal{Z}} \left| \Expect_{\hat{\Prob}_N}[\exp(\alpha \ell(\z, \vecxi))] - \Expect_{\Prob}[\exp(\alpha \ell(\z, \vecxi))] \right| \geq A/\sqrt{N\gamma}\right) \leq \gamma,
\end{equation}
for sufficiently large $N$, thus concluding that the empirical exponential utility converges uniformly to the true exponential utility, with a rate of convergence of $\mathcal O(1/\sqrt{N})$.

Next, we show that empirical risk converges  uniformly to the true risk by exploiting the properties of the $\log$ function, namely that $\log(1+\epsilon)\leq \epsilon$ for all $\epsilon\geq 0$, and $\log(1-\epsilon)\geq -\epsilon/(1-1/e)) $ for all $\epsilon \in [0, 1-1/e]$. This result is summarized in Lemma~\ref{lemma:log} and can be pictorially verified in Figure~\ref{fig:combinedlog}.

    \begin{figure}[htbp]
    \centering
    \begin{subfigure}[htbp]{0.48\linewidth}
        \centering
        \includegraphics[width=\linewidth]{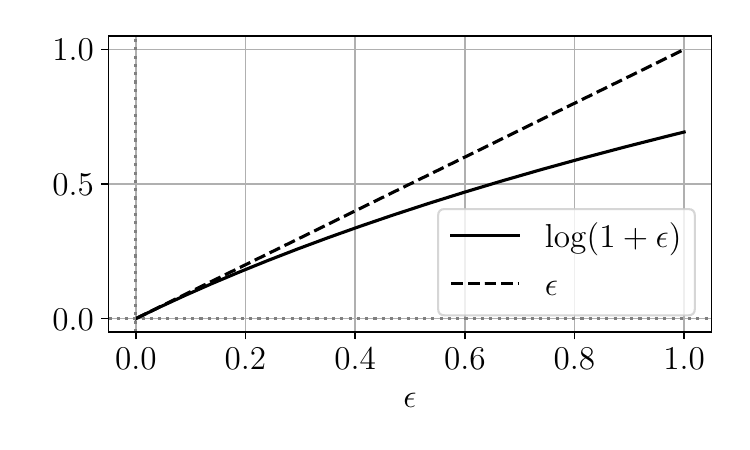}
        \caption{$\log(1+\epsilon)\leq \epsilon,\; \forall \epsilon\geq 0$.}
        \label{fig:loglb}
    \end{subfigure}
    \hfill
    \begin{subfigure}[htbp]{0.48\linewidth}
        \centering
        \includegraphics[width=\linewidth]{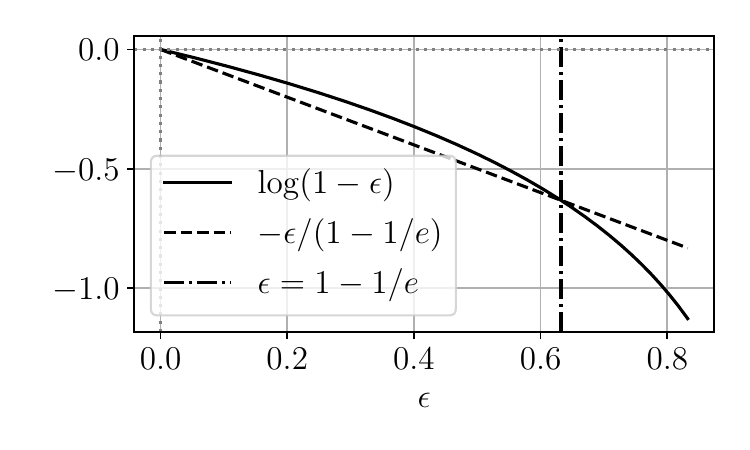}
        \caption{$\log(1-\epsilon)\geq -\epsilon/(1-1/e) \;\forall \epsilon\in [0, 1-1/e]$}
        \label{fig:logub}
    \end{subfigure}
    \caption{Plot of the inequalities that the $\log$ function satisfies around $0$.}
    \label{fig:combinedlog}
\end{figure}

In the subsequent steps of the proof, we consider $N$ to be large enough such that $\frac{A}{\sqrt{N\gamma} \exp(\alpha \rho^*)}  \leq 1-1/e$, with $\rho^*:=\min_{\z\in\Z} (1/\alpha)\log(\Expect_{\Prob}[\exp(\alpha \ell(\z, \vecxi))])$ and focus on scenarios for which  the  absolute difference between the empirical utility and true utility in \eqref{eq:conv:utility2} is upper bounded by $A/(\sqrt{N\gamma})$ for all $\z\in \mathcal{Z}$. Specifically, we will show that it implies  that  the  absolute difference between the empirical risk and true risk is upper bounded by $A/(\alpha \sqrt{N\gamma}\exp(\alpha \rho^*))$ for all $\z\in \mathcal{Z}$.  To simplify notation, we further denote by $\hat{u}_N(\z):= \Expect_{\hat{\Prob}_N}(\exp(\alpha \ell(\z, \vecxi))) $, $u(\z):=\Expect_{\Prob}[\exp(\alpha \ell(\z, \vecxi))]$, $u^*:=\min_{\z\in\Z} \Expect_{\Prob}(\exp(\alpha \ell(\z, \vecxi)))=\exp(\alpha\rho^*)$. For any $\gamma>0$,  from \eqref{eq:conv:utility2}, we have: 
\begin{equation*}
\begin{array}{lll}
 &\lvert \hat{u}_{N}(\z) - u(\z)\rvert <  \frac{A}{\sqrt{N\gamma}}\,& \forall \z\in \Z\\
    \implies& \frac{\lvert\hat{u}_N(\z)-u(\z)\rvert}{u^*}  <  \frac{A}{\sqrt{N\gamma}\exp(\alpha \rho^*)} &\forall \z\in \Z\\
           \implies&  \frac{\lvert\hat{u}_N(\z)-u(\z)\rvert}{u(\z)} <  \frac{A}{\sqrt{N\gamma}\exp(\alpha \rho^*)}& \forall \z\in \Z\\
        \implies& \frac{\hat{u}_N(\z)}{u(\z)}  <  1+\frac{A}{\sqrt{N\gamma}\exp(\alpha \rho^*)} &\forall \z\in \Z\\
   \implies &\frac{1}{\alpha}\log\left(\frac{\hat{u}_N(\z)}{u(\z)}\right)  <  \frac{1}{\alpha}\log\left(1+\frac{A}{\sqrt{N\gamma}\exp(\alpha \rho^*)}\right) &\forall \z\in \Z,
  \end{array}
\end{equation*}
where in the first implication we divided both sides by $u^*$, while the second implication follows since   $u^* \leq \min_{\z\in \Z} u(\z)$. The last implication follows by  applying the logarithm on both sides of the inequality and dividing by $\alpha$.  The last expression can be further simplified  since by the properties of the logarithm function, we have $\log(1+\epsilon)\leq \epsilon$ for all $\epsilon \geq 0$, hence:
\begin{equation} \label{eq:right}
\frac{1}{\alpha}\log\left(\hat{u}_N(\z)\right) -\frac{1}{\alpha}\log\left(u(\z)\right)  <  \frac{A}{\alpha\sqrt{N\gamma}\exp(\alpha \rho^*)}<  \frac{A}{(1-1/e)\alpha\sqrt{N\gamma}\exp(\alpha \rho^*)},\; \forall \z \in \Z.
\end{equation}
 Similarly, for any $\gamma>0$,  from \eqref{eq:conv:utility2} we have%
 \begin{equation*}
\begin{array}{lll}
 &\lvert\hat{u}_N(\z) - u(\z)\rvert <  \frac{A}{\sqrt{N\gamma}}\,& \forall \z\in \Z\\
           \implies&  \frac{\lvert\hat{u}_N(\z)-u(\z)\rvert}{u(\z)} <  \frac{A}{\sqrt{N\gamma}\exp(\alpha \rho^*)}& \forall \z\in \Z\\
        \implies&  -\frac{A}{\sqrt{N\gamma}\exp(\alpha \rho^*)}  < \frac{\hat{u}_N(\z)}{u(\z)}-1,& \forall \z\in \Z  \\
  \implies & \frac{1}{\alpha}\log\left(1-\frac{A}{\sqrt{N\gamma}\exp(\alpha \rho^*)} \right)  <\frac{1}{\alpha}\log\left(\frac{\hat{u}_N(\z)}{u(\z)}\right)& \forall \z\in \Z,\\

  \end{array}
\end{equation*}
where the first and last implication follow  as before. %
The last expression can be further simplified given that $\frac{A}{\sqrt{N\gamma} \exp(\alpha \rho^*)} \leq 1-1/e$ for sufficiently large $N$, which allows us to apply the property of the logarithm function,  $\log(1-\epsilon)\geq -\epsilon/(1-1/e))$ for all $ \epsilon\in [0, 1-1/e]$, to obtain:
\begin{equation}\label{eq:left}
 -\frac{A}{(1-1/e)\alpha\sqrt{N\gamma}\exp(\alpha \rho^*)} <  \frac{1}{\alpha}\log\left(\hat{u}_N(\z)\right) -\frac{1}{\alpha}\log\left(u(\z)\right)\; \forall \z \in \Z.
\end{equation}
We now combine \eqref{eq:right} and \eqref{eq:left} as follows:%
\begin{equation}
    \left| \frac{1}{\alpha}\log\left(\hat{u}_N(\z)\right) -\frac{1}{\alpha}\log\left(u(\z)\right) \right| < \frac{A}{(1-1/e)\alpha\sqrt{N\gamma}\exp(\alpha \rho^*)}\; \forall \z \in \Z.
\end{equation}
From \eqref{eq:conv:utility2}, we obtain:
\begin{align*}
     \Prob \left( \sup_{\z \in \mathcal{Z}} \left| \rho_{\hat{\Prob}_N}(\z)- \rho_{\Prob}(\z) \right| <  \frac{A}{(1-1/e)\alpha\sqrt{N\gamma}\exp(\alpha \rho^*)} \right)  
     &\geq \Prob \left( \sup_{\z \in \mathcal{Z}} \left| \hat{u}_N(\z) - u(\z) \right| < \frac{A}{\sqrt{N\gamma}}\right)\\
     &>1-\gamma,
\end{align*}
which implies that:
\begin{equation*}
\Prob \left( \left| \min_{\z \in \mathcal{Z}}  \rho_{\hat{\Prob}_N}(\z)- \min_{\z \in \mathcal{Z}}\rho_{\Prob}(\z) \right| <  \frac{A}{(1-1/e)\alpha\sqrt{N\gamma}\exp(\alpha \rho^*)}\right) >1-\gamma.
\end{equation*}
The latter can be rewritten using our notation as: 
\begin{equation*}
\Prob \left( \left| \rho_{\model{SAA}}^* - \rho^* \right| \geq  B/(\sqrt{N\gamma}\alpha \exp(\alpha \rho^*)) \right) \leq  \gamma,
\end{equation*}
with $B:=A/(1-1/e)>0$.

To prove the convergence in probability, we simply consider any $\gamma>0$ and $\Delta>0$ and confirm that:
\[\Prob \left( \left| \rho_{\model{SAA}}^* - \rho^* \right| \geq \Delta\right)\leq \gamma,\]
as long as $N$ is large enough for $\frac{A}{\sqrt{N\gamma} \exp(\alpha \rho^*)}  \leq (1-1/e)\min(\Delta,1)$. Indeed, for such $N$, we necessarily have that:
\[\Prob \left( \left| \rho_{\model{SAA}}^* - \rho^* \right| \geq \Delta\right)\leq \Prob \left( \left| \rho_{\model{SAA}}^* - \rho^* \right| \geq \frac{A}{(1-1/e)\sqrt{N\gamma} \exp(\alpha \rho^*)}\right) \leq \gamma. \]
\end{proof}

\subsubsection{Proof of Proposition \ref{prop:infinite_cost} }
\begin{proof}
Let $u(\z)$ denote the worst-case expected utility for a given decision $\z$, that is,
\begin{align}
  u(\z):=\sup_{\Q \in \mathcal{B}(\epsilon)}u_\Q(\ell(\z, \vecxi))=\mathbb{E}_{\Q}[\exp(\alpha \ell(\z,\vecxi))],
\end{align} 
where the ambiguity set $\mathcal{B}(\epsilon)$  of radius $\epsilon\geq 0$ is defined as:
\begin{equation*} 
\mathcal{B}(\epsilon) := \left\{
         \Q \in \mathcal{M}(\Xi)\lvert  \Q\left\{\vecxi \in \Xi\right\} = 1,    \mathcal{W}^p(\Q, \hat{\Prob}_{N})\leq \epsilon \right\}.
\end{equation*}
The type-$p$ Wasserstein distance, $\mathcal{W}^p\left(\Prob_1,\Prob_{2}\right)$, is defined as:
\begin{align*}
&\mathcal{W}^p\left(\Prob_1,\Prob_{2}\right)=\inf_{\pi\in \mathcal{M}(\Xi\times\Xi)}\left\{\left(\int_{\Xi}\int_{\Xi}\|{\vecxi}_1-{\vecxi}_2\|^p\, \pi (d\vecxi_1,d\vecxi_2)\right)^{1/p}\right\},
\end{align*}
where $\pi$ is a joint distribution of ${\bm \eta}_1$ and ${\bm \eta}_2$
with marginals $\Prob_1$ and $\Prob_2$, respectively.
From \citet[][Lemma 2, Proposition 2]{Gao_Kleywegt_2023},  the worst-case utility $u(\z)$ for any $\z\in \Z$ is infinite with $p$-Wasserstein $(p<\infty)$ ambiguity set since the loss function $\exp(\alpha \ell(\z,\vecxi))$ does not satisfy the growth condition, that is, there does not exist any $\vecxi_0 \in \Xi$ and constants $L>0, M>0$ such that $\exp(\alpha \ell(\z,\vecxi)) \leq L \|\vecxi-\vecxi_0\|^p +M$ holds for all $\vecxi \in \Xi$.

Next, we prove by contradiction that the worst-case entropic risk    $\rho(\z):=\sup_{\Q \in \mathcal{B}(\epsilon)}\rho_\Q(\ell(\z, \vecxi))$ is  unbounded for all $\z\in \Z$. Suppose that $\rho(\z)$ is bounded, implying that   there exists an $  M<\infty$ such that:
 \begin{align}
\rho(\z) =  \sup_{\Q \in \mathcal{B}(\epsilon)}\frac{1}{\alpha} \log \left(u_{\Q }(\z)\right) \leq M,
 \end{align}
 which can be equivalently written as: 
\begin{equation}
 u_\Q(\ell(\z, \vecxi)) \leq \exp(\alpha M) \text{ for all } \Q\in \mathcal{B}(\epsilon),
\end{equation}
since the exponential function is a monotonically increasing.  This implies that $u(\z)$ is bounded,   which contradicts the result in \cite{Gao_Kleywegt_2023}. %
Hence, the worst-case  entropic risk $\rho(\z)$ is infinite for all $\z \in \Z$.
 \end{proof}

\subsubsection{Proof of Theorem \ref{thm:reg:cone}}\label{append:sec:reg:cone}
\begin{proof}
       Problem~\eqref{eq:drotypeinf} can be  equivalently written as:
       \begin{equation*}
       \begin{array}{ll}
          \min   &\frac{1}{\alpha}\log\left(\frac{1}{N}\sum_{i=1}^N \exp(\alpha t_i) \right)\\
           \text{s.t.} & \bm t\in\mathbb{R}^N,\,\bm z\in\mathcal{Z} \\
           &\displaystyle t_i \geq \sup_{\vecxi \in  \Xi^i_\epsilon} \ell(\z,\vecxi) \quad      \forall i\in [N],
       \end{array}
       \end{equation*}
       where $\Xi^i_\epsilon = \{\vecxi:\|\vecxi-\hat{\vecxi}_i\| \leq \epsilon\}$. Since $\ell(\z, \vecxi)=\max_{j\in[m]}\ell_j(\z, \vecxi)$, we obtain:
            \begin{equation}
       \begin{array}{ll}
          \min_{\bt}  &\frac{1}{\alpha}\log\left(\frac{1}{N}\sum_{i=1}^N \exp(\alpha t_i) \right)\\
          \text{s.t.} & \bm t\in\mathbb{R}^N, \,\bm z\in\mathcal{Z}, \\
            & \displaystyle t_i \geq \sup_{\vecxi \in  \Xi^i_\epsilon}   \ell_j(\z, \vecxi) \quad\forall i\in[N], j \in [m].
       \end{array}\label{eq:nonlinearcons}
       \end{equation}
       The relative interior of the intersection of set $\Xi^i_\epsilon$ and domain of $\ell_j(\z, \vecxi)$ is non-empty for all $\epsilon \geq0$. So, we can use  Fenchel duality theorem \citep{Ben-Tal_denHertog_2015} to obtain: 
      \begin{subequations}\label{Fenchel}
       \begin{equation}
         \sup_{\vecxi \in \Xi_\epsilon^i} \ell_j(\z, \vecxi)= \inf_{\V{\varphi}_{ij}} \delta^*(\V{\varphi}_{ij} \rvert \Xi_\epsilon^i) - \ell_{j*}(\z,\V{\varphi}_{ij}), 
       \end{equation}
       where $\V{\varphi}_{ij}\in \Re^d$, $\ell_{j*}(\z,\V{\varphi}_{ij}) := \inf_{\vecxi} \{\V{\varphi}_{ij}^\top \vecxi-\ell_j(\z, \vecxi)\}$ is the partial concave conjugate of $\ell_j(\z, \vecxi)$ and $ \delta^*(\V{\varphi}_{ij} \rvert \Xi^i_\epsilon) $ is the support function of $\Xi_\epsilon^i$, i.e.,
              \begin{align}
          & \delta^*(\V{\varphi}_{ij} \rvert \Xi^i_\epsilon) = \sup_{\vecxi \in \Xi^i_\epsilon} \V{\varphi}_{ij}^\top \vecxi = \V{\varphi}_{ij}^\top\hat{\vecxi}_i + \sup_{\V{\zeta}: \|\V{\zeta}\|\leq \epsilon} \V{\varphi}_{ij}^\top\V{\zeta} = \V{\varphi}_{ij}^\top \hat{\vecxi}_i+ \epsilon \| \V{\varphi}_{ij} \|_{*}, 
       \end{align}
       \end{subequations}
       where the last equality follows by the definition of the dual norm. Substituting \eqref{Fenchel} in \eqref{eq:nonlinearcons} results in the following finite dimensional  conic program:
       
       \begin{equation*} 
    \begin{array}{lll}
       \min  &  \displaystyle \frac{1}{\alpha}\log \left(\frac{1}{N}\sum_{i=1}^{N}\exp(\alpha t_i)\right)\\
       
     \text{s.t.}   & \bm t\in\mathbb{R}^N,\,\bm z\in\mathcal{Z},\,\bm\varphi_{ij}\in\Re^d&\forall i\in [N],\,j\in [m]\\
     &\displaystyle  \V{\varphi}_{ij}^\top \hat{\vecxi}_i+ \epsilon \| \V{\varphi}_{ij} \|_{*} - \ell_{j*}(\z,\V{\varphi}_{ij})\leq t_i &\forall i\in [N],\,j\in [m].
    \end{array}
\end{equation*}
    \end{proof}

\subsubsection{Proof of Theorem \ref{thm:converge:dro}}\label{proof:droconverge}
\begin{proof}
We will show that $\rho_\model{DRO}^*$ converges to $\rho^*$ by examining the relationship between $\rho_\model{DRO}^*$  and $\rho_\model{SAA}^*$,  and  $\rho_\model{SAA}^*$ and $\rho^*$. The latter relationship is established in Proposition \ref{lemma:convrate:saa}.

Concerning the relationship between $\rho_\model{SAA}^*$ and $\rho_\model{DRO}^*$ defined using the ambiguity set $\mathcal{B}_\infty(\epsilon)$ with any $\epsilon\geq 0$, 
since $\hat{\Prob}_N \in \mathcal{B}_\infty(\epsilon)$, the following inequality holds: 
\begin{equation*}
\sup_{\Q \in \mathcal{B}_\infty(\epsilon)}\frac{1}{\alpha}\log\left(\Expect_{\Q}[\exp(\alpha \ell(\z,\vecxi))]\right) \geq \frac{1}{\alpha}\log(\Expect_{\hat{\Prob}_N}[\exp(\alpha \ell(\z,\vecxi))])\quad \forall \bm z\in\mathcal{Z},
\end{equation*}
which implies that $\rho_\model{DRO}^*   \geq \rho_\model{SAA}^*$. Moreover, we know from   \cite{Bertsimas_Shtern_2023} that
\begin{equation}\label{eq:worstcaseLip}
    \sup_{\Q \in \mathcal{B}_\infty(\epsilon)}\frac{1}{\alpha}\log\left(\Expect_{\Q}[\exp(\alpha \ell(\z,\vecxi))]\right) = \frac{1}{\alpha}\log\left(\frac{1}{N}\sum_{i\in[N]}\sup_{\vecxi:\|\vecxi-\hat{\vecxi}_i\| \leq \epsilon} \exp(\alpha \ell(\z, \vecxi)) \right).
\end{equation}
In addition, from the L-Lipschitz continuity of $\ell$  in $\vecxi$ for all $\z \in \Z$ (see Assumption \ref{assum:loss}\ref{ass:lipchitz_loss_xi}), we have
\begin{equation*}
    \begin{array}{lll}
         &|\ell(\z, \vecxi) - \ell(\z, \hat{\vecxi}_i)| \leq L \|\vecxi - \hat{\vecxi}_i\| &\forall \bm\eta, \forall i\\
       \implies& |\ell(\z, \vecxi) - \ell(\z, \hat{\vecxi}_i)|  \leq L \epsilon& \forall \vecxi \in \{\vecxi: \|\vecxi - \hat{\vecxi}_i\| \leq  \epsilon\},\forall i \\
      \implies & \displaystyle \sup_{\vecxi:\|\vecxi_i-\hat{\vecxi}\| \leq \epsilon} \exp(\alpha \ell(\z, \vecxi)) \leq \exp(\alpha (\ell(\z, \hat{\vecxi}_i) + L\epsilon))&\forall i, \\
    \end{array}
\end{equation*} 
where the first implication follows from the definition of the ambiguity set. Substituting the resulting  inequality in \eqref{eq:worstcaseLip} results in
 \begin{equation*}
     \begin{array}{lll}
     \displaystyle\sup_{\Q \in \mathcal{B}_\infty(\epsilon)}\frac{1}{\alpha}\log\left(\Expect_{\Q}\left[\exp(\alpha \ell(\z,\vecxi))\right]\right)  &\displaystyle\leq \frac{1}{\alpha}\log\left(\frac{1}{N}\sum_{i\in[N]}\left(\exp(\alpha \ell(\z, \hat\vecxi_i)+ \alpha L\epsilon)\right)\right)\\[3ex]
    &= \displaystyle\frac{1}{\alpha}\log\left(\frac{1}{N}\sum_{i\in[N]}\exp(\alpha \ell(\z,\hat\vecxi_i))\right)+L\epsilon.
    \end{array}
 \end{equation*}
 From the above inequality, it follows that $
\rho_\model{DRO}^* \leq \rho_\model{SAA}^*+  L \epsilon$. Combining with $\rho_{\model{DRO}}^* \geq\rho_{\model{SAA}}^*$, we conclude that    $\rho_\model{SAA}^*\leq \rho_\model{DRO}^* \leq \rho_\model{SAA}^*+L\epsilon$.

We can now establish a high confidence bound on $\lvert \rho_\model{DRO}^* - \rho^* \rvert$ that converges to zero at the rate of $\mathcal O(1/\sqrt{N})$ when using $\mathcal{B}_\infty(c/\sqrt{N})$ for some $c>0$. %
Namely, given some $\gamma>0$, we let $\phi(N, \gamma):=\frac{A}{\sqrt{N\gamma}\alpha \exp(\alpha \rho^*)}=\mathcal O(1/\sqrt{N})$ as defined in Proposition \ref{lemma:convrate:saa}. We can then show:
\begin{align*}
     \Prob\left(\lvert \rho_\model{DRO}^* - \rho^* \rvert \geq \phi(N, \gamma)+Lc/\sqrt{N}  \right) &\leq  \Prob\left( \lvert \rho_\model{DRO}^* - \rho_{\model{SAA}}^*\rvert   + \lvert \rho_{\model{SAA}}^*- \rho^* \rvert \geq \phi(N, \gamma)+Lc/\sqrt{N} \right) \\
     &\leq  \Prob(  \lvert \rho_{\model{SAA}}^*- \rho^* \rvert \geq \phi(N, \gamma)  ) \\
     &\leq \gamma,
\end{align*}
where the first inequality follows from the triangle inequality, the second from $\lvert \rho_\model{DRO}^* - \rho_\model{SAA}^*\rvert\leq Lc/\sqrt{N}$, and the third from Proposition \ref{lemma:convrate:saa} as long as $N$ is large enough.

To prove the convergence in probability, we simply consider any $\gamma>0$ and $\Delta>0$ and confirm that:
\[\Prob \left( \left| \rho_{\model{DRO}}^* - \rho^* \right| \geq \Delta\right)\leq \gamma,\]
as long as $N$ is large enough for Proposition \ref{lemma:convrate:saa} to apply and $\phi(N, \gamma)+Lc /\sqrt{N} \leq \Delta$. Indeed, for such $N$, we necessarily have that:
\[\Prob \left( \left| \rho_\model{DRO}^* - \rho^* \right| \geq \Delta\right)\leq \Prob \left( \left| \rho_\model{DRO}^* - \rho^* \right| \geq \phi(N, \gamma)+Lc /\sqrt{N}\right)\geq \gamma.\]
\end{proof}

\subsubsection{Proof of Proposition \ref{prop:k-fold}}
\begin{proof}
\begin{align*}
    \Expect[\rho_{k\sim U(K)}(\rho_{\vecxi\sim \hat{\mathbb P}_k^K}(\ell(\z^*(\hat{\mathbb P}_{-k}^K,\epsilon), \vecxi)))] &=\Expect\left[\frac{1}{\alpha} \log\left(\frac{1}{K}\sum_{k=1}^K\exp\left(\alpha \rho_{\vecxi\sim \hat{\mathbb P}_k^K}(\ell(\z^*(\hat{\mathbb P}_{-k}^K,\epsilon), \vecxi))\right)\right)\right]\\
    &\leq \frac{1}{\alpha} \log\left(\Expect\left[\frac{1}{K}\sum_{k=1}^K\exp\left(\alpha \rho_{\vecxi\sim \hat{\mathbb P}_k^K}(\ell(\z^*(\hat{\mathbb P}_{-k}^K,\epsilon), \vecxi))\right)\right]\right)\\
    &= \frac{1}{\alpha} \log\left(\frac{1}{K}\sum_{k=1}^K\Expect\left[\exp\left(\alpha \rho_{\vecxi\sim \hat{\mathbb P}_k^K}(\ell(\z^*(\hat{\mathbb P}_{-k}^K,\epsilon), \vecxi))\right)\right]\right)\\    
       &= \frac{1}{\alpha} \log\left(\Expect\left[\exp\left(\alpha \rho_{\vecxi\sim \hat{\mathbb P}_1^K}(\ell(\z^*(\hat{\mathbb P}_{-1}^K,\epsilon), \vecxi))\right)\right]\right)\\       
       &= \rho\left(\rho_{\vecxi\sim \hat{\mathbb P}_1^K}(\ell(\z^*(\hat{\mathbb P}_{-1}^K,\epsilon), \vecxi))\right)\\
&= \rho\left(\rho\left(\rho_{\vecxi\sim \hat{\mathbb P}_1^K}(\ell(\z^*(\hat{\mathbb P}_{-1}^K,\epsilon), \vecxi)\revised{)}\middle|\hat{\mathbb P}_{-1}^K\right)\right)\\  
&= \rho\left(\rho_{\vecxi\sim\mathbb P}\left(\ell(\z^*(\hat{\mathbb P}_{-1}^K,\epsilon), \vecxi)\right)\right)\\  
    &= \rho(\ell(\z^*(\hat{\mathbb P}_{N-N/K},\epsilon), \vecxi)),
\end{align*}
where expectations and $\rho$'s are with respect to randomness in the data $\Dist_N$, except for the last equation where the randomness is in both the data and a new sample $\vecxi\sim\mathbb P$. The first inequality follows from concavity of log function and Jensen's inequality,  then we exploit the fact that each $(\hat{\mathbb P}_{k}^K,\hat{\mathbb P}_{-k}^K)$ pair is identically distributed to $(\hat{\mathbb P}_{1}^K,\hat{\mathbb P}_{-1}^K)$, and finally, we use the tower property of the entropic risk measure. 
 \end{proof}

\section{Additional details}\label{append:details}

\subsection{\revised{Location–scale optimization problem}}\label{sec:appendix:location:scale}
Our setting builds on the location–scale condition of \citet{meyer1987two}, where two cumulative distributions $G_1$ and $G_2$ differ only through parameters $(a,b)$ if $G_1(x)=G_2(a+b x)$ for all $b>0$. Such conditions are classically used to show that expected–utility maximizers select portfolios on the Markowitz mean–variance efficient frontier; see, e.g., \citet{meyer1992sufficient}, who use a Kolmogorov–Smirnov test to assess whether standardized stock–return portfolios share a common distribution. More generally, there is a long tradition of assuming that portfolio returns are fully characterized by their mean and variance, notably under multivariate normal, elliptical, or skew–elliptical models \citep{meyer1992sufficient,Adcock2014MeanVarianceSkew,schuhmacher2021justifying}. In the same spirit, Assumption~\ref{ass:meanVarControl} posits a common location–scale representation for feasible losses, so that each decision corresponds to a pair $(\mu,\sigma)$. For instance, if $\vecxi\in \mathbb{R}^d$ is elliptically symmetric around its mean, i.e., $\vecxi \stackrel{F}{=} \boldsymbol m + A \bm{u}$ with $\bm u\in \mathbb{R}^d$ spherically symmetric, then for each $\vecz$ with $A^\top \vecz \neq 0$, the loss $\ell(\vecz,\vecxi)=\vecz^\top\vecxi$ admits the representation $\ell(\vecz,\vecxi)\stackrel{F}{=} \mu(\vecz) + \sigma(\vecz)\,\xi$
where $\mu(\vecz)=\vecz^\top \boldsymbol m $, $\sigma(\vecz)=\|A^\top \vecz\|$, and $\xi := v(\vecz)^\top \bm u$ with $
v(\vecz):=A^\top \vecz/\|A^\top \vecz\|\in\mathbb{R}^d$. By spherical symmetry of $\bm u$, the distribution of $\xi$ does not depend on $\vecz$. If $A^\top \vecz =0$, then $\sigma(\vecz)=0$, and the loss is degenerate, $\ell(\vecz, \vecxi)=\mu(\vecz)$. This representation allows us to study how the bias of sample-based risk estimators scales with volatility.

\subsection{Fitting a GMM} \label{appen:fit_dist}
\paragraph{Entropic risk matching.}   %

After each update of the parameters of a GMM using the gradient descent procedure described in Algorithm \ref{alg:riskmatching}, the parameters are projected back into the feasible region for a valid GMM. This ensures that the mixing weights $\hat{\bm{\pi}}_{t+1}$ remain valid probabilities (which is achieved using a softmax function), and that the standard deviations $\hat{\bm{\sigma}}_{t+1}$ are positive (enforced by taking the maximum of $\exp(-5)$ and $\hat{\bm{\sigma}}_{t+1}$).
The number of components $J$ in the GMM is selected by  CV based on the Wasserstein distance between the distribution of the entropic risk of samples drawn from  fitted GMM and the distribution of entropic risk constructed from the scenarios in the validation set. In all numerical experiments, we set the maximum iterations $T=30000$ and tolerance $\epsilon=\exp(-9)$.

\paragraph{Matching the extremes.}  
Our proposed procedure is motivated by the  Fisher–Tippett–Gnedenko extreme value theorem \citep{de_Haan_Ferreira_2006} which states that given i.i.d.~samples of $\{\zeta_1, \zeta_2, \cdots, \zeta_{n}\}$ with cumulative distribution function (cdf) given by $F(\cdot)$,  the distribution of the (normalized) maxima $M_n =\max\{\zeta_1, \zeta_2, \cdots, \zeta_{n}\}$ converges to a non-degenerate distribution~$G$:
 \begin{align*}
  \lim_{n \rightarrow \infty} \Prob\left(\frac{M_n-b_n}{a_n}\leq x\right) = \lim_{n \rightarrow \infty}  F(a_n x+b_n)^n\rightarrow G(x),
 \end{align*}  
where $a_n$ and $b_n$ are normalizing sequences of scale and location parameters, respectively, that ensure the limit exists and $F(\cdot)^n$ is the cdf of $M_n$.  The limit distribution $G$ belongs to one of three extreme value distributions--Weibull, Fréchet or Gumbel distribution--depending on the tails of $F(\cdot)$. 

 The distribution of the maxima of $n$ i.i.d samples from a normal distribution $\mathcal{N}(\mu, \sigma)$ is given by $\left(\Phi_{\mu, \sigma}\right)^n$ where $\Phi_{\mu, \sigma}$ is the cdf of a normally distributed random variable with mean $\mu$ and $\sigma$. 
  We find the parameters of the normal distribution by matching the 50th and 90th quantiles  of $F_\mathcal{M}(.)$ to the corresponding quantiles of $\left(\Phi_{\mu, \sigma}\right)^n$: 
  \begin{align*}
      &\mu + \sigma \Phi_{0,1}^{-1}(0.5^{1/n}) = F_\mathcal{M}(0.5), \\
      & \mu + \sigma \Phi_{0,1}^{-1}(0.9^{1/n}) = F_\mathcal{M}(0.9),
  \end{align*}
  where the $p$-th quantile for $Y\sim \mathcal{N}(\mu, \sigma)$  is given by $\mu + \sigma \Phi_{0,1}^{-1}(p)$. Solving the above two equations in $(\mu, \sigma)$ gives an approximate distribution for the tails of the underlying distribution, which depends on the true distribution, the total number of samples $N$, and the number of bins $B$. 
  
To balance the trade-off between the number of bins and the sample size in each bin, we set $B=\sqrt{N}$, which is a reasonable compromise.  A large number of bins provides more independent realizations, reducing estimation bias, while a sufficiently large sample size in each bin ensures that the maxima accurately represent the extremes of the distribution.

\begin{algorithm}[t]
\caption{Fit GMM \revised{in step \ref{algstep:fit} of Algorithm \ref{alg:fittingGMM}} based on the extreme value theory\label{alg:fitGMMevt}}
\begin{algorithmic}[1]
\Function{\model{BS-EVT}}{$\bar{\mathcal{S}}$}
\State Divide scenarios in $\bar{\mathcal{S}}$ into $B$ bins of size $n=N/B$ each
  \State $F_\mathcal{M} \gets$ cdf of maxima in each bin 
  \State Determine $(\mu^e, \sigma^e)$ so that $\Phi_{\mu^e, \sigma^e}^n$ matches  $F_{\mathcal{M}}$ 
  \vspace{0.1cm}
  \State $\hat{\V{\pi}} \gets \begin{pmatrix}0.5 \\ 
  0.5\end{pmatrix}$,
         $\hat{\V{\mu}} \gets \begin{pmatrix}\mu^e \\ %
         -\mu^e\end{pmatrix}$,
         $\hat{\V{\sigma}} \gets \begin{pmatrix}\sigma^e \\ 0 \end{pmatrix}$  
  \State $\V{\theta} \gets (\hat{\V{\pi}}, \hat{\V{\mu}}, \hat{\V{\sigma}})$
  \State \Return $\Q^{\btheta}$
\EndFunction
\end{algorithmic}
\end{algorithm}
\subsection{Differential sampling from GMM} \label{appen:samplegmm}

In Algorithm \ref{alg:riskmatching}, differentiable samples are generated using Algorithm \ref{alg:samplegmm}. This algorithm leverages the reparameterization trick \citep{Kingma_Salimans_2015} for continuous distributions and the Gumbel-Softmax trick \citep{Jang_Gu_2017, Maddison_Mnih_2017} for discrete distributions. The Gumbel-Softmax trick allows approximate, differentiable sampling of mixture components, while Gaussian samples are obtained by combining deterministic transformations of the parameters with random noise. As a result, gradients can flow through both the discrete and continuous sampling steps. This enables the minimization of the Wasserstein distance between the empirical and model-based entropic risk distributions.
\begin{algorithm}[H]
\caption{Differentiable Sampling from GMM}
\begin{algorithmic}[1]
\Function{SampleGMM}{$n$, $\btheta$, $\tau$}
  \State Initialize $\mathcal{S}\gets\emptyset$
  \State Extract mixture weights $\bm \pi\in\Delta_J$, means $ \bm \mu\in\mathbb{R}^J$, and standard deviations $ \bm \sigma\in\mathbb{R}_+^J$ from $\btheta$
  \State Let $\mathfrak J\gets\{1,2,\dots,J\}$
  \For{$i=1$ \textbf{to} $n$}
    \State Sample Gumbel noise $\bm g\in \mathbb{R}^{J}$ with $g_j\sim \mathrm{Gumbel}(0,1)$ i.i.d.
    \State Compute $\bm \phi \gets \log(\bm\pi) + \bm g$ \Comment{Gumbel-Softmax logits}
    \State Compute relaxed component weights $\bm w \gets \mathrm{softmax}(\bm \phi/\tau)\in\Delta_J$
    \State Sample $\bm \varepsilon\in\mathbb{R}^J$ with $\varepsilon_j\sim\mathcal{N}(0,1)$ i.i.d.
    \State Compute component-wise Gaussian draws $z_j \gets \mu_j + \sigma_j \varepsilon_j$ for all $j\in\mathfrak{J}$ \Comment{reparametrize}
    \State Compute $s_i \gets \sum_{j\in\mathfrak{J}} w_j z_j$
    \State Append $s_i$ to $\mathcal{S}$
  \EndFor 
  \State \Return $\mathcal{S}$
\EndFunction
\end{algorithmic}
\label{alg:samplegmm}
\end{algorithm}
\subsection{Parameters in Example \ref{ex:projects}}\label{append:param:example}
The parameters of the GMM in Example \ref{ex:projects} are given by:\begin{align*}
\V{\pi} = \begin{bmatrix}0.16\\
       0.28\\
       0.23\\
       0.20\\
       0.13
\end{bmatrix}, 
\quad  & \V{\mu} = \begin{bmatrix}-19.5\\-19.0\\ -18.5\\ -18.0\\ -17.5
\end{bmatrix}, \quad  \V{\sigma}=\begin{bmatrix}
4/25\\ 1/4\\ 4/9\\ 1\\    4 
\end{bmatrix}.\end{align*}
The expected value of $\xi$ is $-18.57$ and standard deviation is $1.65$.

\begin{figure}[t]
    \centering
    \begin{subfigure}[b]{0.48\linewidth}
        \centering
        \includegraphics[width=\linewidth]{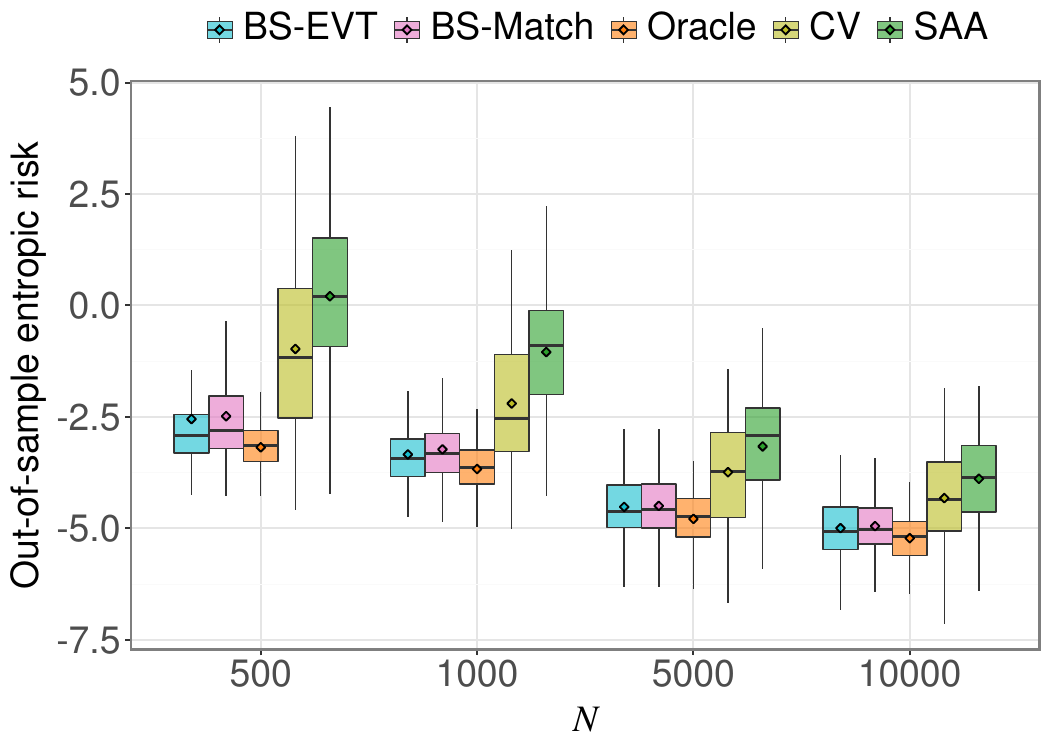}
        \caption{Insurer’s out-of-sample entropic risk.}
        \label{fig:risk_with_N_diff}
    \end{subfigure}
    \hfill
    \begin{subfigure}[b]{0.48\linewidth}
        \centering
        \includegraphics[width=\linewidth]{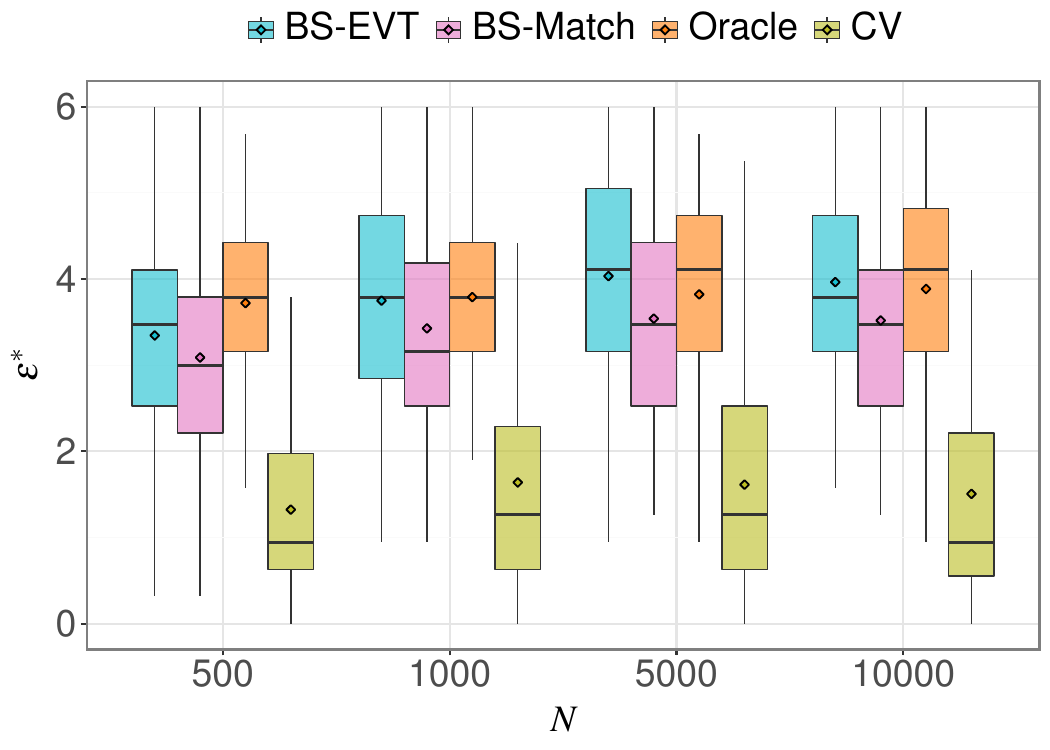}
        \caption{
        Optimal radius $\epsilon^*$ selected by different methods.}
        \label{fig:radius_with_N_diff}
    \end{subfigure}
    \caption{Comparison of the effects of training sample size $N$ on entropic risk (left) and radius $\epsilon^*$ (right). Each household observes samples from different $\Gamma$ marginals and the correlation coefficient $r=0.5$.}
    \label{fig:combined_N_diff}
\end{figure}

\subsection{Households have different marginal distribution}\label{append:diff_marginal}

 In this experiment, we examined the scenario where each household has a distinct Gamma-distributed loss function. The scale, $\kappa$ and location parameters, $\lambda$, of the $\Gamma$-distribution of the loss of the five households are given by $(8, 0.41)$, $(8.5, 0.42)$, $(9, 0.43)$, $(9.5, 0.44)$, and $(10, 0.45)$, respectively. 
 The correlation coefficient was set to $r=0.5$, indicating a moderate positive correlation among the losses. We compared the performance of the proposed methods (\model{BS-EVT} and \model{BS-Match}) with the traditional  CV approach and SAA.

The findings in this case mirror those of the first experiment where households have the same marginal loss distribution: As depicted in Figure \ref{fig:risk_with_N_diff}, \model{BS-EVT} and \model{BS-Match} consistently outperform the traditional \model{CV} method and SAA across different sample sizes. Furthermore, increasing $N$ leads to a reduction in the out-of-sample entropic risk for all methods, with \model{BS-EVT} and \model{BS-Match} showing the most significant improvements. This is due to the fact that \model{CV} and \model{SAA} are overly optimistic and choose smaller radius as compared to \model{Oracle}, whereas the proposed methods \model{BS-EVT} and \model{BS-Match} choose close to optimal radius, see Figure \ref{fig:radius_with_N_diff}.

\subsection{Estimate of entropic risk}\label{append:sec:estimate_out}
Figures \ref{fig:n_500}-\ref{fig:n_10000} present the statistics of the estimate of the out-of-sample risk for different $N\in \{500, 5000, 10000\}$ and radius $\epsilon$ of the ambiguity set in the interval $[0,6]$.  Similar to Figure \ref{fig:est_risk_with_eps_1000}, it can be seen that \model{CV} underestimates the entropic risk for each $\epsilon$. However, \model{BS-Match} and \model{BS-EVT} make better estimation of the variation in the true entropic risk with $\epsilon$, thereby enabling a more informed choice of $\epsilon^*$.

\begin{figure}[htbp]
    \centering
    \begin{subfigure}[htbp]{\linewidth}
        \centering
        \includegraphics[width=0.6\linewidth]{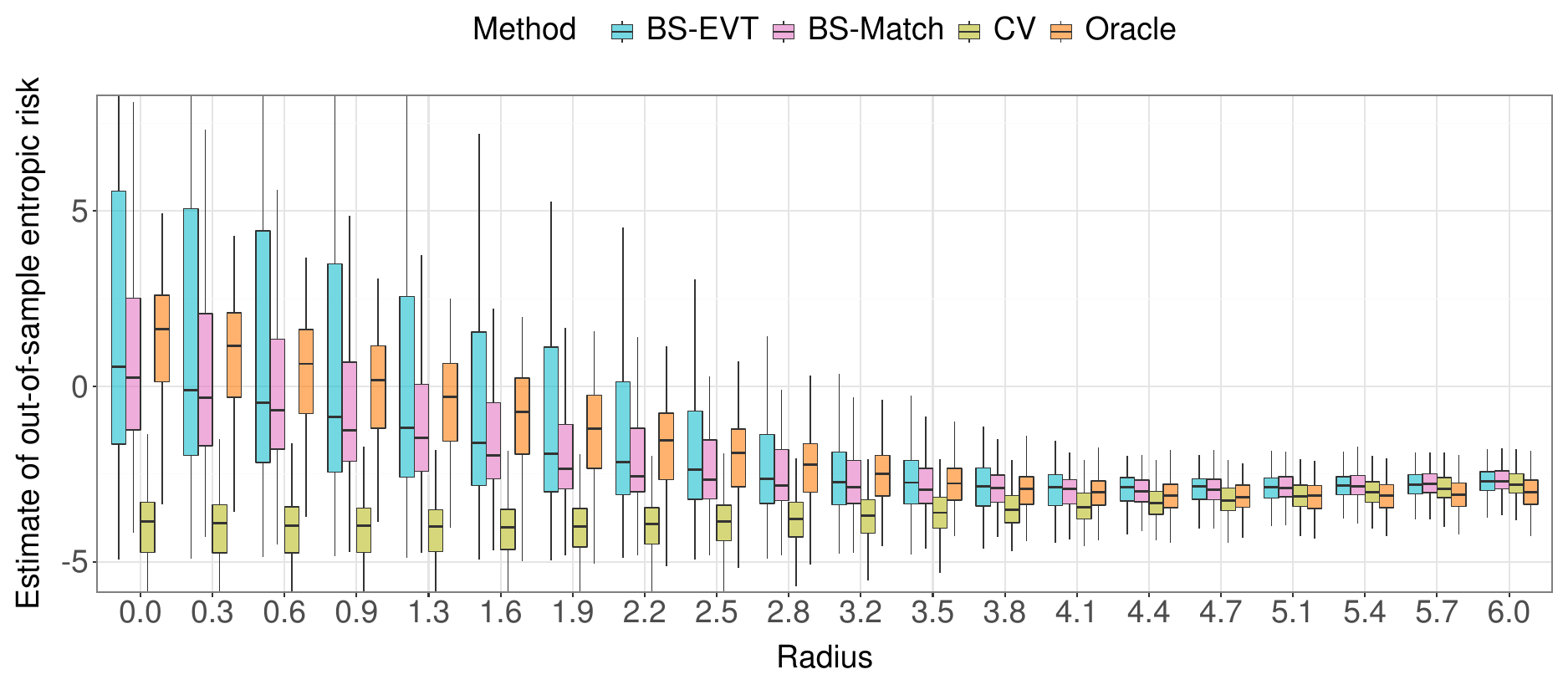}
        \caption{$N=500$}
        \label{fig:n_500}
    \end{subfigure}

    \vspace{0.2cm} %

    \begin{subfigure}[htbp]{\linewidth}
        \centering
        \includegraphics[width=0.6\linewidth]{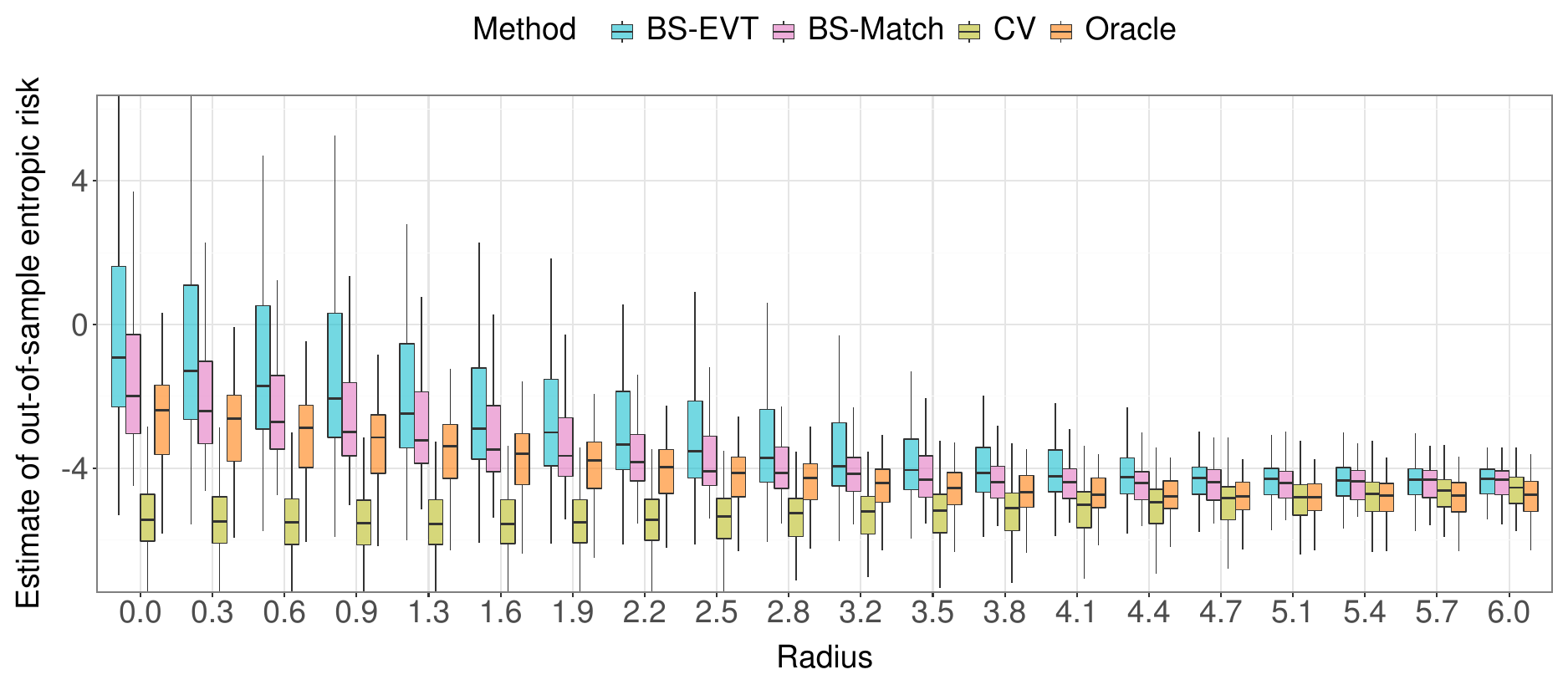}
        \caption{$N=5000$}
        \label{fig:n_1000}
    \end{subfigure}
    
\vspace{0.2cm}
        \begin{subfigure}[htbp]{\linewidth}
        \centering
        \includegraphics[width=0.6\linewidth]{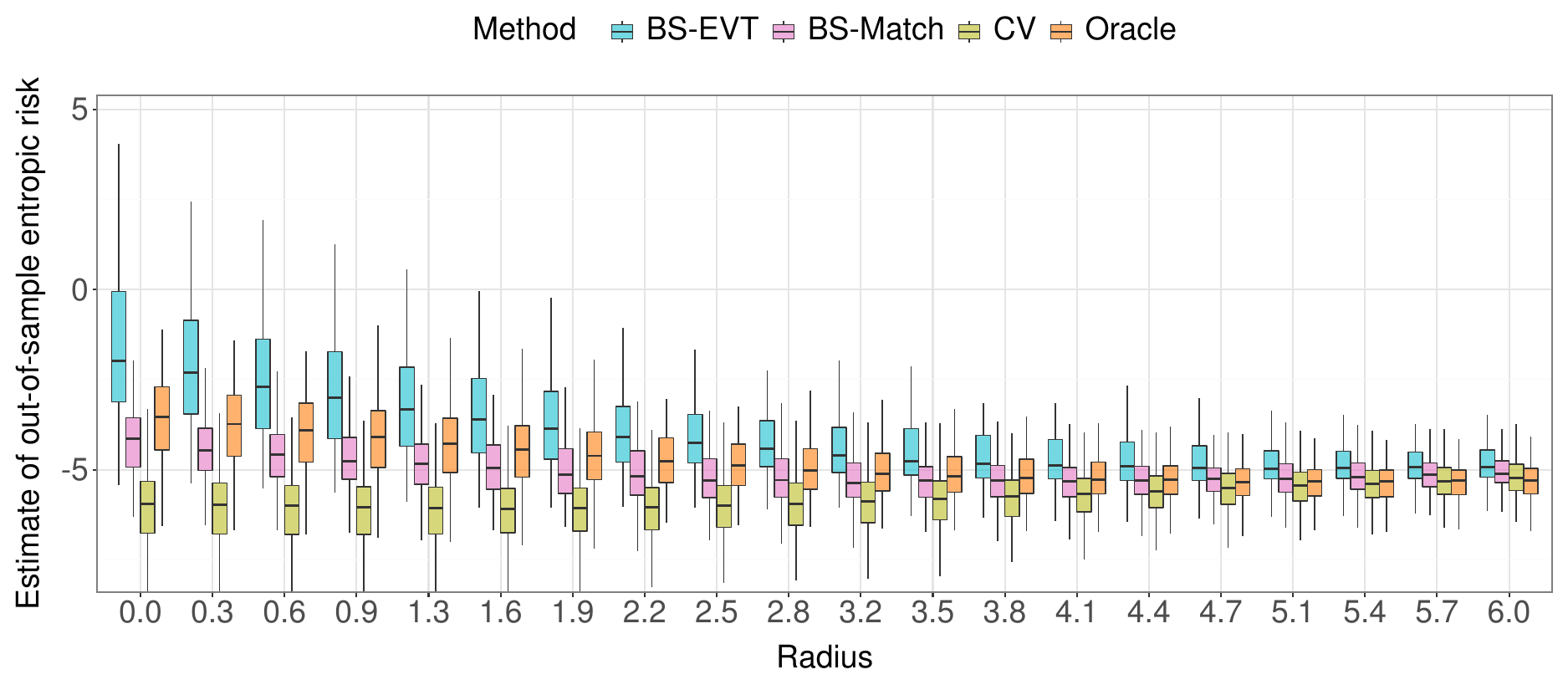}
        \caption{$N=10000$}
        \label{fig:n_10000}
    \end{subfigure}

    \caption{Estimate of entropic risk for different radius and $N$}
    \label{fig:est_rik_N}
\end{figure}

\end{document}